\definecolor{ColBlack}{RGB}{0,0,0} % Black.
\definecolor{ColWhite}{RGB}{255,255,255} % White.
\definecolor{ColAA}{HTML}{520db1} % Violet bleu
\definecolor{ColAB}{HTML}{1a34c0} % Bleu 
\definecolor{ColAC}{HTML}{3851db} % Bleu ocean
\definecolor{ColBA}{HTML}{a80b3a} % Magenta
\definecolor{ColBB}{HTML}{a80b27} % Soleil couchant
\definecolor{ColBC}{HTML}{b10d0d} % Rouge
\renewcommand{\arraystretch}{1.4}
\numberwithin{equation}{subsection}
\def\l@section{\@tocline{1}{3pt}{1pc}{5pc}{}}
\def\l@subsection{\@tocline{2}{2pt}{2pc}{5pc}{}}
\newtheorem{Theorem}{Theorem}[subsection]
\newtheorem{Proposition}[Theorem]{Proposition}
\newtheorem{Lemma}[Theorem]{Lemma}
\newtheorem{Conjecture}[Theorem]{Conjecture}
\renewcommand{\leq}{\leqslant}
\renewcommand{\geq}{\geqslant}
\newcommand{\ColAA}[1]{\textcolor{ColAA}{#1}}
\newcommand{\ColAB}[1]{\textcolor{ColAB}{#1}}
\newcommand{\Hide}[1]{\ColAA{\tt HIDEN}}
\newcommand{\Def}[1]{\ColAB{\em #1}}
\newcommand{\Par}[1]{\mleft(#1\mright)}
\newcommand{\Bra}[1]{\mleft\{#1\mright\}}
\newcommand{\Han}[1]{\mleft[#1\mright]}
\newcommand{\HanL}[1]{\mleft\llbracket#1\mright]}
\newcommand{\Brr}[1]{\mleft|#1\mright|}
\newcommand{\OEIS}[1]{\href{http://oeis.org/#1}{{\bf #1}}}
\tikzstyle{Centering}=[{baseline={([yshift=-0.5ex]current bounding box.center)}}]
\tikzstyle{MarkAA}=[draw=ColAA!80,fill=ColAA!8]
\tikzstyle{MarkAB}=[draw=ColAB!80,fill=ColAB!8]
\tikzstyle{MarkAC}=[draw=ColAC!80,fill=ColAC!8]
\tikzstyle{MarkBA}=[draw=ColBA!80,fill=ColBA!8]
\tikzstyle{MarkBB}=[draw=ColBB!80,fill=ColBB!8]
\tikzstyle{MarkBC}=[draw=ColBC!80,fill=ColBC!8]
\tikzstyle{Node}=[circle,MarkAA,inner sep=1pt,minimum size=2mm,thick,font=\scriptsize]
\tikzstyle{Edge}=[draw=ColBB!80,cap=round,thick,rounded corners=2.5pt]
\tikzstyle{Leaf}=[rectangle,MarkBC,inner sep=0pt,minimum size=1mm,thick]
\tikzstyle{NodeST}=[font=\scriptsize]
\tikzstyle{NodeGraph}=[circle,MarkAB,inner sep=1pt,minimum size=1.5mm,thick]
\tikzstyle{NodeLabelGraph}=[font=\tiny,node distance=3mm]
\tikzstyle{MarkedNodeGraph}=[NodeGraph,rectangle,draw=ColAB!90,fill=ColAB!50,
\tikzstyle{Marked2NodeGraph}=[NodeGraph,regular polygon,regular polygon sides=6,
\tikzstyle{EdgeGraph}=[ColBB!70,cap=round,thick]
\tikzstyle{MarkedEdgeGraph}=[EdgeGraph,ColAA!90,very thick]
\tikzstyle{Marked2EdgeGraph}=[EdgeGraph,ColBA!90,very thick]
\tikzstyle{EdgeLabel}=[midway,inner sep=1pt,fill=ColWhite!0,font=\tiny]
\tikzstyle{MapGraph}=[ColAA!100,draw,dashed,-{>[scale=1.0,length=6,width=5]}]
\tikzstyle{FaceXY}=[fill=ColAA,opacity=.1]
\tikzstyle{FaceXZ}=[fill=ColBA,opacity=.2]
\tikzstyle{FaceYZ}=[fill=ColBC,opacity=.2]
\tikzstyle{Face}=[FaceYZ]
\tikzstyle{PathNode}=[NodeGraph]
\tikzstyle{PathStep}=[EdgeGraph]
\tikzstyle{PathDiag}=[EdgeGraph,ColBA!30,dotted]
\tikzstyle{Map}=[ColBlack!100,draw,-{>[scale=1.5,length=4,width=5]}]
\tikzstyle{Injection}=[ColBlack!100,draw,{>[scale=1.5,length=4,width=5]}-{>[scale=1.5,
\tikzstyle{MapEmbedding}=[Injection]
\tikzstyle{MapIsomorphism}=[Map,double]
\tikzstyle{LineGrid}=[very thin,dashed,draw=ColAC!40]
\tikzstyle{Grid}=[LineGrid]
\newcommand{\N}{\mathbb{N}}
\newcommand{\Z}{\mathbb{Z}}
\newcommand{\R}{\mathbb{R}}
\newcommand{\K}{\mathbb{K}}
\newcommand{\SetCliff}{\mathsf{Cl}}
\newcommand{\SetAvalanche}{\mathsf{Av}}
\newcommand{\SetHill}{\mathsf{Hi}}
\newcommand{\SetCanyon}{\mathsf{Ca}}
\newcommand{\SetPermutations}{\mathfrak{S}}
\newcommand{\SetDyckPaths}{\mathrm{Dy}}
\newcommand{\SetPrime}{\mathcal{P}}
\newcommand{\AlphabetVar}{\mathbb{A}}
\newcommand{\SubFamilly}{\mathcal{S}}
\newcommand{\GeneratingSeries}{\mathcal{G}}
\newcommand{\HilbertSeries}{\mathcal{H}}
\newcommand{\FussCatalan}{\mathrm{cat}}
\newcommand{\TwistedFussCatalan}{\FussCatalan'}
\newcommand{\Unit}{\mathds{1}}
\newcommand{\IndicatorFunction}{\upharpoonleft}
\newcommand{\Leq}{\preccurlyeq}
\newcommand{\LatticeL}{\mathcal{L}}
\newcommand{\PosetP}{\mathcal{P}}
\newcommand{\Hypercube}{\mathcal{H}}
\newcommand{\JoinIrreducibles}{\mathbf{J}}
\newcommand{\MeetIrreducibles}{\mathbf{M}}
\newcommand{\LeastElement}{\bar{0}}
\newcommand{\GreatestElement}{\bar{1}}
\DeclareMathOperator{\JJoin}{\vee}
\DeclareMathOperator{\Meet}{\wedge}
\newcommand{\Angle}[1]{\left\langle#1\right\rangle}
\DeclareMathOperator{\Covered}{\lessdot}
\newcommand{\DiffIndices}{\mathrm{D}}
\newcommand{\Weight}{\omega}
\newcommand{\TreeT}{\mathfrak{t}}
\newcommand{\LeqSuffix}{\leq_{\mathrm{s}}}
\newcommand{\DimensionDelta}{\mathrm{dim}}
\newcommand{\MapM}{\mathbf{m}}
\newcommand{\MapOne}{\mathbf{1}}
\newcommand{\MapTwo}{\mathbf{2}}
\newcommand{\Next}{\mathrm{F}}
\newcommand{\OutputWings}{\mathcal{O}}
\newcommand{\InputWings}{\mathcal{I}}
\newcommand{\Butterflies}{\mathcal{B}}
\newcommand{\ElevationMap}{\mathbf{e}}
\newcommand{\ElevationImage}{\mathcal{E}}
\newcommand{\TreeMap}{\mathrm{tree}}
\newcommand{\LehmerMap}{\mathrm{leh}}
\newcommand{\Complementary}{\mathrm{c}}
\newcommand{\Reduction}{\mathrm{r}}
\newcommand{\IncreaseLetter}{{\uparrow\!}}
\newcommand{\DecreaseLetter}{{\downarrow\!}}
\newcommand{\IncrMap}{{\Uparrow\!}}
\newcommand{\DecrMap}{{\Downarrow\!}}
\newcommand{\DominantCanyon}{\mathrm{d}}
\newcommand{\CharacteristicCliff}{\chi}
\newcommand{\Id}{\mathrm{I}}
\newcommand{\CubicReal}{\mathfrak{C}}
\newcommand{\Volume}{\mathrm{vol}}
\newcommand{\MaxLastLetter}{\mathrm{m}}
\newcommand{\DerivationOnSet}{\mathcal{D}}
\newcommand{\IntervalTwo}{\MapTwo}
\newcommand{\BasisE}{\mathsf{E}}
\newcommand{\BasisF}{\mathsf{F}}
\newcommand{\BasisG}{\mathsf{G}}
\newcommand{\BasisH}{\mathsf{H}}
\newcommand{\SpaceCliff}{\mathbf{Cl}}
\newcommand{\SpaceV}{\mathcal{V}}
\newcommand{\RelationSpace}{\mathcal{R}}
\newcommand{\SpaceHill}{\mathbf{Hi}}
\newcommand{\SpaceCanyon}{\mathbf{Ca}}
\newcommand{\FQSym}{\mathsf{FQSym}}
\newcommand{\PBT}{\mathsf{PBT}}
\newcommand{\Sym}{\mathsf{Sym}}
\DeclareMathOperator{\OverG}{\dashv}
\DeclareMathOperator{\UnderG}{\vdash}
\DeclareMathOperator{\Product}{\cdot}
\DeclareMathOperator{\Coproduct}{\Delta}
\DeclareMathOperator{\Over}{
\begin{tikzpicture}[Centering,scale=.16]
    \draw(0,0)--(1,1.25);
\end{tikzpicture}}
\DeclareMathOperator{\Under}{
\begin{tikzpicture}[Centering,scale=.16]
    \draw(0,0)--(1,-1.25);
\end{tikzpicture}}
\newcommand{\DrawGridSpace}[3]{
    \foreach \x in {0,...,#1} {\foreach \y in {0,...,#2} {\foreach \z in {0,...,#3} {
        \draw[LineGrid](0,\y,\z)--(\x,\y,\z);
        \draw[LineGrid](\x,0,\z)--(\x,\y,\z);
        \draw[LineGrid](\x,\y,0)--(\x,\y,\z);}}}}
\title[Fuss-Catalan posets and algebras]
    {Three Fuss-Catalan posets in interaction \\ and their associative algebras}
\keywords{Poset; Tamari lattice; Fuss-Catalan objets; Malvenuto-Reutenauer algebra;
Loday-Ronco algebra.}
\subjclass[2020]{05E99, 06A07, 06A11, 16T30}
\date{\today}
\author{Camille Combe}
\address{\scriptsize Institut de Recherche Mathématique Avancée UMR 7501,
Université de Strasbourg et CNRS,
7 rue René Descartes
67000 Strasbourg, France.}
\email{combe@math.unistra.fr}
\author{Samuele Giraudo}
\address{\scriptsize LIGM, Université Gustave Eiffel, CNRS, ESIEE Paris, F-$77454$
Marne-la-Vallée, France.}
\email{samuele.giraudo@u-pem.fr}
\begin{document}

%%%%%%%%%%%%%%%%%%%%%%%%%%%%%%%%%%%%%%%%%%%%%%%%%%%%%%%%%%%%%%%%%%%%%%%%%%%%%%%%%%%%%%%%%%%%
%%%%%%%%%%%%%%%%%%%%%%%%%%%%%%%%%%%%%%%%%%%%%%%%%%%%%%%%%%%%%%%%%%%%%%%%%%%%%%%%%%%%%%%%%%%%
%%%%%%%%%%%%%%%%%%%%%%%%%%%%%%%%%%%%%%%%%%%%%%%%%%%%%%%%%%%%%%%%%%%%%%%%%%%%%%%%%%%%%%%%%%%%
\begin{abstract}
We introduce $\delta$-cliffs, a generalization of permutations and increasing trees
depending on a range map $\delta$. We define a first lattice structure on these objects and
we establish general results about its subposets. Among them, we describe sufficient
conditions to have EL-shellable posets, lattices with algorithms to compute the meet and the
join of two elements, and lattices constructible by interval doubling. Some of these
subposets admit natural geometric realizations. Then, we introduce three families of
subposets which, for some maps $\delta$, have underlying sets enumerated by the Fuss-Catalan
numbers. Among these, one is a generalization of Stanley lattices and another one is a
generalization of Tamari lattices.  These three families of posets fit into a chain for the
order extension relation and they share some properties. Finally, in the same way as the
product of the Malvenuto-Reutenauer algebra forms intervals of the right weak Bruhat order
of permutations, we construct algebras whose products form intervals of the lattices of
$\delta$-cliff. We provide necessary and sufficient conditions on $\delta$ to have
associative, finitely presented, or free algebras. We end this work by using the previous
Fuss-Catalan posets to define quotients of our algebras of $\delta$-cliffs. In particular,
one is a generalization of the Loday-Ronco algebra.
\end{abstract}

\maketitle

\vspace{-2em}

\begin{footnotesize}
\tableofcontents
\end{footnotesize}

%%%%%%%%%%%%%%%%%%%%%%%%%%%%%%%%%%%%%%%%%%%%%%%%%%%%%%%%%%%%%%%%%%%%%%%%%%%%%%%%%%%%%%%%%%%%
%%%%%%%%%%%%%%%%%%%%%%%%%%%%%%%%%%%%%%%%%%%%%%%%%%%%%%%%%%%%%%%%%%%%%%%%%%%%%%%%%%%%%%%%%%%%
%%%%%%%%%%%%%%%%%%%%%%%%%%%%%%%%%%%%%%%%%%%%%%%%%%%%%%%%%%%%%%%%%%%%%%%%%%%%%%%%%%%%%%%%%%%%
\section*{Introduction}
The theory of combinatorial Hopf algebras takes a prominent place in algebraic
combinatorics. The Malvenuto-Reutenauer algebra $\FQSym$~\cite{MR95,DHT02} is a central
object in this theory. This structure is defined on the linear span of all permutations and
the product of two permutations has the notable property to form an interval of the right
weak Bruhat order. Moreover, $\FQSym$ admits a lot of substructures, like the Loday-Ronco
algebra of binary trees $\PBT$~\cite{LR98,HNT05} and the algebra of noncommutative symmetric
functions $\Sym$~\cite{GKLLRT94}. Each of these structures brings out in a beautiful and
somewhat unexpected way the combinatorics of some partial orders, respectively the Tamari
order~\cite{Tam62} and the Boolean lattice, playing the same role as the one played by the
right weak Bruhat order for~$\FQSym$. To be slightly more precise, all these algebraic
structures have, as common point, a product $\Product$ which expresses, on their so-called
fundamental bases $\Bra{\BasisF_x}_x$, as
\begin{equation}
    \BasisF_x \Product \BasisF_y
    = \sum_{x \Over y \Leq z \Leq  x \Under y} \BasisF_z,
\end{equation}
where $\Leq$ is a partial order on basis elements, and $\Over$ and $\Under$ are some binary
operations on basis elements (in most cases, some sorts of concatenation operations).
\medbreak

The point of departure of this work consists in considering a different partial order
relation on permutations and ask to what extent analogues of $\FQSym$ and a similar
hierarchy of algebras arise in this context. We consider here first a very natural order on
permutations: the componentwise ordering $\Leq$ on Lehmer codes of
permutations~\cite{Leh60}.  A study of these posets $\SetCliff_\MapOne(n)$ appears
in~\cite{Den13}. Each poset $\SetCliff_\MapOne(n)$ is an order extension of the right weak
Bruhat order of order $n$. To give a concrete point of comparison, the Hasse diagrams of the
right weak Bruhat order of order $3$ and of $\SetCliff_\MapOne(3)$ are respectively

\begin{minipage}{.4\textwidth}
\begin{equation}
    \scalebox{1}{
    \begin{tikzpicture}[Centering,xscale=.8,yscale=.7]
        \node[NodeGraph](000)at(0,0){};
        \node[NodeGraph](001)at(-1,-1){};
        \node[NodeGraph](002)at(-1,-2){};
        \node[NodeGraph](010)at(1,-1){};
        \node[NodeGraph](011)at(1,-2){};
        \node[NodeGraph](012)at(0,-3){};
        \node[NodeLabelGraph,above of=000]{$123$};
        \node[NodeLabelGraph,left of=001]{$132$};
        \node[NodeLabelGraph,left of=002]{$312$};
        \node[NodeLabelGraph,right of=010]{$213$};
        \node[NodeLabelGraph,right of=011]{$231$};
        \node[NodeLabelGraph,below of=012]{$321$};
        \draw[EdgeGraph](000)--(001);
        \draw[EdgeGraph](000)--(010);
        \draw[EdgeGraph](001)--(002);
        \draw[EdgeGraph](010)--(011);
        \draw[EdgeGraph](002)--(012);
        \draw[EdgeGraph](011)--(012);
    \end{tikzpicture}}
\end{equation}
\end{minipage}
\qquad and
\begin{minipage}{.4\textwidth}
\begin{equation}
    \scalebox{1}{
    \begin{tikzpicture}[Centering,xscale=.8,yscale=.7]
        \node[NodeGraph](000)at(0,0){};
        \node[NodeGraph](001)at(-1,-1){};
        \node[NodeGraph](002)at(-1,-2){};
        \node[NodeGraph](010)at(1,-1){};
        \node[NodeGraph](011)at(1,-2){};
        \node[NodeGraph](012)at(0,-3){};
        \node[NodeLabelGraph,above of=000]{$000$};
        \node[NodeLabelGraph,left of=001]{$001$};
        \node[NodeLabelGraph,left of=002]{$002$};
        \node[NodeLabelGraph,right of=010]{$010$};
        \node[NodeLabelGraph,right of=011]{$011$};
        \node[NodeLabelGraph,below of=012]{$012$};
        \draw[EdgeGraph](000)--(001);
        \draw[EdgeGraph](000)--(010);
        \draw[EdgeGraph](001)--(002);
        \draw[EdgeGraph](001)--(011);
        \draw[EdgeGraph](010)--(011);
        \draw[EdgeGraph](002)--(012);
        \draw[EdgeGraph](011)--(012);
    \end{tikzpicture}}.
\end{equation}
\end{minipage}

\noindent As we can observe, the right weak Bruhat order relation on permutations of size
$3$ is included into the order relation of~$\SetCliff_\MapOne(3)$.
\medbreak

In this work, we consider a more general version of Lehmer codes, called $\delta$-cliffs,
leading to distributive lattices $\SetCliff_\delta$. Here $\delta$ is a parameter which is a
map $\N \setminus \{0\} \to \N$, called range map, assigning to each position of the words a
maximal allowed value.  The linear spans $\SpaceCliff_\delta$ of these sets are endowed with
a very natural product related to the intervals of $\SetCliff_\delta$. Some properties of
this product are implied by the general shape of $\delta$.  For instance, when $\delta$ is
so-called valley-free, $\SpaceCliff_\delta$ is an associative algebra, and when $\delta$ is
weakly increasing, $\SpaceCliff_\delta$ is free as a unital associative algebra. The
particular algebra $\SpaceCliff_\MapOne$ is in fact isomorphic to $\FQSym$, so that for
any range map $\delta$, $\SpaceCliff_\delta$ is a generalization of this latter. For
instance, when $\delta$ is the map $\MapM$ satisfying $\MapM(i) = (i - 1) m$ with $m \in
\N$, then all $\SpaceCliff_{\MapM}$ are free associative algebras whose bases are indexed by
increasing trees wherein all nodes have $m + 1$ children.
\medbreak

In the same way as the Tamari order can be defined by restricting the right weak Bruhat
order to some permutations, one builds three subposets of $\SetCliff_\delta$ by restricting
$\Leq$ to particular $\delta$-cliffs. This leads to three families $\SetAvalanche_\delta$,
$\SetHill_\delta$, and $\SetCanyon_\delta$ of posets. When $\delta$ is the particular map
$\MapM$ defined above with $m \geq 0$, the underlying sets of all these posets of order $n
\geq 0$ are enumerated by the $n$-th $m$-Fuss-Catalan number~\cite{DM47}
\begin{equation}
    \FussCatalan_m(n) := \frac{1}{mn + 1} \binom{mn + n}{n}.
\end{equation}
These posets have some close interactions: when $\delta$ is an increasing map,
$\SetHill_\delta$ is an order extension of $\SetCanyon_\delta$, which is itself an order
extension of $\SetAvalanche_\delta$. Besides, $\SetHill_\MapM$ (resp.\ $\SetCanyon_\MapM$)
generalizes for any $m \geq 0$ the Stanley lattice~\cite{Sta75,Knu04} (resp.\ Tamari
lattice), which occurs when $\MapM = \MapOne$. Our generalization of Tamari lattices is
different from the classical one introduced in~\cite{BPR12}. Besides, from these posets
$\SetHill_\MapM$ and $\SetCanyon_\MapM$, one defines respectively two quotient algebras
$\SpaceHill_m$ and $\SpaceCanyon_m$ of $\SpaceCliff_\MapM$. Notably, The algebra
$\SpaceCanyon_1$ is isomorphic to $\PBT$, and the other ones $\SpaceCanyon_m$, $m \geq 2$,
are not free as associative algebras.
\medbreak

This paper is organized as follows.
\medbreak

Section~\ref{sec:cliff_posets} is intended to introduce $\delta$-cliffs and to set some
notations and recalls about poset theory. As a by-product in the process of establishing
links between the posets $\SetCliff_\delta(n)$ and the weak Bruhat order, we introduce an
alternative poset $\Par{\SetCliff_\delta(n), \Leq'}$ when $\delta$ satisfies some particular
conditions. We prove that when $\delta = \MapOne$, the obtained poset is the weak Bruhat
order and we conjecture that for all authorized range maps $\delta$, the posets
$\Par{\SetCliff_\delta(n), \Leq'}$ are semi-distributive lattices. Besides, even if the
posets $\SetCliff_\delta(n)$ have a very simple structure, they contain interesting
subposets $\SubFamilly(n)$. To study these substructures, we establish a series of
sufficient conditions on $\SubFamilly(n)$ for the fact that these posets are
EL-shellable~\cite{BW96,BW97}, are lattices (and give algorithms to compute the meet and the
join of two elements), and are constructible by interval doubling~\cite{Day79}. Moreover,
under some precise conditions, each subposet $\SubFamilly(n)$ can be seen as a geometric
object in $\R^n$. We call this the geometric realization of $\SubFamilly(n)$. We introduce
here the notion of cell and expose a way to compute the volume of the geometrical object.
\medbreak

Next, in Section~\ref{sec:fuss_catalan_posets}, we study the posets $\SetAvalanche_\delta$,
$\SetHill_\delta$, and $\SetCanyon_\delta$. For each of these, we provide some general
properties (EL-shellability, lattice property, constructibility by interval doubling), and
describe its input-wings, output-wings, and butterflies elements, that are elements having
respectively a maximal number of covered elements, covering elements, or both properties at
the same time. We observe a surprising phenomenon: some posets $\SetAvalanche_\delta$,
$\SetHill_\delta$, or $\SetCanyon_\delta$ are isomorphic to their subposets restrained on
input-wings, output-wings, or butterflies elements. Moreover, a notable link among other
ones is that the subposet of $\SetCanyon_\MapM(n)$ is isomorphic to the subposet of
$\SetHill_{\mathbf m - 1}(n)$ restrained to its input-wings.  We also study further
interactions between our three families of Fuss-Catalan posets: there are for instance
bijective posets morphisms (but not poset isomorphisms) between $\SetAvalanche_\delta$ and
$\SetCanyon_\delta$, and between $\SetCanyon_\delta$ and $\SetHill_\delta$, when $\delta$ is
increasing.
\medbreak

Finally, Section~\ref{sec:cliff_algebras} presents a study of the algebra
$\SpaceCliff_\delta$.  We start by introducing a natural coproduct on $\SpaceCliff_\delta$
in order to obtain by duality a product, associative in some cases. Three alternative bases
of $\SpaceCliff_\delta$ are introduced, including two that are multiplicative and are
defined from the order on $\delta$-cliffs.  We then rely on these bases to give a
presentation by generators and relations of $\SpaceCliff_\delta$. When $\delta$ is
valley-free and is $1$-dominated (that is a certain condition on range maps),
$\SpaceCliff_\delta$ admits a finite presentation (a finite number of generators and a
finite number of nontrivial relations between the generators). When $\delta$ is weakly
increasing, $\SpaceCliff_\delta$ is free as an associative algebra. We end this work by
constructing, given a subfamilly $\SubFamilly$ of $\SetCliff_\delta$, a quotient space
$\SpaceCliff_\SubFamilly$ of $\SpaceCliff_\delta$ isomorphic to the linear span of
$\SubFamilly$. A sufficient condition on $\SubFamilly$ to have moreover a quotient algebra
of $\SpaceCliff_\delta$ is introduced.  We also describe a sufficient condition on
$\SubFamilly$ for the fact that the product of two basis elements of
$\SpaceCliff_\SubFamilly$ is an interval of a poset $\SubFamilly(n)$.  These results are
applied to construct and study the two quotients $\SpaceHill_m :=
\SpaceCliff_{\SetHill_\MapM}$ and $\SpaceCanyon_m := \SpaceCliff_{\SetCanyon_\MapM}$ of
$\SpaceCliff_\MapM$. The algebra $\SpaceCanyon_1$ is isomorphic to the Loday-Ronco algebra
and the other algebras $\SpaceCanyon_m$, $m \geq 2$, provide generalizations of this later
which are not free. On the other hand, for any $m \geq 1$, all $\SpaceHill_m$ are other
associative algebras whose dimensions are also Fuss-Catalan numbers and are not free.
\medbreak

This paper is an extended version of~\cite{CG20} containing the proofs of the presented
results and presenting new ones as the geometrical aspects of the studied posets.
\medbreak

%%%%%%%%%%%%%%%%%%%%%%%%%%%%%%%%%%%%%%%%%%%%%%%%%%%%%%%%%%%%%%%%%%%%%%%%%%%%%%%%%%%%%%%%%%%%
\subsubsection*{General notations and conventions}
For any integers $i$ and $j$, $[i, j]$ denotes the set $\{i, i + 1, \dots, j\}$. For any
integer $i$, $[i]$ denotes the set $[1, i]$ and $\HanL{i}$ denotes the set $[0, i]$. Graded
sets are sets decomposing as a disjoint union $S = \bigsqcup_{n \geq 0} S(n)$. For any $x
\in S$, the unique $n \geq 0$ such that $x \in S(n)$ is the degree $|x|$ of $x$. A graded
subset of $S$ is a graded set $S'$ such that for all $n \geq 0$, $S'(n) \subseteq S(n)$. The
generating series of $S$ is the series $\GeneratingSeries_S(t) := \sum_{x \in S} t^{|x|}$.
The empty word is denoted by $\epsilon$. If $P$ is a statement, we denote by
$\IndicatorFunction_P$ the indicator function (equals to $1$ if $P$ holds and $0$
otherwise).
\medbreak

%%%%%%%%%%%%%%%%%%%%%%%%%%%%%%%%%%%%%%%%%%%%%%%%%%%%%%%%%%%%%%%%%%%%%%%%%%%%%%%%%%%%%%%%%%%%
%%%%%%%%%%%%%%%%%%%%%%%%%%%%%%%%%%%%%%%%%%%%%%%%%%%%%%%%%%%%%%%%%%%%%%%%%%%%%%%%%%%%%%%%%%%%
%%%%%%%%%%%%%%%%%%%%%%%%%%%%%%%%%%%%%%%%%%%%%%%%%%%%%%%%%%%%%%%%%%%%%%%%%%%%%%%%%%%%%%%%%%%%
\section{$\delta$-cliff posets and general properties} \label{sec:cliff_posets}
This section is devoted to introduce the lattices of $\delta$-cliffs and to set some
notations and definitions about posets and lattices. Then, we will review some properties of
its subposets, like EL-shellability, constructibility by interval doubling, and geometric
realizations.
\medbreak

%%%%%%%%%%%%%%%%%%%%%%%%%%%%%%%%%%%%%%%%%%%%%%%%%%%%%%%%%%%%%%%%%%%%%%%%%%%%%%%%%%%%%%%%%%%%
%%%%%%%%%%%%%%%%%%%%%%%%%%%%%%%%%%%%%%%%%%%%%%%%%%%%%%%%%%%%%%%%%%%%%%%%%%%%%%%%%%%%%%%%%%%%
\subsection{$\delta$-cliffs}
We introduce here $\delta$-cliffs, their links with Lehmer codes, permutations, and
particular increasing trees.
\medbreak

%%%%%%%%%%%%%%%%%%%%%%%%%%%%%%%%%%%%%%%%%%%%%%%%%%%%%%%%%%%%%%%%%%%%%%%%%%%%%%%%%%%%%%%%%%%%
\subsubsection{First definitions} \label{subsubsec:first_definitions_cliffs}
A \Def{range map} is a map $\delta : \N \setminus \{0\} \to \N$.  We shall specify range
maps as infinite words $\delta = \delta(1) \delta(2) \dots$. For this purpose, for any $a
\in \N$, we shall denote by $a^\omega$ the infinite word having all its letters equal to
$a$. We say that $\delta$
\begin{itemize}
    \item is \Def{rooted} if $\delta(1) = 0$;

    \item is \Def{weakly increasing} if for all $i \geq 1$, $\delta(i) \leq \delta(i + 1)$;

    \item is \Def{increasing} if for all $i \geq 1$, $\delta(i) < \delta(i + 1)$);

    \item has an \Def{ascent} if there are $1 \leq i_1 < i_2$ such that $\delta\Par{i_1} <
    \delta\Par{i_2}$;

    \item has an \Def{descent} if there are $1 \leq i_1 < i_2$ such that $\delta\Par{i_1} >
    \delta\Par{i_2}$;

    \item has a \Def{valley} if there are $1 \leq i_1 < i_2 < i_3$ such that
    $\delta\Par{i_1} > \delta\Par{i_2} < \delta\Par{i_3}$;

    \item is \Def{valley-free} (or \Def{unimodal}) if $\delta$ has no valley;

    \item is \Def{$j$-dominated} for a $j \geq 1$ if there is $k \geq 1$ such that for all
    $k' \geq k$, $\delta(j) \geq \delta\Par{k'}$.
\end{itemize}
For any $n \geq 0$, the \Def{$n$-th dimension} of $\delta$ is the integer
$\DimensionDelta_n(\delta) := \# \Bra{i \in [n] : \delta(i) \ne 0}$.
\medbreak

Given a range map $\delta$, a word $u$ of nonnegative integers of length $n$ is a
\Def{$\delta$-cliff} if for any $i \in [n]$, $u_i \in \HanL{\delta(i)}$. The \Def{size}
$|u|$ of a $\delta$-cliff $u$ is its length as a word, and the \Def{weight} $\Weight(u)$ of
$u$ is the sum of its letters. The graded set of all $\delta$-cliffs where the degree of a
$\delta$-cliff is its size, is denoted by $\SetCliff_\delta$.  In the sequel, for any $m
\geq 0$, we shall denote by $\mathbf{m}$ the range map satisfying
$\MapM(i) = (i - 1) m$ for any $i \in \N \setminus \{0\}$. For instance,
\begin{subequations}
\begin{equation}
    \SetCliff_\MapOne(3) = \Bra{000, 001, 002, 010, 011, 012},
\end{equation}
\begin{equation}
    \SetCliff_\MapTwo(3) =
    \Bra{000, 001, 002, 003, 004, 010, 011, 012, 013, 014, 020, 021, 022, 023, 024}.
\end{equation}
\end{subequations}
Let us denote respectively by $\LeastElement_\delta(n)$ and by $\GreatestElement_\delta(n)$
the $\delta$-cliffs $0^n$ and $\delta(1) \dots \delta(n)$.
\medbreak

It follows immediately from the definition of $\delta$-cliffs that the cardinality of
$\SetCliff_\delta(n)$ satisfies
\begin{equation}
    \# \SetCliff_\delta(n) = \prod_{i \in [n]} (\delta(i) + 1).
\end{equation}
The first numbers of $\mathbf{m}$-cliffs are
\begin{subequations}
\begin{equation}
    1, 1, 1, 1, 1, 1, 1, 1, \qquad m = 0,
\end{equation}
\begin{equation}
    1, 1, 2, 6, 24, 120, 720, 5040, \qquad m = 1,
\end{equation}
\begin{equation}
    1, 1, 3, 15, 105, 945, 10395, 135135, \qquad m = 2.
\end{equation}
\end{subequations}
Last sequence is Sequence~\OEIS{A001147} of~\cite{Slo}.
\medbreak

%%%%%%%%%%%%%%%%%%%%%%%%%%%%%%%%%%%%%%%%%%%%%%%%%%%%%%%%%%%%%%%%%%%%%%%%%%%%%%%%%%%%%%%%%%%%
\subsubsection{Lehmer codes and permutations}
There is a classical correspondence between permutations and \Def{Lehmer
codes}~\cite{Leh60}, that are certain words of integers.  Here, we consider a slight
variation of Lehmer codes, establishing a bijection between the set of $\MapOne$-cliffs of
size $n$ and the set of permutations of the same size. Given a permutation $\sigma$ of size
$n$, let the ${\mathbf 1}$-cliff $u$ such that for any $j \in [n]$, $u_j$ is the number of
values $i$ such that $i < j$ while $\sigma^{-1}(i) > \sigma^{-1}(j)$. We denote by
$\LehmerMap(\sigma)$ the ${\mathbf 1}$-cliff thus associated with the permutation $\sigma$.
For instance, $\LehmerMap(436512) = 002323$.
\medbreak

%%%%%%%%%%%%%%%%%%%%%%%%%%%%%%%%%%%%%%%%%%%%%%%%%%%%%%%%%%%%%%%%%%%%%%%%%%%%%%%%%%%%%%%%%%%%
\subsubsection{Weakly increasing range maps and increasing trees}
Given a rooted weakly increasing range map $\delta$, let $\Delta_\delta : \N \setminus \{0\}
\to \N$ be the map defined by $\Delta_\delta(i) := \delta(i + 1) - \delta(i)$. For instance,
for any $a \geq 0$, $\Delta_{a^\omega} = 0^\omega$, and for any $m \geq 0$, $\Delta_{\MapM}
= m^\omega$. A \Def{$\delta$-increasing tree} is a planar rooted tree where internal nodes
are bijectively labeled from $1$ to $n$, any internal node labeled by $i \in [n]$ has arity
$\Delta_\delta(i) + 1$, and every children of any node labeled by $i \in [n]$ are leaves or
are internal nodes labeled by $j \in [n]$ such that $j > i$. The \Def{size} of such a tree
is its number of internal nodes. The leaves of a $\delta$-increasing tree are implicitly
numbered from $1$ to its total number of leaves from left to right.
\medbreak

Observe that, regardless of any particular condition on $\delta$, any $\delta$-cliff $u$ of
size $n \geq 1$ recursively decomposes as $u = u' a$ where $a \in \HanL{\delta(n)}$ and $u'$
is a $\delta$-cliff of size $n - 1$. Relying on this observation, when $\delta$ is rooted
and weakly increasing, let $\TreeMap_\delta$ be the map sending any $\delta$-cliff $u$ of
size $n$ to the $\delta$-increasing tree of size $n$ recursively defined as follows. If $n =
0$, $\TreeMap_\delta(u)$ is the leaf. Otherwise, by using the above decomposition of $u$,
$\TreeMap_\delta(u)$ is the tree obtained by grafting on the $(a + 1)$-st leaf of the tree
$\TreeMap\Par{u'}$ a node of arity $\Delta_\delta(n) + 1$ labeled by $n$. For instance,
\begin{equation}
    \TreeMap_\MapTwo(0230228) \enspace = \enspace
    \scalebox{.8}{
    \begin{tikzpicture}[Centering,xscale=0.18,yscale=0.10]
        \node[Leaf](0)at(0.00,-8.80){};
        \node[Leaf](11)at(7.00,-4.40){};
        \node[Leaf](12)at(8.00,-13.20){};
        \node[Leaf](14)at(9.00,-13.20){};
        \node[Leaf](15)at(10.00,-13.20){};
        \node[Leaf](17)at(11.00,-13.20){};
        \node[Leaf](19)at(12.00,-13.20){};
        \node[Leaf](2)at(1.00,-8.80){};
        \node[Leaf](20)at(13.00,-13.20){};
        \node[Leaf](21)at(14.00,-8.80){};
        \node[Leaf](3)at(2.00,-17.60){};
        \node[Leaf](5)at(3.00,-17.60){};
        \node[Leaf](6)at(4.00,-17.60){};
        \node[Leaf](8)at(5.00,-13.20){};
        \node[Leaf](9)at(6.00,-13.20){};
        \node[Node](1)at(1.00,-4.40){$4$};
        \node[Node](10)at(7.00,0.00){$1$};
        \node[Node](13)at(9.00,-8.80){$7$};
        \node[Node](16)at(12.00,-4.40){$2$};
        \node[Node](18)at(12.00,-8.80){$3$};
        \node[Node](4)at(3.00,-13.20){$6$};
        \node[Node](7)at(5.00,-8.80){$5$};
        \draw[Edge](0)--(1);
        \draw[Edge](1)--(10);
        \draw[Edge](11)--(10);
        \draw[Edge](12)--(13);
        \draw[Edge](13)--(16);
        \draw[Edge](14)--(13);
        \draw[Edge](15)--(13);
        \draw[Edge](16)--(10);
        \draw[Edge](17)--(18);
        \draw[Edge](18)--(16);
        \draw[Edge](19)--(18);
        \draw[Edge](2)--(1);
        \draw[Edge](20)--(18);
        \draw[Edge](21)--(16);
        \draw[Edge](3)--(4);
        \draw[Edge](4)--(7);
        \draw[Edge](5)--(4);
        \draw[Edge](6)--(4);
        \draw[Edge](7)--(1);
        \draw[Edge](8)--(7);
        \draw[Edge](9)--(7);
        \node(r)at(7.00,3.75){};
        \draw[Edge](r)--(10);
    \end{tikzpicture}},
\end{equation}
and for $\delta := 0233579^\omega$, one has $\Delta_\delta = 2102220^\omega$, and
\begin{equation}
    \TreeMap_\delta(021042) \enspace = \enspace
    \scalebox{.8}{
    \begin{tikzpicture}[Centering,xscale=0.19,yscale=0.11]
        \node[Leaf](0)at(0.00,-8.00){};
        \node[Leaf](10)at(6.00,-12.00){};
        \node[Leaf](12)at(7.00,-12.00){};
        \node[Leaf](13)at(8.00,-12.00){};
        \node[Leaf](15)at(10.00,-8.00){};
        \node[Leaf](2)at(1.00,-8.00){};
        \node[Leaf](3)at(2.00,-12.00){};
        \node[Leaf](5)at(3.00,-12.00){};
        \node[Leaf](6)at(4.00,-12.00){};
        \node[Leaf](9)at(5.00,-8.00){};
        \node[Node](1)at(1.00,-4.00){$4$};
        \node[Node](11)at(7.00,-8.00){$5$};
        \node[Node](14)at(9.00,-4.00){$2$};
        \node[Node](4)at(3.00,-8.00){$6$};
        \node[Node](7)at(5.00,0.00){$1$};
        \node[Node](8)at(5.00,-4.00){$3$};
        \draw[Edge](0)--(1);
        \draw[Edge](1)--(7);
        \draw[Edge](10)--(11);
        \draw[Edge](11)--(14);
        \draw[Edge](12)--(11);
        \draw[Edge](13)--(11);
        \draw[Edge](14)--(7);
        \draw[Edge](15)--(14);
        \draw[Edge](2)--(1);
        \draw[Edge](3)--(4);
        \draw[Edge](4)--(1);
        \draw[Edge](5)--(4);
        \draw[Edge](6)--(4);
        \draw[Edge](8)--(7);
        \draw[Edge](9)--(8);
        \node(r)at(5.00,3.50){};
        \draw[Edge](r)--(7);
    \end{tikzpicture}}.
\end{equation}
\medbreak

\begin{Proposition} \label{prop:cliff_increasing_trees}
    For any rooted weakly increasing range map $\delta$, $\TreeMap_\delta$ is a one-to-one
    correspondence from the set of all $\delta$-cliffs of size $n \geq 0$ and the set of all
    $\delta$-increasing trees of size~$n$.
\end{Proposition}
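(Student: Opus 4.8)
The natural approach is induction on the size $n$, using the recursive construction of $\TreeMap_\delta$ itself. First I would set up a candidate inverse map $\Phi$ from $\delta$-increasing trees of size $n$ to $\delta$-cliffs of size $n$, defined by mimicking the reverse of the construction: given a $\delta$-increasing tree $\TreeT$ of size $n \geq 1$, the node labeled $n$ is necessarily a node whose children are all leaves (since every child of a node labeled $i$ must be a leaf or carry a strictly larger label, and $n$ is maximal), so removing this node — replacing it by a single leaf — yields a tree $\TreeT'$, and if that leaf sits in position $a+1$ among the leaves of $\TreeT'$, one sets $\Phi(\TreeT) := \Phi(\TreeT')\, a$. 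I would need to check that $\TreeT'$ is again a valid $\delta$-increasing tree of size $n-1$ (its node $i$ still has arity $\Delta_\delta(i)+1$, the labels $1,\dots,n-1$ still form a bijective labeling, and the increasing condition is inherited), and that $0 \le a \le \delta(n)$, which holds because $\TreeT'$ has exactly $1 + \sum_{i \in [n-1]} \Delta_\delta(i) = 1 + (\delta(n) - \delta(1)) = \delta(n)+1$ leaves, using $\delta(1)=0$ (rootedness). Wait — the statement does not explicitly assume $\delta$ rooted, only rooted and weakly increasing appears in the setup of $\TreeMap_\delta$; I would double-check that the definition of $\delta$-increasing tree is only invoked for rooted weakly increasing $\delta$, in which case the leaf count identity is exactly what makes $a$ range over $[0,\delta(n)]$.

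Then the bulk of the argument is to verify $\Phi \circ \TreeMap_\delta = \mathrm{id}$ and $\TreeMap_\delta \circ \Phi = \mathrm{id}$, both by induction on $n$. The base case $n=0$ is immediate: there is a unique tree (the leaf) and a unique $\delta$-cliff (the empty word $\epsilon$). For the inductive step on $\Phi \circ \TreeMap_\delta = \mathrm{id}$: given a $\delta$-cliff $u = u'a$ of size $n$, by definition $\TreeMap_\delta(u)$ is obtained from $\TreeMap_\delta(u')$ by grafting a node labeled $n$ of arity $\Delta_\delta(n)+1$ at the $(a+1)$-st leaf; applying $\Phi$ removes precisely that node (it is the one labeled $n$, all of whose children are the fresh leaves), restoring a leaf in position $a+1$ of $\TreeMap_\delta(u')$, hence $\Phi(\TreeMap_\delta(u)) = \Phi(\TreeMap_\delta(u'))\, a = u' a = u$ by the induction hypothesis. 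The reverse composition is symmetric: starting from a tree $\TreeT$, the point is that the node labeled $n$ in $\TreeT$ is unique and has only leaf-children, so the "remove $n$" operation and the "graft at leaf $a+1$" operation are mutually inverse, and one closes the loop with the induction hypothesis on $\TreeT'$.

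The main obstacle, and the one deserving the most care, is the claim that in any $\delta$-increasing tree the node labeled $n$ has all its children equal to leaves — this is what makes $\Phi$ well-defined and is the crux of surjectivity of $\TreeMap_\delta$. This follows from the defining condition that any child of a node labeled $i$ is a leaf or an internal node labeled $j > i$: since $n$ is the largest label there can be no such internal child. A secondary subtlety is bookkeeping the implicit left-to-right numbering of leaves so that "the $(a+1)$-st leaf" is unambiguous and is preserved correctly under grafting and un-grafting; grafting a node of arity $k+1$ in place of the $(a+1)$-st leaf replaces one leaf by $k+1$ leaves, shifting the numbering of all later leaves by $k$, and one must check this is consistent in both directions. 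Once these structural facts are in hand, the two inductions are routine, so I would present the argument compactly, emphasizing the well-definedness of $\Phi$ and the inversion of the graft/un-graft operations.
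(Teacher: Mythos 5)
Your proposal is correct and follows essentially the same route as the paper: the same leaf-count identity $1+\sum_{i}\Delta_\delta(i) = 1+\delta(n+1)-\delta(1)$ (using rootedness) to justify well-definedness, and the same inverse map that strips the maximal-label node (whose children are all leaves) and records the position of the restored leaf, with the two compositions checked by induction on $n$. The only differences are cosmetic indexing conventions (your letter $a$ versus the paper's $a-1$), so there is nothing further to add.
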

\begin{proof}
    Let us first prove that $\TreeMap_\delta$ is a well-defined map. This can be done by
    induction on $n$ and arises from the fact that, for any $u \in \SetCliff_\delta(n)$, the
    total number of leaves of $\TreeMap_\delta(u)$ is $1 + \delta(n + 1)$. This is a
    consequence of the fact that
    \begin{equation}
        1 + \delta(n) - 1 + \Delta_\delta(n) + 1 = 1 + \delta(n + 1).
    \end{equation}
    Therefore, there is in $\TreeMap_\delta(u)$ a leaf of index $a + 1$ for any value $a \in
    \HanL{\delta(n + 1)}$. Hence, and due to the fact that by construction,
    $\TreeMap_\delta(u)$ is a $\delta$-increasing tree, the map $\TreeMap_\delta$ is
    well-defined.
    \smallbreak

    Now, let $\phi$ be the map from the set of all $\delta$-increasing trees of size $n$ to
    $\SetCliff_\delta(n)$ defined recursively as follows. If $\TreeT$ is the leaf, set
    $\phi(\TreeT) := \epsilon$.  Otherwise, consider the node with the maximal label in
    $\TreeT$.  Since $\TreeT$ is increasing, this node has no children. Set $\TreeT'$ as the
    $\delta$-increasing tree obtained by replacing this node by a leaf in $\TreeT$, and set
    $a$ as the index of the leaf of $\TreeT'$ on which this maximal node of $\TreeT$ is
    attached (this index is $1$ if $\TreeT'$ is the leaf). Then, set $\phi(\TreeT) :=
    \phi\Par{\TreeT'} (a - 1)$. The statement of the proposition follows by showing by
    induction on $n$ that $\phi$ is the inverse of the map~$\TreeMap_\delta$.
\end{proof}
\medbreak

In~\cite{CP19}, $s$-decreasing trees are considered, where $s$ is a sequence of length $n
\geq 0$ of nonnegative integers. These trees are labeled decreasingly and any internal node
labeled by $i \in [n]$ has arity $s_i$.  As a consequence of
Proposition~\ref{prop:cliff_increasing_trees}, any $s$-decreasing tree can be encoded by a
$\delta$-increasing tree where $\delta$ is a rooted weakly increasing range map satisfying
$\delta(i) = \sum_{1 \leq j \leq i - 1} s_{n - j + 1}$ for all $i \in [n + 1]$. The
correspondence between such $s$-decreasing trees and $\delta$-increasing trees consists in
relabeling by $n + 1 - i$ each internal node labeled by $i \in [n]$.  A consequence of all
this is that $\delta$-cliffs can be seen as generalizations of $s$-decreasing trees by
relaxing the considered conditions on~$\delta$.
\medbreak

%%%%%%%%%%%%%%%%%%%%%%%%%%%%%%%%%%%%%%%%%%%%%%%%%%%%%%%%%%%%%%%%%%%%%%%%%%%%%%%%%%%%%%%%%%%%
%%%%%%%%%%%%%%%%%%%%%%%%%%%%%%%%%%%%%%%%%%%%%%%%%%%%%%%%%%%%%%%%%%%%%%%%%%%%%%%%%%%%%%%%%%%%
\subsection{$\delta$-cliff posets}
We endow now the set of all $\delta$-cliffs of a given size with an order relation, give
some recalls about poset and lattice theory, and establish a link between the poset of
$\MapOne$-cliffs and the weak Bruhat order of permutations.
\medbreak

%%%%%%%%%%%%%%%%%%%%%%%%%%%%%%%%%%%%%%%%%%%%%%%%%%%%%%%%%%%%%%%%%%%%%%%%%%%%%%%%%%%%%%%%%%%%
\subsubsection{First definitions} \label{subsubsec:first_definitions_cliff_posets}
Let $\delta$ be a range map and  $\Leq$ be the partial order relation on $\SetCliff_\delta$
defined by $u \Leq v$ for any $u, v \in \SetCliff_\delta$ such that $|u| = |v|$ and $u_i
\leq v_i$ for all $i \in [|u|]$. For any $n \geq 0$, the poset $\Par{\SetCliff_\delta(n),
\Leq}$ is the \Def{$\delta$-cliff poset} of order $n$.
Figure~\ref{fig:examples_cliff_posets} shows the Hasse diagrams of some $\delta$-cliff
posets.
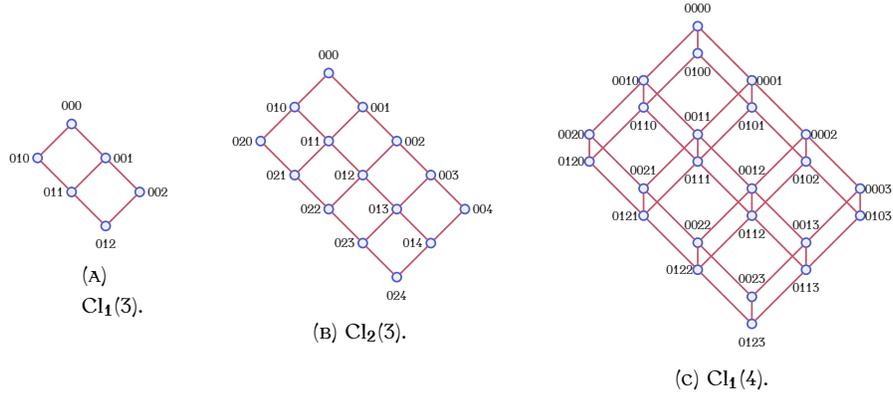
\begin{figure}[ht]
    \centering
    \subfloat[][$\SetCliff_\MapOne(3)$.]{
    \centering
    \scalebox{.8}{
    \begin{tikzpicture}[Centering,xscale=.8,yscale=.8,rotate=-135]
        \draw[Grid](0,0)grid(1,2);
        \node[NodeGraph](000)at(0,0){};
        \node[NodeGraph](001)at(0,1){};
        \node[NodeGraph](002)at(0,2){};
        \node[NodeGraph](010)at(1,0){};
        \node[NodeGraph](011)at(1,1){};
        \node[NodeGraph](012)at(1,2){};
        \node[NodeLabelGraph,above of=000]{$000$};
        \node[NodeLabelGraph,right of=001]{$001$};
        \node[NodeLabelGraph,right of=002]{$002$};
        \node[NodeLabelGraph,left of=010]{$010$};
        \node[NodeLabelGraph,left of=011]{$011$};
        \node[NodeLabelGraph,below of=012]{$012$};
        \draw[EdgeGraph](000)--(001);
        \draw[EdgeGraph](000)--(010);
        \draw[EdgeGraph](001)--(002);
        \draw[EdgeGraph](001)--(011);
        \draw[EdgeGraph](002)--(012);
        \draw[EdgeGraph](010)--(011);
        \draw[EdgeGraph](011)--(012);
    \end{tikzpicture}}
    \label{subfig:cliff_poset_1_3}}
    \quad
    \subfloat[][$\SetCliff_\MapTwo(3)$.]{
    \centering
    \scalebox{.8}{
    \begin{tikzpicture}[Centering,xscale=.8,yscale=.8,rotate=-135]
        \draw[Grid](0,0)grid(2,4);
        \node[NodeGraph](000)at(0,0){};
        \node[NodeGraph](001)at(0,1){};
        \node[NodeGraph](002)at(0,2){};
        \node[NodeGraph](003)at(0,3){};
        \node[NodeGraph](004)at(0,4){};
        \node[NodeGraph](010)at(1,0){};
        \node[NodeGraph](011)at(1,1){};
        \node[NodeGraph](012)at(1,2){};
        \node[NodeGraph](013)at(1,3){};
        \node[NodeGraph](014)at(1,4){};
        \node[NodeGraph](020)at(2,0){};
        \node[NodeGraph](021)at(2,1){};
        \node[NodeGraph](022)at(2,2){};
        \node[NodeGraph](023)at(2,3){};
        \node[NodeGraph](024)at(2,4){};
        \node[NodeLabelGraph,above of=000]{$000$};
        \node[NodeLabelGraph,right of=001]{$001$};
        \node[NodeLabelGraph,right of=002]{$002$};
        \node[NodeLabelGraph,right of=003]{$003$};
        \node[NodeLabelGraph,right of=004]{$004$};
        \node[NodeLabelGraph,left of=010]{$010$};
        \node[NodeLabelGraph,left of=011]{$011$};
        \node[NodeLabelGraph,left of=012]{$012$};
        \node[NodeLabelGraph,left of=013]{$013$};
        \node[NodeLabelGraph,left of=014]{$014$};
        \node[NodeLabelGraph,left of=020]{$020$};
        \node[NodeLabelGraph,left of=021]{$021$};
        \node[NodeLabelGraph,left of=022]{$022$};
        \node[NodeLabelGraph,left of=023]{$023$};
        \node[NodeLabelGraph,below of=024]{$024$};
        \draw[EdgeGraph](000)--(001);
        \draw[EdgeGraph](000)--(010);
        \draw[EdgeGraph](001)--(002);
        \draw[EdgeGraph](001)--(011);
        \draw[EdgeGraph](002)--(003);
        \draw[EdgeGraph](002)--(012);
        \draw[EdgeGraph](003)--(004);
        \draw[EdgeGraph](003)--(013);
        \draw[EdgeGraph](004)--(014);
        \draw[EdgeGraph](010)--(011);
        \draw[EdgeGraph](010)--(020);
        \draw[EdgeGraph](011)--(012);
        \draw[EdgeGraph](011)--(021);
        \draw[EdgeGraph](012)--(013);
        \draw[EdgeGraph](012)--(022);
        \draw[EdgeGraph](013)--(014);
        \draw[EdgeGraph](013)--(023);
        \draw[EdgeGraph](014)--(024);
        \draw[EdgeGraph](020)--(021);
        \draw[EdgeGraph](021)--(022);
        \draw[EdgeGraph](022)--(023);
        \draw[EdgeGraph](023)--(024);
    \end{tikzpicture}}
    \label{subfig:cliff_poset_2_3}}
    \quad
    \subfloat[][$\SetCliff_\MapOne(4)$.]{
    \centering
    \scalebox{.8}{
    \begin{tikzpicture}[Centering,xscale=.9,yscale=.9,
        x={(0,-.5cm)}, y={(-1.0cm,-1.0cm)}, z={(1.0cm,-1.0cm)}]
        \DrawGridSpace{1}{2}{3}
        \node[NodeGraph](0000)at(0,0,0){};
        \node[NodeGraph](0001)at(0,0,1){};
        \node[NodeGraph](0002)at(0,0,2){};
        \node[NodeGraph](0003)at(0,0,3){};
        \node[NodeGraph](0010)at(0,1,0){};
        \node[NodeGraph](0011)at(0,1,1){};
        \node[NodeGraph](0012)at(0,1,2){};
        \node[NodeGraph](0013)at(0,1,3){};
        \node[NodeGraph](0020)at(0,2,0){};
        \node[NodeGraph](0021)at(0,2,1){};
        \node[NodeGraph](0022)at(0,2,2){};
        \node[NodeGraph](0023)at(0,2,3){};
        \node[NodeGraph](0100)at(1,0,0){};
        \node[NodeGraph](0101)at(1,0,1){};
        \node[NodeGraph](0102)at(1,0,2){};
        \node[NodeGraph](0103)at(1,0,3){};
        \node[NodeGraph](0110)at(1,1,0){};
        \node[NodeGraph](0111)at(1,1,1){};
        \node[NodeGraph](0112)at(1,1,2){};
        \node[NodeGraph](0113)at(1,1,3){};
        \node[NodeGraph](0120)at(1,2,0){};
        \node[NodeGraph](0121)at(1,2,1){};
        \node[NodeGraph](0122)at(1,2,2){};
        \node[NodeGraph](0123)at(1,2,3){};
        \node[NodeLabelGraph,above of=0000]{$0000$};
        \node[NodeLabelGraph,right of=0001]{$0001$};
        \node[NodeLabelGraph,right of=0002]{$0002$};
        \node[NodeLabelGraph,right of=0003]{$0003$};
        \node[NodeLabelGraph,left of=0010]{$0010$};
        \node[NodeLabelGraph,above of=0011]{$0011$};
        \node[NodeLabelGraph,above of=0012]{$0012$};
        \node[NodeLabelGraph,above of=0013]{$0013$};
        \node[NodeLabelGraph,left of=0020]{$0020$};
        \node[NodeLabelGraph,above of=0021]{$0021$};
        \node[NodeLabelGraph,above of=0022]{$0022$};
        \node[NodeLabelGraph,above of=0023]{$0023$};
        \node[NodeLabelGraph,below of=0100]{$0100$};
        \node[NodeLabelGraph,below of=0101]{$0101$};
        \node[NodeLabelGraph,below of=0102]{$0102$};
        \node[NodeLabelGraph,right of=0103]{$0103$};
        \node[NodeLabelGraph,below of=0110]{$0110$};
        \node[NodeLabelGraph,below of=0111]{$0111$};
        \node[NodeLabelGraph,below of=0112]{$0112$};
        \node[NodeLabelGraph,below of=0113]{$0113$};
        \node[NodeLabelGraph,left of=0120]{$0120$};
        \node[NodeLabelGraph,left of=0121]{$0121$};
        \node[NodeLabelGraph,left of=0122]{$0122$};
        \node[NodeLabelGraph,below of=0123]{$0123$};
        \draw[EdgeGraph](0000)--(0001);
        \draw[EdgeGraph](0000)--(0010);
        \draw[EdgeGraph](0000)--(0100);
        \draw[EdgeGraph](0001)--(0002);
        \draw[EdgeGraph](0001)--(0011);
        \draw[EdgeGraph](0001)--(0101);
        \draw[EdgeGraph](0002)--(0003);
        \draw[EdgeGraph](0002)--(0012);
        \draw[EdgeGraph](0002)--(0102);
        \draw[EdgeGraph](0003)--(0013);
        \draw[EdgeGraph](0003)--(0103);
        \draw[EdgeGraph](0010)--(0011);
        \draw[EdgeGraph](0010)--(0020);
        \draw[EdgeGraph](0010)--(0110);
        \draw[EdgeGraph](0011)--(0012);
        \draw[EdgeGraph](0011)--(0021);
        \draw[EdgeGraph](0011)--(0111);
        \draw[EdgeGraph](0012)--(0013);
        \draw[EdgeGraph](0012)--(0022);
        \draw[EdgeGraph](0012)--(0112);
        \draw[EdgeGraph](0013)--(0023);
        \draw[EdgeGraph](0013)--(0113);
        \draw[EdgeGraph](0020)--(0021);
        \draw[EdgeGraph](0020)--(0120);
        \draw[EdgeGraph](0021)--(0022);
        \draw[EdgeGraph](0021)--(0121);
        \draw[EdgeGraph](0022)--(0023);
        \draw[EdgeGraph](0022)--(0122);
        \draw[EdgeGraph](0023)--(0123);
        \draw[EdgeGraph](0100)--(0101);
        \draw[EdgeGraph](0100)--(0110);
        \draw[EdgeGraph](0101)--(0102);
        \draw[EdgeGraph](0101)--(0111);
        \draw[EdgeGraph](0102)--(0103);
        \draw[EdgeGraph](0102)--(0112);
        \draw[EdgeGraph](0103)--(0113);
        \draw[EdgeGraph](0110)--(0111);
        \draw[EdgeGraph](0110)--(0120);
        \draw[EdgeGraph](0111)--(0112);
        \draw[EdgeGraph](0111)--(0121);
        \draw[EdgeGraph](0112)--(0113);
        \draw[EdgeGraph](0112)--(0122);
        \draw[EdgeGraph](0113)--(0123);
        \draw[EdgeGraph](0120)--(0121);
        \draw[EdgeGraph](0121)--(0122);
        \draw[EdgeGraph](0122)--(0123);
    \end{tikzpicture}}
    \label{subfig:cliff_poset_1_4}}
    \caption{\footnotesize Hasse diagrams of some $\delta$-cliff posets.}
    \label{fig:examples_cliff_posets}
\end{figure}
\medbreak

Let us introduce some notation about $\delta$-cliffs. For any $u \in \SetCliff_\delta(n)$
and $i \in [n]$, let $\DecreaseLetter_i(u)$ (resp.\ $\IncreaseLetter_i(u)$) be the word on
$\Z$ of length $n$ obtained by decrementing (resp.\ incrementing) by $1$ the $i$-th letter of
$u$. Let also, for any $u, v \in \SetCliff_\delta(n)$,
\begin{math}
    \DiffIndices(u, v) := \Bra{i \in [n] : u_i \ne v_i}
\end{math}
be the set of all indices of different letters between $u$ and $v$. For any $u, v \in
\SetCliff_\delta(n)$, let $u \Meet v$ be the $\delta$-cliff of size $n$ defined for any $i
\in [n]$ by
\begin{math}
    (u \Meet v)_i := \min \Bra{u_i, v_i}.
\end{math}
We also define $u \JJoin v$ similarly by replacing the $\min$ operation by $\max$ in the
previous definition. For any $u, v \in \SetCliff_\delta(n)$, the \Def{difference} between
$v$ and $u$ is the word $v - u$ on $\Z$ of length $n$ defined for any $i \in [n]$ by
\begin{math}
    (v - u)_i := v_i - u_i.
\end{math}
Observe that when $u \Leq v$, $v - u$ is a $\delta$-cliff. The \Def{$\delta$-complementary}
$\Complementary_\delta(u)$ of $u \in \SetCliff_\delta(n)$ is the $\delta$-cliff 
\begin{math}
    \GreatestElement_\delta(n) - u.
\end{math}
For instance, by setting $u := 0010$, if $u$ is seen as a $\MapOne$-cliff, then
$\Complementary_\MapOne(u) = 0113$, and if $u$ is seen as a $\MapTwo$-cliff, then
$\Complementary_\MapTwo(u) = 0236$. This map $\Complementary_\delta$ is an involution.
\medbreak

%%%%%%%%%%%%%%%%%%%%%%%%%%%%%%%%%%%%%%%%%%%%%%%%%%%%%%%%%%%%%%%%%%%%%%%%%%%%%%%%%%%%%%%%%%%%
\subsubsection{First properties and recalls} \label{subsubsec:first_properties_cliffs}
A study of the $\MapOne$-cliff posets appears in~\cite{Den13}.  Our definition stated here
depending on $\delta$ is therefore a generalization of these posets.  The structure of the
$\delta$-cliff posets is very simple since each of these posets of order $n$ is isomorphic
to the Cartesian product $\HanL{\delta(1)} \times \dots \times \HanL{\delta(n)}$, where
$\HanL{k}$ is the total order on $k + 1$ elements. It follows from this observation that
each $\delta$-cliff poset is a lattice admitting respectively $\Meet$ and $\JJoin$ as meet
and join operations.  Recall that a lattice $(\LatticeL, \Meet, \JJoin)$ is
\Def{distributive} if for all $x, y, z \in \LatticeL$,
\begin{math}
    x \Meet (y \JJoin z) = (x \Meet y) \JJoin (x \Meet z).
\end{math}
Recall also that all distributive lattices are modular and graded~\cite{Sta11}. Since total
orders are distributive and the distributivity is preserved by the Cartesian product of
posets, $\SetCliff_\delta(n)$ is a distributive lattice.
\medbreak

Recall that the \Def{covering relation} of a poset $\PosetP$ is the set of all pairs $(x, y)
\in \PosetP^2$ such that the interval $[x, y]$ has cardinality $2$.  It follows immediately
from the definition of $\Leq$ that the covering relation $\Covered$ of $\SetCliff_\delta(n)$
satisfies $u \Covered v$ if and only if there is an index $i \in [n]$ such that $v =
\IncreaseLetter_i(u)$. Moreover, these posets $\SetCliff_\delta(n)$ are graded, and the rank
of a $\delta$-cliff $u$ is $\Weight(u)$. The least element of the poset is
$\LeastElement_\delta(n)$ while the greatest element~$\GreatestElement_\delta(n)$.
\medbreak

Let us make some reminders about poset morphisms used thereafter. If $\Par{\PosetP_1,
\Leq_1}$ and $\Par{\PosetP_2, \Leq_2}$ are two posets, a map $\phi : \PosetP_1 \to
\PosetP_2$ is a \Def{poset morphism} if for any $x, y \in \PosetP_1$, $x \Leq_1 y$ implies
$\phi(x) \Leq_2 \phi(y)$.  We say that $\PosetP_2$ is an \Def{order extension} of a poset
$\PosetP_1$ if there is a map $\phi : \PosetP_1 \to \PosetP_2$ which is both a bijection and
a poset morphism. A map $\phi : \PosetP_1 \to \PosetP_2$ is a \Def{poset embedding} if for
any $x, y \in \PosetP_1$, $x \Leq_1 y$ if and only if $\phi(x) \Leq_2 \phi(y)$. Observe that
a poset embedding is necessarily injective.  A map $\phi : \PosetP_1 \to \PosetP_2$ is a
\Def{poset isomorphism} if $\phi$ is both a bijection and a poset embedding.
\medbreak

%%%%%%%%%%%%%%%%%%%%%%%%%%%%%%%%%%%%%%%%%%%%%%%%%%%%%%%%%%%%%%%%%%%%%%%%%%%%%%%%%%%%%%%%%%%%
\subsubsection{Links with the weak Bruhat order} \label{subsubsec:links_weak_bruhat_order}
Let $\SetPermutations$ be the graded set of all permutations where the degree of a
permutation is its length as a word. A \Def{coinversion} of a permutation $\sigma$ is a pair
$\Par{\sigma_j, \sigma_i}$ such that $\sigma_j < \sigma_i$ and $i < j$. For any $n \geq 0$,
the \Def{weak Bruhat order} of order $n$ is a partial order $\Par{\SetPermutations(n),
\Leq_{\SetPermutations}}$ wherein for any $\sigma, \nu \in \SetPermutations(n)$, $\sigma
\Leq_{\SetPermutations} \nu$ if the set of all coinversions of $\sigma$ is contained in the
set of all coinversions of $\nu$. By denoting by $s_i$, $i \in [n - 1]$, the $i$-th
elementary transposition, the covering relation $\Covered_{\SetPermutations}$ of this poset
satisfies $\sigma \Covered_{\SetPermutations} \sigma s_i$ for any $\sigma \in
\SetPermutations(n)$ and any $i \in [n - 1]$ such that~$\sigma_i < \sigma_{i + 1}$.
\medbreak

When $\delta$ is a rooted weakly increasing range map, let us consider the binary relation
$\Covered'$ on $\SetCliff_\delta(n)$ defined as follows. Let $u, v \in \SetCliff_\delta$ and
$\TreeT := \TreeMap_\delta(u)$. We have $u \Covered' v$ if there is an index $i \in [n]$
such that $v = \IncreaseLetter_i(u)$ and all the children of the node labeled by $i$ of
$\TreeT$ are leaves, except possibly the first of its brotherhood. For instance, for $\delta
:= 0233579^\omega$ and the $\delta$-cliff $u := 021042$, since
\begin{equation}
    \TreeMap_\delta(u) \enspace = \enspace
    \scalebox{.8}{
    \begin{tikzpicture}[Centering,xscale=0.19,yscale=0.11]
        \node[Leaf](0)at(0.00,-8.00){};
        \node[Leaf](10)at(6.00,-12.00){};
        \node[Leaf](12)at(7.00,-12.00){};
        \node[Leaf](13)at(8.00,-12.00){};
        \node[Leaf](15)at(10.00,-8.00){};
        \node[Leaf](2)at(1.00,-8.00){};
        \node[Leaf](3)at(2.00,-12.00){};
        \node[Leaf](5)at(3.00,-12.00){};
        \node[Leaf](6)at(4.00,-12.00){};
        \node[Leaf](9)at(5.00,-8.00){};
        \node[Node](1)at(1.00,-4.00){$4$};
        \node[Node](11)at(7.00,-8.00){$5$};
        \node[Node](14)at(9.00,-4.00){$2$};
        \node[Node](4)at(3.00,-8.00){$6$};
        \node[Node](7)at(5.00,0.00){$1$};
        \node[Node](8)at(5.00,-4.00){$3$};
        \draw[Edge](0)--(1);
        \draw[Edge](1)--(7);
        \draw[Edge](10)--(11);
        \draw[Edge](11)--(14);
        \draw[Edge](12)--(11);
        \draw[Edge](13)--(11);
        \draw[Edge](14)--(7);
        \draw[Edge](15)--(14);
        \draw[Edge](2)--(1);
        \draw[Edge](3)--(4);
        \draw[Edge](4)--(1);
        \draw[Edge](5)--(4);
        \draw[Edge](6)--(4);
        \draw[Edge](8)--(7);
        \draw[Edge](9)--(8);
        \node(r)at(5.00,3.50){};
        \draw[Edge](r)--(7);
    \end{tikzpicture}},
\end{equation}
we observe that all the children of the nodes labeled by $2$, $3$, $5$ and $6$ are leaves,
except possibly the first ones. For this reason, $u$ is covered by $\IncreaseLetter_3(u) =
022042$ and by $\IncreaseLetter_6(u) = 021043$, but not by $\IncreaseLetter_2(u) = 031042$
since this word is not a $\delta$-cliff.
\medbreak

The reflexive and transitive closure $\Leq'$ of this relation is an order relation. By
Proposition~\ref{prop:cliff_increasing_trees}, this endows the set of all
$\delta$-increasing trees with a poset structure. It follows immediately from the
description of the covering relation $\Covered$ of $\SetCliff_\delta(n)$ provided in
Section~\ref{subsubsec:first_properties_cliffs} that $\Covered'$ is a refinement of
$\Covered$. For this reason $\Par{\SetCliff_\delta(n), \Leq}$ is an order extension
of~$\Par{\SetCliff_\delta(n), \Leq'}$.  Figure~\ref{fig:example_generalization_weak_bruhat}
shows an example of a Hasse diagram of such a poset.
\begin{figure}[ht]
    \centering
    \scalebox{.7}{
    \begin{tikzpicture}[Centering,xscale=.8,yscale=1.1]
        \node[](0000)at(0,0){$0000$};
        \node[](0001)at(-2,-1){$0001$};
        \node[](0100)at(0,-1){$0100$};
        \node[](0010)at(2,-1){$0010$};
        \node[](0002)at(-3,-2){$0002$};
        \node[](0101)at(-1,-2){$0101$};
        \node[](0110)at(1,-2){$0110$};
        \node[](0011)at(3,-2){$0011$};
        \node[](0102)at(-2,-3){$0102$};
        \node[](0012)at(0,-3){$0012$};
        \node[](0111)at(2,-3){$0111$};
        \node[](0112)at(0,-4){$0112$};
        \draw[EdgeGraph](0000)--(0001);
        \draw[EdgeGraph](0000)--(0100);
        \draw[EdgeGraph](0000)--(0010);
        \draw[EdgeGraph](0001)--(0002);
        \draw[EdgeGraph](0001)--(0101);
        \draw[EdgeGraph](0100)--(0101);
        \draw[EdgeGraph](0100)--(0110);
        \draw[EdgeGraph](0010)--(0110);
        \draw[EdgeGraph](0010)--(0011);
        \draw[EdgeGraph](0002)--(0102);
        \draw[EdgeGraph](0002)--(0012);
        \draw[EdgeGraph](0101)--(0102);
        \draw[EdgeGraph](0110)--(0111);
        \draw[EdgeGraph](0011)--(0012);
        \draw[EdgeGraph](0011)--(0111);
        \draw[EdgeGraph](0102)--(0112);
        \draw[EdgeGraph](0012)--(0112);
        \draw[EdgeGraph](0111)--(0112);
    \end{tikzpicture}}
    \caption{\footnotesize The Hasse diagram of the poset $\Par{\SetCliff_{0112^\omega}(4),
    \Leq'}$.}
    \label{fig:example_generalization_weak_bruhat}
\end{figure}

\medbreak

\begin{Proposition} \label{prop:weak_bruhat_order_increasing_trees}
    For any $n \geq 0$, the poset $\Par{\SetCliff_\MapOne(n), \Leq'}$ is isomorphic to
    the weak Bruhat order on permutations of size~$n$.
\end{Proposition}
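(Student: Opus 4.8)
The plan is to prove that the Lehmer code map $\LehmerMap$ is a poset isomorphism from the weak Bruhat order $\Par{\SetPermutations(n), \Leq_{\SetPermutations}}$ onto $\Par{\SetCliff_\MapOne(n), \Leq'}$. Since $\Leq'$ is by definition the reflexive and transitive closure of $\Covered'$, since the weak Bruhat order is the reflexive and transitive closure of its covering relation $\Covered_{\SetPermutations}$ (it is graded by the number of coinversions), and since $\LehmerMap$ is a bijection between $\SetPermutations(n)$ and $\SetCliff_\MapOne(n)$, it suffices to show that $\LehmerMap$ restricts to a bijection between the two covering relations; concretely, that $\sigma \Covered_{\SetPermutations} \nu$ if and only if $\LehmerMap(\sigma) \Covered' \LehmerMap(\nu)$.

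The first ingredient is a \emph{code-change computation}: for $\sigma \in \SetPermutations(n)$ and $j \in [n - 1]$ with $\sigma_j < \sigma_{j + 1}$, one has $\LehmerMap(\sigma s_j) = \IncreaseLetter_{\sigma_{j + 1}}\Par{\LehmerMap(\sigma)}$. This follows by inspecting the definition of $\LehmerMap$: setting $a := \sigma_j$ and $b := \sigma_{j + 1}$, passing from $\sigma$ to $\sigma s_j$ only exchanges the relative order of the two adjacent values $a < b$. For a value $c \notin \Bra{a, b}$, the set of values smaller than $c$ to the right of $c$ is unchanged; for the value $a$, the only change is that the larger value $b$ moves from the right of $a$ to its left, hence does not affect the $a$-th entry; for the value $b$, the smaller value $a$ moves from the left of $b$ to its right, so the $b$-th entry increases by exactly $1$. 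As a byproduct, $\LehmerMap(\sigma s_j)$ being a $\MapOne$-cliff, the $b$-th entry of $\LehmerMap(\sigma)$ is strictly smaller than its maximal value $b - 1$.

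The core of the proof is the analysis of the tree condition. Put $u := \LehmerMap(\sigma)$ and $\TreeT := \TreeMap_\MapOne(u)$; since $\Delta_\MapOne(i) = 1$ for all $i$, the tree $\TreeT$ is an increasing binary tree, so the condition in the definition of $\Covered'$ amounts to: \emph{the right child of the node labeled $i$ is a leaf}. I would prove, by induction on $n$, that $\TreeT$ is exactly the increasing binary tree whose in-order (symmetric) traversal of the internal labels reads $\sigma_n \sigma_{n - 1} \dots \sigma_1$. For the inductive step, deleting the maximal value $n$ from $\sigma$ produces a permutation $\sigma'$ of $[n - 1]$ with $\LehmerMap\Par{\sigma'} = u_1 \dots u_{n - 1}$ (deleting a value larger than $i$ leaves the $i$-th entry untouched), and by construction $\TreeT$ is obtained from $\TreeMap_\MapOne\Par{\LehmerMap(\sigma')}$ by grafting the node $n$, with two leaf children, onto its leaf number $u_n + 1$; now $u_n$ equals the number of entries of $\sigma$ to the right of $n$, so if $n$ occupies position $p$ in $\sigma$, then $u_n + 1 = n + 1 - p$, which is precisely the in-order slot at which $n$ must replace a leaf in order for the in-order reading to become $\sigma_n \dots \sigma_1$. (Matching the left-to-right leaf numbering used by $\TreeMap_\MapOne$ with the in-order leaf numbering is the one delicate bookkeeping point.) Using then the standard description of increasing binary trees --- the subtree rooted at the node labeled $i$ spans the maximal block of consecutive in-order slots on which $i$ is the minimum --- the right child of node $i$ is a leaf if and only if $i$ occupies the last in-order slot or is immediately followed, in in-order, by a smaller value; translating back through the reversal, this holds if and only if $i$ sits in position $1$ of $\sigma$ or the value immediately to the left of $i$ in $\sigma$ is smaller than $i$.

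It remains to combine these facts. If $\sigma \Covered_{\SetPermutations} \nu$, then $\nu = \sigma s_j$ with $\sigma_j < \sigma_{j + 1}$; the value $i := \sigma_{j + 1}$ lies in position $j + 1 \geq 2$ of $\sigma$ with strictly smaller left neighbor $\sigma_j$, so the tree condition holds for $i$, while the code-change computation gives $\LehmerMap(\nu) = \IncreaseLetter_i\Par{\LehmerMap(\sigma)}$; hence $\LehmerMap(\sigma) \Covered' \LehmerMap(\nu)$. Conversely, if $\LehmerMap(\sigma) \Covered' \LehmerMap(\nu)$, let $i$ be the unique index with $\LehmerMap(\nu) = \IncreaseLetter_i\Par{\LehmerMap(\sigma)}$; since this forces the $i$-th entry of $\LehmerMap(\sigma)$ to be strictly smaller than $i - 1$, the value $i$ is not in position $1$ of $\sigma$, so by the tree condition its left neighbor $\sigma_{p - 1}$ (with $p$ the position of $i$ in $\sigma$) satisfies $\sigma_{p - 1} < i$; thus $\sigma \Covered_{\SetPermutations} \sigma s_{p - 1}$, and the code-change computation yields $\LehmerMap\Par{\sigma s_{p - 1}} = \IncreaseLetter_i\Par{\LehmerMap(\sigma)} = \LehmerMap(\nu)$, whence $\nu = \sigma s_{p - 1}$ by injectivity of $\LehmerMap$. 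This gives the desired bijection between covering relations, and therefore the claimed isomorphism. I expect the tree lemma to be the main obstacle --- namely, carrying out rigorously the inductive identification of $\TreeMap_\MapOne \circ \LehmerMap$ with the reversed-permutation increasing binary tree, including the leaf-numbering bookkeeping; the code-change computation and the final assembly are routine.
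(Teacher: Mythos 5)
Your proposal is correct and follows essentially the same route as the paper: both arguments show that $\LehmerMap$ carries the covering relation $\Covered_{\SetPermutations}$ bijectively onto $\Covered'$, with the key step being the identification of $\TreeMap_\MapOne(\LehmerMap(\sigma))$ with the increasing binary tree of the reversal of $\sigma$ (your in-order-traversal description is exactly the tree produced by the paper's recursive min-decomposition map $\phi$). Your version is somewhat more explicit about the code-change computation $\LehmerMap(\sigma s_j) = \IncreaseLetter_{\sigma_{j+1}}(\LehmerMap(\sigma))$ and the leaf-numbering bookkeeping, both of which check out.
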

\begin{proof}
    Let $\phi$ be the map from the set of all words $u$ of size $n$ of integers without
    repeated letters to the set of increasing binary trees of size $n$ where internal
    nodes are bijectively labeled by the letters of $u$, defined recursively as follows. If
    $\sigma$ is the empty word, then $\phi(\sigma)$ is the leaf. Otherwise, $\sigma$
    decomposes as $\sigma = w a w'$ where $a$ is the least letter of $\sigma$, and $w$ and
    $w'$ are words of integers. In this case, $\phi(\sigma)$ is the binary tree consisting
    in a root labeled by $a$ and having as left subtree $\phi\Par{w'}$ and as right subtree
    $\phi(w)$ ---observe the reversal of the order between $w$ and $w'$. Now, by induction
    on $n$, one can prove that for any permutation $\sigma$ of size $n$, the binary trees
    $\phi(\sigma)$ and $\TreeMap_\MapOne(\LehmerMap(\sigma))$ are the same.
    \smallbreak

    Assume that $\sigma$ and $\nu$ are two permutations such that $\sigma
    \Covered_{\SetPermutations} \nu$. Thus, by definition of $\Covered_{\SetPermutations}$,
    $\sigma$ decomposes as $\sigma = w a b w'$ and $\nu$ as $\nu = w b a w'$ where $a$ and
    $b$ are letters such that $a < b$, and $w$ and $w'$ are words of integers.  By
    definition of $\phi$, since $a$ and $b$ are adjacent in $\sigma$, the right subtree of
    the node labeled by $b$ of $\phi(\sigma)$ is empty. Therefore, due to the property
    stated in the first part of the proof, and by definition of the map $\TreeMap_{\mathbf
    1}$ and of the covering relation $\Covered'$, one has $\LehmerMap(\sigma) \Covered'
    \LehmerMap(\nu)$.  Conversely, assume that $u$ and $v$ are two $\MapOne$-cliffs such
    that $u \Covered' v$. Thus, by definition of $\Covered'$, $v$ is obtained by changing a
    letter $u_i$, $i \geq 2$, in $u$ by $u_i + 1$, and in $\TreeMap_\MapOne(u)$, the
    right subtree of the node labeled by $i$ is empty. Let $\sigma := \LehmerMap^{-1}(u)$
    and $\nu := \LehmerMap^{-1}(v)$.  Since $\phi(\sigma)$ and $\TreeMap_\MapOne(u)$ are
    the same increasing binary trees, we have, from the definition of the map $\phi$, that
    $u_{i - 1} < u_i$. Finally, by definition of $\Covered_{\SetPermutations}$, one obtains
    $\sigma \Covered_{\SetPermutations} \nu$.
    \smallbreak

    We have shown that the bijection $\LehmerMap$ between $\SetPermutations(n)$ and
    $\SetCliff_\MapOne(n)$ is such that, for any $\sigma, \nu \in \SetPermutations(n)$,
    $\sigma \Covered_{\SetPermutations} \nu$ if and only if $\LehmerMap(\sigma) \Covered'
    \LehmerMap(\nu)$ . For this reason, $\LehmerMap$ is a poset isomorphism.
\end{proof}
\medbreak

Therefore, Proposition~\ref{prop:weak_bruhat_order_increasing_trees} says in particular that
the $\MapOne$-cliff poset is an extension of the weak Bruhat order. Besides, for all rooted
weakly increasing range maps $\delta$, one can see $\Par{\SetCliff_\delta(n), \Leq'}$ as
generalizations of the weak Bruhat order. Supported by some computer experiments, we state
the following conjecture.
\medbreak

\begin{Conjecture} \label{con:bruhat_order_increasing_trees}
    For any rooted weakly increasing range map $\delta$ and any $n \geq 0$, the poset
    $\Par{\SetCliff_\delta(n), \Leq'}$ is a semi-distributive lattice.
\end{Conjecture}
\medbreak

%%%%%%%%%%%%%%%%%%%%%%%%%%%%%%%%%%%%%%%%%%%%%%%%%%%%%%%%%%%%%%%%%%%%%%%%%%%%%%%%%%%%%%%%%%%%
%%%%%%%%%%%%%%%%%%%%%%%%%%%%%%%%%%%%%%%%%%%%%%%%%%%%%%%%%%%%%%%%%%%%%%%%%%%%%%%%%%%%%%%%%%%%
\subsection{Subposets of $\delta$-cliff posets}
Despite their simplicity, the $\delta$-cliff posets contain subposets having a lot of
combinatorial and algebraic properties. If $\SubFamilly$ is a graded subset of
$\SetCliff_\delta$, each $\SubFamilly(n)$, $n \geq 0$, is a subposet of
$\SetCliff_\delta(n)$ for the order relation $\Leq$.  We denote by $\Covered_\SubFamilly$
the covering relation of each $\SubFamilly(n)$, $n \geq 0$.
\medbreak

We say that $\SubFamilly$ is 
\begin{itemize}
    \item \Def{spread} if for any $n \geq 0$, $\LeastElement_\delta(n) \in \SubFamilly$ and
    $\GreatestElement_\delta(n) \in \SubFamilly$;

    \item \Def{straight} if for any $u, v \in \SubFamilly$ such that $u \Covered_\SubFamilly
    v$, $\# \DiffIndices(u, v) = 1$;

    \item \Def{coated} if for any $n \geq 0$, any $u, v \in \SubFamilly(n)$ such that $u
    \Leq v$, and any $i \in [n - 1]$, $u_1 \dots u_i v_{i + 1} \dots v_n \in \SubFamilly$;

    \item \Def{closed by prefix} if for any $u \in \SubFamilly$, all prefixes of $u$ (that
    are words $u_1 \dots u_k$ where $k \in \HanL{|u|}$) belong to $\SubFamilly$;

    \item \Def{minimally extendable} if $\epsilon \in \SubFamilly$ and for any $u \in
    \SubFamilly$, $u0 \in \SubFamilly$;

    \item \Def{maximally extendable} if $\epsilon \in \SubFamilly$ and for any $u \in
    \SubFamilly$, $u \, \delta(|u| + 1) \in \SubFamilly$.
\end{itemize}
Observe that when $\SubFamilly$ is spread, each poset $\SubFamilly(n)$, $n \geq 0$, is
\Def{bounded}, that is it admits a least and a greatest element. Observe also that if
$\SubFamilly$ is both minimally and maximally extendable, then $\SubFamilly$ is spread.
\medbreak

\begin{Lemma} \label{lem:coated_implies_straight}
    Let $\delta$ be a range map and $\SubFamilly$ be a coated graded subset of
    $\SetCliff_\delta$. Then, $\SubFamilly$ is straight.
\end{Lemma}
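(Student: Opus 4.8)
The plan is to argue by contradiction. Suppose $\SubFamilly$ is coated but not straight. Then there exist $u, v \in \SubFamilly(n)$ for some $n \geq 0$ with $u \Covered_\SubFamilly v$ and $\# \DiffIndices(u, v) \geq 2$. Since $u \Covered_\SubFamilly v$ entails $u \Leq v$ in $\SetCliff_\delta(n)$, we have $u_i \leq v_i$ for all $i \in [n]$, so in fact $u_i < v_i$ for each of the (at least two) indices in $\DiffIndices(u, v)$. Pick one such index; call it $i$, and note $i \leq n - 1$ or $i = n$. The idea is to build an element strictly between $u$ and $v$ in $\SubFamilly(n)$, contradicting that $[u, v]$ has cardinality $2$, i.e. that $u$ is covered by $v$.

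The key step is the choice of the intermediate element. Let $j = \max \DiffIndices(u, v)$ and let $i$ be any other index in $\DiffIndices(u, v)$, so $i < j$ and both $u_i < v_i$ and $u_j < v_j$. Consider the word $w := u_1 \dots u_i v_{i+1} \dots v_n$, obtained by gluing the length-$i$ prefix of $u$ to the length-$(n-i)$ suffix of $v$. By the coated property applied to $u \Leq v$ in $\SubFamilly(n)$ and the index $i \in [n-1]$, we get $w \in \SubFamilly$. Moreover, letterwise, $w$ agrees with $u$ on positions $1, \dots, i$ and with $v$ on positions $i+1, \dots, n$, so $u \Leq w \Leq v$; and $w \neq u$ because $w_j = v_j > u_j$ (here $j > i$), while $w \neq v$ because $w_i = u_i < v_i$. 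Hence $u \prec w \prec v$ strictly, so $[u, v]$ contains at least three elements, contradicting $u \Covered_\SubFamilly v$.

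There is one boundary case to handle: if $\DiffIndices(u, v)$ has an element $i$ with $i \leq n-1$, the argument above runs verbatim; the only way this fails is if $\DiffIndices(u, v) = \{n\}$, but then $\# \DiffIndices(u, v) = 1$ and there is nothing to prove. So whenever $\# \DiffIndices(u, v) \geq 2$, at least one of its elements lies in $[n-1]$ (in fact all but possibly the last do), and we may take $i$ to be the minimum of $\DiffIndices(u, v)$, which is automatically at most $n-1$ since a second differing index exists above it; then choosing $j$ to be any differing index greater than $i$ completes the argument. I do not expect any serious obstacle here: the coated hypothesis is tailored precisely to produce the interpolating word, and the rest is a direct comparison of letters. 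The only point requiring a moment of care is confirming that the constructed $w$ is genuinely distinct from both $u$ and $v$, which is where the assumption $\# \DiffIndices(u, v) \geq 2$ (giving us the two distinct indices $i < j$ at which to detect the difference) is used.
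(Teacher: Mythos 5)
Your proof is correct and follows essentially the same route as the paper's: both use the coated property to manufacture a word of the form $u_1 \dots u_k v_{k+1} \dots v_n$ lying strictly between $u$ and $v$ whenever $\# \DiffIndices(u, v) \geq 2$, thereby ruling out a covering relation. The only (immaterial) difference is the split point — the paper cuts just before $j = \max \DiffIndices(u,v)$ so that the interpolant differs from $u$ in exactly one index, while you cut just after a non-maximal differing index; either choice yields the required strict interpolant.
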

\begin{proof}
    Let $n \geq 0$ and $u, v \in \SubFamilly(n)$ such that $u \Leq v$ and $\#
    \DiffIndices(u, v) \geq 2$.  Set $j := \max \DiffIndices(u, v)$ and $w := u_1 \dots u_{j
    - 1} v_j v_{j + 1} \dots v_n$. Since $\SubFamilly$ is coated, $w$ belongs to
    $\SubFamilly$, and moreover, since $j$ is maximal, $w := u_1 \dots u_{j - 1} v_j u_{j
    + 1} \dots u_n$. Therefore, $\# \DiffIndices(u, w) = 1$. This proves that there exists a
    $w' \in \SubFamilly(n)$ such that $u \Covered_\SubFamilly w' \Leq w$ and $\#
    \DiffIndices\Par{u, w'} = 1$.  Thus, $\SubFamilly$ is straight.
\end{proof}
\medbreak

We use here the notion of $n$-th dimension of range maps, defined in
Section~\ref{subsubsec:first_definitions_cliffs}. In the case where $\SubFamilly$ is
straight, we define the graded set of
\begin{itemize}
    \item \Def{input-wings} as the set $\InputWings(\SubFamilly)$ containing any $u \in
    \SubFamilly$ which covers exactly $\DimensionDelta_{|u|}(\delta)$ elements;

    \item \Def{output-wings} as the set $\OutputWings(\SubFamilly)$ containing any $u \in
    \SubFamilly$ which is covered by exactly $\DimensionDelta_{|u|}(\delta)$ elements;

    \item \Def{butterflies} as the set $\Butterflies(\SubFamilly)$ being the intersection
    $\InputWings(\SubFamilly) \cap \OutputWings(\SubFamilly)$.
\end{itemize}
Equivalently, $u \in \SubFamilly$ is an input-wing (resp.\ output-wing) if it is possible to
decrement (resp.\ increment) all values at all positions $i \in [|u|]$ such that $\delta(i)
\ne 0$. Observe also that if there is an $i \geq 1$ such that $\delta(i) = 1$, there are no
butterfly in $\SubFamilly(n)$ for all~$n \geq i$.
\medbreak

We present now general results about subposets $\SubFamilly(n)$, $n \geq 0$, of
$\delta$-cliff posets.
\medbreak

%%%%%%%%%%%%%%%%%%%%%%%%%%%%%%%%%%%%%%%%%%%%%%%%%%%%%%%%%%%%%%%%%%%%%%%%%%%%%%%%%%%%%%%%%%%%
\subsubsection{EL-shellability}
Let $\Par{\PosetP, \Leq_\PosetP}$ and $\Par{\Lambda, \Leq_\Lambda}$ be two posets, and
$\lambda : \Covered_{\PosetP} \to \Lambda$ be a map (here $\Covered_{\PosetP}$ is seen as
the set of all pairs $(u, v)$ such that $v$ covers $u$ in $\PosetP$). For any saturated
chain $\Par{x^{(1)}, \dots, x^{(k)}}$ of $\PosetP$, by a slight abuse of notation, we set
\begin{equation}
    \lambda\Par{x^{(1)}, \dots, x^{(k)}}
    :=
    \Par{\lambda\Par{x^{(1)}, x^{(2)}}, \dots, \lambda\Par{x^{(k - 1)}, x^{(k)}}}.
\end{equation}
We say that a saturated chain of $\PosetP$ is \Def{$\lambda$-increasing} (resp.\
\Def{$\lambda$-weakly decreasing}) if its image by $\lambda$ is an increasing (resp.\ weakly
decreasing) word w.r.t. the partial order relation $\Leq_{\Lambda}$. We say also that a
saturated chain $\Par{x^{(1)}, \dots, x^{(k)}}$ of $\PosetP$ is \Def{$\lambda$-smaller} than
a saturated chain $\Par{y^{(1)}, \dots, y^{(\ell)}}$ of $\PosetP$ if the image by $\lambda$
of $\Par{x^{(1)}, \dots, x^{(k)}}$ is smaller than the image by $\lambda$ of $\Par{y^{(1)},
\dots, y^{(\ell)}}$ for the lexicographic order induced by $\Leq_{\Lambda}$.  The map
$\lambda$ is an \Def{EL-labeling} of $\PosetP$ if there exist such a poset $\Lambda$ and a
map $\lambda$ such that for any $x, y \in \PosetP$ satisfying $x \Leq_\PosetP y$, there is
exactly one $\lambda$-increasing saturated chain from $x$ to~$y$ which is minimal among all
saturated chains from $x$ to $y$ w.r.t. the order on saturated chains just described.  The
poset $\PosetP$ is \Def{EL-shellable}~\cite{BW96,BW97} if $\PosetP$ is bounded and admits an
EL-labeling.
\medbreak

The EL-shellability of a poset $\PosetP$ implies several topological and order theoretical
properties of the associated order complex $\Delta(\PosetP)$ made of all the chains of
$\PosetP$. For instance, one of the consequences for $\PosetP$ for having at most one
$\lambda$-weakly decreasing chain between any pair of its elements is that the Möbius
function of $\PosetP$ takes values in $\{-1, 0, 1\}$. In an equivalent way, the simplicial
complex associated with each open interval of $\PosetP$ is either contractile or has the
homotopy type of a sphere~\cite{BW97}.
\medbreak

For the sequel, we set $\Lambda$ as the poset $\Z^2$ wherein elements are ordered
lexicographically. For any straight graded subset $\SubFamilly$ of $\SetCliff_\delta$, let
us introduce the map
\begin{math}
    \lambda_\SubFamilly : \Covered_\SubFamilly \to \Z^2
\end{math}
defined for any $(u, v) \in \Covered_{\SubFamilly}$ by
\begin{math}
    \lambda_\SubFamilly(u, v) := \Par{-i, u_i}
\end{math}
where $i$ is the unique index $i \in [|u|]$ such that $\DiffIndices(u, v) = \{i\}$.  Observe
that the fact that $\SubFamilly$ is straight ensures that $\lambda_\SubFamilly$ is
well-defined.
\medbreak

\begin{Theorem} \label{thm:subposets_el_shellability}
    Let $\delta$ be a range map and $\SubFamilly$ be a coated graded subset of
    $\SetCliff_\delta$. For any $n \geq 0$, the map $\lambda_\SubFamilly$ is an EL-labeling
    of $\SubFamilly(n)$. Moreover, there is at most one $\lambda_\SubFamilly$-weakly
    decreasing chain between any pair of elements of~$\SubFamilly(n)$.
\end{Theorem}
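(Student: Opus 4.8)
The plan is to verify the two defining properties of an EL-labeling for $\lambda_\SubFamilly$ on $\SubFamilly(n)$, namely: (1) between any $u \Leq v$ in $\SubFamilly(n)$ there is exactly one $\lambda_\SubFamilly$-increasing saturated chain, and (2) this chain is lexicographically minimal (in the $\Lambda = \Z^2$-induced order) among all saturated chains from $u$ to $v$. The simultaneous statement about weakly decreasing chains will follow from the same combinatorial analysis, by a symmetric argument.

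First I would reduce to understanding saturated chains in $\SubFamilly(n)$. Since $\SubFamilly$ is coated, Lemma~\ref{lem:coated_implies_straight} gives that $\SubFamilly$ is straight, so each cover $(x, y) \in \Covered_\SubFamilly$ changes exactly one letter, say at position $i$, and by the description of $\Covered$ in Section~\ref{subsubsec:first_properties_cliffs} together with $x \Leq y$ this change must be $y = \IncreaseLetter_i(x)$; hence $\lambda_\SubFamilly(x, y) = (-i, x_i)$ is well-defined. Given $u \Leq v$ in $\SubFamilly(n)$, a saturated chain from $u$ to $v$ is a sequence of single increments, each at some position $i \in \DiffIndices(u, v)$, performing exactly $v_i - u_i$ increments at position $i$. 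The key point I would establish is that the ``coated'' hypothesis guarantees that every such interleaving of increments stays inside $\SubFamilly(n)$: indeed any intermediate word $w$ of such a chain satisfies $u \Leq w \Leq v$ and is obtained from $u$ by raising a prefix-determined set of positions fully and one position partially; using coatedness repeatedly (first to get $u_1 \dots u_{j-1} v_j \dots v_n \in \SubFamilly$ for the relevant $j$, then iterating) one shows each such $w$ lies in $\SubFamilly$. Thus the saturated chains from $u$ to $v$ are in bijection with words on $\{\IncreaseLetter_i : i \in \DiffIndices(u,v)\}$ with exactly $v_i - u_i$ occurrences of $\IncreaseLetter_i$, exactly as in Proposition~\ref{prop:cliff_saturated_chains}.

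Next, with this description in hand, I would identify the unique $\lambda_\SubFamilly$-increasing chain. Since $\lambda_\SubFamilly(x, \IncreaseLetter_i(x)) = (-i, x_i)$, along a saturated chain, successive labels compared lexicographically: the first coordinate $-i$ forces us, for the chain to be increasing, to process positions in \emph{decreasing} order of index (largest $i$ first gives smallest $-i$), and \emph{within} a fixed position $i$, consecutive increments have labels $(-i, u_i), (-i, u_i + 1), \dots$ which are automatically strictly increasing in the second coordinate. Therefore the unique $\lambda_\SubFamilly$-increasing saturated chain from $u$ to $v$ is the one that first performs all $v_n - u_n$ increments at position $n$ (if $n \in \DiffIndices(u,v)$), then all increments at the next-largest differing position, and so on down to the smallest. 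For lexicographic minimality: any other saturated chain from $u$ to $v$ differs from this canonical one, and its first step that differs must occur at a strictly smaller position index $i' < i$ than the corresponding step of the canonical chain, hence has first label-coordinate $-i' > -i$, making it lexicographically larger. For the weakly decreasing statement, the symmetric analysis shows a $\lambda_\SubFamilly$-weakly-decreasing chain must process positions in weakly increasing index order, and within each position the second coordinates increase, so strict decrease of the index is forced; this pins down at most one such chain (the one processing positions in increasing order).

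I expect the main obstacle to be step two: carefully showing that the coated hypothesis is exactly what is needed to guarantee that \emph{every} interleaving of the increments remains in $\SubFamilly(n)$, so that the set of saturated chains is genuinely the full set of shuffles and not some proper subset. One must argue this by an induction on the number of differing positions, peeling off the largest (or smallest) differing index via the coated condition applied to the pair $(u, v)$, and checking the induction hypothesis transfers to the sub-interval; the bookkeeping of which prefix agrees with $u$ and which suffix with $v$ at each intermediate word is the delicate part. Everything else — the lexicographic comparisons — is a direct unwinding of the definition of $\lambda_\SubFamilly$ and the lexicographic order on $\Z^2$.
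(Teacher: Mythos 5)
Your overall strategy for identifying the candidate chains (process the differing positions from the largest index down for the increasing chain, from the smallest index up for the weakly decreasing one, with the second coordinate of the label controlling consecutive steps at a fixed position) matches the paper's, and that part of the lexicographic analysis is sound. The gap is in your step two, and it is not merely bookkeeping: the claim you plan to prove there is false. Covers in $\SubFamilly(n)$ are not unit increments, and coatedness does not imply that every interleaving of unit increments stays inside $\SubFamilly$. The subposet $\SetCanyon_\MapOne(3) = \Bra{000, 001, 002, 010, 012}$ is coated (Proposition~\ref{prop:properties_canyon_posets}), yet $010 \Covered_\SubFamilly 012$ is a cover that raises the last letter by $2$, because $011$ is not a canyon; so your assertion that a cover $(x,y)$ must satisfy $y = \IncreaseLetter_i(x)$ already fails, and with it the claimed bijection between saturated chains of $\SubFamilly(n)$ and shuffle words as in Proposition~\ref{prop:cliff_saturated_chains}. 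Even where covers do happen to be unit increments the interleaving claim fails: in $\SetHill_\MapOne(4)$ one has $0011 \Leq 0122$, but the interleaving that raises position $3$ before finishing position $4$ passes through $0021$, which is not a hill.

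What coatedness actually buys is much weaker and is exactly what the paper uses: for $u \Leq v$ in $\SubFamilly(n)$, the specific mixed words $u_1 \dots u_{j-1} v_j \dots v_n$ lie in $\SubFamilly$, which guarantees that the two greedy chains (fully finish the largest unfinished index, respectively the smallest one, before moving on) exist as saturated chains of $\SubFamilly(n)$; every intermediate word of those chains has the form $u_1 \dots u_{j-1}\Par{u_j + a} v_{j+1} \dots v_n$. The uniqueness arguments must then be run against the actual covering relation $\Covered_\SubFamilly$, in which one position changes by a possibly large amount, rather than against the set of all shuffles. Your lexicographic reasoning survives this repair essentially unchanged, since the label of a cover at position $i$ still has first coordinate $-i$ and second coordinate the current value at $i$, but the existence half of your argument has to be rebuilt on the greedy chains rather than on the full-shuffle description.
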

\begin{proof}
    By Lemma~\ref{lem:coated_implies_straight}, the fact that $\SubFamilly$ is coated
    implies that $\SubFamilly$ is also straight.  Let $u, v \in \SubFamilly(n)$ such that $u
    \Leq v$.  Since $\SubFamilly$ is straight, the image by $\lambda_\SubFamilly$ of any
    saturated chain from $u$ to $v$ is well-defined.
    \smallbreak

    Now, let
    \begin{math}
        \Par{u = w^{(0)}, w^{(1)}, \dots, w^{(k)} = v}
    \end{math}
    be the sequence of elements of $\SubFamilly(n)$ defined in the following way. For any $i
    \in \HanL{k - 1}$, the word $w^{(i + 1)}$ is obtained from $w^{(i)}$ by increasing by
    the minimal possible value $a \geq 1$ the letter $w^{(i)}_j$ such that $j$ is the
    greatest index satisfying $w^{(i)}_j < v_j$. By construction, for any $i \in \HanL{k -
    1}$, each $w^{(i + 1)}$ writes as $w^{(i + 1)} = u_1 \dots u_{j - 1} \Par{u_j + a} v_{j
    + 1} \dots v_n$, where $a$ is some positive integer.  There is at least one value $a$
    such that $w^{(i)}$ belongs to $\SubFamilly(n)$ since by hypothesis, $\SubFamilly$ is
    coated. For this reason, the considered sequence is a well-defined saturated chain in
    $\SubFamilly(n)$. This saturated chain is also $\lambda_\SubFamilly$-increasing by
    construction. Moreover, since $\SubFamilly$ is straight, if one consider another
    saturated chain from $u$ to $v$, this chain passes through a word obtained by
    incrementing a letter which has not a greatest index, and one has to choose later in the
    chain the letter of the smallest index to increment it. For this reason, this saturated
    chain would not be $\lambda_\SubFamilly$-increasing.
    \smallbreak

    Assume now that there exists in $\SubFamilly$ a $\lambda_\SubFamilly$-weakly decreasing
    saturated chain of the form
    \begin{math}
        \Par{u = w^{(0)}, w^{(1)}, \dots, w^{(k)} = v}.
    \end{math}
    By definition of $\lambda_\SubFamilly$ and of the poset $\Lambda$, for any $i \in
    \HanL{k - 1}$, the word $w^{(i + 1)}$ is obtained from $w^{(i)}$ by increasing by the
    minimal possible value the letter $w^{(i)}_j$ such that $j$ is the smallest index
    satisfying $w^{(i)}_j < v_j$. If it exists, this saturated chain is by construction the
    unique $\lambda_\SubFamilly$-weakly decreasing saturated chain from $u$ to~$v$.
\end{proof}
\medbreak

%%%%%%%%%%%%%%%%%%%%%%%%%%%%%%%%%%%%%%%%%%%%%%%%%%%%%%%%%%%%%%%%%%%%%%%%%%%%%%%%%%%%%%%%%%%%
\subsubsection{Meet and join operations, sublattices, and lattices}
\label{subsubsec:incrementation_decrementation_maps}
Here we give some sufficient conditions on $\SubFamilly$ for the fact that each
$\SubFamilly(n)$, $n \geq 0$, is a lattice.
\medbreak

First, when $\SubFamilly$ is spread and, for any $n \geq 0$ and any $u, v \in
\SubFamilly(n)$, $u \Meet v \in \SubFamilly$ (resp.\ $u \JJoin v \in \SubFamilly$),
by~\cite{Sta11}, each $\SubFamilly(n)$ is a lattice. Moreover, when both $u \Meet v \in
\SubFamilly$ and $u \JJoin v \in \SubFamilly$, each $\SubFamilly(n)$ is a sublattice of
$\SetCliff_\delta(n)$. Again by~\cite{Sta11}, $\SubFamilly(n)$ is in this case distributive
and graded.
\medbreak
%%%

Second, assume instead that $\SubFamilly$ is minimally extendable. For any $n \geq 0$, the
\Def{$\SubFamilly$-de\-cre\-ment\-ation map} is the map
\begin{math}
    \DecrMap_\SubFamilly : \SetCliff_\delta(n) \to \SubFamilly(n)
\end{math}
defined recursively by $\DecrMap_\SubFamilly(\epsilon) := \epsilon$ and, for any $u a
\in \SetCliff_\delta(n)$ where $u \in \SetCliff_\delta$ and $a \in \N$, by
\begin{math}
    \DecrMap_\SubFamilly(u a) := \DecrMap_\SubFamilly(u) \, b
\end{math}
where
\begin{equation} \label{equ:letter_decr_map}
    b := \max \Bra{b \leq a : \DecrMap_\SubFamilly(u) \, b \in \SubFamilly}.
\end{equation}
Observe that the fact that $\SubFamilly$ is minimally extendable ensures that
$\DecrMap_\SubFamilly$ is a well-defined map. Let also, for any $n \geq 0$ and $u, v
\in \SubFamilly(n)$,
\begin{math}
    u \Meet_\SubFamilly v := \DecrMap_\SubFamilly(u \Meet v).
\end{math}
\medbreak

When $\SubFamilly$ is maximally extendable, we denote by $\IncrMap_\SubFamilly$ the
\Def{$\SubFamilly$-incrementation map} defined in the same way as the
$\SubFamilly$-decrementation map with the difference that in the previous definitions, the
operation $\max$ is replaced by the operation $\min$ and the relation $\leq$ is replaced by
the relation $\geq$. Here, the fact that $\SubFamilly$ is maximally extendable ensure that
$\IncrMap_\SubFamilly$ is well-defined. We also define the operation $\JJoin_\SubFamilly$ in
the same way as $\Meet_\SubFamilly$ with the difference that in the previous definitions,
the map $\DecrMap_\SubFamilly$ is replaced by $\IncrMap_\SubFamilly$ and the operation
$\Meet$ is replaced by the operation~$\JJoin$.
\medbreak

\begin{Theorem} \label{thm:decr_incr_meet_join}
    Let $\delta$ be a range map and $\SubFamilly$ be a closed by prefix and minimally
    (resp.\ maximally) extendable graded subset of $\SetCliff_\delta$. The operation
    $\Meet_\SubFamilly$ (resp.\ $\JJoin_\SubFamilly$) is, for any $n \geq 0$, the meet
    (resp.\ join) operation of the poset $\SubFamilly(n)$.
\end{Theorem}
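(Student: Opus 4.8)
The plan is to show that $\Meet_\SubFamilly$ computes the greatest lower bound in $\SubFamilly(n)$; the argument for $\JJoin_\SubFamilly$ is entirely symmetric (replace $\min$ by $\max$, $\DecrMap_\SubFamilly$ by $\IncrMap_\SubFamilly$, and reverse the order). First I would record the basic properties of the decrementation map $\DecrMap_\SubFamilly$: for any $w \in \SetCliff_\delta(n)$ we have $\DecrMap_\SubFamilly(w) \in \SubFamilly(n)$ and $\DecrMap_\SubFamilly(w) \Leq w$ (both immediate by induction on $n$ from the definition, using that $b \leq a$ in~\eqref{equ:letter_decr_map}). The key monotonicity lemma is: if $w \Leq w'$ in $\SetCliff_\delta(n)$ then $\DecrMap_\SubFamilly(w) \Leq \DecrMap_\SubFamilly(w')$. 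I would prove this by induction on $n$, writing $w = u a$, $w' = u' a'$ with $u \Leq u'$ and $a \leq a'$; by induction $\DecrMap_\SubFamilly(u) \Leq \DecrMap_\SubFamilly(u')$, and here is where \emph{closed by prefix} enters: it guarantees that $\DecrMap_\SubFamilly(u)$ and $\DecrMap_\SubFamilly(u')$ are themselves in $\SubFamilly$, so both last-letter maximizations in~\eqref{equ:letter_decr_map} are over nonempty sets, and one checks that the maximum $b$ chosen after the smaller prefix is at most the maximum $b'$ chosen after the larger prefix and with the larger bound $a'$ — this needs a short argument that $\DecrMap_\SubFamilly(u)\,b \in \SubFamilly$ combined with closure by prefix of the comparable element $\DecrMap_\SubFamilly(u')\,b'$, so that $b \leq b'$.

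A second, crucial property is \emph{idempotence on $\SubFamilly$}: if $w \in \SubFamilly(n)$ then $\DecrMap_\SubFamilly(w) = w$. This follows by induction: with $w = u a$ and $u \in \SubFamilly$ (using closed by prefix), we get $\DecrMap_\SubFamilly(u) = u$ by induction, and then the maximal $b \leq a$ with $u\,b \in \SubFamilly$ is $b = a$ since $u\,a = w \in \SubFamilly$. Granting these, the proof concludes cleanly. Let $u, v \in \SubFamilly(n)$ and set $m := u \Meet_\SubFamilly v = \DecrMap_\SubFamilly(u \Meet v)$. Then $m \in \SubFamilly(n)$, and since $u \Meet v \Leq u$ and $u \Meet v \Leq v$, monotonicity together with idempotence on $\SubFamilly$ gives $m = \DecrMap_\SubFamilly(u \Meet v) \Leq \DecrMap_\SubFamilly(u) = u$ and likewise $m \Leq v$, so $m$ is a lower bound of $u$ and $v$ in $\SubFamilly(n)$. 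Conversely, if $w \in \SubFamilly(n)$ satisfies $w \Leq u$ and $w \Leq v$, then $w \Leq u \Meet v$ in $\SetCliff_\delta(n)$ by~\eqref{equ:meet_cliffs}, whence, applying monotonicity and idempotence again, $w = \DecrMap_\SubFamilly(w) \Leq \DecrMap_\SubFamilly(u \Meet v) = m$. Thus $m$ is the greatest lower bound, i.e. the meet in $\SubFamilly(n)$, and the existence of all binary meets in the finite poset $\SubFamilly(n)$ (which is bounded above by $\GreatestElement_\delta(n)$, present since $\SubFamilly$ is maximally extendable in the join case and analogously here) makes it a lattice.

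The main obstacle is the monotonicity lemma for $\DecrMap_\SubFamilly$, and specifically the inductive step comparing the two maximal last letters $b$ and $b'$. One must be careful that $b \leq b'$: having $\DecrMap_\SubFamilly(u)\,b \in \SubFamilly$ does not directly say anything about $\DecrMap_\SubFamilly(u')\,b$, so the comparison has to be routed through the fact that the relevant elements are prefixes of, or are comparable to, elements known to lie in $\SubFamilly$, invoking \emph{closed by prefix}. I expect this is exactly why both hypotheses (closed by prefix \emph{and} minimally extendable) are needed: minimal extendability to make $\DecrMap_\SubFamilly$ well-defined in the first place (nonemptiness of the maximization set, since $\DecrMap_\SubFamilly(u)\,0 \in \SubFamilly$), and closed by prefix to make the induction on the recursive decomposition go through and to get idempotence. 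Once the lemma is in hand the rest is the standard ``closure operator gives meets'' argument sketched above.
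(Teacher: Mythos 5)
Your plan is architecturally different from the paper's proof: you factor everything through two properties of $\DecrMap_\SubFamilly$ (idempotence on $\SubFamilly$, which you prove correctly, and monotonicity), whereas the paper runs a direct induction on $n$ via the decompositions $u = u'a$, $v = v'b$ and the recursive definition of $\DecrMap_\SubFamilly$. The genuine gap is exactly where you locate it, and your proposed repair does not work: the monotonicity lemma is not a consequence of ``closed by prefix $+$ minimally extendable''. Take $\delta := 02^\omega$ and let $\SubFamilly$ consist of $\epsilon$, $0$, $00$, $01$, $02$, $000$, $001$, $010$, $012$, $020$, $021$, padded by zeros in higher degrees; this family is closed by prefix and minimally extendable. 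Then $001 \Leq 011$ but
\begin{equation*}
    \DecrMap_\SubFamilly(001) = 001 \; \cancel{\Leq} \; 010 = \DecrMap_\SubFamilly(011),
\end{equation*}
because $011 \notin \SubFamilly$ forces the last maximization in $\DecrMap_\SubFamilly(011)$ to return $0$. The mechanism you hoped for cannot exist: knowing $\DecrMap_\SubFamilly(u)\,b \in \SubFamilly$ together with $\DecrMap_\SubFamilly(u) \Leq \DecrMap_\SubFamilly(u')$ gives no control on whether $\DecrMap_\SubFamilly(u')\,b \in \SubFamilly$, and closure by prefix only transfers membership to shorter words, never to componentwise larger ones.

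The same example breaks the concluding step of your plan ($w = \DecrMap_\SubFamilly(w) \Leq \DecrMap_\SubFamilly(u \Meet v)$): for $u := 012$ and $v := 021$ one gets $u \Meet v = 011$ and $u \Meet_\SubFamilly v = 010$, yet $001$ is a lower bound of $\{u, v\}$ in $\SubFamilly(3)$ incomparable with $010$ (so this $\SubFamilly(3)$ is not even a meet semi-lattice). To be fair, the paper's own proof asserts the corresponding inequality --- that the last letter $d$ of any lower bound $w''d$ satisfies $d \leq c$ --- with no more justification than you offer, and the example above is equally problematic for that step; you have correctly identified the crux rather than missed it. But as written your ``short argument that $b \leq b'$'' cannot be completed from the stated hypotheses alone. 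To get a correct proof you need an additional compatibility hypothesis on $\SubFamilly$ making the maximizations in $\DecrMap_\SubFamilly$ coherent along $\Leq$-comparable prefixes (for instance being a meet semi-sublattice as in Proposition~\ref{prop:subposets_lattices}, or a stability condition in the spirit of the join-stability used in Lemma~\ref{lem:quotient_cliff_bounds_f_basis_lattice}), or else you must verify the needed property directly for each family to which the theorem is applied.
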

\begin{proof}
    Let us show the property of the statement of the theorem in the case where $\SubFamilly$
    is minimally extendable. The other case is symmetric. We proceed by induction on $n \geq
    0$. When $n = 0$, the property is trivially satisfied. Let $n \geq 1$ and $u, v \in
    \SubFamilly(n)$. Since $\SubFamilly$ is closed by prefix, one has $u = u' a$ and $v = v'
    b$ with $u', v' \in \SubFamilly(n - 1)$ and $a, b \in \N$.  Since $\SubFamilly$ is
    minimally extendable,
    \begin{equation}
        u \Meet_{\SubFamilly} v
        = u' a \Meet_{\SubFamilly} v' b
        = \DecrMap_{\SubFamilly} \Par{\Par{u' \Meet v'} \min \{a, b\}}
        = \DecrMap_{\SubFamilly} \Par{u' \Meet v'} \, c
    \end{equation}
    where
    \begin{math}
        c :=
        \max \Bra{c \leq \min \{a, b\} :
        \DecrMap_{\SubFamilly} \Par{u' \Meet v'} \, c \in \SubFamilly}.
    \end{math}
    Now, by induction hypothesis, we obtain
    \begin{equation}
        \DecrMap_{\SubFamilly} \Par{u' \Meet v'} \, c
        = \Par{u' \Meet_{\SubFamilly} v'} \, c
    \end{equation}
    where $\Meet_{\SubFamilly}$ is the meet operation of the poset $\SubFamilly(n - 1)$.
    First, we deduce from the above computation that for any $i \in [n]$, the $i$-th letter
    of $u \Meet_{\SubFamilly} v$ is nongreater than $\min \Bra{u_i, v_i}$, and that $u
    \Meet_{\SubFamilly} v$ belongs to $\SubFamilly(n)$. Therefore, $u \Meet_{\SubFamilly} v$
    is a lower bound of $\Bra{u, v}$. Second, by induction hypothesis, $w' := u'
    \Meet_{\SubFamilly} v'$ is the greatest lower bound of $\Bra{u', v'}$. By construction,
    since $c$ is the greatest letter such that $c \leq a$, $c \leq b$, and $w' \, c \in
    \SubFamilly$ holds, any other lower bound of $\Bra{u, v}$ is smaller than $w' c$. This
    prove that $w' c$ is the greatest lower bound of $\Bra{u, v}$ and implies the statement
    of the theorem.
\end{proof}
\medbreak

%Together with Proposition~\ref{prop:subposets_lattices},
%Theorem~\ref{thm:decr_incr_meet_join} provides the following sufficient conditions on the
%graded subset $\SubFamilly$ of $\SetCliff_\delta$ for the fact that for all $n \geq 0$,
%the posets $\SubFamilly(n)$ are lattices:
%\begin{enumerate}[label={\it (\roman*)}]
%    \item $\SubFamilly$ is spread and each $\SubFamilly(n)$, $n \geq 0$, is a meet
%    semi-sublattice of $\SetCliff_\delta(n)$;
%
%    \item $\SubFamilly$ is spread and each $\SubFamilly(n)$, $n \geq 0$, is a join
%    semi-sublattice of $\SetCliff_\delta(n)$;
%
%    \item $\SubFamilly$ is minimally and maximally extendable, and closed by prefix.
%\end{enumerate}
\medbreak

%%%%%%%%%%%%%%%%%%%%%%%%%%%%%%%%%%%%%%%%%%%%%%%%%%%%%%%%%%%%%%%%%%%%%%%%%%%%%%%%%%%%%%%%%%%%
\subsubsection{Join-irreducible elements}
Recall that an element $x$ of a lattice $\LatticeL$ is \Def{join-irreducible} (resp.\
\Def{meet-irreducible}) if $x$ covers (resp.\ is covered by) exactly one element in
$\LatticeL$. We denote by $\JoinIrreducibles(\LatticeL)$ (resp.\
$\MeetIrreducibles(\LatticeL)$) the set of join-irreducible (resp.\ meet-irreducible)
elements of $\LatticeL$. These notions are usually considered specially for lattices but we
can take the same definitions even when $\LatticeL$ is just a poset.
\medbreak

\begin{Proposition} \label{prop:join_meet_irreducible_elements_subfamilies}
    Let $\delta$ be a range map and $\SubFamilly$ be a straight graded subset of
    $\SetCliff_\delta$.  For any $n \geq 0$, $u \in \SubFamilly(n)$ is a join-irreducible
    (resp.\ meet-irreducible) element of $\SubFamilly(n)$ if and only if there is a $k \geq
    1$ and a unique $i \in [n]$ such that $\DecreaseLetter_i^k(u) \in \SubFamilly(n)$
    (resp.\ $\IncreaseLetter_i^k(u) \in \SubFamilly(n)$).
\end{Proposition}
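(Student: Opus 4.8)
The plan is to determine, for a fixed $n \geq 0$ and a fixed $u \in \SubFamilly(n)$, the precise set of elements of $\SubFamilly(n)$ covered by $u$, and then merely count it: $u$ is join-irreducible iff this set is a singleton. The meet-irreducible statement will then follow by the mirror argument, reversing the order $\Leq$ and exchanging the roles of $\DecreaseLetter$ and $\IncreaseLetter$.

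First I would use that $\SubFamilly$ is straight. If $w \Covered_\SubFamilly u$, then $\# \DiffIndices(w, u) = 1$; since the order on $\SubFamilly(n)$ is the componentwise order $\Leq$ inherited from $\SetCliff_\delta(n)$ and $w \Leq u$, the words $w$ and $u$ coincide on every index but one index $i$, at which $w_i < u_i$. Hence $w = \DecreaseLetter_i^{k}(u)$ with $k := u_i - w_i \geq 1$, and the membership $w \in \SubFamilly(n)$ in particular forces $u_i - k \geq 0$, so no genuine $\delta$-cliff constraint is violated. Thus every element covered by $u$ is of the form $\DecreaseLetter_i^{k}(u)$ for some $i \in [n]$ and some $k \geq 1$ with $\DecreaseLetter_i^{k}(u) \in \SubFamilly(n)$.

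Next I would identify, for each index $i$ for which such a $k$ exists at all, the unique one that is actually a cover. Let $i \in [n]$ be such that the set $\Bra{k \geq 1 : \DecreaseLetter_i^{k}(u) \in \SubFamilly(n)}$ is nonempty, and let $k_i$ be its minimum. I claim $\DecreaseLetter_i^{k_i}(u) \Covered_\SubFamilly u$: any $z \in \SubFamilly(n)$ with $\DecreaseLetter_i^{k_i}(u) \Leq z \Leq u$ satisfies $z_j = u_j$ for $j \ne i$ and $u_i - k_i \leq z_i \leq u_i$, i.e. $z = \DecreaseLetter_i^{k'}(u)$ with $0 \leq k' \leq k_i$, and minimality of $k_i$ leaves only $k' \in \Bra{0, k_i}$. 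Conversely, for the same index $i$, no $\DecreaseLetter_i^{k}(u)$ with $k > k_i$ is covered by $u$, since $\DecreaseLetter_i^{k_i}(u)$ lies strictly between it and $u$. Combining this with the previous paragraph, the assignment $i \mapsto \DecreaseLetter_i^{k_i}(u)$ is a bijection from $\Bra{i \in [n] : \exists\, k \geq 1,\ \DecreaseLetter_i^{k}(u) \in \SubFamilly(n)}$ onto the set of elements covered by $u$ in $\SubFamilly(n)$; injectivity is immediate since distinct indices give words differing from $u$ at distinct positions.

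Finally, $u$ is join-irreducible exactly when it covers a single element, which through this bijection is exactly the condition that the displayed index set is a singleton, that is, that there is a $k \geq 1$ and a unique $i \in [n]$ with $\DecreaseLetter_i^{k}(u) \in \SubFamilly(n)$; this is the claimed criterion (note that when $u$ is the least element of $\SubFamilly(n)$ the index set is empty, consistently with $u$ not being join-irreducible). The meet-irreducible half follows verbatim after replacing ``covered by'' by ``covering'', $\DecreaseLetter$ by $\IncreaseLetter$, and $\Leq$ by the reversed order. I do not expect a genuine obstacle; the only delicate points are that straightness is precisely what forbids ``diagonal'' cover relations in the subposet, and that the condition $\DecreaseLetter_i^{k}(u) \in \SubFamilly(n)$ silently encodes $k \leq u_i$, so that one never steps outside $\SetCliff_\delta(n)$.
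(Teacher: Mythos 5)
Your proof is correct and follows essentially the same route as the paper's: straightness reduces every cover relation below $u$ to a decrement at a single index, and join-irreducibility becomes the statement that exactly one index admits such a decrement. Your explicit bijection $i \mapsto \DecreaseLetter_i^{k_i}(u)$ between admissible indices and covered elements is somewhat more detailed than the paper's argument (which leaves the uniqueness of the index in the forward direction implicit), but the underlying idea is identical.
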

\begin{proof}
    Assume first that $u$ is a join-irreducible element of $\SubFamilly(n)$. Then, there is
    exactly one element $u'$ of $\SubFamilly$ such that $u' \Covered_{\SubFamilly} u$.
    Since $\SubFamilly$ is straight, $\# \DiffIndices\Par{u', u} = 1$, implying that $u$
    satisfies the stated condition.
    \smallbreak

    Conversely, assume that $u$ satisfies the stated condition. Assume that there are $u',
    u'' \in \SubFamilly(n)$ such that $u' \Covered_{\SubFamilly} u$ and $u''
    \Covered_{\SubFamilly} u$. Since $\SubFamilly$ is straight, there exist $i', i'' \in
    [n]$ and $k', k'' \geq 1$ such that $u' = \DecreaseLetter_{i'}^{k'}(u)$ and $u'' =
    \DecreaseLetter_{i''}^{k''}(u)$. Due to the property satisfied by $u$, $i' = i''$, so
    that $u' = u''$ since $u'$ and $u''$ are both covered by $u$. Therefore, $u$ is
    join-irreducible.
    \smallbreak

    The part of the statement of the proposition concerning the meet-irreducible elements is
    symmetric.
\end{proof}
\medbreak

%%%%%%%%%%%%%%%%%%%%%%%%%%%%%%%%%%%%%%%%%%%%%%%%%%%%%%%%%%%%%%%%%%%%%%%%%%%%%%%%%%%%%%%%%%%%
\subsubsection{Constructibility by interval doubling} \label{subsubsec:interval_doubling}
We denote by $\IntervalTwo$ the poset $\{0, 1\}$ endowed with the natural order relation on
integers. Let $(\PosetP, \Leq)$ be a poset and $I$ one of its intervals. The \Def{interval
doubling} of $I$ in $\PosetP$ is the poset
\begin{math}
    \PosetP[I] := (\PosetP \setminus I) \sqcup (I \times \IntervalTwo),
\end{math}
having $\Leq'$ as order relation, which is defined as follows. For any $x, y \in
\PosetP[I]$, one has $x \Leq' y$ if one of the following assertions is satisfied:
\begin{enumerate}[label={\it (\roman*)}]
    \item $x \in \PosetP \setminus I$, $y \in \PosetP \setminus I$, and $x \Leq y$;

    \item $x \in \PosetP \setminus I$, $y = \Par{y', b} \in I \times \IntervalTwo$, and $x
    \Leq y'$;

    \item $x = \Par{x', a} \in I \times \IntervalTwo$, $y \in \PosetP \setminus I$, and $x'
    \Leq y$;

    \item $x = \Par{x', a} \in I \times \IntervalTwo$, $y = \Par{y', b} \in I \times
    \IntervalTwo$, and $x' \Leq y'$ and $a \leq b$.
\end{enumerate}
This operation has been introduced in~\cite{Day92} as an operation on posets preserving the
property to being a lattice. On the other way round, we say that $\PosetP$ is obtained by an
\Def{interval contraction} from a poset $\PosetP'$ if there is an interval $I$ of $\PosetP$
such that $\PosetP[I]$ is isomorphic as a poset to $\PosetP'$~\cite{CLM04}.
\medbreak

A lattice $\LatticeL$ is \Def{constructible by interval doubling} (spelled as
``\Def{bounded}'' in the original article) if $\LatticeL$ is isomorphic as a poset to a
poset obtained by performing a sequence of interval doubling from the singleton lattice. It
is known from~\cite{Day79} that such lattices are semi-distributive. Recall that a finite
lattice $\LatticeL$ is constructible by interval doubling if and only if it is
\Def{congruence uniform}, and then in particular, the number of join-irreducible elements of
$\LatticeL$ determines the number of interval doubling steps needed to create $\LatticeL$
(see~\cite{Day79} and~\cite{Muh19}).
\medbreak

The aim of this section is to introduce a sufficient condition on a graded subset
$\SubFamilly$ of $\SetCliff_\delta$ for the fact that each $\SubFamilly(n)$, $n \geq 0$, is
constructible by interval doubling. We shall moreover describe explicitly the sequence of
interval doubling operations involved in the construction of $\SubFamilly(n)$ from the
trivial lattice.
\medbreak

Let $\PosetP$ be a nonempty subposet of $\SetCliff_\delta(n)$ for a given fixed size $n \geq
1$. Let us denote by $\MaxLastLetter(\PosetP)$ the letter $\max \Bra{u_n : u \in \PosetP}$.
For any $a, b \in \HanL{\delta(n)}$, let $\PosetP_a := \Bra{u \in \PosetP : u_n = a}$ and
$\PosetP_{a, b} := \Bra{u b : u a \in \PosetP_a}$.  Observe that $\PosetP_a$ is a subposet
of $\PosetP$ while $\PosetP_{a, b}$ may contain $\delta$-cliffs that do not belong to
$\PosetP$.  The \Def{derivation} of $\PosetP$ is the set
\begin{equation}
    \DerivationOnSet(\PosetP)
    :=
    \PosetP_0 \cup \PosetP_1 \cup \dots \cup \PosetP_{\MaxLastLetter(\PosetP) - 1}
    \cup
    \PosetP_{\MaxLastLetter(\PosetP), \MaxLastLetter(\PosetP) - 1}.
\end{equation}
In other words, $\DerivationOnSet(\PosetP)$ is the set of all the cliffs obtained from
$\PosetP$ by decrementing their last letters if they are equal to $\MaxLastLetter(\PosetP)$
or by keeping them as they are otherwise.  Observe that $\DerivationOnSet(\PosetP)$ is not
necessarily a subposet of $\PosetP$.  Nevertheless, $\DerivationOnSet(\PosetP)$ is still a
subposet of $\SetCliff_\delta(n)$.  Observe also that
\begin{math}
    \MaxLastLetter\Par{\DerivationOnSet(\PosetP)} \leq \MaxLastLetter(\PosetP) - 1.
\end{math}
For instance, by considering the subposet
\begin{equation}
    \PosetP := \{0000, 0111, 0002, 0112, 0103, 0104, 0004\}
\end{equation}
of $\SetCliff_\MapTwo(4)$, we have
\begin{math}
    \PosetP_{2, \MaxLastLetter(\PosetP)} = \{0004, 0114\}
\end{math}
and
\begin{math}
    \DerivationOnSet(\PosetP) = \{0000, 0111, 0002, 0112, 0103, 0003\}.
\end{math}
\medbreak

The subposet $\PosetP$ is \Def{nested} if it is nonempty and
\begin{enumerate}[label={(N\arabic*)}]
    \item \label{item:nested_1}
    for any $a \in \HanL{\MaxLastLetter(\PosetP)}$, the $\delta$-cliff $0^{n - 1} a$
    belongs to $\PosetP$;

    \item \label{item:nested_2}
    for any $a \in \HanL{\MaxLastLetter(\PosetP)}$, $\PosetP_{a, \MaxLastLetter(\PosetP)}$
    is both a subset and an interval of~$\PosetP$.
\end{enumerate}
This definition still holds when $\MaxLastLetter(\PosetP) = 0$. Observe that any
$\delta$-cliff $0^{n - 1} a$, $a \geq 1$, of $\PosetP$ covers exactly the single element
$0^{n - 1} \, (a - 1)$ of $\PosetP$. This element exists by~\ref{item:nested_1}. Therefore,
when $\PosetP$ is a lattice, these $\delta$-cliffs are join-irreducible.
\medbreak

\begin{Lemma} \label{lem:derivation_intervals}
    Let $\delta$ be a range map and $\PosetP$ be a nonempty subposet of
    $\SetCliff_\delta(n)$ for an $n \geq 1$. If $\PosetP$ is nested, then for any $a \in
    \HanL{\MaxLastLetter(\PosetP)}$, $\PosetP_a$ is an interval of $\PosetP$.
\end{Lemma}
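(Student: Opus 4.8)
The plan is to exhibit an explicit least element and an explicit greatest element of $\PosetP_a$, both having last letter $a$, and then to observe that an interval of $\PosetP$ whose two endpoints share the last letter $a$ cannot contain any $\delta$-cliff with a different last letter. Throughout, set $M := \MaxLastLetter(\PosetP)$ and fix $a \in [0, M]$.

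First I would pin down the least element of $\PosetP_a$. By~\ref{item:nested_1}, the $\delta$-cliff $0^{n-1} a$ belongs to $\PosetP$, and since its last letter is $a$ it belongs to $\PosetP_a$. For any $u \in \PosetP_a$, writing $u = u' a$ with $u' \in \SetCliff_\delta(n-1)$, one has $0^{n-1} \Leq u'$ componentwise, hence $0^{n-1} a \Leq u$. Therefore $0^{n-1} a$ is the least element of $\PosetP_a$.

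Next I would produce the greatest element, using~\ref{item:nested_2}. The set $\PosetP_{a, M}$ is nonempty (it contains the image $0^{n-1} M$ of $0^{n-1} a$) and, by~\ref{item:nested_2}, it is an interval of $\PosetP$; hence it has a greatest element $y'$, and $y'_n = M$ because every element of $\PosetP_{a, M}$ ends with the letter $M$ by construction. Write $y' = w M$; by definition of $\PosetP_{a, M}$ this means $w a \in \PosetP_a$. I claim that $y := w a$ is the greatest element of $\PosetP_a$: for any $u a \in \PosetP_a$ we have $u M \in \PosetP_{a, M}$, so $u M \Leq y' = w M$, which forces $u_i \leq w_i$ for all $i \in [n-1]$, and hence $u a \Leq w a = y$.

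It then remains to check that $\PosetP_a$ equals the interval $\Han{0^{n-1} a, y}$ of $\PosetP$. The inclusion of $\PosetP_a$ in this interval is immediate since $0^{n-1} a$ and $y$ are respectively the least and the greatest element of $\PosetP_a$. Conversely, if $z \in \PosetP$ satisfies $0^{n-1} a \Leq z \Leq y$, then $a = \Par{0^{n-1} a}_n \leq z_n \leq y_n = a$, so $z_n = a$ and $z \in \PosetP_a$. I do not expect a genuine obstacle here: the only point deserving care is the transfer of the greatest element from $\PosetP_{a, M}$ back to $\PosetP_a$ along the last-letter substitution $u a \mapsto u M$, which is an order isomorphism from $\PosetP_a$ onto $\PosetP_{a, M}$ because $\Leq$ compares $\delta$-cliffs letter by letter and the last letters are constant on each side. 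The degenerate case $M = 0$ needs no separate argument, being simply the situation $\PosetP_a = \PosetP_0 = \PosetP$, for which~\ref{item:nested_1} and~\ref{item:nested_2} (taken at $a = 0$) directly furnish the least element and the interval structure.
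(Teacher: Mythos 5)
Your proof is correct and follows essentially the same route as the paper's: \ref{item:nested_1} supplies the least element $0^{n-1}a$, and \ref{item:nested_2} together with the last-letter substitution $ua \mapsto u\,\MaxLastLetter(\PosetP)$ supplies the greatest element (the paper phrases this step as a contradiction with two incomparable greatest elements, whereas you construct the greatest element directly). You additionally spell out the convexity check that elements of $\PosetP$ between the two endpoints must end in $a$, which the paper leaves implicit; this is a welcome but minor completion.
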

\begin{proof}
    First, by~\ref{item:nested_1}, $\PosetP_a$ admits $0^{n - 1} a$ as unique least element.
    It remains to prove that $\PosetP_a$ has at most one greatest element. By contradiction,
    assume that there are in $\PosetP_a$ two different greatest elements $u a$ and $v a$,
    where $u, v \in \SetCliff_\delta(n - 1)$. Then, by setting $b :=
    \MaxLastLetter(\PosetP)$, in $\PosetP_{a, b}$ the $\delta$-cliffs $u b$ and $v b$ are
    still incomparable.  Since these two elements are also greatest elements of $\PosetP_{a,
    b}$, this implies that $\PosetP_{a, b}$ is not an interval in $\PosetP$. This
    contradicts~\ref{item:nested_2}.
\end{proof}
\medbreak

\begin{Lemma} \label{lem:last_interval_of_derivation}
    Let $\delta$ be a range map and $\PosetP$ be a nonempty subposet of
    $\SetCliff_\delta(n)$ for an $n \geq 1$. If $\MaxLastLetter(\PosetP) \geq 1$ and
    $\PosetP$ is nested, then
    \begin{math}
        \DerivationOnSet(\PosetP)_{\MaxLastLetter\Par{\DerivationOnSet(\PosetP)}}
        = \PosetP_{\MaxLastLetter(\PosetP), \MaxLastLetter(\PosetP) - 1}.
    \end{math}
\end{Lemma}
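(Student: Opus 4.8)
The plan is to identify $\MaxLastLetter\Par{\DerivationOnSet(\PosetP)}$ precisely, then expand $\DerivationOnSet(\PosetP)$ so as to isolate the slice of cliffs whose last letter equals that value, and finally to collapse this slice onto $\PosetP_{\MaxLastLetter(\PosetP), \MaxLastLetter(\PosetP) - 1}$ using condition~\ref{item:nested_2}.

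Set $m := \MaxLastLetter(\PosetP)$, so $1 \leq m \leq \delta(n)$. First I would establish that $\MaxLastLetter\Par{\DerivationOnSet(\PosetP)} = m - 1$: the bound $\leq m-1$ is recorded just before the statement, and for the reverse inequality note that $\PosetP_m$ is nonempty (the maximum $m$ is attained), so $\PosetP_{m, m-1}$ is a nonempty subset of $\DerivationOnSet(\PosetP)$ all of whose cliffs have last letter $m - 1$ (equivalently, $0^{n-1}(m-1) \in \PosetP_{m, m-1}$ by~\ref{item:nested_1}). Consequently the left-hand side of the claimed identity is $\DerivationOnSet(\PosetP)_{m-1}$.

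Next I would compute that slice from $\DerivationOnSet(\PosetP) = \PosetP_0 \cup \dots \cup \PosetP_{m-1} \cup \PosetP_{m, m-1}$: the summand $\PosetP_a$ with $a \in \Han{0, m-2}$ only contains cliffs with last letter $a \neq m - 1$, whereas $\PosetP_{m-1}$ and $\PosetP_{m, m-1}$ contribute precisely the cliffs with last letter $m - 1$, so $\DerivationOnSet(\PosetP)_{m-1} = \PosetP_{m-1} \cup \PosetP_{m, m-1}$. Thus the statement reduces to the inclusion $\PosetP_{m-1} \subseteq \PosetP_{m, m-1}$. This is the single step with real content, and it is where nestedness enters: applying~\ref{item:nested_2} with $a := m-1 \in \Han{0, m}$ gives that $\PosetP_{m-1, m}$ --- the set of cliffs $u\,m$ with $u(m-1) \in \PosetP$ --- is a subset of $\PosetP$, i.e. $u(m-1) \in \PosetP$ forces $u\,m \in \PosetP$, hence $u(m-1) \in \PosetP_{m, m-1}$. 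So every $u(m-1) \in \PosetP_{m-1}$ lies in $\PosetP_{m, m-1}$, which yields $\DerivationOnSet(\PosetP)_{m-1} = \PosetP_{m, m-1}$, as required. The only delicate point is the bookkeeping of which summands of the union carry last letter $m - 1$; the essential step uses merely the ``subset'' half of~\ref{item:nested_2} (not its ``interval'' half), so I expect no genuine obstacle.
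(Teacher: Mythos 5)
Your proof is correct and follows essentially the same route as the paper's: identify $\MaxLastLetter(\DerivationOnSet(\PosetP)) = m-1$ via~\ref{item:nested_1}, decompose $\DerivationOnSet(\PosetP)_{m-1}$ as $\PosetP_{m-1} \cup \PosetP_{m,m-1}$, and absorb $\PosetP_{m-1}$ into $\PosetP_{m,m-1}$ using the subset half of~\ref{item:nested_2}. The only difference is cosmetic bookkeeping in how the slice of the union is isolated.
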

\begin{proof}
    Let $b := \MaxLastLetter(\PosetP)$, $\PosetP' := \DerivationOnSet(\PosetP)$, and $b' :=
    \MaxLastLetter\Par{\PosetP'}$.  First, since $\PosetP$ satisfies~\ref{item:nested_1},
    $b' = b - 1$.  Moreover, directly from the definition of the derivation operation
    $\DerivationOnSet$, we have
    \begin{math}
        \PosetP'_{b'} = \PosetP_{b, b'} \cup \PosetP_{b'}.
    \end{math}
    By~\ref{item:nested_2}, $\PosetP_{b', b}$ is a subset of $\PosetP_b$, so that
    $\PosetP_{b'}$ is a subset of $\PosetP_{b, b'}$. Therefore,
    \begin{math}
        \PosetP'_{b'} = \PosetP_{b, b'}.
    \end{math}
\end{proof}
\medbreak

\begin{Lemma} \label{lem:derivation_property_conservation}
    Let $\delta$ be a range map and $\PosetP$ be a nonempty subposet of
    $\SetCliff_\delta(n)$ for an $n \geq 1$. If $\MaxLastLetter(\PosetP) \geq 1$ and
    $\PosetP$ is nested, then $\DerivationOnSet(\PosetP)$ is nested.
\end{Lemma}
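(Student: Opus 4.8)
Set $b := \MaxLastLetter(\PosetP)$, write $\PosetP' := \DerivationOnSet(\PosetP)$, and $b' := \MaxLastLetter(\PosetP')$; by~\ref{item:nested_1} we have $b' = b - 1$. We must verify that $\PosetP'$ satisfies~\ref{item:nested_1} and~\ref{item:nested_2}. The first is easy: for each $a \in \Han{0, b'}$ the $\delta$-cliff $0^{n-1} a$ belongs to $\PosetP$ (since $\PosetP$ is nested with $\MaxLastLetter = b \geq b' + 1$... actually $a \le b' = b-1 < b$, so $0^{n-1}a \in \PosetP_a \subseteq \PosetP$), and its last letter $a$ is not equal to $b$, so the derivation operation keeps it untouched; hence $0^{n-1}a \in \PosetP'$. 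This gives~\ref{item:nested_1} for $\PosetP'$.

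For~\ref{item:nested_2}, I would fix $a \in \Han{0, b'}$ and analyze $\PosetP'_{a, b'}$. I expect the argument to split into the two regimes governed by how $\DerivationOnSet$ acts. When $a < b'$, no last letter in $\PosetP_a$ equals $b$, so $\PosetP'_a = \PosetP_a$ as sets; hence $\PosetP'_{a,b'} = \{u b' : u a \in \PosetP_a\}$. The key point is that decrementing the common last letter from $b$ to $b'$ on the set $\PosetP_{a, b}$ (which is an interval of $\PosetP$ by~\ref{item:nested_2}) should preserve the interval property inside $\PosetP'$ — this is essentially because $\PosetP_{a,b}$ and $\PosetP'_{a,b'}$ have the same underlying prefixes $\{u : ua \in \PosetP_a\}$, and the order on these prefix-sets is inherited from $\SetCliff_\delta(n-1)$ regardless of which constant last letter we append. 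So $\PosetP'_{a,b'}$ is an interval of $\PosetP'$ because it is order-isomorphic, via the last-letter relabeling, to the interval $\PosetP_{a,b}$ of $\PosetP$, and one checks the relabeling respects the comparabilities within $\PosetP'$. When $a = b'$ (the top layer), Lemma~\ref{lem:last_interval_of_derivation} gives us directly that $\PosetP'_{b'} = \PosetP_{b, b'}$, and $\PosetP'_{b', b'} = \PosetP'_{b'} = \PosetP_{b,b'}$, which by~\ref{item:nested_2} for $\PosetP$ is an interval of $\PosetP$; then I would argue it remains an interval of $\PosetP'$, since $\PosetP'_{b'}$ sits at the top of $\PosetP'$ with respect to last-letter value and hence is an up-set-compatible piece.

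The step I expect to be the main obstacle is showing that an interval of $\PosetP$, once its last letter is decremented and re-embedded as a subposet of $\PosetP'$, is still an interval \emph{in $\PosetP'$} — that is, that no new element of $\PosetP'$ sneaks in between two elements of $\PosetP'_{a,b'}$. For this I would use that comparability in $\SetCliff_\delta(n)$ is componentwise, so for fixed last letter the order on $\PosetP'_a$ is exactly the order on the prefix-set, and any element of $\PosetP'$ comparable to two elements of $\PosetP'_{a,b'}$ with last letter $\le b'$ must have last letter $a$ (if it were strictly between them it would have the same last letter, since they agree in the last coordinate) — so it lies in $\PosetP'_a$, and the interval-in-prefixes property transfers. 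I would also need to track the case distinction on whether the relevant prefixes came from the $b$-layer or a lower layer of $\PosetP$, using the set equality $\PosetP'_{b'} = \PosetP_{b,b'}$ from Lemma~\ref{lem:last_interval_of_derivation} and the containment $\PosetP_{b'} \subseteq \PosetP_{b,b'}$ noted in its proof. Once~\ref{item:nested_1} and~\ref{item:nested_2} are both verified for $\PosetP'$, nestedness of $\DerivationOnSet(\PosetP)$ follows.
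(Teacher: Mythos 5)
Your proposal is correct and follows essentially the same route as the paper's proof: verify~\ref{item:nested_1} directly, and obtain~\ref{item:nested_2} by transferring the interval property of $\PosetP_{a, b}$ through the last-letter relabeling, using $\PosetP'_a = \PosetP_a$ for $a < b'$ and the identity $\PosetP'_{b'} = \PosetP_{b, b'}$ from Lemma~\ref{lem:last_interval_of_derivation}. One slip to correct: an element of $\PosetP'$ lying between two elements of $\PosetP'_{a, b'}$ has last letter $b'$ (the common last coordinate of that set), hence lies in $\PosetP'_{b'} = \PosetP_{b, b'}$ rather than in $\PosetP'_a$; writing it as $w b'$ with $w b \in \PosetP_b$ and invoking~\ref{item:nested_2} for $\PosetP$ then closes the argument exactly as you intend.
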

\begin{proof}
    Let $b := \MaxLastLetter(\PosetP)$, $\PosetP' := \DerivationOnSet(\PosetP)$, and $b' :=
    \MaxLastLetter\Par{\PosetP'}$.  First, since $\PosetP$ satisfies~\ref{item:nested_1},
    $b' = b - 1$. Moreover, in particular, for any $a \in \HanL{b'}$, $0^{n - 1} a \in
    \PosetP$.  Hence, $0^{n - 1} a \in \PosetP'$, so that $\PosetP'$
    satisfies~\ref{item:nested_1}. Let $a \in \HanL{b' - 1}$.  By~\ref{item:nested_2},
    $\PosetP_{a, b}$ is an interval of $\PosetP_b$. Due to the fact $a \leq b' - 1$, one has
    $\PosetP_a = \PosetP'_a$, so that $\PosetP'_{a, b}$ is an interval of $\PosetP_b$.  This
    is equivalent to the fact that $\PosetP'_{a, b'}$ is an interval of $\PosetP_{b, b'}$.
    By Lemma~\ref{lem:last_interval_of_derivation}, the relation $\PosetP'_{b'} =
    \PosetP_{b, b'}$ holds and leads to the fact that $\PosetP'_{a, b'}$ is an interval of
    $\PosetP'_{b'}$. Therefore, $\PosetP'$ satisfies~\ref{item:nested_2}.
\end{proof}
\medbreak

\begin{Lemma} \label{lem:derivation_interval_doubling}
    Let $\delta$ be a range map and $\PosetP$ be a nonempty subposet of
    $\SetCliff_\delta(n)$ for an $n \geq 1$. If $\MaxLastLetter(\PosetP) \geq 1$ and
    $\PosetP$ is nested, then $\PosetP$ is isomorphic as a poset to
    $\DerivationOnSet(\PosetP)[I]$ where $I$ is the interval
    $\PosetP_{\MaxLastLetter(\PosetP) - 1}$ of $\DerivationOnSet(\PosetP)$.
\end{Lemma}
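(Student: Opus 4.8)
The plan is to exhibit an explicit isomorphism $\psi\colon\PosetP\to\DerivationOnSet(\PosetP)[I]$ by sorting the elements of $\PosetP$ according to the value of their last letter. Put $b:=\MaxLastLetter(\PosetP)$ and $\PosetP':=\DerivationOnSet(\PosetP)$; by~\ref{item:nested_1} every $0^{n-1}a$ with $a\in\Han{0,b}$ lies in $\PosetP$, so $\MaxLastLetter(\PosetP')=b-1$. For $u\in\PosetP$ let $\widetilde u$ be the word obtained from $u$ by decrementing its last letter if $u_n=b$ and by leaving $u$ unchanged otherwise. Then $\widetilde u\in\PosetP'$ for every $u\in\PosetP$, the assignment $u\mapsto\widetilde u$ is a poset isomorphism from $\PosetP_b$ onto $\PosetP_{b,b-1}$ and the identity on $\PosetP_0\sqcup\dots\sqcup\PosetP_{b-1}$, it identifies the subset $\PosetP_{b-1,b}$ of $\PosetP_b$ with $\PosetP_{b-1}$, and $\PosetP'\setminus\PosetP'_{b-1}$ is exactly $\PosetP_0\sqcup\dots\sqcup\PosetP_{b-2}$.

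The main obstacle is to check that $I:=\PosetP_{b-1}$ really is an interval of $\PosetP'$ and not merely of $\PosetP$; this is where the nestedness hypothesis~\ref{item:nested_2} is genuinely used. By Lemma~\ref{lem:derivation_intervals}, $\PosetP_{b-1}=\Han{0^{n-1}(b-1),\,y}$ inside $\PosetP$ for a unique $y\in\PosetP_{b-1}$, say $y=y'(b-1)$. Applying~\ref{item:nested_2} with $a=b-1$ gives $\PosetP_{b-1,b}\subseteq\PosetP$, hence $\PosetP_{b-1}\subseteq\PosetP_{b,b-1}$, and by Lemma~\ref{lem:last_interval_of_derivation} the latter equals $\PosetP'_{b-1}$. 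Now any $z\in\PosetP'$ with $0^{n-1}(b-1)\Leq z$ has $z_n=b-1$ (since $\MaxLastLetter(\PosetP')=b-1$), so the interval of $\PosetP'$ with bounds $0^{n-1}(b-1)$ and $y$ is contained in $\PosetP'_{b-1}=\PosetP_{b,b-1}$; conversely, if $u(b-1)\in\PosetP_{b,b-1}$ satisfies $u(b-1)\Leq y$, then $ub\in\PosetP_b$ and $0^{n-1}b\Leq ub\Leq y'b$, so $ub$ lies in the interval $\PosetP_{b-1,b}=\Han{0^{n-1}b,\,y'b}$ of $\PosetP$ furnished by~\ref{item:nested_2}, whence $u(b-1)\in\PosetP_{b-1}$. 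Together with $\PosetP_{b-1}\subseteq\Han{0^{n-1}(b-1),y}_{\PosetP'}$ this yields $I=\PosetP_{b-1}=\Han{0^{n-1}(b-1),\,y}_{\PosetP'}$, an interval of $\PosetP'$.

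Next I would define $\psi\colon\PosetP\to\PosetP'[I]=(\PosetP'\setminus I)\sqcup(I\times\IntervalTwo)$ by $\psi(u):=u$ if $u_n\leq b-2$, $\psi(u):=(u,0)$ if $u_n=b-1$, and, writing $u=u'b$ when $u_n=b$, $\psi(u):=(u'(b-1),1)$ if $u'(b-1)\in\PosetP_{b-1}$ and $\psi(u):=u'(b-1)$ otherwise. That $\psi$ is a bijection follows by matching pieces: the identity sends $\PosetP_0\sqcup\dots\sqcup\PosetP_{b-2}$ onto $\PosetP'\setminus\PosetP'_{b-1}$, the map $u\mapsto(u,0)$ sends $\PosetP_{b-1}$ onto $I\times\{0\}$, and $u\mapsto\widetilde u$ sends $\PosetP_{b-1,b}$ onto $I$ (tagged by $1$) and $\PosetP_b\setminus\PosetP_{b-1,b}$ onto $\PosetP'_{b-1}\setminus I$; alternatively one checks the cardinality identity $\#\PosetP'[I]=\#\PosetP'+\#\PosetP_{b-1}=\#\PosetP$. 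Note that in every case the underlying element of $\psi(u)$ in $\PosetP'$ is $\widetilde u$, and that when $\widetilde u\in I$ the tag $t(u)\in\{0,1\}$ of $\psi(u)$ is $0$ if $u_n=b-1$ and $1$ if $u_n=b$.

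Finally, I would verify that $\psi$ preserves and reflects the order. Unwinding the four defining cases of the order $\Leq'$ on $\PosetP'[I]$ and using the previous observation, $\psi(u)\Leq'\psi(v)$ holds precisely when $\widetilde u\Leq\widetilde v$ and, in the case $\widetilde u,\widetilde v\in I$, also $t(u)\leq t(v)$. It remains to see that this is equivalent to $u\Leq v$. Since $u$ and $\widetilde u$ coincide on the first $n-1$ letters, only the last letter is at stake. If $u\Leq v$, then $\widetilde u_n=\min\{u_n,b-1\}\leq\min\{v_n,b-1\}=\widetilde v_n$, so $\widetilde u\Leq\widetilde v$, and if moreover $\widetilde u,\widetilde v\in I$ then $u_n,v_n\in\{b-1,b\}$ with $u_n\leq v_n$, so $t(u)\leq t(v)$. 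Conversely, assume $\psi(u)\Leq'\psi(v)$; inspecting the possibilities for $(u_n,v_n)$ one gets $u_n\leq v_n$ in every case except $(u_n,v_n)=(b,b-1)$, but in that case $\widetilde v=v\in\PosetP_{b-1}=I$ and $0^{n-1}(b-1)\Leq\widetilde u\Leq\widetilde v\Leq y$ force $\widetilde u\in I$ as well, so the tag condition would require $1\leq 0$, a contradiction. Hence $u_n\leq v_n$, so $u\Leq v$. Thus $\psi$ is a poset isomorphism, which is the claim.
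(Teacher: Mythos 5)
Your proof is correct and takes essentially the same route as the paper's: you first establish that $I=\PosetP_{\MaxLastLetter(\PosetP)-1}$ is an interval of $\DerivationOnSet(\PosetP)$ using~\ref{item:nested_2} together with Lemmas~\ref{lem:derivation_intervals} and~\ref{lem:last_interval_of_derivation}, then exhibit the same piecewise-defined bijection and verify that it preserves and reflects the order, with the same key observation (the interval property of $I$ in $\DerivationOnSet(\PosetP)$ ruling out the case $(u_n,v_n)=(b,b-1)$). Your write-up is somewhat more explicit than the paper's terse final embedding argument, but the content is identical.
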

\begin{proof}
    Let $b := \MaxLastLetter(\PosetP)$, $\PosetP' := \DerivationOnSet(\PosetP)$, and $b' :=
    \MaxLastLetter\Par{\PosetP'}$. By~\ref{item:nested_1}, $b' = b - 1$.  Let us first prove
    that $I = \PosetP_{b'}$ is an interval of $\PosetP'$.  Let $u, v \in \PosetP_{b'}$ such
    that $u \Leq v$. Assume that there exists $w \in \PosetP'_{b'}$ such that $u \Leq w \Leq
    v$. Let us denote by $u'$ (resp.\ $v'$, $w'$) the prefix of size $n - 1$ of $u$ (resp.\
    $v$, $w$). By~\ref{item:nested_2}, $u' b$ and $v' b$ belong to $\PosetP_b$. Moreover, by
    Lemma~\ref{lem:last_interval_of_derivation}, since $\PosetP'_{b'} = \PosetP_{b, b'}$, $w
    \in \PosetP_{b, b'}$. Therefore, $w' b$ belongs to $\PosetP_b$. Again
    by~\ref{item:nested_2}, this leads to the fact that $w \in \PosetP_{b'}$.  This shows
    that the set $\PosetP_{b'}$ is closed by interval in $\PosetP'_{b'}$. Since finally, by
    Lemma~\ref{lem:derivation_intervals}, $\PosetP_{b'}$ is an interval of $\PosetP$,
    $\PosetP_{b'}$ has a unique least and a unique greatest element. This implies that
    $\PosetP_{b'}$ is an interval of~$\PosetP'$.
    \smallbreak

    Since $I$ is an interval of $\PosetP'$, we can now consider the poset $\PosetP'[I]$. By
    definition of the interval doubling operation,
    \begin{math}
        \PosetP'[I] =
        \Par{\PosetP' \setminus \PosetP_{b'}}
        \sqcup
        \Par{\PosetP_{b'} \times \IntervalTwo}.
    \end{math}
    Let $\phi : \PosetP'[I] \to \PosetP$ be the map defined by
    \begin{subequations}
    \begin{equation}
        \phi(u a) := u a,
        \qquad \mbox{ if } u a \in \PosetP' \setminus \PosetP_{b'}
        \mbox{ and } a \ne b',
    \end{equation}
    \begin{equation}
        \phi\Par{u b'} := u b,
        \qquad \mbox{ if } u b ' \in \PosetP' \setminus \PosetP_{b'},
    \end{equation}
    \begin{equation}
        \phi\Par{\Par{u b', 1}} := u b',
        \qquad \mbox{ if } \Par{u b', 1} \in \PosetP_{b'} \times \IntervalTwo,
    \end{equation}
     \begin{equation}
        \phi\Par{\Par{u b', 2}} := u b,
        \qquad \mbox{ if } \Par{u b', 2} \in \PosetP_{b'} \times \IntervalTwo.
    \end{equation}
    \end{subequations}
    This map $\phi$ is well-defined because, respectively, one has $\PosetP'_a = \PosetP_a$
    for any $a \in \HanL{b' - 1}$, Lemma~\ref{lem:last_interval_of_derivation} holds, $I$
    is in particular a subset of $\PosetP$, and $\PosetP$ satisfies~\ref{item:nested_2}. Let
    now $\psi : \PosetP \to \PosetP'[I]$ be the map satisfying
    \begin{subequations}
    \begin{equation}
        \psi(u a) = u a,
        \qquad \mbox{ if } u a \in \PosetP \mbox{ and } a \in \HanL{b' - 1},
    \end{equation}
    \begin{equation}
        \psi(u b) = u b',
        \qquad \mbox{ if } u b' \in \PosetP' \setminus \PosetP_{b'},
    \end{equation}
    \begin{equation}
        \psi(u b) = \Par{u b', 2},
        \qquad \mbox{ if } u b' \in \PosetP_{b'},
    \end{equation}
    \begin{equation}
        \psi\Par{u b'} = \Par{u b', 1},
        \qquad \mbox{ if } u b' \in \PosetP_{b'}.
    \end{equation}
    \end{subequations}
    By similar arguments as before, this map $\psi$ is well-defined. Moreover, by
    construction, $\psi$ is the inverse of $\phi$. Therefore, $\phi$ is a bijection.  The
    fact that $\phi$ is a poset embedding comes by definition of $\phi$ and from the fact
    that, due to the property of $\PosetP$ to be nested, for any $u b' \in \PosetP'
    \setminus \PosetP_{b'}$, all elements greater than $u b'$ in $\PosetP'$ do not belong
    to~$\PosetP_{b'}$. Thus, $\PosetP'[I]$ is isomorphic as a poset to~$\PosetP$.
\end{proof}
\medbreak

By assuming that $\PosetP$ is nested, the \Def{sequence of derivations} from $\PosetP$ is
the sequence
\begin{equation}
    \Par{\PosetP, \DerivationOnSet(\PosetP), \DerivationOnSet^2(\PosetP),
    \dots, \DerivationOnSet^{\MaxLastLetter(\PosetP)}(\PosetP)}
\end{equation}
of subsets of $\SetCliff_\delta(n)$. Observe that due to~\ref{item:nested_1}, for any $k \in
\Han{\MaxLastLetter(\PosetP) - 1}$, $\MaxLastLetter\Par{\DerivationOnSet^k(\PosetP)} \geq
1$, so that $\DerivationOnSet^{k + 1}(\PosetP)$ is well-defined.
\medbreak

Given a graded subset $\SubFamilly$ of $\SetCliff_\delta$, we say by extension that
$\SubFamilly$ is \Def{nested} if for all $n \geq 0$, the posets $\SubFamilly(n)$ are nested.
\medbreak

\begin{Theorem} \label{thm:constructible_by_interval_doubling_subfamilly}
    Let $\delta$ be a rooted range map and $\SubFamilly$ be a nested and closed by prefix
    graded subset of $\SetCliff_\delta$.  For any $n \geq 1$, $\SubFamilly(n)$ is
    constructible by interval doubling. Moreover,
    \begin{equation} \begin{split}
        \label{equ:constructible_by_interval_doubling_subfamilly}
        \SubFamilly(n) & \to \DerivationOnSet(\SubFamilly(n))
        \to \dots \to \DerivationOnSet^{\MaxLastLetter(\SubFamilly(n))}(\SubFamilly(n))
        \simeq \SubFamilly(n - 1)
        \\
        & \to \DerivationOnSet(\SubFamilly(n - 1)) \to \dots
        \to \DerivationOnSet^{\MaxLastLetter(\SubFamilly(n - 1))}(\SubFamilly(n - 1))
        \simeq \SubFamilly(n - 2) \\
        & \to \dots \to \SubFamilly(0) \simeq \{\epsilon\}
    \end{split} \end{equation}
    is a sequence of interval contractions from $\SubFamilly(n)$ to the trivial
    lattice~$\{\epsilon\}$.
\end{Theorem}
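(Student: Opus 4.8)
The plan is to establish the theorem by an induction that peels off one coordinate at a time, using the three preceding lemmas as the inductive engine. First I would fix $n \geq 1$ and argue that each poset $\SubFamilly(n)$ is nested (this is a hypothesis) and, crucially, that the successive derivations $\DerivationOnSet^k(\SubFamilly(n))$ remain nested. The latter follows by repeated application of Lemma~\ref{lem:derivation_property_conservation}: since $\delta$ is rooted, property~\ref{item:nested_1} guarantees that as long as $\MaxLastLetter$ is positive we may keep differentiating, and each differentiation preserves nestedness. Thus the sequence of derivations from $\SubFamilly(n)$ is well-defined, and after $\MaxLastLetter(\SubFamilly(n))$ steps we reach a subposet whose elements all have last letter $0$.

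Next I would identify the endpoint of this chain of derivations. After exhausting all derivations, every $\delta$-cliff in $\DerivationOnSet^{\MaxLastLetter(\SubFamilly(n))}(\SubFamilly(n))$ ends in $0$; stripping this last letter gives a bijection onto a subset of $\SetCliff_\delta(n-1)$. The key point here is that this subset is exactly $\SubFamilly(n-1)$: one inclusion uses that $\SubFamilly$ is closed by prefix (every element of $\SubFamilly(n-1)$ is a prefix of some element of $\SubFamilly(n)$, hence — via~\ref{item:nested_1} and the structure of the derivations — appears), and the reverse inclusion is immediate since truncating a prefix of a $\SubFamilly$-element stays in $\SubFamilly$. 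Moreover this truncation is a poset isomorphism onto $\SubFamilly(n-1)$ because $\Leq$ ignores the trailing $0$. Combined with the observation that $\MaxLastLetter(\DerivationOnSet(\PosetP)) \leq \MaxLastLetter(\PosetP) - 1$, this shows that each horizontal segment of the diagram in~\eqref{equ:constructible_by_interval_doubling_subfamilly} is valid and terminates as claimed, and iterating the truncation down through $n-1, n-2, \dots, 0$ reaches $\SubFamilly(0) = \{\epsilon\}$.

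The heart of the argument is then that each single arrow $\DerivationOnSet^k(\SubFamilly(n)) \to \DerivationOnSet^{k+1}(\SubFamilly(n))$ is an interval contraction. This is precisely Lemma~\ref{lem:derivation_interval_doubling}: since $\DerivationOnSet^k(\SubFamilly(n))$ is nested and has $\MaxLastLetter \geq 1$ at each such step, $\DerivationOnSet^k(\SubFamilly(n))$ is isomorphic as a poset to $\DerivationOnSet^{k+1}(\SubFamilly(n))[I]$ where $I$ is the interval of the $(k{+}1)$-st derivation consisting of cliffs whose last letter equals $\MaxLastLetter - 1$. Reading this the other way, passing from $\DerivationOnSet^{k+1}$ to $\DerivationOnSet^{k}$ is exactly an interval doubling, so the whole chain~\eqref{equ:constructible_by_interval_doubling_subfamilly}, read from right to left (from $\{\epsilon\}$ up to $\SubFamilly(n)$), is a sequence of interval doublings starting from the singleton lattice. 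Since along the way each intermediate object $\SubFamilly(n-j)$ is a lattice (Theorem~\ref{thm:decr_incr_meet_join} together with Proposition~\ref{prop:subposets_lattices} applies once one checks $\SubFamilly$ is also minimally and maximally extendable — in fact one only needs that the derivations of lattices are lattices, which interval doubling guarantees), this certifies that $\SubFamilly(n)$ is constructible by interval doubling.

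The main obstacle I anticipate is the bookkeeping at the junction between consecutive sizes, i.e.\ verifying carefully that $\DerivationOnSet^{\MaxLastLetter(\SubFamilly(n))}(\SubFamilly(n))$, as a subposet of $\SetCliff_\delta(n)$ sitting entirely in last-letter $0$, really coincides with the image of $\SubFamilly(n-1)$ under the map $u \mapsto u0$, and that this identification is order-compatible. This requires using Lemma~\ref{lem:last_interval_of_derivation} to track precisely which cliffs survive each derivation, and invoking both the closed-by-prefix hypothesis and property~\ref{item:nested_1} (which is where rootedness of $\delta$ enters, ensuring $0^{n-1}a \in \SubFamilly(n)$ for the relevant range of $a$). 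Once this identification is pinned down, the rest is a formal concatenation of the lemma statements, and the displayed chain~\eqref{equ:constructible_by_interval_doubling_subfamilly} is the explicit witness of constructibility.
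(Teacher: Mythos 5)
Your proposal is correct and follows essentially the same route as the paper's proof: induction on $n$, with Lemmas~\ref{lem:derivation_property_conservation} and~\ref{lem:derivation_interval_doubling} supplying the interval doublings within each size, and prefix-closure identifying the fully derived poset with $\SubFamilly(n-1)$. The junction you single out as delicate --- checking that the last-letter-zero poset is all of $\SubFamilly(n-1)$ rather than merely the set of length-$(n-1)$ prefixes of elements of $\SubFamilly(n)$ --- is treated at exactly the same level of detail in the paper's own argument, so no further repair is required of you here.
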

\begin{proof}
    We proceed by induction on $n \geq 0$. If $n = 0$, since $\delta$ is rooted, we
    necessarily have $\SubFamilly(0) \simeq \{\epsilon\}$, and this poset is by
    constructible by interval doubling.  Assume now that $n \geq 1$ and set $\PosetP :=
    \SubFamilly(n)$.  Since $\SubFamilly$ is nested, the sequence of reductions from
    $\PosetP$ is well-defined. By Lemmas~\ref{lem:derivation_property_conservation}
    and~\ref{lem:derivation_interval_doubling}, by setting
    \begin{math}
        \PosetP' := \DerivationOnSet^{\MaxLastLetter(\PosetP)}(\PosetP),
    \end{math}
    $\PosetP$ is obtained by performing a sequence of interval doubling from the poset
    $\PosetP'$. Now, due to the definition of the derivation algorithm $\DerivationOnSet$,
    $\PosetP'$ is made of the $\delta$-cliffs of $\PosetP$ wherein the last letters have
    been replaced by $0$. This poset $\PosetP'$ is therefore isomorphic to the poset
    $\PosetP''$ formed by the prefixes of length $n - 1$ of $\PosetP$.  Since $\SubFamilly$
    is closed by prefix, $\PosetP''$ is thus the poset $\SubFamilly(n - 1)$.  By induction
    hypothesis, this last poset is constructible by interval doubling.  Therefore,
    $\SubFamilly(n)$ also is.  All this produces the
    sequence~\eqref{equ:constructible_by_interval_doubling_subfamilly} of interval
    contractions.
\end{proof}
\medbreak

%%%%%%%%%%%%%%%%%%%%%%%%%%%%%%%%%%%%%%%%%%%%%%%%%%%%%%%%%%%%%%%%%%%%%%%%%%%%%%%%%%%%%%%%%%%%
\subsubsection{Elevation maps} \label{subsubsec:elevation_maps}
We introduce here a combinatorial tool intervening in the study of the three Fuss-Catalan
posets introduced in the sequel.
\medbreak

Let $\SubFamilly$ be a closed by prefix graded subset of $\SetCliff_\delta$.  For any $u \in
\SubFamilly$, let
\begin{equation}
    \Next_\SubFamilly(u) := \Bra{a \in \HanL{\delta(|u| + 1)} : u a \in \SubFamilly}.
\end{equation}
By definition, $\Next_\SubFamilly(u)$ is the set of all the letters $a$ that can follow $u$
to form an element of~$\SubFamilly$.  For any $n \geq 0$, the \Def{$\SubFamilly$-elevation
map} is the map
\begin{math}
    \ElevationMap_{\SubFamilly} : \SubFamilly(n) \to \SetCliff_\delta(n)
\end{math}
defined, for any $u \in \SubFamilly(n)$ and $i \in [n]$, by
\begin{equation}
    \ElevationMap_{\SubFamilly}(u)_i
    := \# \Par{\Next_\SubFamilly\Par{u_1 \dots u_{i - 1}} \cap \HanL{u_i - 1}}
\end{equation}
for any $i \in [n]$. From an intuitive point of view, the value of the $i$-th letter of
$\ElevationMap_{\SubFamilly}(u)$ is the number of cliffs of $\SubFamilly$ obtained by
considering the prefix of $u$ ending at the letter $u_i$ and by replacing this letter by a
smaller one. Remark in particular that $\ElevationMap_{\SetCliff_\delta}$ is the identity
map. Besides, we say that any $u \in \SubFamilly$ is an \Def{exuviae} if
$\ElevationMap_\SubFamilly(u) = u$.
\medbreak

Let $\ElevationImage_\SubFamilly$ be the graded set wherein for any $n \geq 0$,
$\ElevationImage_\SubFamilly(n)$ is the image of $\SubFamilly(n)$ by the
$\SubFamilly$-elevation map.  We call this set the \Def{$\SubFamilly$-elevation image}.
Observe that $\ElevationImage_\SubFamilly$ is a graded subset of $\SetCliff_\delta$. Note
also that for any $u \in \SubFamilly$, $\ElevationMap_\SubFamilly(u) \Leq u$.
\medbreak

\begin{Proposition} \label{prop:elevation_map_injectivity}
    Let $\delta$ be a range map and $\SubFamilly$ be a closed by prefix graded subset of
    $\SetCliff_\delta$.  For any $n \geq 0$, the $\SubFamilly$-elevation map is injective on
    the domain $\SubFamilly(n)$.
\end{Proposition}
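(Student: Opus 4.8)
The plan is to reconstruct $u$ letter by letter from $\ElevationMap_\SubFamilly(u)$, reading from left to right. The crucial elementary fact is that an element of a finite totally ordered set is completely determined by the number of elements strictly below it; here the relevant ordered set at step $i$ will be $\Next_\SubFamilly(u_1 \dots u_{i-1}) \subseteq \Han{0, \delta(i)}$, and the key point will be that $u_i$ is always a member of it, which is exactly where the closed-by-prefix hypothesis enters.

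Concretely, I would fix $u, v \in \SubFamilly(n)$ with $\ElevationMap_\SubFamilly(u) = \ElevationMap_\SubFamilly(v)$ and prove by induction on $i \in \Han{0, n}$ that $u_1 \dots u_i = v_1 \dots v_i$. The case $i = 0$ is trivial since both prefixes are empty. For the inductive step, set $w := u_1 \dots u_{i - 1} = v_1 \dots v_{i - 1}$; since $\SubFamilly$ is closed by prefix, $w$, $u_1 \dots u_i$, and $v_1 \dots v_i$ all belong to $\SubFamilly$, so $u_i, v_i \in \Next_\SubFamilly(w)$. By definition, $\ElevationMap_\SubFamilly(u)_i = \# \Par{\Next_\SubFamilly(w) \cap \Han{0, u_i - 1}}$ is precisely the number of elements of $\Next_\SubFamilly(w)$ strictly smaller than $u_i$, and similarly $\ElevationMap_\SubFamilly(v)_i$ counts those strictly smaller than $v_i$. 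Because $u_i$ lies in the finite chain $\Next_\SubFamilly(w)$, it is the unique element of that chain having exactly $\ElevationMap_\SubFamilly(u)_i$ elements below it; the same holds for $v_i$ with $\ElevationMap_\SubFamilly(v)_i$. From $\ElevationMap_\SubFamilly(u)_i = \ElevationMap_\SubFamilly(v)_i$ we then get $u_i = v_i$, hence $u_1 \dots u_i = v_1 \dots v_i$. Taking $i = n$ yields $u = v$, so $\ElevationMap_\SubFamilly$ is injective on $\SubFamilly(n)$.

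I do not expect a genuine obstacle here: the only delicate point is verifying that $u_i$ really belongs to $\Next_\SubFamilly(u_1 \dots u_{i - 1})$, which is the sole use of the closed-by-prefix assumption and is needed so that the count $\ElevationMap_\SubFamilly(u)_i$ pins $u_i$ down uniquely rather than ambiguously. I would also remark that this argument in fact produces an explicit left-to-right decoding procedure recovering $u$ from $\ElevationMap_\SubFamilly(u)$, which can be used later to describe the elevation image $\ElevationImage_\SubFamilly$ and the exuviae.
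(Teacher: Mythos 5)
Your proof is correct and follows essentially the same route as the paper's: an induction that recovers the letters of $u$ one at a time, using that closure by prefix places $u_i$ in $\Next_\SubFamilly(u_1 \dots u_{i-1})$ and that an element of this finite chain is determined by the number of chain elements strictly below it. The paper phrases it as an induction on $n$ peeling off the last letter rather than a forward induction on prefixes, but the content is identical (and you make the key membership point slightly more explicit than the paper does).
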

\begin{proof}
    We proceed by induction on $n$. When $n = 0$, the property is trivially satisfied. Let
    $u, v \in \SubFamilly(n)$ such that $n \geq 1$ and
    \begin{math}
        \ElevationMap_{\SubFamilly}(u) = \ElevationMap_{\SubFamilly}(v).
    \end{math}
    Since $\SubFamilly$ is closed by prefix, we have $u = u' a$ and $v = v' b$ where $u', v'
    \in \SubFamilly(n - 1)$ and $a, b \in \N$.  By definition of
    $\ElevationMap_{\SubFamilly}$, we have
    \begin{math}
        \ElevationMap_{\SubFamilly}\Par{u'a } = \ElevationMap_{\SubFamilly}\Par{u'} c
    \end{math}
    and
    \begin{math}
        \ElevationMap_{\SubFamilly}\Par{v' b} = \ElevationMap_{\SubFamilly}\Par{v'} c
    \end{math}
    where $c \in \N$. Hence,
    \begin{math}
        \ElevationMap_{\SubFamilly}\Par{u'} = \ElevationMap_{\SubFamilly}\Par{v'}
    \end{math}
    which leads, by induction hypothesis, to the fact that $u' = v'$.  Moreover, we deduce
    from this and from the definition of the $\SubFamilly$-elevation map that there are
    exactly $c$ letters $a'$ smaller than $a$ such that $u' \, a' \in \SubFamilly$ and that
    there are exactly $c$ letters $b'$ smaller than $b$ such that $v' \, b' \in
    \SubFamilly$. Therefore, we have $a = b$ and thus $u = v$, establishing the injectivity
    of~$\ElevationMap_{\SubFamilly}$.
\end{proof}
\medbreak

\begin{Lemma} \label{lem:elevation_image_closed_prefix}
    Let $\delta$ be a range map and $\SubFamilly$ be a closed by prefix graded subset of
    $\SetCliff_\delta$. The $\SubFamilly$-elevation image is closed by prefix.
\end{Lemma}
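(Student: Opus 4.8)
The plan is to reduce everything to a single observation: removing the last letter of an element of $\ElevationImage_\SubFamilly$ produces again an element of $\ElevationImage_\SubFamilly$, and then to iterate. So fix $n \geq 1$ and let $w \in \ElevationImage_\SubFamilly(n)$. By definition of the elevation image, there is a $u \in \SubFamilly(n)$ such that $\ElevationMap_\SubFamilly(u) = w$. Since $\SubFamilly$ is closed by prefix, $u$ decomposes as $u = u' a$ with $u' \in \SubFamilly(n - 1)$ and $a \in \N$; this is where the hypothesis on $\SubFamilly$ is used, namely to guarantee that $\ElevationMap_\SubFamilly(u')$ is defined.

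The key point is that, directly from the definition of $\ElevationMap_\SubFamilly$, the $i$-th letter of $\ElevationMap_\SubFamilly(u)$ depends only on the prefix $u_1 \dots u_i$ of $u$. Hence, for every $i \in [n - 1]$, the prefix $u_1 \dots u_{i - 1}$ of $u$ coincides with the corresponding prefix of $u'$ and $u_i = u'_i$, so that $\ElevationMap_\SubFamilly(u)_i = \ElevationMap_\SubFamilly\Par{u'}_i$. Consequently the length-$(n - 1)$ prefix of $w = \ElevationMap_\SubFamilly(u)$ is exactly $\ElevationMap_\SubFamilly\Par{u'}$, which belongs to $\ElevationImage_\SubFamilly(n - 1)$ by definition.

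This shows that any element of $\ElevationImage_\SubFamilly$ of positive size has its immediate prefix again in $\ElevationImage_\SubFamilly$. By an immediate induction on the number of letters removed, every prefix of every element of $\ElevationImage_\SubFamilly$ lies in $\ElevationImage_\SubFamilly$, so $\ElevationImage_\SubFamilly$ is closed by prefix. I do not expect any real obstacle here: the whole argument rests on the prefix-compatibility of the elevation map, which is built into its recursive/letterwise definition, and the closure-by-prefix of $\SubFamilly$ serves only to make the intermediate objects legitimate.
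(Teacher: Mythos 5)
Your proof is correct and follows essentially the same route as the paper's: both arguments rest on the fact that the $i$-th letter of $\ElevationMap_\SubFamilly(u)$ depends only on the prefix $u_1 \dots u_i$, together with the closure by prefix of $\SubFamilly$ to ensure the truncated word stays in the domain of the elevation map. The only cosmetic difference is that you peel off one letter at a time and induct, whereas the paper handles an arbitrary prefix in one step.
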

\begin{proof}
    Let $n \geq 0$ and $v \in \ElevationImage_\SubFamilly(n)$. Then, there exists $u \in
    \SubFamilly(n)$ such that $\ElevationMap_\SubFamilly(u) = v$.  Let $v'$ be a prefix of
    $v$. Since $\SubFamilly$ is closed by prefix, the prefix $u'$ of $u$ of length $n' :=
    \left|v'\right|$ belongs to $\SubFamilly\Par{n'}$. Moreover, by definition of
    $\ElevationMap_\SubFamilly$, we have $\ElevationMap_\SubFamilly\Par{u'} = v'$.
    Therefore, $v' \in \ElevationImage_\SubFamilly$, implying the statement of the lemma.
\end{proof}
\medbreak

\begin{Proposition} \label{prop:elevation_map_poset_morphism}
    Let $\delta$ be a range map and $\SubFamilly$ be a closed by prefix graded subset of
    $\SetCliff_\delta$ such that for any $u, v \in \SubFamilly$, $u \Leq v$ implies
    $\Next_\SubFamilly(v) \subseteq \Next_\SubFamilly(u)$.  For any $n \geq 0$, the map
    $\ElevationMap_{\SubFamilly}^{-1}$ is a poset morphism from
    $\ElevationImage_\SubFamilly(n)$ to~$\SubFamilly(n)$.
\end{Proposition}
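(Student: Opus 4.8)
The plan is to prove the statement by induction on $n$, using that $\ElevationMap_{\SubFamilly}$ is already known to be injective on $\SubFamilly(n)$ by Proposition~\ref{prop:elevation_map_injectivity}, so that $\ElevationMap_{\SubFamilly}^{-1} : \ElevationImage_\SubFamilly(n) \to \SubFamilly(n)$ is well-defined. The base case $n = 0$ is trivial. For the inductive step I would take $x, y \in \ElevationImage_\SubFamilly(n)$ with $x \Leq y$, set $u := \ElevationMap_{\SubFamilly}^{-1}(x)$ and $v := \ElevationMap_{\SubFamilly}^{-1}(y)$, and show that $u \Leq v$.

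Since $\SubFamilly$ is closed by prefix, write $u = u' a$ and $v = v' b$ with $u', v' \in \SubFamilly(n - 1)$ and $a, b \in \N$. Unfolding the recursive definition of the elevation map on these prefix decompositions, the length-$(n-1)$ prefix of $x$ is exactly $\ElevationMap_{\SubFamilly}(u')$ and its last letter is $c := \#\Par{\Next_\SubFamilly(u') \cap \Han{0, a - 1}}$, and likewise the prefix of $y$ is $\ElevationMap_{\SubFamilly}(v')$ with last letter $d := \#\Par{\Next_\SubFamilly(v') \cap \Han{0, b - 1}}$. Because $\Leq$ on $\delta$-cliffs is componentwise, the hypothesis $x \Leq y$ splits into $\ElevationMap_{\SubFamilly}(u') \Leq \ElevationMap_{\SubFamilly}(v')$ together with $c \leq d$. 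These two prefixes are elevation images of $u'$ and $v'$, hence lie in $\ElevationImage_\SubFamilly(n - 1)$ (compare Lemma~\ref{lem:elevation_image_closed_prefix}), so the induction hypothesis applies and yields $u' \Leq v'$.

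It remains to deduce $a \leq b$, and this counting step is the real content of the argument. From $u' \Leq v'$ and the assumption on $\SubFamilly$ we get $\Next_\SubFamilly(v') \subseteq \Next_\SubFamilly(u')$. Suppose, for contradiction, that $a > b$. Then $\Han{0, b - 1} \subseteq \Han{0, a - 1}$, so $\Next_\SubFamilly(v') \cap \Han{0, b - 1} \subseteq \Next_\SubFamilly(u') \cap \Han{0, a - 1}$, and this inclusion is strict: indeed $b \in \Next_\SubFamilly(v') \subseteq \Next_\SubFamilly(u')$ because $v' b \in \SubFamilly$, while $b \in \Han{0, a - 1} \setminus \Han{0, b - 1}$ since $0 \leq b \leq a - 1$. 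Taking cardinalities gives $d < c$, contradicting $c \leq d$. Hence $a \leq b$, and combined with $u' \Leq v'$ this gives $u = u' a \Leq v' b = v$, which completes the induction and shows that $\ElevationMap_{\SubFamilly}^{-1}$ is a poset morphism.

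I do not anticipate any serious obstacle here: the only care needed is in the recursive unfolding of $\ElevationMap_{\SubFamilly}$ along the prefix decomposition and in the observation that $\Leq$ restricts to prefixes. The crux is the short cardinality argument in the last paragraph that converts the containment $\Next_\SubFamilly(v') \subseteq \Next_\SubFamilly(u')$ into the inequality $a \leq b$.
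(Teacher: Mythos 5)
Your proof is correct and follows essentially the same route as the paper's: induction on $n$, decomposition into a length-$(n-1)$ prefix handled by the induction hypothesis, and the $\Next_\SubFamilly$-containment hypothesis to control the last letter. The only difference is cosmetic — you run the last-letter comparison by contradiction and spell out the cardinality count that the paper compresses into ``by definition of $\ElevationMap_{\SubFamilly}^{-1}$''.
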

\begin{proof}
    First, by Proposition~\ref{prop:elevation_map_injectivity}, the map
    $\ElevationMap_{\SubFamilly}^{-1}$ is well-defined. We now proceed by induction on $n$.
    When $n = 0$, the property is trivially satisfied. Let $u$ and $v$ be elements of
    $\ElevationImage_{\SubFamilly}(n)$ such that $n \geq 1$ and $u \Leq v$. By
    Lemma~\ref{lem:elevation_image_closed_prefix}, we have $u = u' a$ and $v = v' b$ where
    $u', v' \in \ElevationImage_{\SubFamilly}(n - 1)$ and $a, b \in \N$. By definition of
    $\ElevationMap_{\SubFamilly}^{-1}$, we have
    \begin{math}
        \ElevationMap_{\SubFamilly}^{-1}\Par{u' a}
        = \ElevationMap_{\SubFamilly}^{-1}\Par{u'} c
    \end{math}
    and
    \begin{math}
        \ElevationMap_{\SubFamilly}^{-1}\Par{v' b}
        = \ElevationMap_{\SubFamilly}^{-1}\Par{v'} d
    \end{math}
    where $c, d \in \N$.  Since $u \Leq v$, one has $u' \Leq v'$ so that, by induction
    hypothesis,
    \begin{math}
        \ElevationMap_{\SubFamilly}^{-1}\Par{u'}
        \Leq \ElevationMap_{\SubFamilly}^{-1}\Par{v'}.
    \end{math}
    Moreover, $u \Leq v$ implies that  $a \leq b$. Due to the fact that
    $\Next_\SubFamilly\Par{v'} \subseteq \Next_\SubFamilly\Par{u'}$, one has by definition
    of $\ElevationMap_{\SubFamilly}^{-1}$ that $c \leq d$. Therefore,
    \begin{math}
        \ElevationMap_{\SubFamilly}^{-1}\Par{u'} c
        \leq \ElevationMap_{\SubFamilly}^{-1}\Par{v'} d,
    \end{math}
    which implies the statement of the proposition.
\end{proof}
\medbreak

Proposition~\ref{prop:elevation_map_poset_morphism} says that when $\SubFamilly$ is closed
by prefix, for any $n \geq 0$, the poset $\SubFamilly(n)$ is an order extension of
$\ElevationImage_\SubFamilly(n)$.
\medbreak

%%%%%%%%%%%%%%%%%%%%%%%%%%%%%%%%%%%%%%%%%%%%%%%%%%%%%%%%%%%%%%%%%%%%%%%%%%%%%%%%%%%%%%%%%%%%
\subsubsection{Geometric cubic realizations}
Let $\SubFamilly$ be a graded subset of $\SetCliff_\delta$. For any $n \geq 0$, the
\Def{realization} of $\SubFamilly(n)$ is the geometric object
$\CubicReal\Par{\SubFamilly(n)}$ defined in the space $\R^n$ and obtained by placing for
each $u \in \SubFamilly(n)$ a vertex of coordinates $\Par{u_1, \dots, u_n}$, and by forming
for each $u, v \in \SubFamilly(n)$ such that $u \Covered_{\SubFamilly} v$ an edge between
$u$ and $v$.  Remark that the posets of Figure~\ref{fig:examples_cliff_posets} represent
actually the realizations of $\delta$-cliff posets. We will follow this drawing
convention for all the next figures of posets in all the sequel.  When $\SubFamilly$ is
straight, every edge of $\CubicReal\Par{\SubFamilly(n)}$ is parallel to a line passing by
the origin and a point of the form $\Par{0, \dots, 0, 1, 0, \dots, 0}$. In this case, we say
that $\CubicReal\Par{\SubFamilly(n)}$ is \Def{cubic}.
\medbreak

As a side remark, we would like to stress that the present notion of geometric realization
of a poset differs from the usual one saying that it consists in the geometric realization
of the simplicial complex of the chain of the poset.
\medbreak

Let us assume from now that $\SubFamilly$ is straight.  Let $u, v \in \SubFamilly(n)$ such
that $u \Leq v$. The word $u$ is \Def{cell-compatible} with $v$ if for any word $w$ of
length $n$ such that for any $i \in [n]$, $w_i \in \Bra{u_i, v_i}$, then $w \in
\SubFamilly$.  In this case, we call \Def{cell} the set of points
\begin{equation}
    \Angle{u, v} := \Bra{x \in \R^n : u_i \leq x_i \leq v_i \mbox{ for all } i \in [n]}.
\end{equation}
By definition, a cell is an orthotope, that is a parallelotope whose edges are all mutually
orthogonal or parallel. A point $x$ of $\R^n$ is \Def{inside} a cell $\Angle{u, v}$ if for
any $i \in [n]$, $u_i \ne v_i$ implies $u_i < x_i < v_i$.  A cell $\Angle {u, v}$ is
\Def{pure} if there is no point of $\SubFamilly(n)$ inside $\Angle{u, v}$. In other terms,
this says that for all $w \in [u, v]$, there exists $i \in [n]$ such that $u_i \ne v_i$ and
$w_i \in \Bra{u_i, v_i}$.  Two cells $\Angle{u, v}$ and $\Angle{u', v'}$ of
$\CubicReal(\SubFamilly(n))$ are \Def{disjoint} if there is no point of $\R^n$ which is both
inside $\Angle{u, v}$ and $\Angle{u', v'}$. The \Def{dimension} $\dim\Angle{u, v}$ of a
cell $\Angle{u, v}$ is its dimension as an orthotope and it satisfies $\dim \Angle{u, v} =
\# \DiffIndices(u, v)$.  The \Def{volume} $\Volume \Angle{u, v}$ of $\Angle{u, v}$ is its
volume as an orthotope and its satisfies
\begin{equation}
    \Volume \Angle{u, v} = \prod_{i \in \DiffIndices(u, v)} v_i - u_i.
\end{equation}
For any $k \geq 0$, the \Def{$k$-volume} $\Volume_k(\CubicReal(\SubFamilly(n)))$ of
$\CubicReal(\SubFamilly(n))$ is the volume obtained by summing the volumes of all its all
its cells of dimension $k$, computed by not counting several times potential intersecting
orthotopes. The \Def{volume} $\Volume(\CubicReal(\SubFamilly(n)))$ of
$\CubicReal(\SubFamilly(n))$ is defined as $\Volume_k(\CubicReal(\SubFamilly(n)))$ where $k$
is the largest integer such that $\CubicReal(\SubFamilly(n))$ has at least one cell of
dimension $k$.
\medbreak

Figure~\ref{fig:examples_cubic_realizations_cells} shows examples of these notions.
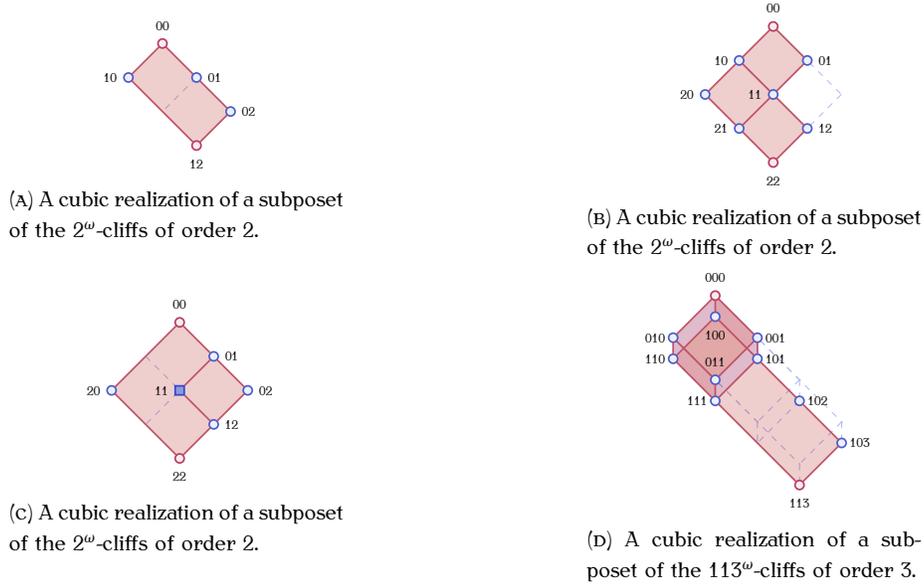
\begin{figure}[ht]
    \centering
    \subfloat[][A cubic realization of a subposet of the $2^\omega$-cliffs of order~$2$.]{
    \begin{minipage}{.45\textwidth}
    \centering
    \scalebox{.8}{
    \begin{tikzpicture}[Centering,xscale=.8,yscale=.8,rotate=-135]
        \draw[Grid](0,0)grid(1,2);
        \draw[Face](0,0)--(0,2)--(1,2)--(1,0);
        \node[NodeGraph,MarkBA](00)at(0,0){};
        \node[NodeGraph](01)at(0,1){};
        \node[NodeGraph](02)at(0,2){};
        \node[NodeGraph](10)at(1,0){};
        \node[NodeGraph,MarkBA](12)at(1,2){};
        \node[NodeLabelGraph,above of=00]{$00$};
        \node[NodeLabelGraph,right of=01]{$01$};
        \node[NodeLabelGraph,right of=02]{$02$};
        \node[NodeLabelGraph,left of=10]{$10$};
        \node[NodeLabelGraph,below of=12]{$12$};
        \draw[EdgeGraph](00)--(01);
        \draw[EdgeGraph](00)--(10);
        \draw[EdgeGraph](01)--(02);
        \draw[EdgeGraph](02)--(12);
        \draw[EdgeGraph](10)--(12);
    \end{tikzpicture}}
    \end{minipage}
    \label{subfig:pure_cell}}
    \qquad
    \subfloat[][A cubic realization of a subposet of the $2^\omega$-cliffs of order~$2$.]{
    \begin{minipage}{.45\textwidth}
    \centering
    \scalebox{.8}{
    \begin{tikzpicture}[Centering,xscale=.8,yscale=.8,rotate=-135]
        \draw[Grid](0,0)grid(2,2);
        \draw[Face](0,0)--(0,1)--(1,1)--(1,0);
        \draw[Face](1,0)--(1,1)--(2,1)--(2,0);
        \draw[Face](1,1)--(1,2)--(2,2)--(2,1);
        \node[NodeGraph,MarkBA](00)at(0,0){};
        \node[NodeGraph](01)at(0,1){};
        \node[NodeGraph](10)at(1,0){};
        \node[NodeGraph](20)at(2,0){};
        \node[NodeGraph](11)at(1,1){};
        \node[NodeGraph](12)at(1,2){};
        \node[NodeGraph](21)at(2,1){};
        \node[NodeGraph,MarkBA](22)at(2,2){};
        \node[NodeLabelGraph,above of=00]{$00$};
        \node[NodeLabelGraph,right of=01]{$01$};
        \node[NodeLabelGraph,left of=10]{$10$};
        \node[NodeLabelGraph,left of=20]{$20$};
        \node[NodeLabelGraph,left of=11]{$11$};
        \node[NodeLabelGraph,right of=12]{$12$};
        \node[NodeLabelGraph,left of=21]{$21$};
        \node[NodeLabelGraph,below of=22]{$22$};
        \draw[EdgeGraph](00)--(01);
        \draw[EdgeGraph](00)--(10);
        \draw[EdgeGraph](01)--(11);
        \draw[EdgeGraph](10)--(11);
        \draw[EdgeGraph](10)--(20);
        \draw[EdgeGraph](20)--(21);
        \draw[EdgeGraph](11)--(12);
        \draw[EdgeGraph](11)--(21);
        \draw[EdgeGraph](12)--(22);
        \draw[EdgeGraph](21)--(22);
    \end{tikzpicture}}
    \end{minipage}
    \label{subfig:no_cell_compatible}}
    \qquad
    \qquad
    \subfloat[][A cubic realization of a subposet of the $2^\omega$-cliffs of order~$2$.]{
    \begin{minipage}{.45\textwidth}
    \centering
    \scalebox{.8}{
    \begin{tikzpicture}[Centering,xscale=.8,yscale=.8,rotate=-135]
        \draw[Grid](0,0)grid(2,2);
        \draw[Face](0,0)--(0,2)--(2,2)--(2,0);
        \node[NodeGraph,MarkBA](00)at(0,0){};
        \node[NodeGraph](01)at(0,1){};
        \node[NodeGraph](02)at(0,2){};
        \node[NodeGraph](20)at(2,0){};
        \node[MarkedNodeGraph](11)at(1,1){};
        \node[NodeGraph](12)at(1,2){};
        \node[NodeGraph,MarkBA](22)at(2,2){};
        \node[NodeLabelGraph,above of=00]{$00$};
        \node[NodeLabelGraph,right of=01]{$01$};
        \node[NodeLabelGraph,right of=02]{$02$};
        \node[NodeLabelGraph,left of=20]{$20$};
        \node[NodeLabelGraph,left of=11]{$11$};
        \node[NodeLabelGraph,right of=12]{$12$};
        \node[NodeLabelGraph,below of=22]{$22$};
        \draw[EdgeGraph](00)--(01);
        \draw[EdgeGraph](00)--(20);
        \draw[EdgeGraph](01)--(02);
        \draw[EdgeGraph](01)--(11);
        \draw[EdgeGraph](11)--(12);
        \draw[EdgeGraph](02)--(12);
        \draw[EdgeGraph](12)--(22);
        \draw[EdgeGraph](20)--(22);
    \end{tikzpicture}}
    \end{minipage}
    \label{subfig:no_pure_cell}}
    \qquad
    \subfloat[][A cubic realization of a subposet of the $113^\omega$-cliffs of order~$3$.]{
    \begin{minipage}{.45\textwidth}
    \centering
    \scalebox{.8}{
    \begin{tikzpicture}[Centering,xscale=.7,yscale=.7,
        x={(0,-.5cm)}, y={(-1.0cm,-1.0cm)}, z={(1.0cm,-1.0cm)}]
        \DrawGridSpace{1}{1}{3}
        \draw[FaceYZ](0,0,0)--(0,0,1)--(0,1,1)--(0,1,0);
        \draw[FaceYZ](1,0,0)--(1,0,1)--(1,1,1)--(1,1,0);
        \draw[FaceYZ](1,0,1)--(1,0,3)--(1,1,3)--(1,1,1);
        \draw[FaceXZ](0,1,0)--(0,1,1)--(1,1,1)--(1,1,0);
        \draw[FaceXZ](0,0,0)--(0,0,1)--(1,0,1)--(1,0,0);
        \draw[FaceXY](0,0,0)--(1,0,0)--(1,1,0)--(0,1,0);
        \draw[FaceXY](0,0,1)--(1,0,1)--(1,1,1)--(0,1,1);
        \node[NodeGraph,MarkBA](000)at(0,0,0){};
        \node[NodeGraph](001)at(0,0,1){};
        \node[NodeGraph](010)at(0,1,0){};
        \node[NodeGraph](011)at(0,1,1){};
        \node[NodeGraph](100)at(1,0,0){};
        \node[NodeGraph](101)at(1,0,1){};
        \node[NodeGraph](102)at(1,0,2){};
        \node[NodeGraph](103)at(1,0,3){};
        \node[NodeGraph](110)at(1,1,0){};
        \node[NodeGraph](111)at(1,1,1){};
        \node[NodeGraph,MarkBA](113)at(1,1,3){};
        \node[NodeLabelGraph,above of=000]{$000$};
        \node[NodeLabelGraph,right of=001]{$001$};
        \node[NodeLabelGraph,left of=010]{$010$};
        \node[NodeLabelGraph,above of=011]{$011$};
        \node[NodeLabelGraph,below of=100]{$100$};
        \node[NodeLabelGraph,right of=101]{$101$};
        \node[NodeLabelGraph,right of=102]{$102$};
        \node[NodeLabelGraph,right of=103]{$103$};
        \node[NodeLabelGraph,left of=110]{$110$};
        \node[NodeLabelGraph,left of=111]{$111$};
        \node[NodeLabelGraph,below of=113]{$113$};
        \draw[EdgeGraph](000)--(001);
        \draw[EdgeGraph](000)--(010);
        \draw[EdgeGraph](000)--(100);
        \draw[EdgeGraph](001)--(011);
        \draw[EdgeGraph](001)--(101);
        \draw[EdgeGraph](010)--(011);
        \draw[EdgeGraph](010)--(110);
        \draw[EdgeGraph](011)--(111);
        \draw[EdgeGraph](100)--(101);
        \draw[EdgeGraph](100)--(110);
        \draw[EdgeGraph](101)--(102);
        \draw[EdgeGraph](101)--(111);
        \draw[EdgeGraph](102)--(103);
        \draw[EdgeGraph](103)--(113);
        \draw[EdgeGraph](110)--(111);
        \draw[EdgeGraph](111)--(113);
    \end{tikzpicture}}
    \end{minipage}
    \label{subfig:cells_3d}}
    \caption{\footnotesize Some cubic realizations of straight subposets of posets of
    $\delta$-cliffs for certain range maps~$\delta$.}
    \label{fig:examples_cubic_realizations_cells}
\end{figure}
Figure~\ref{subfig:pure_cell} shows a cubic realization wherein $00$ is cell-compatible with
$12$. Hence, $\Angle{00, 12}$ is a cell. The point $\Par{\frac{1}{2}, \frac{3}{2}} \in \R^2$
is inside $\Angle{00, 12}$, and since there are no elements of the poset inside the cell,
this cell is pure. Figure~\ref{subfig:no_cell_compatible} shows a cubic realization wherein
$00$ is not cell-compatible with $22$ because $02$ does not belong to the poset.
Nevertheless, $\Angle{00, 11}$, $\Angle{10, 21}$, and $\Angle{11, 22}$ are pure cells of
dimension $2$.  Figure~\ref{subfig:no_pure_cell} shows a cubic realization wherein
$\Angle{00, 22}$ is a non-pure cell. Indeed, the $\delta$-cliff $11$ is an element of the
poset and is inside this cell.  Finally, Figure~\ref{subfig:cells_3d} shows a cubic
realization having $1$ as volume since there is exactly one cell $\Angle{000, 111}$ of
maximal dimension (which is $3$) and of volume $1$. Its $2$-volume is $8$ since this cubic
realization decomposes as the seven disjoint cells $\Angle{000, 011}$, $\Angle{000,
101}$, $\Angle{000, 110}$, $\Angle{001, 111}$, $\Angle{010, 111}$, $\Angle{100, 111}$, and
$\Angle{101, 113}$ of respective volumes $1$, $1$, $1$, $1$, $1$, $1$, and~$2$.
\medbreak

There is a close connection between output-wings (resp.\ input-wings) of $\SubFamilly(n)$,
$n \geq 0$, and the computation of the volume of $\CubicReal(\SubFamilly(n))$: if $\Angle{u,
v}$ is a cell of maximal dimension of $\CubicReal(\SubFamilly(n))$, then due to the fact
that $\SubFamilly$ is straight, $u$ (resp.\ $v$) is an output-wing (resp.\ input-wing) of
$\SubFamilly(n)$. When for any $n \geq 0$,
\begin{enumerate}[label={\it (\roman*)}]
    \item there is a map $\rho : \OutputWings(\SubFamilly)(n) \to
    \InputWings(\SubFamilly)(n)$;
    \item all cells of maximal dimension of $\CubicReal(\SubFamilly(n))$ express as
    $\Angle{u, \rho(u)}$ with $u \in \OutputWings(\SubFamilly)(n)$;
    \item \label{item:volume_condition_3}
    all cells of
    \begin{math}
        \Bra{\Angle{u, \rho(u)} : u \in \OutputWings(\SubFamilly)(n)}
    \end{math}
    are pairwise disjoint;
\end{enumerate}
then the volume of $\CubicReal(\SubFamilly(n))$, $n \geq 0$, writes as
\begin{equation} \label{equ:volume_computation_from_output_wings}
    \Volume\Par{\CubicReal(\SubFamilly(n))} =
    \sum_{u \in \OutputWings(\SubFamilly)(n)} \Volume \Angle{u, \rho(u)}.
\end{equation}
When some cells of
\begin{math}
    \Bra{\Angle{u, \rho(u)} : u \in \OutputWings(\SubFamilly)(n)}
\end{math}
intersect each other, the expression for the volume would not be at as simple
as~\eqref{equ:volume_computation_from_output_wings} and can be written instead as an
inclusion-exclusion formula. Of course, the same property holds when $\rho$ is instead a map
from $\InputWings(\SubFamilly)(n)$ to $\OutputWings(\SubFamilly)(n)$ by changing accordingly
the previous text.
\medbreak

Recall that the \Def{order dimension}~\cite{Tro92} of a poset $\PosetP$ is the smallest
nonnegative integer $k$ such that there exists a poset embedding of $\PosetP$ into
$\Par{\N^k, \Leq}$ where $\Leq$ is the componentwise partial order. Recall that the
\Def{hypercube} of dimension $n \geq 0$ is the poset $\Hypercube_n$ on the set of the
subsets of $[n]$ ordered by set inclusion. It can be shown that the order dimension of
$\Hypercube_n$ is~$n$.
\medbreak

\begin{Proposition} \label{prop:subfamillies_order_dimension}
    Let $\delta$ be a range map and $\SubFamilly$ be a straight graded subset of
    $\SetCliff_\delta$. If, for an $n \geq 0$, $\CubicReal\Par{\SubFamilly(n)}$ has a cell
    of dimension $\DimensionDelta_n(\delta)$, then the order dimension of the poset
    $\SubFamilly(n)$ is~$\DimensionDelta_n(\delta)$.
\end{Proposition}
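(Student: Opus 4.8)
The plan is to establish the two inequalities $\dim_{\mathrm{ord}}(\SubFamilly(n)) \leq \DimensionDelta_n(\delta)$ and $\dim_{\mathrm{ord}}(\SubFamilly(n)) \geq \DimensionDelta_n(\delta)$ separately. For the upper bound, I would use the tautological embedding $\SubFamilly(n) \hookrightarrow \SetCliff_\delta(n) \subseteq \N^n$; but $\N^n$ has too many coordinates, so the point is to discard the useless ones. Since every $\delta$-cliff $u$ satisfies $u_i = 0$ whenever $\delta(i) = 0$, the $i$-th coordinate is constant on $\SubFamilly(n)$ for each such $i$, and the projection onto the $\DimensionDelta_n(\delta) = \#\Bra{i \in [n] : \delta(i) \ne 0}$ remaining coordinates is still a poset embedding into $\N^{\DimensionDelta_n(\delta)}$. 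This gives $\dim_{\mathrm{ord}}(\SubFamilly(n)) \leq \DimensionDelta_n(\delta)$, and this part requires no hypothesis on $\SubFamilly$ beyond being a subset of $\SetCliff_\delta(n)$.

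For the lower bound, I would exploit the assumed cell of dimension $\DimensionDelta_n(\delta)$. By hypothesis there exist $u, v \in \SubFamilly(n)$ with $u \Leq v$ such that $u$ is cell-compatible with $v$ and $\#\DiffIndices(u,v) = \DimensionDelta_n(\delta)$; note $\DiffIndices(u,v)$ must then be exactly $\Bra{i \in [n] : \delta(i) \ne 0}$, since $u_i = v_i = 0$ forcibly whenever $\delta(i) = 0$. Cell-compatibility means that every word $w$ with $w_i \in \Bra{u_i, v_i}$ for all $i$ lies in $\SubFamilly$; in particular, restricting further to $w_i \in \{u_i, v_i\}$ for $i \in \DiffIndices(u,v)$ and $w_i = u_i$ otherwise, one gets that $\SubFamilly(n)$ contains a sub-poset order-isomorphic to the Boolean lattice $\Hypercube_k$ with $k = \DimensionDelta_n(\delta)$, the isomorphism sending a subset $A \subseteq \DiffIndices(u,v)$ to the word taking value $v_i$ on $i \in A$ and $u_i$ off $A$. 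Since order dimension is monotone under taking induced subposets and the order dimension of $\Hypercube_k$ is $k$ (as recalled just before the statement), we obtain $\dim_{\mathrm{ord}}(\SubFamilly(n)) \geq k = \DimensionDelta_n(\delta)$.

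Combining the two inequalities yields the claimed equality. The main point requiring a little care is the identification of the induced Boolean sublattice: one must check that the map $A \mapsto w^{(A)}$, where $w^{(A)}_i := v_i$ if $i \in A$ and $w^{(A)}_i := u_i$ otherwise, is genuinely a poset embedding of $\Hypercube_k$ into $\SubFamilly(n)$. This is where cell-compatibility is used (to guarantee $w^{(A)} \in \SubFamilly$), together with the elementary observation that for $i \in \DiffIndices(u,v)$ one has $u_i < v_i$, so $w^{(A)} \Leq w^{(B)}$ in $\SetCliff_\delta(n)$ is equivalent to $A \subseteq B$. I do not expect any serious obstacle; the monotonicity of order dimension under induced subposets and the fact that $\dim_{\mathrm{ord}}(\Hypercube_k) = k$ are standard facts (\cite{Tro92}) already invoked in the surrounding text.
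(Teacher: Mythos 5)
Your proposal is correct and follows essentially the same route as the paper: the upper bound via the embedding into the product of the $\DimensionDelta_n(\delta)$ nontrivial coordinates, and the lower bound by reading off a copy of $\Hypercube_{\DimensionDelta_n(\delta)}$ inside the given cell. The only difference is that you spell out the hypercube embedding $A \mapsto w^{(A)}$ explicitly, which the paper leaves implicit.
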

\begin{proof}
    First, since $\SubFamilly(n)$ is a subposet of $\SetCliff_\delta(n)$, $\SubFamilly(n)$
    is a subposet of the Cartesian product $\N^k$ where $k := \# \Bra{i \in [n] : \delta(i)
    \ne 0}$. This poset has order dimension $\DimensionDelta_n(\delta)$, so that the order
    dimension of $\SubFamilly(n)$ is at most $\DimensionDelta_n(\delta)$. Besides, since
    $\SubFamilly$ is straight, the notion of cell is well-defined in the cubic realization
    of $\SubFamilly(n)$. By hypothesis, $\SubFamilly(n)$ contains a cell $\Angle{u, v}$ of
    dimension $\DimensionDelta_n(\delta)$. Thus, there is a poset embedding of
    $\Hypercube_{\DimensionDelta_n(\delta)}$ into the interval $[u, v]$ of $\SubFamilly(n)$.
    Therefore, the order dimension of $\SubFamilly(n)$ is at
    least~$\DimensionDelta_n(\delta)$.
\end{proof}
\medbreak

As a particular case of Proposition~\ref{prop:subfamillies_order_dimension}, the order
dimension of $\SetCliff_\delta(n)$ is $\DimensionDelta_n(\delta)$. This explains the
terminology of ``$n$-th dimension of $\delta$'' for the notation $\DimensionDelta_n(\delta)$
introduced in Section~\ref{subsubsec:first_definitions_cliffs}.
\medbreak

%%%%%%%%%%%%%%%%%%%%%%%%%%%%%%%%%%%%%%%%%%%%%%%%%%%%%%%%%%%%%%%%%%%%%%%%%%%%%%%%%%%%%%%%%%%%
%%%%%%%%%%%%%%%%%%%%%%%%%%%%%%%%%%%%%%%%%%%%%%%%%%%%%%%%%%%%%%%%%%%%%%%%%%%%%%%%%%%%%%%%%%%%
%%%%%%%%%%%%%%%%%%%%%%%%%%%%%%%%%%%%%%%%%%%%%%%%%%%%%%%%%%%%%%%%%%%%%%%%%%%%%%%%%%%%%%%%%%%%
\section{Some Fuss-Catalan posets} \label{sec:fuss_catalan_posets}
We present here some examples of subposets of $\delta$-cliff posets. We focus in this work
on three posets whose elements are enumerated by $m$-Fuss-Catalan numbers for the case
$\delta = \MapM$, $m \geq 0$.  We provide some combinatorial properties of these posets like
among others, a description of their input-wings, output-wings, and butterflies, a study of
their order theoretic properties, and a study of their cubic realizations. We end this
section by establishing links between these three families of posets in terms of poset
morphisms, poset embeddings, and poset isomorphisms. We shall omit some straightforward
proofs (for instance, in the case of the descriptions of input-wings, output-wings,
butterflies, meet-irreducible and join-irreducible elements of the posets ---the
descriptions of these two families use
Proposition~\ref{prop:join_meet_irreducible_elements_subfamilies}).
\medbreak

We use the following notation conventions. Poset morphisms are denoted through arrows
$\tikz{\draw[Map](0,0)--(.5,0)}$, poset embeddings through arrows
$\tikz{\draw[MapEmbedding](0,0)--(.5,0)}$, and poset isomorphisms through
arrows~$\tikz{\draw[MapIsomorphism](0,0)--(.5,0)}$.
\medbreak

%%%%%%%%%%%%%%%%%%%%%%%%%%%%%%%%%%%%%%%%%%%%%%%%%%%%%%%%%%%%%%%%%%%%%%%%%%%%%%%%%%%%%%%%%%%%
%%%%%%%%%%%%%%%%%%%%%%%%%%%%%%%%%%%%%%%%%%%%%%%%%%%%%%%%%%%%%%%%%%%%%%%%%%%%%%%%%%%%%%%%%%%%
\subsection{$\delta$-avalanche posets}
We begin by introducing a first Fuss-Catalan family of posets. As we shall see, these posets
are not lattices but they form an important tool to study the next two families of
Fuss-Catalan posets.
\medbreak

%%%%%%%%%%%%%%%%%%%%%%%%%%%%%%%%%%%%%%%%%%%%%%%%%%%%%%%%%%%%%%%%%%%%%%%%%%%%%%%%%%%%%%%%%%%%
\subsubsection{Objects} \label{subsubsec:avalanche_objects}
For any range map $\delta$, let $\SetAvalanche_\delta$ be the graded subset of
$\SetCliff_\delta$ containing all $\delta$-cliffs $u$ such that for all nonempty prefixes
$u'$ of $u$, then $\Weight\Par{u'} \leq \delta\Par{\left|u'\right|}$. Any element of
$\SetAvalanche_\delta$ is a \Def{$\delta$-avalanche}. For instance,
\begin{equation}
    \SetAvalanche_\MapTwo(3)
    =
    \Bra{000, 001, 002, 003, 004, 010, 011, 012, 013, 020, 021, 022}.
\end{equation}
\medbreak

\begin{Proposition} \label{prop:properties_avalanche_objects}
    For any weakly increasing range map $\delta$, the graded set $\SetAvalanche_\delta$ is
    \begin{enumerate}[label={\it (\roman*)}]
        \item \label{item:properties_avalanche_objects_1}
        closed by prefix;

        \item \label{item:properties_avalanche_objects_2}
        is minimally extendable;

        \item \label{item:properties_avalanche_objects_3}
        is maximally extendable if and only if $\delta = 0^\omega$.
    \end{enumerate}
\end{Proposition}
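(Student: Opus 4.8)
The plan is to treat the three items by direct arguments on prefixes and weights; the first two are routine, and the only real content is the converse implication in \ref{item:properties_avalanche_objects_3}.

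For \ref{item:properties_avalanche_objects_1}, I would observe that if $u$ is a $\delta$-avalanche and $v$ is a prefix of $u$, then $v$ is a $\delta$-cliff (being a prefix of one), and every nonempty prefix $v'$ of $v$ is a nonempty prefix of $u$, hence $\Weight\Par{v'} \leq \delta\Par{|v'|}$; thus $v \in \SetAvalanche_\delta$. This step uses nothing about $\delta$ being weakly increasing. For \ref{item:properties_avalanche_objects_2}, note that $\epsilon \in \SetAvalanche_\delta$ holds vacuously. Given $u \in \SetAvalanche_\delta(n)$, the word $u0$ is a $\delta$-cliff since $0 \leq \delta(n+1)$, and its nonempty prefixes are those of $u$ together with $u0$ itself, for which $\Weight(u0) = \Weight(u)$. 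If $n = 0$ this equals $0 \leq \delta(1)$; if $n \geq 1$, taking the prefix equal to $u$ in the avalanche condition gives $\Weight(u) \leq \delta(n)$, and since $\delta$ is weakly increasing $\delta(n) \leq \delta(n+1)$, whence $\Weight(u0) \leq \delta(n+1)$. So $u0 \in \SetAvalanche_\delta$, and this is precisely where weak increasingness enters.

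For \ref{item:properties_avalanche_objects_3}, the implication $\delta = 0^\omega \Rightarrow$ maximally extendable is immediate: then every $\delta$-cliff equals $0^n$, which lies in $\SetAvalanche_\delta$, and $u\,\delta(|u|+1) = u0$. For the converse I would argue by contraposition. Suppose $\delta \ne 0^\omega$ and pick any $k \geq 1$ with $\delta(k) \geq 1$. Then $u := 0^{k-1}\delta(k)$ is a $\delta$-avalanche, since all of its proper prefixes have weight $0$ and $\Weight(u) = \delta(k) \leq \delta(k)$. If $\SetAvalanche_\delta$ were maximally extendable, then $u\,\delta(k+1) = 0^{k-1}\delta(k)\delta(k+1)$ would be a $\delta$-avalanche as well; but its prefix of length $k+1$, namely itself, has weight $\delta(k) + \delta(k+1) > \delta(k+1)$, contradicting the defining bound. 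Hence $\delta = 0^\omega$. The only mildly delicate point, and thus the main obstacle, is this last step: one must exhibit a concrete avalanche whose canonical maximal extension already overflows, the key observation being that a single step past a position where $\delta$ takes a positive value already does it.
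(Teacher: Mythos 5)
Your proof is correct and follows essentially the same route as the paper's: prefixes and the $0$-extension are handled directly from the weight condition using $\delta(n) \leq \delta(n+1)$, and the converse in (iii) is established by exhibiting the avalanche $0^{k-1}\delta(k)$ whose maximal extension overflows at the last position. The only cosmetic difference is that the paper takes $k$ minimal with $\delta(k)\geq 1$, whereas you correctly note any such $k$ works.
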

\begin{proof}
    Point~\ref{item:properties_avalanche_objects_1} is an immediate consequence of the
    definition of $\delta$-avalanches. Let $n \geq 0$ and $u \in \SetAvalanche_\delta(n)$.
    Since $\delta(n + 1) \geq \delta(n)$, $u0$ is a $\delta$-avalanche. This
    establishes~\ref{item:properties_avalanche_objects_2}. Finally, we have immediately
    that $\SetAvalanche_{0^\omega}$ is maximally extendable. Moreover, when $\delta \ne
    0^\omega$, there is an $n \geq 1$ such that $\delta(n) \geq 1$ and $\delta\Par{n'} = 0$
    for all $1 \leq n' < n$. Therefore, $0^{n - 1} \, \delta(n)$ is a $\delta$-avalanche
    but $0^{n - 1} \, \delta(n) \, \delta(n + 1)$ is not. Therefore,
    \ref{item:properties_avalanche_objects_3} holds.
\end{proof}
\medbreak

\begin{Proposition} \label{prop:enumeration_avalanche}
    For any $m \geq 0$ and $n \geq 0$,
    \begin{math}
        \# \SetAvalanche_\MapM(n) = \FussCatalan_m(n).
    \end{math}
\end{Proposition}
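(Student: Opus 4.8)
The plan is to recast $\delta$-avalanches for $\delta = \MapM$ as monotone lattice paths confined to one side of a line, and to enumerate those via the cycle lemma.

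Since $\MapM(k) = m(k - 1)$, the definition unfolds to say that $\SetAvalanche_\MapM(n)$ is exactly the set of words $u = u_1 \cdots u_n$ of nonnegative integers with $u_1 + \cdots + u_k \le m(k - 1)$ for every $k \in [n]$; in particular $u_1 = 0$. Writing $S_k := u_1 + \cdots + u_k$, the map $u \mapsto (S_1, \dots, S_n)$ identifies $\SetAvalanche_\MapM(n)$ with the set of weakly increasing integer sequences $0 = S_1 \le S_2 \le \cdots \le S_n$ satisfying $S_k \le m(k - 1)$, hence with the set of lattice paths from $(0, 0)$ to $(mn, n)$ using unit north and east steps whose $k$-th north step has abscissa $S_k$ — equivalently, the set of such paths staying weakly above the line $y = x / m$. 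So $\# \SetAvalanche_\MapM(n)$ is the number of these paths, and it remains to show this number is $\FussCatalan_m(n)$.

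For this I would give an explicit bijection onto a family to which the cycle lemma applies. To $u \in \SetAvalanche_\MapM(n)$ associate the word
\[
    W(u) := A^{mn + 1 - \Weight(u)}\, B\, A^{u_n}\, B\, A^{u_{n - 1}}\, B\, \cdots\, B\, A^{u_2}\, B
\]
over $\Bra{A, B}$ (the block $A^{u_1}$ being empty), a word with $n$ letters $B$ and $mn + 1$ letters $A$. Reading $A$ as $+1$ and $B$ as $-m$, the partial sums of $W(u)$ increase inside each $A$-block and drop at each $B$, so they are everywhere $\ge 1$ if and only if the value reached right after the $j$-th $B$ is, for each $j \in [n]$; that value equals $mn + 1 - S_{n - j + 1} - jm$, so the condition is exactly $S_{n - j + 1} \le m(n - j)$ — that is, exactly the avalanche inequality of $u$ at position $n - j + 1$ (position $1$, i.e. $j = n$, giving $u_1 = 0$). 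Thus $W(u)$ has all partial sums $\ge 1$. Conversely such a word must begin with $A$ and end with $B$ (otherwise a partial sum $-m$ or $0$ appears), hence factors uniquely as $A^{c_0} B A^{c_1} B \cdots A^{c_{n - 1}} B$ with $c_0 \ge 1$, $c_i \ge 0$, $\sum_i c_i = mn + 1$, and then $u := 0\, c_{n - 1}\, c_{n - 2} \cdots c_1$ is an avalanche with $W(u)$ the given word. So $u \mapsto W(u)$ is a bijection from $\SetAvalanche_\MapM(n)$ onto the words over $\Bra{A, B}$ with $n$ letters $B$, $mn + 1$ letters $A$, and all partial sums $\ge 1$.

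It then suffices to count those words. Any word with $n$ letters $B$ and $mn + 1$ letters $A$ has total value $(mn + 1) - mn = 1$ and all its letter-values $\le 1$, so by the cycle lemma exactly one of its $(m + 1)n + 1$ cyclic rotations has all partial sums $\ge 1$; a standard double count of the pairs (word, good rotation) then gives that the number of good words is
\[
    \frac{1}{(m + 1)n + 1}\binom{(m + 1)n + 1}{n}
    = \frac{1}{mn + 1}\binom{(m + 1)n}{n} = \FussCatalan_m(n),
\]
which with the bijection proves the claim (the cases $m = 0$ or $n = 0$ also being immediate by hand). I expect the main obstacle to be the bookkeeping in the bijection — checking that the leading block $A^{mn + 1 - \Weight(u)}$ has positive length (this is the avalanche inequality at $k = n$), that $W(u)$ has the stated letter counts and ends in $B$, and that the translation between the partial-sum conditions on $W(u)$ and the prefix-sum conditions on $u$ is free of off-by-one errors; the cycle-lemma step itself is routine.
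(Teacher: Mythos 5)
Your proof is correct, but it follows a genuinely different route from the paper's. The paper obtains the count as a corollary of Proposition~\ref{prop:image_elevation_map_hill}: the elevation map $\ElevationMap_{\SetHill_\MapM}$ is injective (Proposition~\ref{prop:elevation_map_injectivity}) and its image is exactly $\SetAvalanche_\MapM(n)$, so avalanches are equinumerous with $\MapM$-hills, which are in bijection with $m$-Dyck paths and hence counted by $\FussCatalan_m(n)$ --- the latter enumeration being taken as known. Your opening step, passing from $u$ to the partial sums $S_k$, is essentially the inverse of that elevation map, so both arguments agree on the reduction to monotone objects; where you diverge is that you then prove the Fuss--Catalan count from first principles, encoding each avalanche as a word with $mn+1$ letters $A$ and $n$ letters $B$ whose partial sums (with $A\mapsto +1$, $B\mapsto -m$) stay $\geq 1$, and applying the cycle lemma; the resulting count $\tfrac{1}{(m+1)n+1}\binom{(m+1)n+1}{n}$ does simplify to $\tfrac{1}{mn+1}\binom{(m+1)n}{n}=\FussCatalan_m(n)$. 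The paper's route buys a structural bijection between avalanches and hills that is reused repeatedly in Section~\ref{sec:fuss_catalan_posets} (for instance in Proposition~\ref{prop:max_avalanche_hill_bijection} and in the order-extension chain), at the price of citing the classical enumeration of $m$-Dyck paths; yours is self-contained and needs none of the elevation-map machinery, at the price of not producing that bijection. Your bookkeeping is sound: the leading block $A^{mn+1-\Weight(u)}$ has length at least $m+1>0$ by the avalanche inequality at $k=n$, the letter counts are $n$ and $mn+1$ as claimed since $u_1=0$, and the value reached just after the $j$-th $B$ being at least $1$ is precisely the inequality $S_{n-j+1}\leq m(n-j)$, i.e.\ the avalanche condition at position $n-j+1$; aperiodicity of the words (total sum $1$) justifies the division by $(m+1)n+1$ in the cycle-lemma step.
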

\begin{proof}
    This is a consequence of Proposition~\ref{prop:image_elevation_map_hill} coming next.
    Indeed, by this result, $\SetAvalanche_\MapM(n)$ is the image by the elevation map of a
    graded set of objects enumerated by $m$-Fuss-Catalan numbers. Since this set of objects
    satisfies all the requirements of Proposition~\ref{prop:elevation_map_injectivity}, the
    elevation map is injective, implying that it is a bijection.
\end{proof}
\medbreak

Therefore, by Proposition~\ref{prop:enumeration_avalanche}, the first numbers of
$\MapM$-avalanches by sizes are
\begin{subequations}
\begin{equation}
    1, 1, 1, 1, 1, 1, 1, 1, \qquad m = 0,
\end{equation}
\begin{equation}
    1, 1, 2, 5, 14, 42, 132, 429, \qquad m = 1,
\end{equation}
\begin{equation}
    1, 1, 3, 12, 55, 273, 1428, 7752, \qquad m = 2.
\end{equation}
\end{subequations}
The second and third sequences are respectively Sequences~\OEIS{A000108}
and~\OEIS{A001764} of~\cite{Slo}.
\medbreak

%%%%%%%%%%%%%%%%%%%%%%%%%%%%%%%%%%%%%%%%%%%%%%%%%%%%%%%%%%%%%%%%%%%%%%%%%%%%%%%%%%%%%%%%%%%%
\subsubsection{Posets}
For any $n \geq 0$, the subposet $\SetAvalanche_\delta(n)$ of $\SetCliff_\delta(n)$ is the
\Def{$\delta$-avalanche poset} of order $n$. Figure~\ref{fig:examples_avalanche_posets}
shows the Hasse diagrams of some $\MapM$-avalanche posets.
\begin{figure}[ht]
    \centering
    \subfloat[][$\SetAvalanche_\MapOne(3)$.]{
    \centering
    \scalebox{.7}{
    \begin{tikzpicture}[Centering,xscale=.8,yscale=.8,rotate=-135]
        \draw[Grid](0,0)grid(1,2);
        \node[NodeGraph](000)at(0,0){};
        \node[NodeGraph](001)at(0,1){};
        \node[NodeGraph](002)at(0,2){};
        \node[NodeGraph](010)at(,0){};
        \node[NodeGraph](011)at(1,1){};
        \node[NodeLabelGraph,above of=000]{$000$};
        \node[NodeLabelGraph,right of=001]{$001$};
        \node[NodeLabelGraph,below of=002]{$002$};
        \node[NodeLabelGraph,left of=010]{$010$};
        \node[NodeLabelGraph,below of=011]{$011$};
        \draw[EdgeGraph](000)--(001);
        \draw[EdgeGraph](000)--(010);
        \draw[EdgeGraph](001)--(002);
        \draw[EdgeGraph](001)--(011);
        \draw[EdgeGraph](010)--(011);
    \end{tikzpicture}}
    \label{subfig:avalanche_poset_1_3}}
    \qquad \qquad
    \subfloat[][$\SetAvalanche_\MapTwo(3)$.]{
    \centering
    \scalebox{.7}{
    \begin{tikzpicture}[Centering,xscale=.8,yscale=.8,rotate=-135]
        \draw[Grid](0,0)grid(2,4);
        \node[NodeGraph](000)at(0,0){};
        \node[NodeGraph](001)at(0,1){};
        \node[NodeGraph](002)at(0,2){};
        \node[NodeGraph](003)at(0,3){};
        \node[NodeGraph](004)at(0,4){};
        \node[NodeGraph](010)at(1,0){};
        \node[NodeGraph](011)at(1,1){};
        \node[NodeGraph](012)at(1,2){};
        \node[NodeGraph](013)at(1,3){};
        \node[NodeGraph](020)at(2,0){};
        \node[NodeGraph](021)at(2,1){};
        \node[NodeGraph](022)at(2,2){};
        \node[NodeLabelGraph,above of=000]{$000$};
        \node[NodeLabelGraph,right of=001]{$001$};
        \node[NodeLabelGraph,right of=002]{$002$};
        \node[NodeLabelGraph,right of=003]{$003$};
        \node[NodeLabelGraph,below of=004]{$004$};
        \node[NodeLabelGraph,left of=010]{$010$};
        \node[NodeLabelGraph,below of=011]{$011$};
        \node[NodeLabelGraph,below of=012]{$012$};
        \node[NodeLabelGraph,below of=013]{$013$};
        \node[NodeLabelGraph,left of=020]{$020$};
        \node[NodeLabelGraph,left of=021]{$021$};
        \node[NodeLabelGraph,below of=022]{$022$};
        \draw[EdgeGraph](000)--(001);
        \draw[EdgeGraph](000)--(010);
        \draw[EdgeGraph](001)--(002);
        \draw[EdgeGraph](001)--(011);
        \draw[EdgeGraph](002)--(003);
        \draw[EdgeGraph](002)--(012);
        \draw[EdgeGraph](003)--(004);
        \draw[EdgeGraph](003)--(013);
        \draw[EdgeGraph](010)--(011);
        \draw[EdgeGraph](010)--(020);
        \draw[EdgeGraph](011)--(012);
        \draw[EdgeGraph](011)--(021);
        \draw[EdgeGraph](012)--(013);
        \draw[EdgeGraph](012)--(022);
        \draw[EdgeGraph](020)--(021);
        \draw[EdgeGraph](021)--(022);
        \end{tikzpicture}}
    \label{subfig:avalanche_poset_2_3}}
    \qquad \qquad
    \subfloat[][$\SetAvalanche_\MapOne(4)$.]{
    \centering
    \scalebox{.7}{
    \begin{tikzpicture}[Centering,xscale=.7,yscale=.7,
        x={(0,-.5cm)}, y={(-1.0cm,-1.0cm)}, z={(1.0cm,-1.0cm)}]
        \DrawGridSpace{1}{2}{3}
        \node[NodeGraph](0000)at(0,0,0){};
        \node[NodeGraph](0001)at(0,0,1){};
        \node[NodeGraph](0002)at(0,0,2){};
        \node[NodeGraph](0003)at(0,0,3){};
        \node[NodeGraph](0010)at(0,1,0){};
        \node[NodeGraph](0011)at(0,1,1){};
        \node[NodeGraph](0012)at(0,1,2){};
        \node[NodeGraph](0020)at(0,2,0){};
        \node[NodeGraph](0021)at(0,2,1){};
        \node[NodeGraph](0100)at(1,0,0){};
        \node[NodeGraph](0101)at(1,0,1){};
        \node[NodeGraph](0102)at(1,0,2){};
        \node[NodeGraph](0110)at(1,1,0){};
        \node[NodeGraph](0111)at(1,1,1){};
        \node[NodeLabelGraph,above of=0000]{$0000$};
        \node[NodeLabelGraph,right of=0001]{$0001$};
        \node[NodeLabelGraph,right of=0002]{$0002$};
        \node[NodeLabelGraph,below of=0003]{$0003$};
        \node[NodeLabelGraph,left of=0010]{$0010$};
        \node[NodeLabelGraph,above of=0011]{$0011$};
        \node[NodeLabelGraph,below of=0012]{$0012$};
        \node[NodeLabelGraph,left of=0020]{$0020$};
        \node[NodeLabelGraph,below of=0021]{$0021$};
        \node[NodeLabelGraph,below of=0100]{$0100$};
        \node[NodeLabelGraph,below of=0101]{$0101$};
        \node[NodeLabelGraph,below of=0102]{$0102$};
        \node[NodeLabelGraph,below of=0110]{$0110$};
        \node[NodeLabelGraph,below of=0111]{$0111$};
        \draw[EdgeGraph](0000)--(0001);
        \draw[EdgeGraph](0000)--(0010);
        \draw[EdgeGraph](0000)--(0100);
        \draw[EdgeGraph](0001)--(0002);
        \draw[EdgeGraph](0001)--(0011);
        \draw[EdgeGraph](0001)--(0101);
        \draw[EdgeGraph](0002)--(0003);
        \draw[EdgeGraph](0002)--(0012);
        \draw[EdgeGraph](0002)--(0102);
        \draw[EdgeGraph](0010)--(0011);
        \draw[EdgeGraph](0010)--(0020);
        \draw[EdgeGraph](0010)--(0110);
        \draw[EdgeGraph](0011)--(0012);
        \draw[EdgeGraph](0011)--(0021);
        \draw[EdgeGraph](0011)--(0111);
        \draw[EdgeGraph](0020)--(0021);
        \draw[EdgeGraph](0100)--(0101);
        \draw[EdgeGraph](0100)--(0110);
        \draw[EdgeGraph](0101)--(0102);
        \draw[EdgeGraph](0101)--(0111);
        \draw[EdgeGraph](0110)--(0111);
    \end{tikzpicture}}
    \label{subfig:avalanche_poset_1_4}}
    \caption{\footnotesize Hasse diagrams of some $\delta$-avalanche posets.}
    \label{fig:examples_avalanche_posets}
\end{figure}
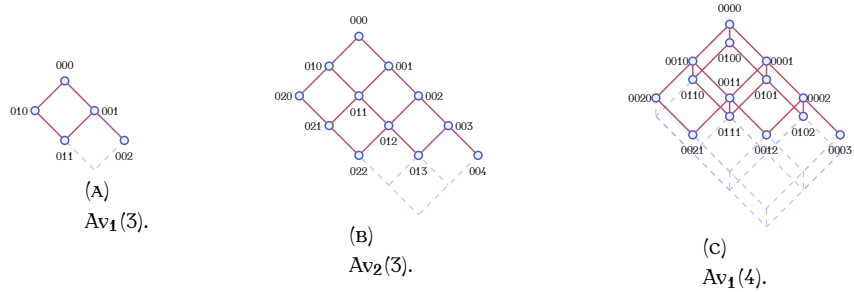
\medbreak

Let $\delta$ be a weakly increasing range map.  Notice that in general,
$\SetAvalanche_\delta(n)$ is not bounded. Since for all $u \in \SetAvalanche_\delta(n)$,
$\Weight\Par{u} \leq \delta(n)$, we have $u \in \max_{\Leq}{\SetAvalanche_\delta}(n)$ if and
only if $\Weight\Par{u} = \delta(n)$. Moreover, due to the fact that any $\delta$-cliff
obtained by decreasing a letter in a $\delta$-avalanche is also a $\delta$-avalanche, the
poset $\SetAvalanche_\delta(n)$ is the order ideal of $\SetCliff_\delta(n)$ generated by
$\max_{\Leq}{\SetAvalanche_\delta}(n)$. Finally, as a particular case, we shall show as a
consequence of upcoming Proposition~\ref{prop:max_avalanche_hill_bijection} that
for any $m \geq 0$ and $n \geq 1$,
$\# \max_{\Leq} \SetAvalanche_\MapM(n) = \FussCatalan_m(n - 1)$.
\medbreak

\begin{Proposition} \label{prop:properties_avalanche_posets}
    For any weakly increasing range map $\delta$ and $n \geq 0$, the poset
    $\SetAvalanche_\delta(n)$
    \begin{enumerate}[label={\it (\roman*)}]
        \item \label{item:properties_avalanche_posets_1}
        is straight, where $u \in \SetAvalanche_\delta(n)$ is covered by $v \in
        \SetAvalanche_\delta(n)$ if and only if there is an $i \in [n]$ such that
        $\IncreaseLetter_i(u) = v$;

        \item \label{item:properties_avalanche_posets_2}
        is coated;

        \item \label{item:properties_avalanche_posets_3}
        is graded, where the rank of an avalanche is its weight;

        \item \label{item:properties_avalanche_posets_4}
        admits an EL-labeling;

        \item \label{item:properties_avalanche_posets_5}
        is a meet semi-sublattice of $\SetCliff_\delta(n)$;

        \item \label{item:properties_avalanche_posets_6}
        is a lattice if and only if $\delta = 0^\omega$.
    \end{enumerate}
\end{Proposition}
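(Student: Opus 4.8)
The plan is to derive all six items from one simple closure observation together with the general machinery of the previous subsection.

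First I would record the key fact: if $u \Leq v$ are two $\delta$-avalanches of the same size $n$, then every $\delta$-cliff $w$ with $u \Leq w \Leq v$ is again a $\delta$-avalanche, since for every nonempty prefix $w'$ of $w$ of length $k$ one has $\Weight\Par{w'} \leq \Weight\Par{v_1 \dots v_k} \leq \delta(k)$. Applied to the word $u_1 \dots u_i v_{i + 1} \dots v_n$, which clearly lies in the interval $[u, v]$ of $\SetCliff_\delta(n)$, this shows that $\SetAvalanche_\delta$ is coated, hence by Lemma~\ref{lem:coated_implies_straight} also straight; this is item~\ref{item:properties_avalanche_posets_2}. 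For item~\ref{item:properties_avalanche_posets_1}: straightness says that a cover $u \Covered_{\SetAvalanche_\delta} v$ satisfies $\DiffIndices(u, v) = \Bra{i}$ for some $i$; then $\IncreaseLetter_i(u)$ is a $\delta$-cliff in $[u, v]$, hence a $\delta$-avalanche, so coveredness forces $v = \IncreaseLetter_i(u)$; conversely, if $v = \IncreaseLetter_i(u)$ is an avalanche then already in $\SetCliff_\delta(n)$ there is nothing strictly between $u$ and $v$, so $u \Covered_{\SetAvalanche_\delta} v$.

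Items~\ref{item:properties_avalanche_posets_3} and~\ref{item:properties_avalanche_posets_4} then follow quickly. Since $0^n = \LeastElement_\delta(n)$ is the unique minimal avalanche (any avalanche can be decremented down to it while staying in $\SetAvalanche_\delta$) and, by item~\ref{item:properties_avalanche_posets_1}, every cover increases the weight by exactly $1$, any saturated chain from $u$ to $v$ has length $\Weight(v) - \Weight(u)$; hence $\SetAvalanche_\delta(n)$ is graded with rank function $\Weight$, which is item~\ref{item:properties_avalanche_posets_3}. Item~\ref{item:properties_avalanche_posets_4} is just Theorem~\ref{thm:subposets_el_shellability} applied to the coated graded subset $\SetAvalanche_\delta$ (that theorem moreover gives the at most one weakly decreasing chain property). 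For item~\ref{item:properties_avalanche_posets_5}, observe that for avalanches $u, v$ the prefixes of $u \Meet v$ have weight at most those of $u$, so $u \Meet v$ is again a $\delta$-avalanche; being the greatest lower bound of $\Bra{u, v}$ in $\SetCliff_\delta(n)$ and lying in the subposet, it is their greatest lower bound in $\SetAvalanche_\delta(n)$ as well, which is precisely the meet semi-sublattice assertion.

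The remaining item~\ref{item:properties_avalanche_posets_6} is the one needing an actual construction, and it is where the main (mild) obstacle lies. The direction $\delta = 0^\omega \Rightarrow$ lattice is trivial, since then $\SetAvalanche_{0^\omega}(n) = \Bra{0^n}$. For the converse I would argue contrapositively: assuming $\delta \ne 0^\omega$, put $n_0 := \min \Bra{k \geq 1 : \delta(k) \geq 1}$ and $n := n_0 + 1$, so $\delta(n) \geq \delta(n_0) \geq 1$ as $\delta$ is weakly increasing. Then $u := 0^{n - 1}\, \delta(n)$ and $v := 0^{n_0 - 1}\, 1\, \Par{\delta(n) - 1}$ are two distinct $\delta$-avalanches of size $n$, both of weight $\delta(n)$, and any common upper bound $w$ of $\Bra{u, v}$ in $\SetAvalanche_\delta(n)$ would have $w_{n_0} \geq 1$ and $w_n \geq \delta(n)$, whence $\Weight(w) \geq 1 + \delta(n) > \delta(n)$, contradicting that $w$ is an avalanche. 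So $\Bra{u, v}$ has no upper bound at all and $\SetAvalanche_\delta(n)$ is not a lattice. (As literally phrased, item~\ref{item:properties_avalanche_posets_6} should be read over the whole family: for $n \leq n_0$ the poset $\SetAvalanche_\delta(n)$ is still a chain, hence a lattice, and once it fails at some size it fails for all larger sizes via the embedding $u \mapsto u0$; I would spell this out in a closing remark.)
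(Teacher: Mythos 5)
Your proposal is correct and follows essentially the same route as the paper: the paper proves item~\emph{(ii)} with exactly your prefix-weight comparison on $u_1 \dots u_i v_{i+1} \dots v_n$, deduces~\emph{(iv)} from Theorem~\ref{thm:subposets_el_shellability}, and declares the remaining items immediate (relying, as you do, on the fact that $\SetAvalanche_\delta(n)$ is a downward-closed order ideal of $\SetCliff_\delta(n)$). Your only additions are the routine details the paper omits, including a valid explicit witness $\Bra{0^{n-1}\delta(n),\; 0^{n_0-1} 1 \Par{\delta(n)-1}}$ with no common upper bound for the non-lattice direction of~\emph{(vi)}, together with the sensible caveat about how that item must be read across all sizes $n$.
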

\begin{proof}
    Points~\ref{item:properties_avalanche_posets_1},
    \ref{item:properties_avalanche_posets_3}, \ref{item:properties_avalanche_posets_5},
    and~\ref{item:properties_avalanche_posets_6} are immediate. If $u$ and $v$ are two
    $\delta$-avalanches of size $n$ such that $u \Leq v$, then for any $i \in [n - 1]$,
    $\Weight\Par{u_1 \dots u_i} \leq \Weight\Par{v_1 \dots v_i}$. Therefore, the
    $\delta$-cliff $u_1 \dots u_i v_{i + 1} \dots v_n$ is a $\delta$-avalanche. For this
    reason, \ref{item:properties_avalanche_posets_2} checks out.
    Point~\ref{item:properties_avalanche_posets_4} follows
    from~\ref{item:properties_avalanche_posets_2}, and
    Theorem~\ref{thm:subposets_el_shellability}.
\end{proof}
\medbreak

\begin{Proposition} \label{prop:wings_butterflies_avalanche}
    For any $m \geq 1$,
    \begin{enumerate}[label={\it (\roman*)}]
        \item \label{item:wings_butterflies_avalanche_1}
        the graded set $\InputWings\Par{\SetAvalanche_\MapM}$ contains all the
        $\MapM$-avalanches $u$ satisfying $u_i \ne 0$ for all~$i \in [2, |u|]$;

        \item \label{item:wings_butterflies_avalanche_2}
        the graded set $\OutputWings\Par{\SetAvalanche_\MapM}$ contains all the
        $\MapM$-avalanches $u$ satisfying $\Weight\Par{u'} < \MapM\Par{\Brr{u'}}$ for all
        prefixes $u'$ of~$u$ of length $2$ or more;

        \item \label{item:wings_butterflies_avalanche_3}
        the graded set $\Butterflies\Par{\SetAvalanche_\MapM}$ contains all the
        $\MapM$-avalanches $u$ satisfying $u_i \ne 0$ for all $i \in [2, |u|]$, and
        $\Weight\Par{u'} < \MapM\Par{\Brr{u'}}$ for all prefixes $u'$ of~$u$ of length $2$
        or more.
    \end{enumerate}
\end{Proposition}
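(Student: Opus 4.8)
The plan is to read off the number of elements covered by, and covering, each $\MapM$-avalanche directly from the description of the covering relation of $\SetAvalanche_\MapM(n)$ given in Proposition~\ref{prop:properties_avalanche_posets}\ref{item:properties_avalanche_posets_1}, and to compare these two numbers with $\DimensionDelta_n(\MapM)$. The starting observation is that, since $\MapM(1) = 0$ while $\MapM(i) = m(i-1) > 0$ for all $i \geq 2$, one has $\DimensionDelta_n(\MapM) = n - 1$ for every $n \geq 1$; hence an $\MapM$-avalanche $u$ of size $n$ is an input-wing (resp. output-wing) precisely when it covers (resp. is covered by) exactly $n - 1$ elements of $\SetAvalanche_\MapM(n)$. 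Throughout we use that $u_1 = 0$ for every nonempty $\MapM$-avalanche~$u$.

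For~\ref{item:wings_butterflies_avalanche_1}, by Proposition~\ref{prop:properties_avalanche_posets}\ref{item:properties_avalanche_posets_1} the elements covered by $u$ are exactly the words $\DecreaseLetter_i(u)$, $i \in [n]$, that lie in $\SetAvalanche_\MapM(n)$. Decrementing one letter of an avalanche only decreases the weights of its prefixes, so $\DecreaseLetter_i(u)$ is again an $\MapM$-avalanche as soon as it is a word over $\N$, i.e. as soon as $u_i \geq 1$. Thus $u$ covers exactly $\# \Bra{i \in [n] : u_i \geq 1}$ elements; since $u_1 = 0$ is forced, this number equals $n - 1$ if and only if $u_i \neq 0$ for all $i \in [2, n]$, which is~\ref{item:wings_butterflies_avalanche_1}.

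For~\ref{item:wings_butterflies_avalanche_2}, again by Proposition~\ref{prop:properties_avalanche_posets}\ref{item:properties_avalanche_posets_1}, $u$ is covered by $\IncreaseLetter_i(u)$ exactly when $\IncreaseLetter_i(u)$ is an $\MapM$-cliff (that is, $u_i < \MapM(i)$) and an $\MapM$-avalanche; since incrementing $u_i$ affects only the prefixes of $u$ of length at least $i$, the avalanche condition reads $\Weight\Par{u_1 \dots u_j} < \MapM(j)$ for all $j \in [i, n]$. The key step is that the cliff condition is subsumed by this: if $u_i = \MapM(i)$, then $\MapM(i) = u_i \leq \Weight\Par{u_1 \dots u_i} \leq \MapM(i)$ forces $\Weight\Par{u_1 \dots u_i} = \MapM(i)$, which contradicts the $j = i$ case. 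So the indices $i$ yielding a covering element of $u$ are exactly those with $\Weight\Par{u_1 \dots u_j} < \MapM(j)$ for all $j \in [i, n]$. Setting $\ell := \max \Bra{j \in [n] : \Weight\Par{u_1 \dots u_j} = \MapM(j)}$ — well-defined and at least $1$, since $\Weight\Par{u_1} = 0 = \MapM(1)$ — these indices form the block $[\ell + 1, n]$, of cardinality $n - \ell$, which is $n - 1$ if and only if $\ell = 1$, i.e. if and only if $\Weight\Par{u'} < \MapM\Par{\Brr{u'}}$ for every prefix $u'$ of $u$ of length $2$ or more. This is~\ref{item:wings_butterflies_avalanche_2}.

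Finally,~\ref{item:wings_butterflies_avalanche_3} is immediate from $\Butterflies\Par{\SetAvalanche_\MapM} = \InputWings\Par{\SetAvalanche_\MapM} \cap \OutputWings\Par{\SetAvalanche_\MapM}$ together with~\ref{item:wings_butterflies_avalanche_1} and~\ref{item:wings_butterflies_avalanche_2}. The only genuinely delicate point in this plan is the "cliff condition is subsumed" step in the output-wing case — everything else is straightforward bookkeeping with single-letter increments and decrements — and it is exactly there that the particular shape of $\MapM$ (a single free coordinate, at position $1$, whence $\DimensionDelta_n(\MapM) = n - 1$) is what makes the count come out to $n - 1$.
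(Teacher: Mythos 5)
Your proof is correct. The paper in fact omits a proof of this proposition entirely, explicitly classifying it among the ``straightforward'' verifications of input-wings, output-wings and butterflies; your argument --- counting covered and covering elements via the covering relation of Proposition~\ref{prop:properties_avalanche_posets}, comparing with $\DimensionDelta_n(\MapM) = n-1$, and observing that the cliff constraint $u_i < \MapM(i)$ is implied by the strict prefix-weight condition at $j = i$ --- is exactly the routine bookkeeping the authors had in mind, and it fills the gap cleanly.
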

\medbreak

The four posets of avalanches, input-wings, output-wings, and butterflies are linked in the
following way.
\medbreak

\begin{Theorem} \label{thm:poset_morphisms_avalanche}
    For any $m \geq 1$ and $n \geq 0$,
    \begin{equation}
        \begin{tikzpicture}[Centering,xscale=3.5,yscale=1.6,font=\small]
            \node(Avalanches)at(0,0){$\SetAvalanche_{\mathbf m - 1}(n)$};
            \node(InputWingsAvalanche)at(1,0){$\InputWings\Par{\SetAvalanche_\MapM}(n)$};
            \node(OutputWingsAvalanche)at(2,0){$\OutputWings\Par{\SetAvalanche_\MapM}(n)$};
            \node(ButterfliesAvalanche)at(3,0)
                {$\Butterflies\Par{\SetAvalanche_{\mathbf m + 1}}(n)$};
            \draw[MapIsomorphism](Avalanches)--(InputWingsAvalanche)node[midway,above]
                {$\varphi_1$};
            \draw[MapEmbedding](InputWingsAvalanche)
                --(OutputWingsAvalanche)node[midway,above]
                {$\varphi_2$};
            \draw[MapIsomorphism](OutputWingsAvalanche)
                --(ButterfliesAvalanche)node[midway,above]
                {$\varphi_1$};
        \end{tikzpicture}
    \end{equation}
    is a diagram of poset embeddings or isomorphisms, where the maps $\varphi_1$ and
    $\varphi_2$ are defined, for any $u \in \N^n$ and $i \in [n]$, by
    \begin{math}
        \varphi_1(u)_i := \IndicatorFunction_{i \ne 1} \Par{u_i + 1}
    \end{math}
    and
    \begin{math}
        \varphi_2(u)_i := \IndicatorFunction_{i \ne 1} \Par{u_i - 1}.
    \end{math}
\end{Theorem}
\begin{proof}
    This follows from the descriptions of the input-wings, output-wings, and butterflies of
    $\SetAvalanche_\MapM(n)$ provided by Proposition~\ref{prop:wings_butterflies_avalanche}.
\end{proof}
\medbreak

Figure~\ref{fig:poset_morphisms_avalanche} gives an example of the poset isomorphisms or
embeddings described by the statement of Theorem~\ref{thm:poset_morphisms_avalanche}.
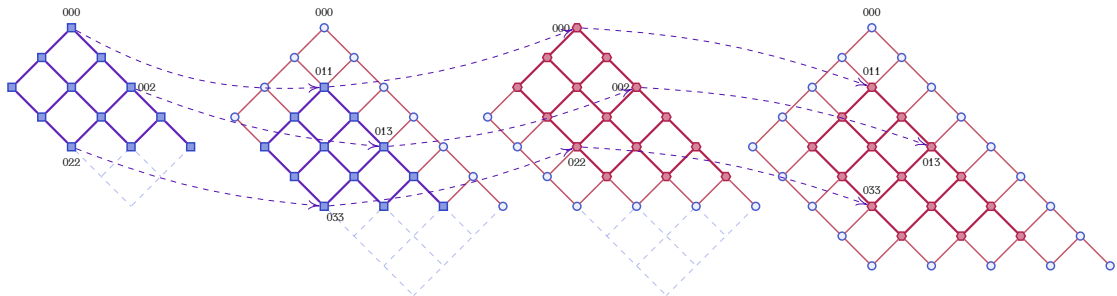
\begin{figure}[ht]
    \centering
    \scalebox{.7}{
    \begin{tikzpicture}[Centering,xscale=.8,yscale=.8]
        % Avalanche, m = 2, n = 3.
        \begin{scope}[xshift=0cm,yshift=0cm,rotate=-135]
        \draw[Grid](0,0)grid(2,4);
        \node[MarkedNodeGraph](A000)at(0,0){};
        \node[MarkedNodeGraph](A001)at(0,1){};
        \node[MarkedNodeGraph](A002)at(0,2){};
        \node[MarkedNodeGraph](A003)at(0,3){};
        \node[MarkedNodeGraph](A004)at(0,4){};
        \node[MarkedNodeGraph](A010)at(1,0){};
        \node[MarkedNodeGraph](A011)at(1,1){};
        \node[MarkedNodeGraph](A012)at(1,2){};
        \node[MarkedNodeGraph](A013)at(1,3){};
        \node[MarkedNodeGraph](A020)at(2,0){};
        \node[MarkedNodeGraph](A021)at(2,1){};
        \node[MarkedNodeGraph](A022)at(2,2){};
        \node[NodeLabelGraph,above of=A000]{$000$};
        \node[NodeLabelGraph,right of=A002]{$002$};
        \node[NodeLabelGraph,below of=A022]{$022$};
        \draw[MarkedEdgeGraph](A000)--(A001);
        \draw[MarkedEdgeGraph](A000)--(A010);
        \draw[MarkedEdgeGraph](A001)--(A002);
        \draw[MarkedEdgeGraph](A001)--(A011);
        \draw[MarkedEdgeGraph](A002)--(A003);
        \draw[MarkedEdgeGraph](A002)--(A012);
        \draw[MarkedEdgeGraph](A003)--(A004);
        \draw[MarkedEdgeGraph](A003)--(A013);
        \draw[MarkedEdgeGraph](A010)--(A011);
        \draw[MarkedEdgeGraph](A010)--(A020);
        \draw[MarkedEdgeGraph](A011)--(A012);
        \draw[MarkedEdgeGraph](A011)--(A021);
        \draw[MarkedEdgeGraph](A012)--(A013);
        \draw[MarkedEdgeGraph](A012)--(A022);
        \draw[MarkedEdgeGraph](A020)--(A021);
        \draw[MarkedEdgeGraph](A021)--(A022);
        \end{scope}
        %
        % Input-wings avalanche, m = 3, n = 3.
        \begin{scope}[xshift=6cm,yshift=0cm,rotate=-135]
        \draw[Grid](0,0)grid(3,6);
        \node[NodeGraph](B000)at(0,0){};
        \node[NodeGraph](B001)at(0,1){};
        \node[NodeGraph](B002)at(0,2){};
        \node[NodeGraph](B003)at(0,3){};
        \node[NodeGraph](B004)at(0,4){};
        \node[NodeGraph](B005)at(0,5){};
        \node[NodeGraph](B006)at(0,6){};
        \node[NodeGraph](B010)at(1,0){};
        \node[MarkedNodeGraph](B011)at(1,1){};
        \node[MarkedNodeGraph](B012)at(1,2){};
        \node[MarkedNodeGraph](B013)at(1,3){};
        \node[MarkedNodeGraph](B014)at(1,4){};
        \node[MarkedNodeGraph](B015)at(1,5){};
        \node[NodeGraph](B020)at(2,0){};
        \node[MarkedNodeGraph](B021)at(2,1){};
        \node[MarkedNodeGraph](B022)at(2,2){};
        \node[MarkedNodeGraph](B023)at(2,3){};
        \node[MarkedNodeGraph](B024)at(2,4){};
        \node[NodeGraph](B030)at(3,0){};
        \node[MarkedNodeGraph](B031)at(3,1){};
        \node[MarkedNodeGraph](B032)at(3,2){};
        \node[MarkedNodeGraph](B033)at(3,3){};
        \node[NodeLabelGraph,above of=B000]{$000$};
        \node[NodeLabelGraph,above of=B011]{$011$};
        \node[NodeLabelGraph,above of=B013]{$013$};
        \node[NodeLabelGraph,below right of=B033]{$033$};
        \draw[EdgeGraph](B000)--(B001);
        \draw[EdgeGraph](B000)--(B010);
        \draw[EdgeGraph](B001)--(B002);
        \draw[EdgeGraph](B001)--(B011);
        \draw[EdgeGraph](B002)--(B003);
        \draw[EdgeGraph](B002)--(B012);
        \draw[EdgeGraph](B003)--(B004);
        \draw[EdgeGraph](B003)--(B013);
        \draw[EdgeGraph](B004)--(B005);
        \draw[EdgeGraph](B004)--(B014);
        \draw[EdgeGraph](B005)--(B006);
        \draw[EdgeGraph](B005)--(B015);
        \draw[EdgeGraph](B010)--(B011);
        \draw[EdgeGraph](B010)--(B020);
        \draw[MarkedEdgeGraph](B011)--(B012);
        \draw[MarkedEdgeGraph](B011)--(B021);
        \draw[MarkedEdgeGraph](B012)--(B013);
        \draw[MarkedEdgeGraph](B012)--(B022);
        \draw[MarkedEdgeGraph](B013)--(B014);
        \draw[MarkedEdgeGraph](B013)--(B023);
        \draw[MarkedEdgeGraph](B014)--(B015);
        \draw[MarkedEdgeGraph](B014)--(B024);
        \draw[EdgeGraph](B020)--(B021);
        \draw[EdgeGraph](B020)--(B030);
        \draw[MarkedEdgeGraph](B021)--(B022);
        \draw[MarkedEdgeGraph](B021)--(B031);
        \draw[MarkedEdgeGraph](B022)--(B023);
        \draw[MarkedEdgeGraph](B022)--(B032);
        \draw[MarkedEdgeGraph](B023)--(B024);
        \draw[MarkedEdgeGraph](B023)--(B033);
        \draw[EdgeGraph](B030)--(B031);
        \draw[MarkedEdgeGraph](B031)--(B032);
        \draw[MarkedEdgeGraph](B032)--(B033);
        \end{scope}
        %
        % Output-wings avalanche, m = 3, n = 3.
        \begin{scope}[xshift=12cm,yshift=0cm,rotate=-135]
        \draw[Grid](0,0)grid(3,6);
        \node[Marked2NodeGraph](C000)at(0,0){};
        \node[Marked2NodeGraph](C001)at(0,1){};
        \node[Marked2NodeGraph](C002)at(0,2){};
        \node[Marked2NodeGraph](C003)at(0,3){};
        \node[Marked2NodeGraph](C004)at(0,4){};
        \node[Marked2NodeGraph](C005)at(0,5){};
        \node[NodeGraph](C006)at(0,6){};
        \node[Marked2NodeGraph](C010)at(1,0){};
        \node[Marked2NodeGraph](C011)at(1,1){};
        \node[Marked2NodeGraph](C012)at(1,2){};
        \node[Marked2NodeGraph](C013)at(1,3){};
        \node[Marked2NodeGraph](C014)at(1,4){};
        \node[NodeGraph](C015)at(1,5){};
        \node[Marked2NodeGraph](C020)at(2,0){};
        \node[Marked2NodeGraph](C021)at(2,1){};
        \node[Marked2NodeGraph](C022)at(2,2){};
        \node[Marked2NodeGraph](C023)at(2,3){};
        \node[NodeGraph](C024)at(2,4){};
        \node[NodeGraph](C030)at(3,0){};
        \node[NodeGraph](C031)at(3,1){};
        \node[NodeGraph](C032)at(3,2){};
        \node[NodeGraph](C033)at(3,3){};
        \node[NodeLabelGraph,left of=C000]{$000$};
        \node[NodeLabelGraph,left of=C002]{$002$};
        \node[NodeLabelGraph,below of=C022]{$022$};
        \draw[Marked2EdgeGraph](C000)--(C001);
        \draw[Marked2EdgeGraph](C000)--(C010);
        \draw[Marked2EdgeGraph](C001)--(C002);
        \draw[Marked2EdgeGraph](C001)--(C011);
        \draw[Marked2EdgeGraph](C002)--(C003);
        \draw[Marked2EdgeGraph](C002)--(C012);
        \draw[Marked2EdgeGraph](C003)--(C004);
        \draw[Marked2EdgeGraph](C003)--(C013);
        \draw[Marked2EdgeGraph](C004)--(C005);
        \draw[Marked2EdgeGraph](C004)--(C014);
        \draw[EdgeGraph](C005)--(C006);
        \draw[EdgeGraph](C005)--(C015);
        \draw[Marked2EdgeGraph](C010)--(C011);
        \draw[Marked2EdgeGraph](C010)--(C020);
        \draw[Marked2EdgeGraph](C011)--(C012);
        \draw[Marked2EdgeGraph](C011)--(C021);
        \draw[Marked2EdgeGraph](C012)--(C013);
        \draw[Marked2EdgeGraph](C012)--(C022);
        \draw[Marked2EdgeGraph](C013)--(C014);
        \draw[Marked2EdgeGraph](C013)--(C023);
        \draw[EdgeGraph](C014)--(C015);
        \draw[EdgeGraph](C014)--(C024);
        \draw[Marked2EdgeGraph](C020)--(C021);
        \draw[EdgeGraph](C020)--(C030);
        \draw[Marked2EdgeGraph](C021)--(C022);
        \draw[EdgeGraph](C021)--(C031);
        \draw[Marked2EdgeGraph](C022)--(C023);
        \draw[EdgeGraph](C022)--(C032);
        \draw[EdgeGraph](C023)--(C024);
        \draw[EdgeGraph](C023)--(C033);
        \draw[EdgeGraph](C030)--(C031);
        \draw[EdgeGraph](C031)--(C032);
        \draw[EdgeGraph](C032)--(C033);
        \end{scope}
        %
        % Output-wings avalanche, m = 4, n = 3.
        \begin{scope}[xshift=19cm,yshift=0cm,rotate=-135]
        %\draw[Grid](0,0)grid(4,8);
        %
        \node[NodeGraph](D000)at(0,0){};
        \node[NodeGraph](D001)at(0,1){};
        \node[NodeGraph](D002)at(0,2){};
        \node[NodeGraph](D003)at(0,3){};
        \node[NodeGraph](D004)at(0,4){};
        \node[NodeGraph](D005)at(0,5){};
        \node[NodeGraph](D006)at(0,6){};
        \node[NodeGraph](D007)at(0,7){};
        \node[NodeGraph](D008)at(0,8){};
        \node[NodeGraph](D010)at(1,0){};
        \node[Marked2NodeGraph](D011)at(1,1){};
        \node[Marked2NodeGraph](D012)at(1,2){};
        \node[Marked2NodeGraph](D013)at(1,3){};
        \node[Marked2NodeGraph](D014)at(1,4){};
        \node[Marked2NodeGraph](D015)at(1,5){};
        \node[Marked2NodeGraph](D016)at(1,6){};
        \node[NodeGraph](D017)at(1,7){};
        \node[NodeGraph](D020)at(2,0){};
        \node[Marked2NodeGraph](D021)at(2,1){};
        \node[Marked2NodeGraph](D022)at(2,2){};
        \node[Marked2NodeGraph](D023)at(2,3){};
        \node[Marked2NodeGraph](D024)at(2,4){};
        \node[Marked2NodeGraph](D025)at(2,5){};
        \node[NodeGraph](D026)at(2,6){};
        \node[NodeGraph](D030)at(3,0){};
        \node[Marked2NodeGraph](D031)at(3,1){};
        \node[Marked2NodeGraph](D032)at(3,2){};
        \node[Marked2NodeGraph](D033)at(3,3){};
        \node[Marked2NodeGraph](D034)at(3,4){};
        \node[NodeGraph](D035)at(3,5){};
        \node[NodeGraph](D040)at(4,0){};
        \node[NodeGraph](D041)at(4,1){};
        \node[NodeGraph](D042)at(4,2){};
        \node[NodeGraph](D043)at(4,3){};
        \node[NodeGraph](D044)at(4,4){};
        \node[NodeLabelGraph,above of=D000]{$000$};
        \node[NodeLabelGraph,above of=D011]{$011$};
        \node[NodeLabelGraph,below of=D013]{$013$};
        \node[NodeLabelGraph,above of=D033]{$033$};
        \draw[EdgeGraph](D000)--(D001);
        \draw[EdgeGraph](D000)--(D010);
        \draw[EdgeGraph](D001)--(D002);
        \draw[EdgeGraph](D001)--(D011);
        \draw[EdgeGraph](D002)--(D003);
        \draw[EdgeGraph](D002)--(D012);
        \draw[EdgeGraph](D003)--(D004);
        \draw[EdgeGraph](D003)--(D013);
        \draw[EdgeGraph](D004)--(D005);
        \draw[EdgeGraph](D004)--(D014);
        \draw[EdgeGraph](D005)--(D006);
        \draw[EdgeGraph](D005)--(D015);
        \draw[EdgeGraph](D006)--(D007);
        \draw[EdgeGraph](D006)--(D016);
        \draw[EdgeGraph](D007)--(D008);
        \draw[EdgeGraph](D007)--(D017);
        \draw[EdgeGraph](D010)--(D011);
        \draw[EdgeGraph](D010)--(D020);
        \draw[Marked2EdgeGraph](D011)--(D012);
        \draw[Marked2EdgeGraph](D011)--(D021);
        \draw[Marked2EdgeGraph](D012)--(D013);
        \draw[Marked2EdgeGraph](D012)--(D022);
        \draw[Marked2EdgeGraph](D013)--(D014);
        \draw[Marked2EdgeGraph](D013)--(D023);
        \draw[Marked2EdgeGraph](D014)--(D015);
        \draw[Marked2EdgeGraph](D014)--(D024);
        \draw[Marked2EdgeGraph](D015)--(D016);
        \draw[Marked2EdgeGraph](D015)--(D025);
        \draw[EdgeGraph](D016)--(D017);
        \draw[EdgeGraph](D016)--(D026);
        \draw[EdgeGraph](D020)--(D021);
        \draw[EdgeGraph](D020)--(D030);
        \draw[Marked2EdgeGraph](D021)--(D022);
        \draw[Marked2EdgeGraph](D021)--(D031);
        \draw[Marked2EdgeGraph](D022)--(D023);
        \draw[Marked2EdgeGraph](D022)--(D032);
        \draw[Marked2EdgeGraph](D023)--(D024);
        \draw[Marked2EdgeGraph](D023)--(D033);
        \draw[Marked2EdgeGraph](D024)--(D025);
        \draw[Marked2EdgeGraph](D024)--(D034);
        \draw[EdgeGraph](D025)--(D026);
        \draw[EdgeGraph](D025)--(D035);
        \draw[EdgeGraph](D030)--(D031);
        \draw[EdgeGraph](D030)--(D040);
        \draw[Marked2EdgeGraph](D031)--(D032);
        \draw[EdgeGraph](D031)--(D041);
        \draw[Marked2EdgeGraph](D032)--(D033);
        \draw[EdgeGraph](D032)--(D042);
        \draw[Marked2EdgeGraph](D033)--(D034);
        \draw[EdgeGraph](D033)--(D043);
        \draw[EdgeGraph](D034)--(D035);
        \draw[EdgeGraph](D034)--(D044);
        \draw[EdgeGraph](D040)--(D041);
        \draw[EdgeGraph](D041)--(D042);
        \draw[EdgeGraph](D042)--(D043);
        \draw[EdgeGraph](D043)--(D044);
        \end{scope}
        %
        % Arrows.
        \draw[MapGraph](A000)edge[bend right=16](B011);
        \draw[MapGraph](A002)edge[bend right=8](B013);
        \draw[MapGraph](A022)edge[bend right=8](B033);
        \draw[MapGraph](B011)edge[bend right=8](C000);
        \draw[MapGraph](B013)edge[bend right=8](C002);
        \draw[MapGraph](B033)edge[bend right=8](C022);
        \draw[MapGraph](C000)edge[bend left=8](D011);
        \draw[MapGraph](C002)edge[bend left=8](D013);
        \draw[MapGraph](C022)edge[bend left=8](D033);
    \end{tikzpicture}}
    \caption{\footnotesize From the top to bottom and left to right, here are the posets
    $\SetAvalanche_\MapTwo(3)$, $\SetAvalanche_{\mathbf 3}(3)$, $\SetAvalanche_{\mathbf
    3}(3)$, and $\SetAvalanche_{\mathbf 4}(3)$. All these posets contain
    $\SetAvalanche_\MapTwo(3)$ as subposet by restricting on input-wings, output-wings,
    or butterflies.}
    \label{fig:poset_morphisms_avalanche}
\end{figure}
As a consequence of Theorem~\ref{thm:poset_morphisms_avalanche}, for
any $m \geq 1$ and $n \geq 0$, the number of input-wings in $\SetAvalanche_\MapM(n)$
is~$\FussCatalan_{m - 1}(n)$.
\medbreak

Let us define for any $m \geq 0$ and $n \geq 1$ the $n$-th \Def{twisted $m$-Fuss-Catalan
number} by
\begin{equation}
    \TwistedFussCatalan_m(n) :=
    \frac{1}{n} \binom{n(m + 1) - 2}{n - 1}.
\end{equation}
\medbreak

\begin{Proposition} \label{prop:enumeration_output_wings_avalanche}
    For any $m \geq 1$ and $n \geq 1$, $\# \OutputWings\Par{\SetAvalanche_\MapM}(0) = 1$
    and
    \begin{math}
        \# \OutputWings\Par{\SetAvalanche_\MapM}(n) = \TwistedFussCatalan_m(n).
    \end{math}
\end{Proposition}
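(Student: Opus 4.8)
The plan is to translate the statement into a lattice-path enumeration and then settle it with the cycle lemma. First I would use Proposition~\ref{prop:wings_butterflies_avalanche}, together with $\MapM(k) = m(k-1)$ and the definition of $\MapM$-avalanches, to reformulate the objects: a $\MapM$-cliff $u$ of size $n$ is an output-wing of $\SetAvalanche_\MapM(n)$ precisely when $u_1 = 0$ and $u_1 + \dots + u_k \le m(k-1) - 1$ for every $k \in [2,n]$. Writing $x_j := u_{j+1}$ for $j \in [n-1]$, this identifies $\#\OutputWings(\SetAvalanche_\MapM)(n)$ with the number of sequences $(x_1,\dots,x_{n-1})$ of nonnegative integers such that $x_1 + \dots + x_\ell \le m\ell - 1$ for all $\ell \in [n-1]$. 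The case $n = 0$ is immediate since $\epsilon$ is vacuously an output-wing, and $n = 1$ forces the single word $0$, matching $\TwistedFussCatalan_m(1) = 1$.

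Next I would pass to a count with a fixed total. Given such a sequence $(x_1,\dots,x_{n-1})$, append $x_n := (mn-1) - (x_1 + \dots + x_{n-1})$; the partial-sum bound and $m \ge 1$ give $x_n \ge m \ge 0$, so this is a well-defined bijection onto the set $\mathcal{C}_n$ of sequences $(x_1,\dots,x_n)$ of nonnegative integers with $x_1 + \dots + x_n = mn - 1$ and $x_1 + \dots + x_\ell \le m\ell - 1$ for all $\ell \in [n]$ (the constraint at $\ell = n$ being automatic). Hence it suffices to prove $\#\mathcal{C}_n = \frac{1}{n}\binom{(m+1)n - 2}{n-1}$.

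Then I would apply the cycle lemma to $\mathcal{C}_n$. The set $X$ of all length-$n$ sequences of nonnegative integers summing to $mn - 1$ has size $\binom{(mn-1) + (n-1)}{n-1} = \binom{(m+1)n - 2}{n-1}$ by stars and bars, and $\mathbb{Z}/n\mathbb{Z}$ acts on $X$ by cyclic rotation. Since any proper period $d \mid n$ would force $n/d$ to divide both $n$ and $mn - 1$, hence to divide $\gcd(n, mn-1) = \gcd(n,-1) = 1$, every orbit has size exactly $n$. Substituting $y_i := m - x_i$ turns the staircase inequalities into the condition that all partial sums of $(y_1,\dots,y_n)$ are positive, while the total becomes $y_1 + \dots + y_n = 1$; since rotation commutes with $x \mapsto y$, the cycle lemma (in Raney's form: among the $n$ cyclic rotations of an integer sequence with total $1$, exactly one has all partial sums positive) gives exactly one element of $\mathcal{C}_n$ per orbit. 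Therefore $\#\mathcal{C}_n = \frac{1}{n}\#X = \TwistedFussCatalan_m(n)$.

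The only non-routine point, and the place to be careful, is the encoding step: one must check that the inequalities $x_1 + \dots + x_\ell \le m\ell - 1$ are \emph{exactly} equivalent to positivity of every partial sum of $(m-x_1,\dots,m-x_n)$ --- including the boundary $\ell = n$, where both conditions hold trivially --- and that the orbits genuinely have full size $n$, so that ``one valid rotation'' really does mean ``one element of $\mathcal{C}_n$''. The reformulation of output-wings, the padding bijection, and the binomial evaluation are all routine bookkeeping.
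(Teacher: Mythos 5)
Your proof is correct, but it takes a genuinely different route from the paper's. Both arguments start the same way, by using Proposition~\ref{prop:wings_butterflies_avalanche} to characterize output-wings through the strict inequalities $\Weight\Par{u'} < \MapM\Par{\Brr{u'}}$; from there the paper passes to the partial-sum encoding $v_i := u_1 + \dots + u_i$, identifies the resulting words with \emph{indecomposable} $m$-Dyck paths, and extracts the count from the functional equation $\GeneratingSeries'(t) = t\Par{1 - \GeneratingSeries'(t)}^{-m}$, quoting that this is the known generating series of the twisted Fuss-Catalan numbers. You instead pad the sequence to a composition of $mn-1$ into $n$ nonnegative parts and apply the cycle lemma, using $\gcd(n, mn-1) = 1$ to guarantee free orbits. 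Your version is more self-contained: it produces the closed form $\frac{1}{n}\binom{(m+1)n-2}{n-1}$ directly rather than deferring to a known series identity. The paper's version, in exchange, sets up the bijection with indecomposable $m$-Dyck paths, which it explicitly reuses later (in the proof of Proposition~\ref{prop:enumeration_butterflies_hill}). One point you were right to flag but could state more forcefully: since the entries $y_i = m - x_i$ can exceed $1$, you genuinely need Raney's form of the cycle lemma for arbitrary integer sequences summing to $1$, not the more commonly quoted Dvoretzky--Motzkin form with entries bounded above by $1$; that stronger form is true (the standard argument via the last minimum of the partial-sum walk goes through unchanged), so your proof stands.
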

\begin{proof}
    By Proposition~\ref{prop:wings_butterflies_avalanche}, the set
    $\OutputWings\Par{\SetAvalanche_\MapM}(n)$ is in one-to-one correspondence with the set
    of all $\MapM$-cliffs $v$ of size $n$ such that for any $i \in [2, n]$, $v_{i - 1} \leq
    v_i < \MapM(i)$. A possible bijection between these two sets sends any $u \in
    \OutputWings\Par{\SetAvalanche_\MapM}(n)$ to the $\MapM$-cliff $v$ of the same size such
    that for any $i \in [n]$, $v_i := u_1 + \dots + u_i$.  These words are moreover in
    one-to-one correspondence with indecomposable $m$-Dyck paths with $n \geq 1$ up steps,
    that are $m$-Dyck paths which cannot be written as a nontrivial concatenation of two
    $m$-Dyck paths. A possible bijection is the one described in upcoming
    Section~\ref{subsubsec:hill_objects}. Let us denote by $\GeneratingSeries(t)$ (resp.\
    $\GeneratingSeries'(t)$)  the generating series of $m$-Dyck paths (resp.\ indecomposable
    $m$-Dyck paths) enumerated w.r.t. their numbers of up steps. By convention,
    $\GeneratingSeries'(t)$ has no constant term. Since any $m$-Dyck path decomposes in a
    unique way as a concatenation of indecomposable $m$-Dyck paths, one has
    \begin{math}
        \GeneratingSeries(t) = \Par{1 - \GeneratingSeries'(t)}^{-1}.
    \end{math}
    Now, by using the fact that $\GeneratingSeries(t)$ satisfies
    \begin{math}
        \GeneratingSeries(t) = 1 + t \GeneratingSeries(t)^{m + 1},
    \end{math}
    we have
    \begin{equation} \label{equ:enumeration_output_wings_avalanche}
        \GeneratingSeries'(t)
        = \frac{\GeneratingSeries(t) - 1}{\GeneratingSeries(t)}
        = t \GeneratingSeries(t)^m
        = t \Par{\frac{1}{1 - \GeneratingSeries'(t)}}^m
    \end{equation}
    This relation satisfied by $\GeneratingSeries'(t)$ between the first and last members
    of~\eqref{equ:enumeration_output_wings_avalanche} is known to be the one of the
    generating series of twisted $m$-Fuss-Catalan numbers (see~\cite{Slo} for instance).
\end{proof}
\medbreak

By Proposition~\ref{prop:enumeration_output_wings_avalanche}, the first numbers of
output-wings of $\SetAvalanche_\MapM(n)$ by sizes are
\begin{subequations}
\begin{equation}
    1, 1, 1, 1, 1, 1, 1, 1, \qquad m = 0,
\end{equation}
\begin{equation}
    1, 1, 1, 2, 5, 14, 42, 132, \qquad m = 1,
\end{equation}
\begin{equation}
    1, 1, 2, 7, 30, 143, 728, 3876, \qquad m = 2.
\end{equation}
\end{subequations}
The fourth sequence is Sequence~\OEIS{A006013} of~\cite{Slo}. As a side remark, for any $m
\geq 1$, the generating series of the graded set $\OutputWings\Par{\SetAvalanche_\MapM}$ is
$1$ plus the inverse, for the functional composition of series, of the polynomial~$t (1 -
t)^m$.
\medbreak

\begin{Proposition} \label{prop:irreducibles_avalanche}
    For any $m \geq 1$ and $n \geq 1$,
    \begin{enumerate}[label={\it (\roman*)}]
        \item the set $\MeetIrreducibles\Par{\SetAvalanche_\MapM(n)}$ contains all
        $\MapM$-avalanches $u$ such that $u = u' a$ where $u' \in \max_{\Leq}
        \SetAvalanche_\MapM(n - 1)$ and $a \in \HanL{m - 1}$;
        \item the set $\JoinIrreducibles\Par{\SetAvalanche_\MapM(n)}$ contains all
        $\MapM$-avalanches having exactly one letter different from~$0$.
    \end{enumerate}
\end{Proposition}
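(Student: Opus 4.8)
The plan is to deduce both statements from Proposition~\ref{prop:join_meet_irreducible_elements_subfamilies}, applied to the graded subset $\SubFamilly := \SetAvalanche_\MapM$, which is straight by Proposition~\ref{prop:properties_avalanche_posets}~\ref{item:properties_avalanche_posets_1}. That proposition tells us that $u \in \SetAvalanche_\MapM(n)$ is join-irreducible (resp.\ meet-irreducible) if and only if there is exactly one index $i \in [n]$ for which some $k \geq 1$ satisfies $\DecreaseLetter_i^k(u) \in \SetAvalanche_\MapM(n)$ (resp.\ $\IncreaseLetter_i^k(u) \in \SetAvalanche_\MapM(n)$). Since lowering a single letter of a $\MapM$-avalanche always yields a $\MapM$-avalanche, and since the covering relation of $\SetAvalanche_\MapM(n)$ changes one letter by $1$ (Proposition~\ref{prop:properties_avalanche_posets}~\ref{item:properties_avalanche_posets_1}), for each index the existence of such a $k$ is equivalent to the case $k = 1$; so the task reduces to counting the indices $i$ with $\DecreaseLetter_i(u) \in \SetAvalanche_\MapM(n)$ and those with $\IncreaseLetter_i(u) \in \SetAvalanche_\MapM(n)$, respectively.

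For part~(ii): decrementing the $i$-th letter of an avalanche is possible exactly when $u_i \geq 1$, and it never breaks the avalanche condition, since all prefix weights only decrease. Hence $u$ covers precisely as many elements as it has nonzero letters, and it is join-irreducible if and only if it has a single nonzero letter, which is the asserted description.

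For part~(i): I would first pin down the upper covers of $u$. As $\MapM(1) = 0$, the index $1$ is never admissible, and for $i \geq 2$ a short check shows $\IncreaseLetter_i(u) \in \SetAvalanche_\MapM(n)$ if and only if (a)~$u_i < m(i - 1)$ and (b)~$\Weight(u_1 \cdots u_j) < m(j - 1)$ for every $j \in [i, n]$. Writing $p$ for the largest index of $[1, n]$ with $\Weight(u_1 \cdots u_p) = m(p - 1)$ --- which is well defined since $\Weight(u_1) = 0 = m \cdot 0$ --- condition~(b) holds precisely when $i > p$. If $u$ is not maximal, that is $\Weight(u) < m(n - 1)$, then $u_n \leq \Weight(u) < m(n - 1)$ forces $i = n$ to satisfy both (a) and (b); consequently $u$ is meet-irreducible if and only if $u$ is not maximal and $u_i = m(i - 1)$ for all $i \in [p + 1, n - 1]$ (the only way (a) can fail). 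Now take $u = u' a$ with $u' \in \max_{\Leq} \SetAvalanche_\MapM(n - 1)$ and $a \in [0, m - 1]$: then $\Weight(u') = \MapM(n - 1) = m(n - 2)$, so the length-$(n-1)$ prefix touches the ceiling, giving $p \geq n - 1$, while $\Weight(u) = m(n - 2) + a \leq m(n - 1) - 1$ shows $u$ is not maximal and $p \leq n - 1$; hence $p = n - 1$, the range $[p + 1, n - 1]$ is empty, and $u$ is meet-irreducible, establishing the stated inclusion. (The converse inclusion follows too: if $u$ is meet-irreducible and $p \leq n - 2$, then $n - 1 \in [p + 1, n - 1]$ would force $u_{n-1} = m(n - 2)$, contradicting $\Weight(u_1 \cdots u_{n-1}) < m(n - 2)$ which holds because $n - 1 > p$; so $p = n - 1$, whence $u_1 \cdots u_{n-1}$ has weight $m(n - 2) = \MapM(n - 1)$ and $u_n = \Weight(u) - m(n - 2) \in [0, m - 1]$.)

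The step I expect to be the real obstacle is the meet-irreducible count in part~(i): one has to keep apart the global obstruction~(b), which makes the set of admissible upper indices an up-set beginning right after the last ceiling-touching prefix, from the local obstruction~(a), which can knock out intermediate admissible indices, and then see that a single upper cover survives exactly when the length-$(n-1)$ prefix is itself maximal. The join-irreducible part and the reduction to counting covers are essentially immediate once Proposition~\ref{prop:join_meet_irreducible_elements_subfamilies} is in hand.
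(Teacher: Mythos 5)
Your proof is correct. Note that the paper itself gives no argument here: it explicitly announces at the start of Section~\ref{sec:fuss_catalan_posets} that the descriptions of meet- and join-irreducible elements are among the ``straightforward proofs'' it omits, so there is no official proof to compare against. Your route --- reduce to counting upper and lower covers via Proposition~\ref{prop:join_meet_irreducible_elements_subfamilies} and the straightness of $\SetAvalanche_\MapM$, then analyse which single-letter increments preserve the avalanche condition --- is the natural one, and both the inclusion and the converse you sketch check out.

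One remark: the step you single out as ``the real obstacle'' in part~(i) in fact dissolves. Your local condition~(a), namely $u_i < m(i-1)$, is automatically implied by your global condition~(b) at $j = i$, since $u_i \leq \Weight\Par{u_1 \cdots u_i} < m(i-1)$ whenever $i > p$. Hence \emph{every} index $i \in [p+1, n]$ is admissible, the number of elements covering $u$ is exactly $n - p$, and $u$ is meet-irreducible precisely when $p = n - 1$, which is your conclusion. Your criterion ``$u_i = m(i-1)$ for all $i \in [p+1, n-1]$'' is never satisfiable unless the range is empty (which is exactly how your converse argument exploits it), so the case analysis separating obstructions~(a) and~(b) is correct but vacuous; dropping~(a) shortens the proof without changing anything.
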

\medbreak

By Proposition~\ref{prop:irreducibles_avalanche} and by upcoming
Proposition~\ref{prop:max_avalanche_hill_bijection}, the number of meet-irreducible elements
of $\SetAvalanche_\MapM(n)$ satisfies, for any $m \geq 1$ and $n \geq 2$,
\begin{math}
    \# \MeetIrreducibles\Par{\SetAvalanche_\MapM(n)}
    =
    m \FussCatalan_m(n - 2)
\end{math}
and the number of join-irreducibles elements of $\SetAvalanche_\MapM(n)$ satisfies, for any
$m \geq 1$ and $n \geq 1$,
\begin{math}
    \# \JoinIrreducibles\Par{\SetAvalanche_\MapM(n)}
    = m \binom{n}{2}.
\end{math}
\medbreak

%%%%%%%%%%%%%%%%%%%%%%%%%%%%%%%%%%%%%%%%%%%%%%%%%%%%%%%%%%%%%%%%%%%%%%%%%%%%%%%%%%%%%%%%%%%%
\subsubsection{Cubic realization}
The map $\varphi_2$ introduced in Theorem~\ref{thm:poset_morphisms_avalanche} is used here
to describe the cells of maximal dimension of the cubic realization of
$\SetAvalanche_\MapM(n)$, $m \geq 1$, $n \geq 0$.
\medbreak

\begin{Proposition} \label{prop:cells_avalanche}
    For any $m \geq 1$, $n \geq 0$, and $u \in \InputWings\Par{\SetAvalanche_\MapM}(n)$,
    \begin{enumerate}[label={\it (\roman*)}]
        \item \label{item:cells_avalanche_1}
        the $\MapM$-avalanche $\varphi_2(u)$ is cell-compatible with the $\MapM$-avalanche
        $u$;

        \item \label{item:cells_avalanche_2}
        the cell $\Angle{\varphi_2(u), u}$ is pure;

        \item \label{item:cells_avalanche_3}
        all cells of
        \begin{math}
            \Bra{\Angle{\varphi_2(u), u} : u \in \InputWings\Par{\SetAvalanche_\MapM}(n)}
        \end{math}
        are pairwise disjoint.
    \end{enumerate}
\end{Proposition}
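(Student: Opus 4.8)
The plan is to read everything off the explicit descriptions of $\InputWings\Par{\SetAvalanche_\MapM}$ and $\OutputWings\Par{\SetAvalanche_\MapM}$ provided by Proposition~\ref{prop:wings_butterflies_avalanche}, together with the fact, established in Proposition~\ref{prop:input_wings_output_wings_avalanche_embedding}, that $\zeta$ is a well-defined map, so that in particular $\zeta(u) \in \SetAvalanche_\MapM(n)$. The first step is to pin down where $u$ and $\zeta(u)$ differ. Since $\MapM(1) = 0$, every $\MapM$-cliff has first letter $0$, hence $\zeta(u)_1 = 0 = u_1$; and by Proposition~\ref{prop:wings_butterflies_avalanche}, an input-wing $u$ of size $n$ satisfies $u_i \geq 1$ for all $i \in [2, n]$, hence $\zeta(u)_i = u_i - 1 \geq 0$. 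Consequently $\zeta(u) \Leq u$, and $u_i - \zeta(u)_i$ equals $1$ for $i \in [2, n]$ and $0$ for $i = 1$; in other words $\Angle{\zeta(u), u}$ is a translated unit box whose nondegenerate directions are exactly the indices in $[2, n]$.

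For~\ref{item:cells_avalanche_1}, I would take an arbitrary word $w$ of length $n$ with $w_i \in \Bra{\zeta(u)_i, u_i}$ for all $i$, so that $\zeta(u) \Leq w \Leq u$, and check that $w \in \SetAvalanche_\MapM$: it is a $\MapM$-cliff since $0 \leq w_i \leq u_i \leq \MapM(i)$, and a $\MapM$-avalanche since every prefix satisfies $\Weight\Par{w_1 \dots w_k} \leq \Weight\Par{u_1 \dots u_k} \leq \MapM(k)$, the last inequality holding because $u$ is itself a $\MapM$-avalanche. For~\ref{item:cells_avalanche_2}, the points of $\SetAvalanche_\MapM(n)$ have integer coordinates, while a point inside $\Angle{\zeta(u), u}$ must satisfy $u_i - 1 < x_i < u_i$ for each $i \in [2, n]$, which no integer does; when $n \leq 1$ one has $u = 0^n$ and the cell is a point, so the claim is trivial. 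For~\ref{item:cells_avalanche_3}, suppose a point $x \in \R^n$ is inside both $\Angle{\zeta(u), u}$ and $\Angle{\zeta(v), v}$ for input-wings $u, v$ of size $n$; then for each $i \in [2, n]$ the open intervals $(u_i - 1, u_i)$ and $(v_i - 1, v_i)$ both contain $x_i$, and two such intervals with integer endpoints meet only if $u_i = v_i$, so with $u_1 = 0 = v_1$ we get $u = v$, proving disjointness.

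None of these steps is a serious obstacle: the real content is that $\zeta$ sends an input-wing to the opposite corner of a pure unit box, after which purity and pairwise disjointness are immediate from the integrality of the elements of $\SetAvalanche_\MapM(n)$. The only point needing a bit of care is the bookkeeping in the degenerate directions (the first coordinate, and the cases $n \leq 1$), so I would make sure throughout that the conditions defining ``inside a cell'' are imposed only on the indices $i$ with $\zeta(u)_i \neq u_i$, in accordance with the definition.
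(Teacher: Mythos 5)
Your proposal is correct and follows essentially the same route as the paper: identify $\Angle{\zeta(u),u}$ as a unit box in the directions $[2,n]$, use the fact that decrementing letters of an avalanche preserves the prefix-weight condition for cell-compatibility, and invoke integrality of the elements of $\SetAvalanche_\MapM(n)$ for purity and pairwise disjointness. Your treatment of~\ref{item:cells_avalanche_3} via the non-overlap of the open intervals $(u_i-1,u_i)$ and $(v_i-1,v_i)$ is in fact slightly more explicit than the paper's, which derives both~\ref{item:cells_avalanche_2} and~\ref{item:cells_avalanche_3} from the single observation that no element of $\SetAvalanche_\MapM(n)$ lies inside such a cell.
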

\begin{proof}
    Let $v$ be an $\MapM$-cliff of size $n$ satisfying $v_i \in \Bra{\varphi_2(u)_i, u_i}$
    for all $i \in [n]$. By definition of $\varphi_2$, $v_1 = 0$ and $v_i \in \Bra{u_i - 1,
    u_i}$ for all $i \in [2, n]$. Since $u$ is an input-wing of $\SetAvalanche_\MapM$,
    $\varphi_2(u)$ is an $\MapM$-avalanche, and due to the definition of
    $\MapM$-avalanches, any $\MapM$-cliff obtained by decrementing some letters of $u$ is
    still an $\MapM$-avalanche. Thus, $v \in \SetAvalanche_\MapM$
    and~\ref{item:cells_avalanche_1} holds. Points~\ref{item:cells_avalanche_2}
    and~\ref{item:cells_avalanche_3} are consequences of the fact that there is no element
    of $\SetAvalanche_\MapM(n)$ inside a cell $\Angle{\varphi_2(u), u}$. Indeed, since for
    any $i \in [n]$, $\Brr{\varphi_2(u)_i - u_i} \leq 1$, we have $v_i \in
    \Bra{\varphi_2(u)_i, u_i}$ for all $v \in \Angle{\varphi_2(u), u} \cap
    \SetAvalanche_\MapM(n)$.
\end{proof}
\medbreak

As shown by Proposition~\ref{prop:cells_avalanche}, the cells of maximal dimension of the
cubic realization of $\SetAvalanche_\MapM(n)$ are all of the form $\Angle{\varphi_2(u), u}$
where the $u$ are input-wings of~$\SetAvalanche_\MapM(n)$.
\medbreak

\begin{Proposition} \label{prop:volume_avalanche}
    For any $m \geq 1$ and $n \geq 0$,
    \begin{math}
        \Volume\Par{\CubicReal\Par{\SetAvalanche_\MapM(n)}} = \FussCatalan_{m - 1}(n).
    \end{math}
\end{Proposition}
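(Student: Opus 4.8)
The plan is to reduce the computation to counting input-wings of $\SetAvalanche_\MapM(n)$ through the volume formula \eqref{equ:volume_computation_from_output_wings}, and then to invoke the enumeration of input-wings already recorded. The case $n = 0$ is immediate, since $\SetAvalanche_\MapM(0) = \{\epsilon\}$ and its realization is a single point of $\R^0$, of volume $1 = \FussCatalan_{m-1}(0)$; so I would assume $n \geq 1$ in what follows.

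First I would pin down the dimension of the top cells. Every $\MapM$-avalanche has first letter $0$ (because $\Weight(u_1) \leq \MapM(1) = 0$), so any cell $\Angle{a, b}$ of $\CubicReal\Par{\SetAvalanche_\MapM(n)}$ satisfies $a_1 = b_1 = 0$, whence $\dim \Angle{a, b} = \# \DiffIndices(a, b) \leq n - 1$. Proposition~\ref{prop:cells_avalanche}, together with the remark following it, shows that this bound is attained and, more precisely, that the cells of dimension $n - 1$ are exactly the cells $\Angle{\zeta(u), u}$ for $u$ ranging over $\InputWings\Par{\SetAvalanche_\MapM}(n)$, where $\zeta$ is the embedding of Proposition~\ref{prop:input_wings_output_wings_avalanche_embedding}, and that these cells are pairwise disjoint. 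Hence $\Volume\Par{\CubicReal\Par{\SetAvalanche_\MapM(n)}}$ is the $(n-1)$-volume, computed by summing the volumes of these pairwise disjoint cells; this is precisely \eqref{equ:volume_computation_from_output_wings} applied with $\rho := \zeta$ in the input-wings-to-output-wings direction.

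Next I would evaluate the volume of a single such cell. For $u \in \InputWings\Par{\SetAvalanche_\MapM}(n)$, Proposition~\ref{prop:wings_butterflies_avalanche} gives $u_i \neq 0$ for all $i \in [2, n]$, so by definition of $\zeta$ one has $\zeta(u)_1 = 0 = u_1$ and $\zeta(u)_i = u_i - 1$ for $i \in [2, n]$; thus $\DiffIndices(\zeta(u), u) = [2, n]$ and
\begin{equation}
    \Volume \Angle{\zeta(u), u}
    = \prod_{i \in \DiffIndices(\zeta(u), u)} (u_i - \zeta(u)_i)
    = \prod_{i = 2}^{n} 1 = 1 .
\end{equation}
Therefore $\Volume\Par{\CubicReal\Par{\SetAvalanche_\MapM(n)}} = \# \InputWings\Par{\SetAvalanche_\MapM}(n)$, and by the consequence of Proposition~\ref{prop:input_wings_avalanche_avalanche_bijection} noted above (the isomorphism $\InputWings\Par{\SetAvalanche_\MapM}(n) \simeq \SetAvalanche_{\mathbf m - 1}(n)$ combined with Proposition~\ref{prop:enumeration_avalanche}) this cardinality equals $\FussCatalan_{m-1}(n)$, which is the claimed identity.

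The proof is essentially a bookkeeping step sitting on top of Proposition~\ref{prop:cells_avalanche}; the only points requiring any care are that no cell of dimension $n$ exists and that the family $\Bra{\Angle{\zeta(u), u}}_u$ exhausts the top-dimensional cells, which I would extract respectively from the ``first letter is $0$'' observation and from Proposition~\ref{prop:cells_avalanche}, rather than re-deriving them here.
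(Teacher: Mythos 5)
Your proof is correct and follows essentially the same route as the paper's: both reduce the volume to the count of input-wings via Proposition~\ref{prop:cells_avalanche} and formula~\eqref{equ:volume_computation_from_output_wings}, observe that each cell $\Angle{\zeta(u), u}$ has volume $1$, and conclude with Proposition~\ref{prop:input_wings_avalanche_avalanche_bijection}. The only difference is that you spell out a few details the paper leaves implicit (the top dimension being $n-1$ and the explicit computation $\DiffIndices(\zeta(u),u) = [2,n]$), which is harmless.
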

\begin{proof}
    Proposition~\ref{prop:cells_avalanche} describes all the cells of maximal dimension of
    $\CubicReal\Par{\SetAvalanche_\MapM(n)}$ as cells $\Angle{\varphi_2(u), u}$ where $u$ is
    an input-wing of $\SetAvalanche_\MapM(n)$. Since all these cells are pairwise disjoint,
    the volume of $\CubicReal\Par{\SetAvalanche_\MapM(n)}$ expresses
    as~\eqref{equ:volume_computation_from_output_wings}. Moreover, observe that the volume
    of each cell $\Angle{\varphi_2(u), u}$ where $u$ in an input-wing is by definition of
    $\varphi_2$ equal to $1$. Therefore,
    $\Volume\Par{\CubicReal\Par{\SetAvalanche_\MapM(n)}}$ is equal to the number of
    input-wings of $\SetAvalanche_\MapM(n)$. The statement of the proposition follows now
    from Theorem~\ref{thm:poset_morphisms_avalanche}.
\end{proof}
\medbreak

%%%%%%%%%%%%%%%%%%%%%%%%%%%%%%%%%%%%%%%%%%%%%%%%%%%%%%%%%%%%%%%%%%%%%%%%%%%%%%%%%%%%%%%%%%%%
%%%%%%%%%%%%%%%%%%%%%%%%%%%%%%%%%%%%%%%%%%%%%%%%%%%%%%%%%%%%%%%%%%%%%%%%%%%%%%%%%%%%%%%%%%%%
\subsection{$\delta$-hill posets}
We now introduce $\delta$-hills and $\delta$-hill posets as subposets of $\delta$-cliff
posets. As we shall see, some of these posets are sublattices of $\MapM$-cliff lattices.
\medbreak

%%%%%%%%%%%%%%%%%%%%%%%%%%%%%%%%%%%%%%%%%%%%%%%%%%%%%%%%%%%%%%%%%%%%%%%%%%%%%%%%%%%%%%%%%%%%
\subsubsection{Objects} \label{subsubsec:hill_objects}
For any range map $\delta$, let $\SetHill_\delta$ be the graded subset of $\SetCliff_\delta$
containing all $\delta$-cliffs such that that for any $i \in [|u| - 1]$, $u_i \leq u_{i +
1}$. Any element of $\SetHill_\delta$ is a \Def{$\delta$-hill}. For instance,
\begin{equation}
    \SetHill_\MapTwo(3) =
    \Bra{000, 001, 011, 002, 012, 022, 003, 013, 023, 004, 014, 024}.
\end{equation}
\medbreak

\begin{Proposition} \label{prop:properties_hill_objects}
    For any weakly increasing range map $\delta$, the graded set $\SetHill_\delta$ is
    \begin{enumerate}[label={\it (\roman*)}]
        \item \label{item:properties_hill_objects_1}
        closed by prefix;

        \item \label{item:properties_hill_objects_2}
        is minimally extendable if and only if $\delta = 0^\omega$;

        \item \label{item:properties_hill_objects_3}
        is maximally extendable.
    \end{enumerate}
\end{Proposition}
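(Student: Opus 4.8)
The plan is to verify each of the three conditions directly from the definition of $\delta$-hills, using the fact that $\delta$ is weakly increasing.

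For \ref{item:properties_hill_objects_1} (closed by prefix): if $u$ is a $\delta$-hill and $u'$ is a prefix of $u$ of length $n' \le |u|$, then the weak-increasingness condition $u'_i \le u'_{i+1}$ for $i \in [n'-1]$ is inherited from the corresponding inequalities for $u$, and moreover $u'_i = u_i \le \delta(i)$ still holds. Hence $u' \in \SetHill_\delta$, and this is immediate.

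For \ref{item:properties_hill_objects_3} (maximally extendable): clearly $\epsilon \in \SetHill_\delta$. Let $u \in \SetHill_\delta$ of size $n$; I claim $u\,\delta(n+1) \in \SetHill_\delta$. Indeed, writing $v := u\,\delta(n+1)$, for $i \in [n-1]$ the inequality $v_i \le v_{i+1}$ is the one for $u$; for $i = n$, I must check $v_n = u_n \le v_{n+1} = \delta(n+1)$. Since $u$ is a $\delta$-cliff we have $u_n \le \delta(n)$, and since $\delta$ is weakly increasing, $\delta(n) \le \delta(n+1)$, giving $u_n \le \delta(n+1)$. Also $v_{n+1} = \delta(n+1)$ trivially satisfies $0 \le v_{n+1} \le \delta(n+1)$, so $v$ is a $\delta$-cliff; combined with the monotonicity this shows $v \in \SetHill_\delta$.

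For \ref{item:properties_hill_objects_2} (minimally extendable iff $\delta = 0^\omega$): if $\delta = 0^\omega$, then $\SetHill_\delta$ consists only of zero words and is trivially minimally extendable (appending $0$ stays in the set, and $\epsilon \in \SetHill_\delta$). Conversely, if $\delta \ne 0^\omega$, pick the least index $k$ with $\delta(k) \ge 1$ (which exists, and $k \ge 2$ by nothing forcing rootedness, but that is irrelevant here). Then the word $w$ with $w_i = \delta(k)$ for $i \in [k]$ is a $\delta$-hill: it is weakly increasing (constant) and $w_i = \delta(k) \le \delta(i)$ for $i \ge k$ by monotonicity, while for $i < k$ we need $\delta(k) \le \delta(i) = 0$, which fails. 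So I instead take $w := 0^{k-1}\,\delta(k)$, which is a $\delta$-hill (the single nonzero letter is last, so it is weakly increasing, and $0 \le \delta(k) \le \delta(k)$). Then $w0 = 0^{k-1}\,\delta(k)\,0$ is not weakly increasing since $\delta(k) \ge 1 > 0$, so $w0 \notin \SetHill_\delta$, proving $\SetHill_\delta$ is not minimally extendable. This case analysis—choosing the right witness word and its extension—is the only place requiring a moment's thought; the rest is a routine unwinding of definitions, entirely parallel in structure to Proposition~\ref{prop:properties_avalanche_objects}.
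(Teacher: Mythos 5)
Your proof is correct and follows essentially the same route as the paper: part (i) is immediate, part (iii) uses $u_n \leq \delta(n) \leq \delta(n+1)$ to append $\delta(n+1)$, and part (ii) exhibits a hill with positive last letter whose extension by $0$ breaks monotonicity (the paper's witness is $\GreatestElement_\delta(n) = \delta(1)\cdots\delta(n)$ rather than your $0^{k-1}\,\delta(k)$, but both work identically). The only stylistic remark is that the false start in part (ii) --- proposing the constant word $\delta(k)^k$ and then discarding it --- should simply be deleted from a final write-up.
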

\begin{proof}
    Point~\ref{item:properties_hill_objects_1} is an immediate consequence of the definition
    of $\delta$-hills.  We have immediately that $\SetHill_{0^\omega}$ is minimally
    extendable.  Moreover, when $\delta \ne 0^\omega$, there is an $n \geq 1$ such that
    $\delta(n) \geq 1$.  Therefore, $\GreatestElement_\delta(n)$ is a $\delta$-hill but
    $\GreatestElement_\delta(n) \, 0$ is not.  This
    establishes~\ref{item:properties_hill_objects_2}.  Finally, since for any $n \geq 0$,
    $\delta(n + 1) \geq \delta(n)$, one has $\delta(n + 1) \geq u_n$ for any $u \in
    \SetHill_\delta(n)$. This shows that $u \, \delta(n + 1)$ is a $\delta$-hill. Therefore,
    \ref{item:properties_hill_objects_3} holds.
\end{proof}
\medbreak

For any $m \geq 0$, an \Def{$m$-Dyck path} of size $n$ is a path in from $(0, 0)$ to $((m +
1)n, 0)$ in $\N^2$ staying above the $x$-axis, and consisting only in steps of the form $(1,
-1)$, called \Def{down steps}, or steps of the form $(1, m)$, called \Def{up steps}. We
denote by $\SetDyckPaths_m$ the graded set of all $m$-Dyck paths. There is a one-to-one
correspondence between $\SetHill_\MapM(n)$ and $\SetDyckPaths_m(n)$ wherein an $m$-Dyck path
$w$ of size $n$ is sent to the $\MapM$-hill $u$ of size $n$ such that for any $i \in [n]$,
$u_i$ is the number of down steps to the left of the $i$-th up step of~$w$.  For instance,
the $2$-Dyck path
\begin{equation}
    \begin{tikzpicture}[Centering,scale=.3]
        \draw[Grid](0,0)grid(15,4);
        \node[PathNode](0)at(0,0){};
        \node[PathNode](1)at(1,2){};
        \node[PathNode](2)at(2,1){};
        \node[PathNode](3)at(3,0){};
        \node[PathNode](4)at(4,2){};
        \node[PathNode](5)at(5,1){};
        \node[PathNode](6)at(6,3){};
        \node[PathNode](7)at(7,2){};
        \node[PathNode](8)at(8,1){};
        \node[PathNode](9)at(9,0){};
        \node[PathNode](10)at(10,2){};
        \node[PathNode](11)at(11,4){};
        \node[PathNode](12)at(12,3){};
        \node[PathNode](13)at(13,2){};
        \node[PathNode](14)at(14,1){};
        \node[PathNode](15)at(15,0){};
        \draw[PathStep](0)--(1);
        \draw[PathStep](1)--(2);
        \draw[PathStep](2)--(3);
        \draw[PathStep](3)--(4);
        \draw[PathStep](4)--(5);
        \draw[PathStep](5)--(6);
        \draw[PathStep](6)--(7);
        \draw[PathStep](7)--(8);
        \draw[PathStep](8)--(9);
        \draw[PathStep](9)--(10);
        \draw[PathStep](10)--(11);
        \draw[PathStep](11)--(12);
        \draw[PathStep](12)--(13);
        \draw[PathStep](13)--(14);
        \draw[PathStep](14)--(15);
    \end{tikzpicture}
\end{equation}
is sent to the $\MapTwo$-hill $02366$. Since $m$-Dyck paths of size $n$ are known to be
enumerated by $m$-Fuss-Catalan numbers, one has
\begin{math}
    \# \SetHill_\MapM(n) = \FussCatalan_m(n).
\end{math}
\medbreak

\begin{Proposition} \label{prop:image_elevation_map_hill}
    For any range map $\delta$ and any $n \geq 0$,
    \begin{math}
        \ElevationImage_{\SetHill_\delta}(n) = \SetAvalanche_\delta(n).
    \end{math}
\end{Proposition}
\begin{proof}
    First, since $\SetHill_\delta$ is by Proposition~\ref{prop:properties_hill_objects}
    closed by prefix, the $\SetHill_\delta$-elevation map and the
    $\SetHill_\delta$-elevation image are well-defined.  Let $u \in \SetHill_\delta(n)$ and
    $v := \ElevationMap_{\SetHill_\delta}(u)$. By definition of $\delta$-hills and of the
    $\SetHill_\delta$-elevation map, we have $v_1 = u_1$ and, for any $i \in [2, n]$, $v_i =
    u_i - u_{i - 1}$.  Therefore, for any prefix $v' := v_1 \dots v_j$, $j \in [n]$, of $v$,
    we have
    \begin{equation}
        \Weight\Par{v'}
        = u_1 + \Par{u_2 - u_1} + \Par{u_3 - u_2} + \dots + \Par{u_j - u_{j - 1}}
        = u_j.
    \end{equation}
    Since $u$ is in particular a $\delta$-cliff of size $n$, then $u_j \leq \delta(j)$, so
    that $v \in \SetAvalanche_\delta(n)$. This shows that
    $\ElevationImage_{\SetHill_\delta}(n)$ is a subset of $\SetAvalanche_\delta(n)$.
    \smallbreak

    Now, let $u$ be an $\delta$-avalanche of size $n$.  Let us show by induction on $n \geq
    0$ that there exists $v \in \SetHill_\delta(n)$ such that
    $\ElevationMap_{\SetHill_\delta}(v) = u$. When $n = 0$, the property is trivially
    satisfied. When $n \geq 1$, since $\SetAvalanche_\delta$ is, by
    Proposition~\ref{prop:properties_avalanche_objects}, closed by prefix, one has $u = u'
    a$ for a $u' \in \SetAvalanche_\delta(n - 1)$ and an $a \in \N$. By induction
    hypothesis, there exists $v' \in \SetHill_\delta(n - 1)$ such that
    $\ElevationMap_{\SetHill_\delta}\Par{v'} = u'$. Now, let $b := a + v'_{n - 1}$ and set
    $v := v' b$. By using what we have proven in the first paragraph, $\Weight\Par{u'} =
    v'_{n - 1}$. Since $\Weight\Par{u'} + a = \Weight(u) \leq \delta(n)$, we have that $b
    \leq \delta(n)$. Therefore, since moreover $b \geq v'_{n - 1}$, $v$ is a $\delta$-hill
    and it satisfies $\ElevationMap_{\SetHill_\delta}(v) = u$.
\end{proof}
\medbreak

%%%%%%%%%%%%%%%%%%%%%%%%%%%%%%%%%%%%%%%%%%%%%%%%%%%%%%%%%%%%%%%%%%%%%%%%%%%%%%%%%%%%%%%%%%%%
\subsubsection{Posets}
For any $n \geq 0$, the subposet $\SetHill_\delta(n)$ of $\SetCliff_\delta(n)$ is the
\Def{$\delta$-hill poset} of order $n$. Figure~\ref{fig:examples_hill_posets} shows the
Hasse diagrams of some $\MapM$-hill posets.
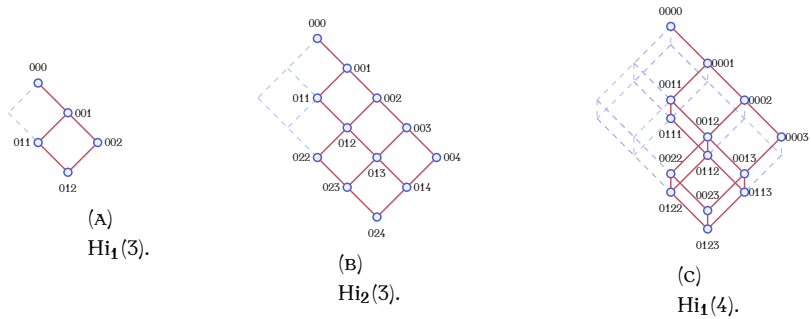
\begin{figure}[ht]
    \centering
    \subfloat[][$\SetHill_\MapOne(3)$.]{
    \centering
    \scalebox{.7}{
    \begin{tikzpicture}[Centering,xscale=.8,yscale=.8,rotate=-135]
        \draw[Grid](0,0)grid(1,2);
        \node[NodeGraph](000)at(0,0){};
        \node[NodeGraph](001)at(0,1){};
        \node[NodeGraph](002)at(0,2){};
        \node[NodeGraph](011)at(1,1){};
        \node[NodeGraph](012)at(1,2){};
        \node[NodeLabelGraph,above of=000]{$000$};
        \node[NodeLabelGraph,right of=001]{$001$};
        \node[NodeLabelGraph,right of=002]{$002$};
        \node[NodeLabelGraph,left of=011]{$011$};
        \node[NodeLabelGraph,below of=012]{$012$};
        \draw[EdgeGraph](000)--(001);
        \draw[EdgeGraph](001)--(002);
        \draw[EdgeGraph](001)--(011);
        \draw[EdgeGraph](002)--(012);
        \draw[EdgeGraph](011)--(012);
    \end{tikzpicture}}
    \label{subfig:hill_poset_1_3}}
    \qquad \qquad
    \subfloat[][$\SetHill_\MapTwo(3)$.]{
    \centering
    \scalebox{.7}{
    \begin{tikzpicture}[Centering,xscale=.8,yscale=.8,rotate=-135]
        \draw[Grid](0,0)grid(2,4);
        \node[NodeGraph](000)at(0,0){};
        \node[NodeGraph](001)at(0,1){};
        \node[NodeGraph](002)at(0,2){};
        \node[NodeGraph](003)at(0,3){};
        \node[NodeGraph](004)at(0,4){};
        \node[NodeGraph](011)at(1,1){};
        \node[NodeGraph](012)at(1,2){};
        \node[NodeGraph](013)at(1,3){};
        \node[NodeGraph](014)at(1,4){};
        \node[NodeGraph](022)at(2,2){};
        \node[NodeGraph](023)at(2,3){};
        \node[NodeGraph](024)at(2,4){};
        \node[NodeLabelGraph,above of=000]{$000$};
        \node[NodeLabelGraph,right of=001]{$001$};
        \node[NodeLabelGraph,right of=002]{$002$};
        \node[NodeLabelGraph,right of=003]{$003$};
        \node[NodeLabelGraph,right of=004]{$004$};
        \node[NodeLabelGraph,left of=011]{$011$};
        \node[NodeLabelGraph,below of=012]{$012$};
        \node[NodeLabelGraph,below of=013]{$013$};
        \node[NodeLabelGraph,right of=014]{$014$};
        \node[NodeLabelGraph,left of=022]{$022$};
        \node[NodeLabelGraph,left of=023]{$023$};
        \node[NodeLabelGraph,below of=024]{$024$};
        \draw[EdgeGraph](000)--(001);
        \draw[EdgeGraph](001)--(002);
        \draw[EdgeGraph](001)--(011);
        \draw[EdgeGraph](002)--(003);
        \draw[EdgeGraph](002)--(012);
        \draw[EdgeGraph](003)--(004);
        \draw[EdgeGraph](003)--(013);
        \draw[EdgeGraph](004)--(014);
        \draw[EdgeGraph](011)--(012);
        \draw[EdgeGraph](012)--(013);
        \draw[EdgeGraph](012)--(022);
        \draw[EdgeGraph](013)--(014);
        \draw[EdgeGraph](013)--(023);
        \draw[EdgeGraph](014)--(024);
        \draw[EdgeGraph](022)--(023);
        \draw[EdgeGraph](023)--(024);
    \end{tikzpicture}}
    \label{subfig:hill_poset_2_3}}
    \qquad \qquad
    \subfloat[][$\SetHill_\MapOne(4)$.]{
    \centering
    \scalebox{.7}{
    \begin{tikzpicture}[Centering,xscale=.7,yscale=.7,
        x={(0,-.5cm)}, y={(-1.0cm,-1.0cm)}, z={(1.0cm,-1.0cm)}]
        \DrawGridSpace{1}{2}{3}
        \node[NodeGraph](0000)at(0,0,0){};
        \node[NodeGraph](0001)at(0,0,1){};
        \node[NodeGraph](0002)at(0,0,2){};
        \node[NodeGraph](0003)at(0,0,3){};
        \node[NodeGraph](0011)at(0,1,1){};
        \node[NodeGraph](0012)at(0,1,2){};
        \node[NodeGraph](0013)at(0,1,3){};
        \node[NodeGraph](0022)at(0,2,2){};
        \node[NodeGraph](0023)at(0,2,3){};
        \node[NodeGraph](0111)at(1,1,1){};
        \node[NodeGraph](0112)at(1,1,2){};
        \node[NodeGraph](0113)at(1,1,3){};
        \node[NodeGraph](0122)at(1,2,2){};
        \node[NodeGraph](0123)at(1,2,3){};
        \node[NodeLabelGraph,above of=0000]{$0000$};
        \node[NodeLabelGraph,right of=0001]{$0001$};
        \node[NodeLabelGraph,right of=0002]{$0002$};
        \node[NodeLabelGraph,right of=0003]{$0003$};
        \node[NodeLabelGraph,above of=0011]{$0011$};
        \node[NodeLabelGraph,above of=0012]{$0012$};
        \node[NodeLabelGraph,above of=0013]{$0013$};
        \node[NodeLabelGraph,above of=0022]{$0022$};
        \node[NodeLabelGraph,above of=0023]{$0023$};
        \node[NodeLabelGraph,below of=0111]{$0111$};
        \node[NodeLabelGraph,below of=0112]{$0112$};
        \node[NodeLabelGraph,right of=0113]{$0113$};
        \node[NodeLabelGraph,below of=0122]{$0122$};
        \node[NodeLabelGraph,below of=0123]{$0123$};
        \draw[EdgeGraph](0000)--(0001);
        \draw[EdgeGraph](0001)--(0002);
        \draw[EdgeGraph](0001)--(0011);
        \draw[EdgeGraph](0002)--(0003);
        \draw[EdgeGraph](0002)--(0012);
        \draw[EdgeGraph](0003)--(0013);
        \draw[EdgeGraph](0011)--(0012);
        \draw[EdgeGraph](0011)--(0111);
        \draw[EdgeGraph](0012)--(0013);
        \draw[EdgeGraph](0012)--(0022);
        \draw[EdgeGraph](0012)--(0112);
        \draw[EdgeGraph](0013)--(0023);
        \draw[EdgeGraph](0013)--(0113);
        \draw[EdgeGraph](0022)--(0023);
        \draw[EdgeGraph](0022)--(0122);
        \draw[EdgeGraph](0023)--(0123);
        \draw[EdgeGraph](0111)--(0112);
        \draw[EdgeGraph](0112)--(0113);
        \draw[EdgeGraph](0112)--(0122);
        \draw[EdgeGraph](0113)--(0123);
        \draw[EdgeGraph](0122)--(0123);
    \end{tikzpicture}}
    \label{subfig:hill_poset_1_4}}
    \caption{\footnotesize Hasse diagrams of some $\delta$-hill posets.}
    \label{fig:examples_hill_posets}
\end{figure}
The $\MapOne$-hill posets are sometimes called Stanley lattices~\cite{Sta75,Knu04}. The
$\delta$-hill posets can be seen as generalizations of these structures.
\medbreak

\begin{Proposition} \label{prop:properties_hill_posets}
    For any weakly increasing range map $\delta$ and $n \geq 0$, the poset
    $\SetHill_\delta(n)$ is
    \begin{enumerate}[label={\it (\roman*)}]
        \item \label{item:properties_hill_posets_1}
        straight, where $u \in \SetHill_\delta(n)$ is covered by $v \in \SetHill_\delta(n)$
        if and only if there is an $i \in [n]$ such that $\IncreaseLetter_i(u) = v$;

        \item \label{item:properties_hill_posets_2}
        coated;

        \item \label{item:properties_hill_posets_3}
        nested;

        \item \label{item:properties_hill_posets_4}
        graded, where the rank of a hill is its weight;

        \item \label{item:properties_hill_posets_5}
        EL-shellable;

        \item \label{item:properties_hill_posets_6}
        a sublattice of $\SetCliff_\delta(n)$;

        \item \label{item:properties_hill_posets_7}
        constructible by interval doubling.
    \end{enumerate}
\end{Proposition}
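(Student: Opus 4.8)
The plan is to obtain the seven assertions by reducing them to the general machinery of Section~\ref{sec:cliff_posets}, after checking two structural properties of $\SetHill_\delta$: that it is coated and that it is nested. Recall first that $\SetHill_\delta$ is closed by prefix by Proposition~\ref{prop:properties_hill_objects}, and that it is spread --- hence each $\SetHill_\delta(n)$ is bounded --- since $0^n$ and $\delta(1) \dots \delta(n)$ are weakly increasing words, thus $\delta$-hills.

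First I would handle~\ref{item:properties_hill_posets_2}: given $\delta$-hills $u \Leq v$ of size $n$ and $i \in [n - 1]$, the word $u_1 \dots u_i v_{i + 1} \dots v_n$ is again weakly increasing, the only adjacency to check being $u_i \leq v_i \leq v_{i + 1}$, which holds since $u \Leq v$ and $v$ is a hill. Thus $\SetHill_\delta$ is coated, and Lemma~\ref{lem:coated_implies_straight} gives that it is straight. To pin down the precise covering relation claimed in~\ref{item:properties_hill_posets_1}, note that if $v = \IncreaseLetter_i(u)$ with both words hills, then $u \Covered_{\SetHill_\delta} v$ because $[u, v]$ already has cardinality $2$ in $\SetCliff_\delta(n)$; conversely, for a cover $u \Covered_{\SetHill_\delta} v$, straightness forces $\DiffIndices(u, v) = \{j\}$ for a single $j$, and then if $v_j \geq u_j + 2$ one checks that $\IncreaseLetter_j(u)$ is a $\delta$-hill strictly between $u$ and $v$ (using $u_{j - 1} \leq u_j$ and $u_{j + 1} = v_{j + 1} \geq v_j \geq u_j + 2$), contradicting the cover; so $v = \IncreaseLetter_j(u)$. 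Point~\ref{item:properties_hill_posets_4} is then immediate: $\Weight$ is a rank function since each cover increments one letter by $1$, so every saturated chain between two comparable hills has length equal to the difference of their weights.

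The main obstacle, and the only genuinely computational step, is~\ref{item:properties_hill_posets_3}. Here $\MaxLastLetter(\SetHill_\delta(n)) = \delta(n)$, and condition~\ref{item:nested_1} holds because every $0^{n - 1} a$ with $a \in [0, \delta(n)]$ is a hill. For~\ref{item:nested_2}, fix $a \in [0, \delta(n)]$ and define $g \in \SetHill_\delta(n - 1)$ by capping the greatest hill at $a$, that is $g_{n - 1} := \min\{a, \delta(n - 1)\}$ and $g_k := \min\{g_{k + 1}, \delta(k)\}$ for decreasing $k$; since $\delta$ is weakly increasing, $g$ is indeed weakly increasing, and a downward induction shows that a size-$(n - 1)$ hill $w$ satisfies $w_{n - 1} \leq a$ if and only if $w \Leq g$. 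Hence $\{w \in \SetHill_\delta(n - 1) : w_{n - 1} \leq a\}$ is the interval $[0^{n - 1}, g]$, and appending the letter $\delta(n)$ turns $\SetHill_\delta(n)_{a, \delta(n)}$ into $[0^{n - 1} \delta(n), g \, \delta(n)]$, which is both a subset of $\SetHill_\delta(n)$ (as $g_{n - 1} \leq a \leq \delta(n)$) and an interval of it. This establishes nestedness.

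The remaining points are then formal. Point~\ref{item:properties_hill_posets_5} follows from~\ref{item:properties_hill_posets_2}, boundedness, and Theorem~\ref{thm:subposets_el_shellability} (which also yields at most one weakly decreasing chain between two elements). Point~\ref{item:properties_hill_posets_6} follows because the coordinatewise $\min$ and $\max$ of two weakly increasing words are weakly increasing, so $\SetHill_\delta(n)$ is closed under $\Meet$ and $\JJoin$, hence a sublattice of $\SetCliff_\delta(n)$ by Proposition~\ref{prop:subposets_lattices}. Point~\ref{item:properties_hill_posets_7} follows from~\ref{item:properties_hill_posets_3}, the closure by prefix, and Theorem~\ref{thm:constructible_by_interval_doubling_subfamilly} --- whose rootedness hypothesis is only used to guarantee $\SubFamilly(0) = \{\epsilon\}$, which always holds for $\SetHill_\delta$. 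Everything outside~\ref{item:properties_hill_posets_3} is either immediate or handed off to an already-proved general statement, so I expect the interval identification above to be the one step needing care.
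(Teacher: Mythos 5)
Your proof is correct and follows essentially the same route as the paper: the paper declares points \textit{(i)}--\textit{(iv)} and \textit{(vi)} immediate and derives \textit{(v)} from coatedness via Theorem~\ref{thm:subposets_el_shellability} and \textit{(vii)} from nestedness and closure by prefix via Theorem~\ref{thm:constructible_by_interval_doubling_subfamilly}, exactly as you do. Your contribution is to supply the details the paper omits (the cover-relation argument, the explicit greatest element $g$ witnessing that $\SetHill_\delta(n)_{a,\delta(n)}$ is an interval, and the observation that the rootedness hypothesis of Theorem~\ref{thm:constructible_by_interval_doubling_subfamilly} is harmless here), all of which check out.
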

\begin{proof}
    Points~\ref{item:properties_hill_posets_1}, \ref{item:properties_hill_posets_2},
    \ref{item:properties_hill_posets_3}, \ref{item:properties_hill_posets_4},
    and~\ref{item:properties_hill_posets_6} are immediate.
    Point~\ref{item:properties_hill_posets_5} follows
    from~\ref{item:properties_hill_posets_2} and
    Theorem~\ref{thm:subposets_el_shellability}. Point~\ref{item:properties_hill_posets_7}
    is a consequence of Theorem~\ref{thm:constructible_by_interval_doubling_subfamilly}
    since~\ref{item:properties_hill_posets_3} holds and, from
    Proposition~\ref{prop:properties_hill_objects}, of the fact that $\SetHill_\delta$ is
    closed by prefix.  Alternatively, \ref{item:properties_hill_posets_7} is implied
    by~\ref{item:properties_hill_posets_6} and the fact that any sublattice of a lattice
    constructible by interval doubling is constructible by interval doubling~\cite{Day79},
    which is indeed the case for~$\SetCliff_\delta(n)$.
\end{proof}
\medbreak

\begin{Proposition} \label{prop:wings_butterflies_hill}
    For any $m \geq 0$,
    \begin{enumerate}[label={\it (\roman*)}]
        \item \label{item:wings_butterflies_hill_1}
        the graded set $\InputWings\Par{\SetHill_\MapM}$ contains all the $\MapM$-cliffs
        $u$ satisfying
        \begin{math}
            u_1 < \dots < u_{|u|};
        \end{math}

        \item \label{item:wings_butterflies_hill_2}
        the graded set $\OutputWings\Par{\SetHill_\MapM}$ contains all the $\MapM$-cliffs
        $u$ satisfying
        \begin{math}
            u_1 \leq u_2 < \dots < u_{|u|}
        \end{math}
        and for all $i \in [2, |u|]$, $u_i < \MapM(i)$;

        \item \label{item:wings_butterflies_hill_3}
        the graded set $\Butterflies\Par{\SetHill_\MapM}$ contains all the $\MapM$-cliffs
        $u$ satisfying
        \begin{math}
            u_1 < \dots < u_{|u|}
        \end{math}
        and for all $i \in [2, |u|]$, $u_i < \MapM(i)$.
    \end{enumerate}
\end{Proposition}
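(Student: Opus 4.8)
The plan is to read off the covered and covering elements of an $\MapM$-hill directly from Proposition~\ref{prop:properties_hill_posets}: part~\ref{item:properties_hill_posets_1} says that $\SetHill_\MapM(n)$ is straight and that $u \Covered_{\SetHill_\MapM} v$ if and only if $v = \IncreaseLetter_i(u)$ for some $i \in [n]$. I first record two elementary facts. Since $\MapM(i) = m(i-1)$ vanishes exactly at $i = 1$ (for $m \geq 1$), one has $\DimensionDelta_n(\MapM) = n - 1$ for every $n \geq 1$, and every $\MapM$-hill $u$ satisfies $u_1 = 0$ as well as $u_1 \leq u_2 \leq \dots \leq u_{|u|}$. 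The cases $n \leq 1$ are immediate and, for $m = 0$, the poset $\SetHill_{\mathbf 0}(n)$ is a single point; so assume $m \geq 1$ and $n \geq 2$.

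Next I would count the elements a fixed $\MapM$-hill $u$ of size $n$ covers. Decrementing the $i$-th letter yields an $\MapM$-hill exactly when $i \geq 2$ and $u_{i-1} < u_i$: the inequality $u_i - 1 \leq u_{i+1}$ is automatic when $i < n$, and $u_i - 1 \geq u_{i-1} \geq 0$ keeps us inside $\SetCliff_\MapM$, while index $1$ is never available since $u_1 = 0$. Hence $u$ covers $\# \Bra{i \in [2, n] : u_{i-1} < u_i}$ elements, which is at most $n - 1 = \DimensionDelta_n(\MapM)$, with equality if and only if $u_1 < u_2 < \dots < u_n$; this is part~\ref{item:wings_butterflies_hill_1}. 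Symmetrically, incrementing the $i$-th letter yields an $\MapM$-hill exactly when $i \geq 2$ and $u_i < \MapM(i)$, together with $u_i < u_{i+1}$ when $i < n$ (there is no increasing constraint at $i = n$, and $i = 1$ is excluded because $u_1 + 1 > \MapM(1) = 0$). Thus $u$ is covered by $\# \Bra{i \in [2, n-1] : u_i < u_{i+1} \text{ and } u_i < \MapM(i)} + \IndicatorFunction_{u_n < \MapM(n)}$ elements, which is at most $(n-2) + 1 = n - 1$, with equality if and only if $u_2 < u_3 < \dots < u_n$ and $u_i < \MapM(i)$ for all $i \in [2, n]$; combined with the hill inequality $u_1 \leq u_2$ this is part~\ref{item:wings_butterflies_hill_2}.

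Finally, part~\ref{item:wings_butterflies_hill_3} follows by intersecting the two descriptions: the strict chain $u_1 < u_2 < \dots < u_n$ from the input-wing condition absorbs the weaker $u_2 < \dots < u_n$ from the output-wing condition, so a butterfly is precisely an $\MapM$-hill with $u_1 < u_2 < \dots < u_{|u|}$ and $u_i < \MapM(i)$ for all $i \in [2, |u|]$. I do not expect a genuine obstacle here; the only point needing attention is the bookkeeping at the two boundary positions — index $1$, where $\MapM(1) = 0$ pins $u_1 = 0$ and removes it from both counts, and index $n$, where the constraint towards a nonexistent successor letter disappears so that only the cliff bound $u_n < \MapM(n)$ survives.
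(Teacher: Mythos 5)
Your proof is correct, and it is exactly the direct verification the paper has in mind: the authors explicitly omit this proof as straightforward, and your count of decrementable/incrementable positions via the covering relation from Proposition~\ref{prop:properties_hill_posets} is the intended argument. The boundary bookkeeping at positions $1$ and $n$, and the dismissal of the degenerate cases $m = 0$ and $n \leq 1$, are all handled correctly.
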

\medbreak

The four posets of hills, input-wings, output-wings, and butterflies are linked in the
following way.
\medbreak

\begin{Theorem} \label{thm:poset_morphisms_hill}
    For any $m \geq 1$ and $n \geq 0$,
    \begin{equation}
        \begin{tikzpicture}[Centering,xscale=3.5,yscale=1.6,font=\small]
            \node(Hills)at(0,0){$\SetHill_{\mathbf m - 1}(n)$};
            \node(InputWingsHill)at(1,0){$\InputWings\Par{\SetHill_\MapM}(n)$};
            \node(OutputWingsHill)at(2,0){$\OutputWings\Par{\SetHill_\MapM}(n)$};
            \node(ButterfliesHill)at(1,-1)
                {$\Butterflies\Par{\SetHill_{\mathbf m + 1}}(n)$};
            \draw[MapIsomorphism](Hills)--(InputWingsHill)node[midway,above]
                {$\varphi_3$};
            \draw[MapEmbedding](InputWingsHill)--(ButterfliesHill)node[midway,right]
                {$\Id$};
            \draw[MapIsomorphism](InputWingsHill)
                --(OutputWingsHill)node[midway,above]
                {$\varphi_2$};
        \end{tikzpicture}
    \end{equation}
    is a diagram of poset embeddings or isomorphisms, where the map $\varphi_3$ is defined
    for any $u \in \N^n$ and $i \in [n]$ by
    \begin{math}
        \varphi_3(u)_i := u_i + i - 1,
    \end{math}
    $\Id$ is the identity map, and $\varphi_2$ is the map defined in the statement
    of Theorem~\ref{thm:poset_morphisms_avalanche}.
\end{Theorem}
\begin{proof}
    This follows from the descriptions of the input-wings, output-wings, and butterflies of
    $\SetHill_\MapM(n)$ provided by Proposition~\ref{prop:wings_butterflies_hill}.
\end{proof}
\medbreak

Figure~\ref{fig:poset_morphisms_hill} gives an example of the poset isomorphisms or
embeddings described by the statement of Theorem~\ref{thm:poset_morphisms_hill}.
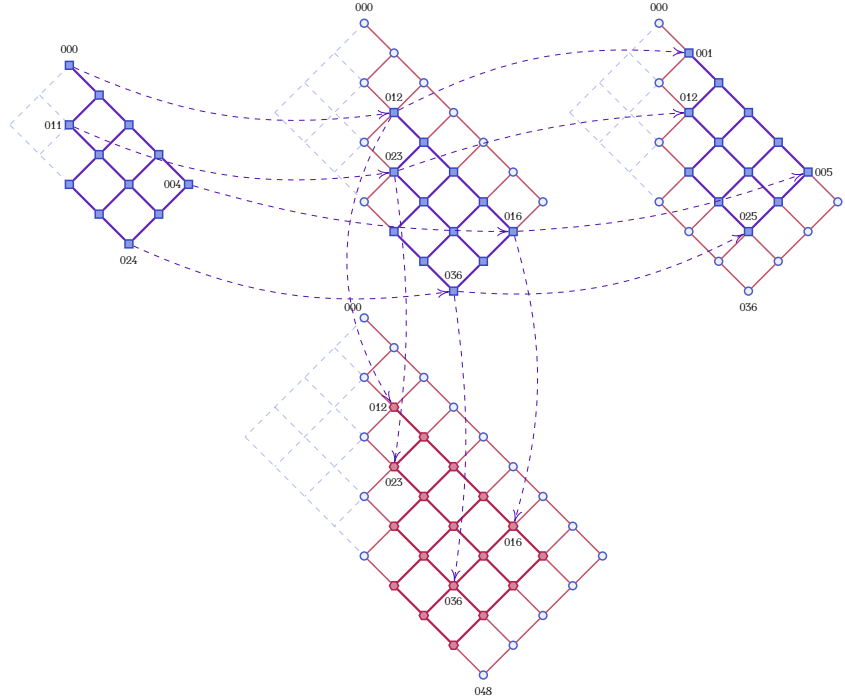
\begin{figure}[ht]
    \centering
    \scalebox{.7}{
    \begin{tikzpicture}[Centering,xscale=.8,yscale=.8]
        % Hill, m = 2, n = 3.
        \begin{scope}[xshift=0cm,yshift=-1cm,rotate=-135]
        \draw[Grid](0,0)grid(2,4);
        \node[MarkedNodeGraph](A000)at(0,0){};
        \node[MarkedNodeGraph](A001)at(0,1){};
        \node[MarkedNodeGraph](A002)at(0,2){};
        \node[MarkedNodeGraph](A003)at(0,3){};
        \node[MarkedNodeGraph](A004)at(0,4){};
        \node[MarkedNodeGraph](A011)at(1,1){};
        \node[MarkedNodeGraph](A012)at(1,2){};
        \node[MarkedNodeGraph](A013)at(1,3){};
        \node[MarkedNodeGraph](A014)at(1,4){};
        \node[MarkedNodeGraph](A022)at(2,2){};
        \node[MarkedNodeGraph](A023)at(2,3){};
        \node[MarkedNodeGraph](A024)at(2,4){};
        \node[NodeLabelGraph,above of=A000]{$000$};
        \node[NodeLabelGraph,left of=A004]{$004$};
        \node[NodeLabelGraph,left of=A011]{$011$};
        \node[NodeLabelGraph,below of=A024]{$024$};
        \draw[MarkedEdgeGraph](A000)--(A001);
        \draw[MarkedEdgeGraph](A001)--(A002);
        \draw[MarkedEdgeGraph](A001)--(A011);
        \draw[MarkedEdgeGraph](A002)--(A003);
        \draw[MarkedEdgeGraph](A002)--(A012);
        \draw[MarkedEdgeGraph](A003)--(A004);
        \draw[MarkedEdgeGraph](A003)--(A013);
        \draw[MarkedEdgeGraph](A004)--(A014);
        \draw[MarkedEdgeGraph](A011)--(A012);
        \draw[MarkedEdgeGraph](A012)--(A013);
        \draw[MarkedEdgeGraph](A012)--(A022);
        \draw[MarkedEdgeGraph](A013)--(A014);
        \draw[MarkedEdgeGraph](A013)--(A023);
        \draw[MarkedEdgeGraph](A014)--(A024);
        \draw[MarkedEdgeGraph](A022)--(A023);
        \draw[MarkedEdgeGraph](A023)--(A024);
        \end{scope}
        %
        % Input-wings hill, m = 3, n = 3.
        \begin{scope}[xshift=7cm,yshift=0cm,rotate=-135]
        \draw[Grid](0,0)grid(3,6);
        \node[NodeGraph](B000)at(0,0){};
        \node[NodeGraph](B001)at(0,1){};
        \node[NodeGraph](B002)at(0,2){};
        \node[NodeGraph](B003)at(0,3){};
        \node[NodeGraph](B004)at(0,4){};
        \node[NodeGraph](B005)at(0,5){};
        \node[NodeGraph](B006)at(0,6){};
        \node[NodeGraph](B011)at(1,1){};
        \node[MarkedNodeGraph](B012)at(1,2){};
        \node[MarkedNodeGraph](B013)at(1,3){};
        \node[MarkedNodeGraph](B014)at(1,4){};
        \node[MarkedNodeGraph](B015)at(1,5){};
        \node[MarkedNodeGraph](B016)at(1,6){};
        \node[NodeGraph](B022)at(2,2){};
        \node[MarkedNodeGraph](B023)at(2,3){};
        \node[MarkedNodeGraph](B024)at(2,4){};
        \node[MarkedNodeGraph](B025)at(2,5){};
        \node[MarkedNodeGraph](B026)at(2,6){};
        \node[NodeGraph](B033)at(3,3){};
        \node[MarkedNodeGraph](B034)at(3,4){};
        \node[MarkedNodeGraph](B035)at(3,5){};
        \node[MarkedNodeGraph](B036)at(3,6){};
        \node[NodeLabelGraph,above of=B000]{$000$};
        \node[NodeLabelGraph,above of=B012]{$012$};
        \node[NodeLabelGraph,above of=B016]{$016$};
        \node[NodeLabelGraph,above of=B023]{$023$};
        \node[NodeLabelGraph,above of=B036]{$036$};
        \draw[EdgeGraph](B000)--(B001);
        \draw[EdgeGraph](B001)--(B002);
        \draw[EdgeGraph](B001)--(B011);
        \draw[EdgeGraph](B002)--(B003);
        \draw[EdgeGraph](B002)--(B012);
        \draw[EdgeGraph](B003)--(B004);
        \draw[EdgeGraph](B003)--(B013);
        \draw[EdgeGraph](B004)--(B005);
        \draw[EdgeGraph](B004)--(B014);
        \draw[EdgeGraph](B005)--(B006);
        \draw[EdgeGraph](B005)--(B015);
        \draw[EdgeGraph](B006)--(B016);
        \draw[EdgeGraph](B011)--(B012);
        \draw[MarkedEdgeGraph](B012)--(B013);
        \draw[EdgeGraph](B012)--(B022);
        \draw[MarkedEdgeGraph](B013)--(B014);
        \draw[MarkedEdgeGraph](B013)--(B023);
        \draw[MarkedEdgeGraph](B014)--(B015);
        \draw[MarkedEdgeGraph](B014)--(B024);
        \draw[MarkedEdgeGraph](B015)--(B016);
        \draw[MarkedEdgeGraph](B015)--(B025);
        \draw[MarkedEdgeGraph](B016)--(B026);
        \draw[EdgeGraph](B022)--(B023);
        \draw[MarkedEdgeGraph](B023)--(B024);
        \draw[EdgeGraph](B023)--(B033);
        \draw[MarkedEdgeGraph](B024)--(B025);
        \draw[MarkedEdgeGraph](B024)--(B034);
        \draw[MarkedEdgeGraph](B025)--(B026);
        \draw[MarkedEdgeGraph](B025)--(B035);
        \draw[MarkedEdgeGraph](B026)--(B036);
        \draw[EdgeGraph](B033)--(B034);
        \draw[MarkedEdgeGraph](B034)--(B035);
        \draw[MarkedEdgeGraph](B035)--(B036);
        \end{scope}
        %
        % Output-wings hill, m = 3, n = 3.
        \begin{scope}[xshift=14cm,yshift=0cm,rotate=-135]
        \draw[Grid](0,0)grid(3,6);
        \node[NodeGraph](C000)at(0,0){};
        \node[MarkedNodeGraph](C001)at(0,1){};
        \node[MarkedNodeGraph](C002)at(0,2){};
        \node[MarkedNodeGraph](C003)at(0,3){};
        \node[MarkedNodeGraph](C004)at(0,4){};
        \node[MarkedNodeGraph](C005)at(0,5){};
        \node[NodeGraph](C006)at(0,6){};
        \node[NodeGraph](C011)at(1,1){};
        \node[MarkedNodeGraph](C012)at(1,2){};
        \node[MarkedNodeGraph](C013)at(1,3){};
        \node[MarkedNodeGraph](C014)at(1,4){};
        \node[MarkedNodeGraph](C015)at(1,5){};
        \node[NodeGraph](C016)at(1,6){};
        \node[NodeGraph](C022)at(2,2){};
        \node[MarkedNodeGraph](C023)at(2,3){};
        \node[MarkedNodeGraph](C024)at(2,4){};
        \node[MarkedNodeGraph](C025)at(2,5){};
        \node[NodeGraph](C026)at(2,6){};
        \node[NodeGraph](C033)at(3,3){};
        \node[NodeGraph](C034)at(3,4){};
        \node[NodeGraph](C035)at(3,5){};
        \node[NodeGraph](C036)at(3,6){};
        \node[NodeLabelGraph,above of=C000]{$000$};
        \node[NodeLabelGraph,right of=C001]{$001$};
        \node[NodeLabelGraph,right of=C005]{$005$};
        \node[NodeLabelGraph,above of=C012]{$012$};
        \node[NodeLabelGraph,above of=C025]{$025$};
        \node[NodeLabelGraph,below of=C036]{$036$};
        \draw[EdgeGraph](C000)--(C001);
        \draw[MarkedEdgeGraph](C001)--(C002);
        \draw[EdgeGraph](C001)--(C011);
        \draw[MarkedEdgeGraph](C002)--(C003);
        \draw[MarkedEdgeGraph](C002)--(C012);
        \draw[MarkedEdgeGraph](C003)--(C004);
        \draw[MarkedEdgeGraph](C003)--(C013);
        \draw[MarkedEdgeGraph](C004)--(C005);
        \draw[MarkedEdgeGraph](C004)--(C014);
        \draw[EdgeGraph](C005)--(C006);
        \draw[MarkedEdgeGraph](C005)--(C015);
        \draw[EdgeGraph](C006)--(C016);
        \draw[EdgeGraph](C011)--(C012);
        \draw[MarkedEdgeGraph](C012)--(C013);
        \draw[EdgeGraph](C012)--(C022);
        \draw[MarkedEdgeGraph](C013)--(C014);
        \draw[MarkedEdgeGraph](C013)--(C023);
        \draw[MarkedEdgeGraph](C014)--(C015);
        \draw[MarkedEdgeGraph](C014)--(C024);
        \draw[EdgeGraph](C015)--(C016);
        \draw[MarkedEdgeGraph](C015)--(C025);
        \draw[EdgeGraph](C016)--(C026);
        \draw[EdgeGraph](C022)--(C023);
        \draw[MarkedEdgeGraph](C023)--(C024);
        \draw[EdgeGraph](C023)--(C033);
        \draw[MarkedEdgeGraph](C024)--(C025);
        \draw[EdgeGraph](C024)--(C034);
        \draw[EdgeGraph](C025)--(C026);
        \draw[EdgeGraph](C025)--(C035);
        \draw[EdgeGraph](C026)--(C036);
        \draw[EdgeGraph](C033)--(C034);
        \draw[EdgeGraph](C034)--(C035);
        \draw[EdgeGraph](C035)--(C036);
        \end{scope}
        %
        % Butterflies hill, m = 4, n = 3.
        \begin{scope}[xshift=7cm,yshift=-7cm,rotate=-135]
        \draw[Grid](0,0)grid(4,8);
        \node[NodeGraph](D000)at(0,0){};
        \node[NodeGraph](D001)at(0,1){};
        \node[NodeGraph](D002)at(0,2){};
        \node[NodeGraph](D003)at(0,3){};
        \node[NodeGraph](D004)at(0,4){};
        \node[NodeGraph](D005)at(0,5){};
        \node[NodeGraph](D006)at(0,6){};
        \node[NodeGraph](D007)at(0,7){};
        \node[NodeGraph](D008)at(0,8){};
        \node[NodeGraph](D011)at(1,1){};
        \node[Marked2NodeGraph](D012)at(1,2){};
        \node[Marked2NodeGraph](D013)at(1,3){};
        \node[Marked2NodeGraph](D014)at(1,4){};
        \node[Marked2NodeGraph](D015)at(1,5){};
        \node[Marked2NodeGraph](D016)at(1,6){};
        \node[Marked2NodeGraph](D017)at(1,7){};
        \node[NodeGraph](D018)at(1,8){};
        \node[NodeGraph](D022)at(2,2){};
        \node[Marked2NodeGraph](D023)at(2,3){};
        \node[Marked2NodeGraph](D024)at(2,4){};
        \node[Marked2NodeGraph](D025)at(2,5){};
        \node[Marked2NodeGraph](D026)at(2,6){};
        \node[Marked2NodeGraph](D027)at(2,7){};
        \node[NodeGraph](D028)at(2,8){};
        \node[NodeGraph](D033)at(3,3){};
        \node[Marked2NodeGraph](D034)at(3,4){};
        \node[Marked2NodeGraph](D035)at(3,5){};
        \node[Marked2NodeGraph](D036)at(3,6){};
        \node[Marked2NodeGraph](D037)at(3,7){};
        \node[NodeGraph](D038)at(3,8){};
        \node[NodeGraph](D044)at(4,4){};
        \node[Marked2NodeGraph](D045)at(4,5){};
        \node[Marked2NodeGraph](D046)at(4,6){};
        \node[Marked2NodeGraph](D047)at(4,7){};
        \node[NodeGraph](D048)at(4,8){};
        \node[NodeLabelGraph,above left of=D000]{$000$};
        \node[NodeLabelGraph,left of=D012]{$012$};
        \node[NodeLabelGraph,below of=D016]{$016$};
        \node[NodeLabelGraph,below of=D023]{$023$};
        \node[NodeLabelGraph,below of=D036]{$036$};
        \node[NodeLabelGraph,below of=D048]{$048$};
        \draw[EdgeGraph](D000)--(D001);
        \draw[EdgeGraph](D001)--(D002);
        \draw[EdgeGraph](D001)--(D011);
        \draw[EdgeGraph](D002)--(D003);
        \draw[EdgeGraph](D002)--(D012);
        \draw[EdgeGraph](D003)--(D004);
        \draw[EdgeGraph](D003)--(D013);
        \draw[EdgeGraph](D004)--(D005);
        \draw[EdgeGraph](D004)--(D014);
        \draw[EdgeGraph](D005)--(D006);
        \draw[EdgeGraph](D005)--(D015);
        \draw[EdgeGraph](D006)--(D007);
        \draw[EdgeGraph](D006)--(D016);
        \draw[EdgeGraph](D007)--(D008);
        \draw[EdgeGraph](D007)--(D017);
        \draw[EdgeGraph](D008)--(D018);
        \draw[EdgeGraph](D011)--(D012);
        \draw[Marked2EdgeGraph](D012)--(D013);
        \draw[EdgeGraph](D012)--(D022);
        \draw[Marked2EdgeGraph](D013)--(D014);
        \draw[Marked2EdgeGraph](D013)--(D023);
        \draw[Marked2EdgeGraph](D014)--(D015);
        \draw[Marked2EdgeGraph](D014)--(D024);
        \draw[Marked2EdgeGraph](D015)--(D016);
        \draw[Marked2EdgeGraph](D015)--(D025);
        \draw[Marked2EdgeGraph](D016)--(D017);
        \draw[Marked2EdgeGraph](D016)--(D026);
        \draw[EdgeGraph](D017)--(D018);
        \draw[Marked2EdgeGraph](D017)--(D027);
        \draw[EdgeGraph](D018)--(D028);
        \draw[EdgeGraph](D022)--(D023);
        \draw[Marked2EdgeGraph](D023)--(D024);
        \draw[EdgeGraph](D023)--(D033);
        \draw[Marked2EdgeGraph](D024)--(D025);
        \draw[Marked2EdgeGraph](D024)--(D034);
        \draw[Marked2EdgeGraph](D025)--(D026);
        \draw[Marked2EdgeGraph](D025)--(D035);
        \draw[Marked2EdgeGraph](D026)--(D027);
        \draw[Marked2EdgeGraph](D026)--(D036);
        \draw[EdgeGraph](D027)--(D028);
        \draw[Marked2EdgeGraph](D027)--(D037);
        \draw[EdgeGraph](D028)--(D038);
        \draw[EdgeGraph](D033)--(D034);
        \draw[Marked2EdgeGraph](D034)--(D035);
        \draw[EdgeGraph](D034)--(D044);
        \draw[Marked2EdgeGraph](D035)--(D036);
        \draw[Marked2EdgeGraph](D035)--(D045);
        \draw[Marked2EdgeGraph](D036)--(D037);
        \draw[Marked2EdgeGraph](D036)--(D046);
        \draw[EdgeGraph](D037)--(D038);
        \draw[Marked2EdgeGraph](D037)--(D047);
        \draw[EdgeGraph](D038)--(D048);
        \draw[EdgeGraph](D044)--(D045);
        \draw[Marked2EdgeGraph](D045)--(D046);
        \draw[Marked2EdgeGraph](D046)--(D047);
        \draw[EdgeGraph](D047)--(D048);
        \end{scope}
        %
        % Arrows.
        \draw[MapGraph](A000)edge[bend right=16](B012);
        \draw[MapGraph](A011)edge[bend right=16](B023);
        \draw[MapGraph](A004)edge[bend right=8](B016);
        \draw[MapGraph](A024)edge[bend right=16](B036);
        \draw[MapGraph](B012)edge[bend left=16](C001);
        \draw[MapGraph](B023)edge[bend left=8](C012);
        \draw[MapGraph](B016)edge[bend right=8](C005);
        \draw[MapGraph](B036)edge[bend right=16](C025);
        \draw[MapGraph](B012)edge[bend right=32](D012);
        \draw[MapGraph](B023)edge[bend left=8](D023);
        \draw[MapGraph](B016)edge[bend left=16](D016);
        \draw[MapGraph](B036)edge[bend left=8](D036);
    \end{tikzpicture}}
    \caption{\footnotesize From the top to bottom and left to right, here are the posets
    $\SetHill_\MapTwo(3)$, $\SetHill_{\mathbf 3}(3)$, $\SetHill_{\mathbf 3}(3)$, and
    $\SetHill_{\mathbf 4}(3)$. All these posets contain $\SetHill_\MapTwo(3)$ as subposet by
    restricting on input-wings, output-wings, or butterflies.}
    \label{fig:poset_morphisms_hill}
\end{figure}
As a consequence of Theorem~\ref{thm:poset_morphisms_hill}, for any $m \geq 1$ and $n \geq
0$, the number of input-wings in $\SetHill_\MapM(n)$ is~$\FussCatalan_{m - 1}(n)$.
\medbreak

\begin{Proposition} \label{prop:enumeration_butterflies_hill}
    For any $m \geq 1$ and $n \geq 1$, $\# \Butterflies\Par{\SetHill_\MapM}(0) = 1$ and
    \begin{math}
        \# \Butterflies\Par{\SetHill_\MapM}(n) = \TwistedFussCatalan_{m - 1}(n).
    \end{math}
\end{Proposition}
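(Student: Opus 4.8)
The plan is to translate $\Butterflies\Par{\SetHill_\MapM}(n)$ into a set of output-wings of an avalanche poset and then quote the enumeration already obtained for the latter. First I would use Proposition~\ref{prop:wings_butterflies_hill}\ref{item:wings_butterflies_hill_3}: an element of $\Butterflies\Par{\SetHill_\MapM}(n)$ is an $\MapM$-cliff $u$ of size $n$ with $u_1 < u_2 < \dots < u_n$ and $u_i < \MapM(i) = m(i - 1)$ for all $i \in [2, n]$, and since $\MapM(1) = 0$ this forces $u_1 = 0$. For $n = 0$ the only such word is $\epsilon$, so the count is $1$; I would then assume $n \geq 1$ and, for the main argument, $m \geq 2$.

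The core step is the bijection. To $u$ as above I would attach the word $c := c_1 \dots c_n$ with $c_1 := 0$ and $c_i := u_i - u_{i - 1} - 1$ for $i \in [2, n]$; strict increasingness of $u$ makes the $c_i$ nonnegative integers, and the prefix sums satisfy
\begin{equation}
    \Weight\Par{c_1 \dots c_i} = u_i - (i - 1) < m(i - 1) - (i - 1) = (m - 1)(i - 1) = \Par{\mathbf m - 1}(i)
\end{equation}
for $i \in [2, n]$. Since the letters of $c$ are nonnegative, this inequality shows at once that $c$ is a $\Par{\mathbf m - 1}$-cliff, that $c$ is a $\Par{\mathbf m - 1}$-avalanche, and that $\Weight\Par{c_1 \dots c_i} < \Par{\mathbf m - 1}(i)$ for every prefix of length at least $2$ --- which is exactly the description of $\OutputWings\Par{\SetAvalanche_{\mathbf m - 1}}$ given by Proposition~\ref{prop:wings_butterflies_avalanche}\ref{item:wings_butterflies_avalanche_2}. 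The map $u \mapsto c$ is invertible, with $u_i := (i - 1) + \Par{c_1 + \dots + c_i}$ recovering $u$ from $c$ and carrying output-wings of $\SetAvalanche_{\mathbf m - 1}$ back to butterflies of $\SetHill_\MapM$; so $\Butterflies\Par{\SetHill_\MapM}(n)$ and $\OutputWings\Par{\SetAvalanche_{\mathbf m - 1}}(n)$ are in bijection, and since $m - 1 \geq 1$ I would conclude with Proposition~\ref{prop:enumeration_output_wings_avalanche} that $\# \Butterflies\Par{\SetHill_\MapM}(n) = \TwistedFussCatalan_{m - 1}(n)$.

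The remaining case $m = 1$ is where the only real subtlety lies: then $\mathbf m - 1 = 0^\omega$, the characterization of output-wings above no longer holds, and Proposition~\ref{prop:enumeration_output_wings_avalanche} does not apply, so this case must be checked by hand. A butterfly of $\SetHill_\MapOne$ of size $n \geq 2$ would satisfy $u_1 = 0 < u_2 < \MapOne(2) = 1$, which is impossible; hence $\Butterflies\Par{\SetHill_\MapOne}(n) = \emptyset$ for $n \geq 2$, while $\Butterflies\Par{\SetHill_\MapOne}(1) = \{0\}$. This agrees with $\TwistedFussCatalan_0(n) = \frac{1}{n}\binom{n - 2}{n - 1}$, which is $1$ for $n = 1$ and $0$ for $n \geq 2$, so the stated formula holds in all cases. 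I do not anticipate any substantial obstacle beyond bookkeeping; the degenerate behaviour at $m = 1$ is the one point that requires explicit attention.
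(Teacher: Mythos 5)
Your proof is correct and follows essentially the same route as the paper's: both reduce $\Butterflies\Par{\SetHill_\MapM}(n)$, via the characterization in Proposition~\ref{prop:wings_butterflies_hill}, to the objects counted in Proposition~\ref{prop:enumeration_output_wings_avalanche} (your difference encoding $c_i = u_i - u_{i-1} - 1$ is just the composite of the paper's shift $v_i = u_i - i + 1$ with the prefix-sum bijection used there). Your explicit treatment of the degenerate case $m = 1$, which the paper's proof glosses over, is a welcome extra check.
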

\begin{proof}
    By Proposition~\ref{prop:wings_butterflies_hill}, the set
    $\Butterflies\Par{\SetHill_\MapM}(n)$ contains all $\MapM$-cliffs $u$ of size $n$
    satisfying $u_1 < \dots < u_n$ and for any $i \in [2, n]$, $u_i < \MapM(i)$. By setting
    $\MapM' := {\mathbf m - 1}$, this set is in one-to-one correspondence with the set of
    all $\MapM'$-cliffs $v$ of size $n$ satisfying $v_{i - 1} \leq v_i < \MapM'(i)$. A
    possible bijection between these two sets sends any $u \in
    \Butterflies\Par{\SetHill_\MapM}(n)$ to the $\MapM'$-cliff $v$ of the same size such
    that for any $i \in [n]$, $v_i = u_i - i + 1$. We have already seen in the proof of
    Proposition~\ref{prop:enumeration_output_wings_avalanche} that these sets are in
    one-to-one correspondence with $(m - 1)$-Dyck paths which cannot be written as a
    nontrivial concatenation of two $(m - 1)$-Dyck paths.  Therefore, the statement of the
    proposition follows.
\end{proof}
\medbreak

\begin{Proposition} \label{prop:max_avalanche_hill_bijection}
    For any $m \geq 0$ and $n \geq 1$, the map
    \begin{math}
        \rho : \max_{\Leq} \SetAvalanche_\MapM(n) \to \SetHill_\MapM(n - 1)
    \end{math}
    such that any $u \in \max_{\Leq} \SetAvalanche_\MapM(n)$, $\rho(u)$ is the prefix of
    size $n - 1$ of $\ElevationMap_{\SetHill_\MapM}^{-1}(u)$, is a bijection.
\end{Proposition}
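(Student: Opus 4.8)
The plan is to reduce everything to the explicit formula for the elevation map of $\SetHill_\MapM$ that was extracted in the proof of Proposition~\ref{prop:image_elevation_map_hill}. Recall from there that if $w$ is a $\MapM$-hill of size $n$, then $\ElevationMap_{\SetHill_\MapM}(w)$ is the $\MapM$-avalanche $v$ with $v_1 = w_1$ and $v_i = w_i - w_{i - 1}$ for $i \in [2, n]$; equivalently, $\ElevationMap_{\SetHill_\MapM}^{-1}$ sends a $\MapM$-avalanche $v$ to the $\MapM$-hill whose $i$-th letter is the partial weight $\Weight\Par{v_1 \cdots v_i}$. Since by Propositions~\ref{prop:elevation_map_injectivity} and~\ref{prop:image_elevation_map_hill} the map $\ElevationMap_{\SetHill_\MapM}$ is a bijection from $\SetHill_\MapM(n)$ onto $\SetAvalanche_\MapM(n)$, this description is unambiguous, and $\rho(u)$ is simply the word $\Par{\Weight(u_1), \Weight\Par{u_1 u_2}, \dots, \Weight\Par{u_1 \cdots u_{n - 1}}}$.

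Well-definedness is then immediate: $\ElevationMap_{\SetHill_\MapM}^{-1}(u)$ lies in $\SetHill_\MapM(n)$, and $\SetHill_\MapM$ is closed by prefix by Proposition~\ref{prop:properties_hill_objects}, so its length-$(n - 1)$ prefix $\rho(u)$ lies in $\SetHill_\MapM(n - 1)$.

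For injectivity I would invoke the observation recorded just before the statement, namely that $u \in \max_{\Leq} \SetAvalanche_\MapM(n)$ if and only if $\Weight(u) = \MapM(n)$. If $\rho(u) = \rho(u')$, then the partial weights of $u$ and $u'$ agree up to index $n - 1$, hence $u_i = u'_i$ for all $i \in [n - 1]$; and then $u_n = \MapM(n) - \Weight\Par{u_1 \cdots u_{n - 1}} = u'_n$, so $u = u'$. For surjectivity, given $h \in \SetHill_\MapM(n - 1)$ I would form the size-$n$ hill $w := h\, \MapM(n)$, which is legitimate because $\SetHill_\MapM$ is maximally extendable (again Proposition~\ref{prop:properties_hill_objects}), and put $u := \ElevationMap_{\SetHill_\MapM}(w)$. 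By Proposition~\ref{prop:image_elevation_map_hill} one has $u \in \SetAvalanche_\MapM(n)$, and $\Weight(u) = w_n = \MapM(n)$ shows that $u$ is $\Leq$-maximal; since $\ElevationMap_{\SetHill_\MapM}^{-1}(u) = w$ has $h$ as its length-$(n - 1)$ prefix, $\rho(u) = h$.

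No step here is a genuine obstacle; the only care needed is to make the identification between $\rho$ and the partial-weight map precise, and to check the trivial base case $n = 1$, where $h = \epsilon$ and $u = 0$. As a corollary, the cardinality of $\max_{\Leq} \SetAvalanche_\MapM(n)$ equals $\# \SetHill_\MapM(n - 1) = \FussCatalan_m(n - 1)$, as asserted in the text.
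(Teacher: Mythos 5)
Your proof is correct and follows essentially the same route as the paper: both arguments rest on the characterization of $\max_{\Leq} \SetAvalanche_\MapM(n)$ as the avalanches of weight $\MapM(n) = m(n-1)$ and on the bijectivity of $\ElevationMap_{\SetHill_\MapM}$, with your surjectivity construction $h \mapsto \ElevationMap_{\SetHill_\MapM}\Par{h \, \MapM(n)}$ being exactly the inverse map $\rho'$ the paper exhibits. You merely verify injectivity and surjectivity separately via the partial-weight formula where the paper checks that the two compositions are identities, which is an equivalent bookkeeping.
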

\begin{proof}
    First, since $\SetHill_\MapM$ is by Proposition~\ref{prop:properties_hill_objects}
    closed by prefix, by Proposition~\ref{prop:elevation_map_injectivity},
    $\ElevationMap_{\SetHill_\MapM}$ is an injective map. This implies that the map $\rho$,
    defined by considering the inverse of $\ElevationMap_{\SetHill_\MapM}$ is a well-defined
    map.  Let $\rho' : \SetHill_\MapM(n - 1) \to \max_{\Leq} \SetAvalanche_\MapM(n)$ be the
    map defined for any $v \in \SetHill_\MapM(n - 1)$ by $\rho'(v) :=
    \ElevationMap_{\SetHill_\MapM}(v a)$ where $a := m (n - 1)$.  As pointed out before, $u
    \in \max_{\Leq} \SetAvalanche_\MapM(n)$ if and only if $\Weight(u) = m (n - 1)$. This
    implies that $\rho'(v)$ belongs to $\max_{\Leq} \SetAvalanche_\MapM(n)$. Moreover, due
    to the respective definitions of $\rho$ and $\rho'$, both $\rho \circ \rho'$ and $\rho'
    \circ \rho$ are identity maps. Therefore, $\rho$ is a bijection.
\end{proof}
\medbreak

\begin{Proposition} \label{prop:irreducibles_hill}
    For any $m \geq 1$ and $n \geq 1$, the set $\JoinIrreducibles\Par{\SetHill_\MapM(n)}$
    contains all $\MapM$-hills $u$ such that $u = 0^k \, a^{n - k}$ such that $k \in [n -
    1]$ and $a \in [km]$.
\end{Proposition}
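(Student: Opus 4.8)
The plan is to reduce join-irreducibility in $\SetHill_\MapM(n)$ to a one-line condition on single-letter decrements, and then run a short case analysis on the first letter. First I would invoke point~\ref{item:properties_hill_posets_1} of Proposition~\ref{prop:properties_hill_posets}: the poset $\SetHill_\MapM(n)$ is straight, and $v \Covered_{\SetHill_\MapM} u$ holds exactly when $u = \IncreaseLetter_i(v)$ for some $i \in [n]$. Hence the elements covered by a hill $u$ are precisely the $\MapM$-hills of the form $\DecreaseLetter_i(u)$, $i \in [n]$, so (equivalently by Proposition~\ref{prop:join_meet_irreducible_elements_subfamilies}) $u$ is join-irreducible if and only if there is a unique index $i \in [n]$ with $\DecreaseLetter_i(u) \in \SetHill_\MapM(n)$.

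Next I would pin down exactly when $\DecreaseLetter_i(u)$ is an $\MapM$-hill. The upper bound $u_i - 1 \leq m(i - 1)$ is automatic, and so is the weak increase between positions $i$ and $i + 1$ of $\DecreaseLetter_i(u)$, since $u_i - 1 < u_i \leq u_{i + 1}$; thus $\DecreaseLetter_i(u) \in \SetHill_\MapM(n)$ if and only if either $i = 1$ and $u_1 \geq 1$, or $i \geq 2$ and $u_{i - 1} < u_i$. I would call such an $i$ an \emph{admissible index} of $u$, so that the join-irreducible hills are precisely those with exactly one admissible index.

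To conclude I would split on $u_1$. If $u_1 \geq 1$, then $1$ is admissible, and uniqueness forces $u_{i-1} = u_i$ for all $i \in [2, n]$, i.e. $u = u_1^n$; this is impossible because $u_1 \leq \MapM(1) = 0$. So $u_1 = 0$, index $1$ is not admissible, and $u$ strictly increases at exactly one position, which together with $u_1 = 0$ and weak monotonicity forces $u = 0^k \, a^{n-k}$ with $a \geq 1$ and $k \in [1, n-1]$ (the value $k = n$ being excluded, as $0^n$ has no admissible index), while the cliff condition $a = u_{k+1} \leq \MapM(k+1) = mk$ gives $a \in [km]$. For the reverse inclusion I would check that every such $u = 0^k \, a^{n-k}$ is indeed an $\MapM$-hill (the only nontrivial inequality $a \leq m(j-1)$ for $j \geq k+1$ following from $a \leq mk$) whose sole admissible index is $k+1$, hence is join-irreducible. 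I do not expect a genuine obstacle; the only delicate points are keeping track of which index remains admissible in each case and noticing that the constraint $\MapM(1) = 0$ rules out the constant-hill situation.
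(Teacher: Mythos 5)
Your argument is correct: the paper explicitly omits the proof of this proposition as one of the "straightforward" ones, and your route — using the covering description from Proposition~\ref{prop:properties_hill_posets}~\emph{(i)} to reduce join-irreducibility to having a unique index $i$ with $\DecreaseLetter_i(u) \in \SetHill_\MapM(n)$, characterizing those indices, and then using $\MapM(1) = 0$ to force $u_1 = 0$ and a single strict ascent — is exactly the intended elementary verification (and in fact establishes the full characterization, not just the stated containment).
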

\medbreak

\begin{Proposition} \label{prop:join_avalanche_join_hill_bijection}
    For any $m \geq 0$ and $n \geq 0$, the map $\ElevationMap_{\SetHill_\MapM}$ is a
    bijection between $\JoinIrreducibles\Par{\SetHill_\MapM(n)}$
    and~$\JoinIrreducibles\Par{\SetAvalanche_\MapM(n)}$.
\end{Proposition}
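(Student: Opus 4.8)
The plan is to deduce the statement from the fact that $\ElevationMap_{\SetHill_\MapM}$ is a bijection from $\SetHill_\MapM(n)$ onto $\SetAvalanche_\MapM(n)$, together with the explicit descriptions of the two sets of join-irreducible elements already at our disposal. The cases $m = 0$ or $n \leq 1$ are immediate, since then both $\SetHill_\MapM(n)$ and $\SetAvalanche_\MapM(n)$ are singletons, hence have no join-irreducible element, so from now on one may assume $m \geq 1$ and $n \geq 2$.

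First I would record the concrete form of the map. Since $\SetHill_\MapM$ is closed by prefix (Proposition~\ref{prop:properties_hill_objects}), for any $\MapM$-hill $u$ of size $n$ one has $\Next_{\SetHill_\MapM}(u_1 \dots u_{i-1}) = \Han{u_{i-1}, \MapM(i)}$ for all $i \in [n]$, with the convention $u_0 := 0$, so that $\ElevationMap_{\SetHill_\MapM}(u)_i = u_i - u_{i-1}$; this is exactly the computation carried out inside the proof of Proposition~\ref{prop:image_elevation_map_hill}. Moreover, by Proposition~\ref{prop:image_elevation_map_hill} we have $\ElevationImage_{\SetHill_\MapM}(n) = \SetAvalanche_\MapM(n)$, and by Proposition~\ref{prop:elevation_map_injectivity} the map $\ElevationMap_{\SetHill_\MapM}$ is injective on $\SetHill_\MapM(n)$; hence it is a bijection from $\SetHill_\MapM(n)$ onto $\SetAvalanche_\MapM(n)$.

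Next I would match the two families of join-irreducibles. By Proposition~\ref{prop:irreducibles_hill}, the join-irreducible elements of $\SetHill_\MapM(n)$ are the words $0^k a^{n-k}$ with $k \in [n-1]$ and $a \in [km]$. Applying the difference formula to such a word yields the word having the single nonzero letter $a$ in position $k+1$ and zeros elsewhere; since $k + 1 \in [2, n]$ and $a \leq km = \MapM(k+1)$, this word is an $\MapM$-avalanche with exactly one nonzero letter, hence a join-irreducible element of $\SetAvalanche_\MapM(n)$ by Proposition~\ref{prop:irreducibles_avalanche}. Conversely, a join-irreducible $\MapM$-avalanche of size $n$ has, by Proposition~\ref{prop:irreducibles_avalanche}, exactly one nonzero letter, say $a$ in position $j$; the avalanche condition forces $j \geq 2$ and $a \leq \MapM(j) = (j-1)m$, and one checks at once that $0^{j-1} a^{n-j+1}$ is an $\MapM$-hill whose image under $\ElevationMap_{\SetHill_\MapM}$ is precisely this avalanche, and which is join-irreducible in $\SetHill_\MapM(n)$ by Proposition~\ref{prop:irreducibles_hill} (with $k := j-1 \in [n-1]$ and $a \leq (j-1)m = km$).

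Combining these observations, $\ElevationMap_{\SetHill_\MapM}$ sends $\JoinIrreducibles\Par{\SetHill_\MapM(n)}$ into $\JoinIrreducibles\Par{\SetAvalanche_\MapM(n)}$ and its image contains every element of the latter; since $\ElevationMap_{\SetHill_\MapM}$ is injective on all of $\SetHill_\MapM(n)$, its restriction to $\JoinIrreducibles\Par{\SetHill_\MapM(n)}$ is a bijection onto $\JoinIrreducibles\Par{\SetAvalanche_\MapM(n)}$, as claimed. The only point requiring a little care is the bookkeeping of the index shift $j = k+1$ between the plateau length $k$ parametrizing a join-irreducible hill and the position $j$ of the spike of the associated join-irreducible avalanche, together with the verification that the range $a \in [km]$ of the plateau height matches exactly the avalanche constraint $a \leq \MapM(j)$; everything else is an immediate consequence of the results quoted above.
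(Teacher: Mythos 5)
Your proof is correct and follows exactly the route the paper intends: its own proof is the one-line remark that the claim is ``a straightforward verification using the descriptions of join-irreducible elements'' from Propositions~\ref{prop:irreducibles_hill} and~\ref{prop:irreducibles_avalanche}, and your argument is precisely that verification carried out in full, using the difference formula $\ElevationMap_{\SetHill_\MapM}(u)_i = u_i - u_{i-1}$ from the proof of Proposition~\ref{prop:image_elevation_map_hill} together with injectivity from Proposition~\ref{prop:elevation_map_injectivity}.
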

\begin{proof}
    This is a straightforward verification using the descriptions of join-irreducible
    elements of $\SetHill_\MapM(n)$ and $\SetAvalanche_\MapM(n)$ brought by
    Propositions~\ref{prop:irreducibles_hill} and~\ref{prop:irreducibles_avalanche}.
\end{proof}
\medbreak

By Proposition~\ref{prop:irreducibles_hill} (or also by
Propositions~\ref{prop:irreducibles_avalanche}
and~\ref{prop:join_avalanche_join_hill_bijection}), the number of join-irreducibles elements
of $\SetHill_\MapM(n)$ satisfies, for any $m \geq 1$ and $n \geq 1$,
\begin{math}
    \# \JoinIrreducibles\Par{\SetHill_\MapM(n)} = m \binom{n}{2}.
\end{math}
Since by Proposition~\ref{prop:properties_hill_posets}, $\SetHill_\MapM(n)$ is constructible
by interval doubling, this is also the number of its meet-irreducible elements~\cite{GW16}.
\medbreak

%%%%%%%%%%%%%%%%%%%%%%%%%%%%%%%%%%%%%%%%%%%%%%%%%%%%%%%%%%%%%%%%%%%%%%%%%%%%%%%%%%%%%%%%%%%%
\subsubsection{Cubic realization}
The map $\varphi_2$ introduced in Theorem~\ref{thm:poset_morphisms_hill} is used here to
describe the cells of maximal dimension of the cubic realization of $\SetHill_\MapM(n)$, $m
\geq 1$, $n \geq 0$.
\medbreak

\begin{Proposition} \label{prop:cells_hill}
    For any $m \geq 1$, $n \geq 0$, and $u \in \InputWings\Par{\SetHill_\MapM}(n)$,
    \begin{enumerate}[label={\it (\roman*)}]
        \item \label{item:cells_hill_1}
        the $\MapM$-hill $\varphi_2(u)$ is cell-compatible with the $\MapM$-hill $u$;

        \item \label{item:cells_hill_2}
        the cell $\Angle{\varphi_2(u), u}$ is pure;

        \item \label{item:cells_hill_3}
        all cells of
        \begin{math}
            \Bra{\Angle{\varphi_2(u), u} : u \in \InputWings\Par{\SetHill_\MapM}(n)}
        \end{math}
        are pairwise disjoint.
    \end{enumerate}
\end{Proposition}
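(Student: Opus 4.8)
The plan is to derive all three assertions from one elementary remark about the map $\theta$ of Proposition~\ref{prop:input_wings_hill_output_wings_hill_bijection}. By Proposition~\ref{prop:wings_butterflies_hill}, an input-wing $u \in \InputWings\Par{\SetHill_\MapM}(n)$ is an $\MapM$-cliff with $u_1 < u_2 < \dots < u_n$; since $u_1 \leq \MapM(1) = 0$, we have $u_1 = 0$, and therefore $u_i \geq i - 1 \geq 1$ for every $i \in [2, n]$. From the definition of $\theta$ we get $\theta(u)_1 = 0 = u_1$ and $\theta(u)_i = u_i - 1$ for $i \in [2, n]$. Consequently $\theta(u) \Leq u$, $\DiffIndices(\theta(u), u) = [2, n]$, and --- the key point --- $0 \leq u_i - \theta(u)_i \leq 1$ for all $i \in [n]$.

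For~\ref{item:cells_hill_1}, I would take an arbitrary word $w$ of length $n$ with $w_i \in \Bra{\theta(u)_i, u_i}$ for all $i \in [n]$ and check that $w \in \SetHill_\MapM$. It is an $\MapM$-cliff because $w_1 = 0$ and, for $i \in [2, n]$, $0 \leq u_i - 1 \leq w_i \leq u_i \leq \MapM(i)$, using that $u$ is an $\MapM$-cliff together with $u_i \geq 1$. It is weakly increasing because $w_1 = 0 \leq w_2$ and, for $i \in [2, n - 1]$, $w_i \leq u_i \leq u_{i + 1} - 1 \leq w_{i + 1}$, using that $u$ is strictly increasing. Hence $\theta(u)$ is cell-compatible with $u$.

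Assertions~\ref{item:cells_hill_2} and~\ref{item:cells_hill_3} then follow, exactly as in the proof of Proposition~\ref{prop:cells_avalanche}, from the bound $u_i - \theta(u)_i \leq 1$. Indeed, for every $i \in \DiffIndices(\theta(u), u) = [2, n]$ the open interval $(u_i - 1, u_i)$ contains no integer, so no element of $\SetHill_\MapM(n)$ can lie inside $\Angle{\theta(u), u}$; this makes the cell pure. For~\ref{item:cells_hill_3}, suppose two input-wings $u \ne u'$ had cells sharing an interior point $x$; then $x_i$ would lie in $(u_i - 1, u_i) \cap (u'_i - 1, u'_i)$ for all $i \in [2, n]$, and since two open unit intervals with integer endpoints meet only when equal, $u_i = u'_i$ for all $i \in [2, n]$, which with $u_1 = 0 = u'_1$ forces $u = u'$, a contradiction; so the cells are pairwise disjoint.

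I do not anticipate a genuine obstacle: the whole statement reduces to the observation that $\theta$ lowers each letter by at most $1$. The only care needed is to distinguish the coordinate $i = 1$ (which $\theta$ fixes to $0$, so it lies outside $\DiffIndices(\theta(u), u)$) from the coordinates $i \geq 2$, and to unwind correctly the definitions of ``cell-compatible'', ``pure cell'', and ``disjoint cells''.
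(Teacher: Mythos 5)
Your proof is correct and follows essentially the same route as the paper, which simply remarks that the argument is analogous to that of Proposition~\ref{prop:cells_avalanche}: everything rests on the observation that $\theta$ lowers each letter of index at least $2$ by exactly $1$, so the cell $\Angle{\theta(u), u}$ has unit edge lengths and its interior can contain no integer point. Your explicit check that every word $w$ with $w_i \in \Bra{\theta(u)_i, u_i}$ is weakly increasing (which is where the hill case genuinely differs from the avalanche case, since decrementing letters of a hill need not yield a hill) is the one detail the paper leaves implicit, and you handle it correctly.
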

\begin{proof}
    Due to the similarity between the maps $\varphi_2$ and the map $\varphi_1$ introduced in
    the statement of Theorem~\ref{thm:poset_morphisms_avalanche}, the proof here is very
    similar to the one of Proposition~\ref{prop:cells_avalanche}.
\end{proof}
\medbreak

As shown by Proposition~\ref{prop:cells_hill}, the cells of maximal dimension of the cubic
realization of $\SetHill_\MapM(n)$ are all of the form $\Angle{\varphi_2(u), u}$ where the
$u$ are input-wings of~$\SetHill_\MapM(n)$.
\medbreak

\begin{Proposition} \label{prop:volume_hill}
    For any $m \geq 1$ and $n \geq 0$,
    \begin{math}
        \Volume\Par{\CubicReal\Par{\SetHill_\MapM(n)}} = \FussCatalan_{m - 1}(n).
    \end{math}
\end{Proposition}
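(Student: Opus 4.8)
The plan is to mimic, almost verbatim, the argument used to establish Proposition~\ref{prop:volume_avalanche}, with the map $\zeta$ there replaced by the map $\theta$ of Proposition~\ref{prop:input_wings_hill_output_wings_hill_bijection}. First I would invoke Proposition~\ref{prop:cells_hill}: since $\SetHill_\MapM$ is straight (Proposition~\ref{prop:properties_hill_posets}), the notion of cell makes sense in $\CubicReal\Par{\SetHill_\MapM(n)}$, and that proposition tells us that the cells of maximal dimension are exactly the cells $\Angle{\theta(u), u}$ for $u \in \InputWings\Par{\SetHill_\MapM}(n)$, that these cells are pure, and that they are pairwise disjoint. Hence the hypotheses required for the volume formula \eqref{equ:volume_computation_from_output_wings} (in its input-wing variant, with $\rho := \theta$) are met, so that
\begin{equation}
    \Volume\Par{\CubicReal\Par{\SetHill_\MapM(n)}}
    = \sum_{u \in \InputWings\Par{\SetHill_\MapM}(n)} \Volume \Angle{\theta(u), u}.
\end{equation}

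Next I would evaluate each summand. By the definition of $\theta$ in Proposition~\ref{prop:input_wings_hill_output_wings_hill_bijection}, one has $\theta(u)_i = \IndicatorFunction_{i \ne 1}\Par{u_i - 1}$, so that $u_i - \theta(u)_i \in \{0, 1\}$ for every $i \in [n]$ (recall that any $u \in \InputWings\Par{\SetHill_\MapM}(n)$ satisfies $u_1 = 0$). Therefore $\Volume \Angle{\theta(u), u} = \prod_{i \in \DiffIndices(\theta(u), u)} \Par{u_i - \theta(u)_i} = 1$, and the sum above simply counts the elements of $\InputWings\Par{\SetHill_\MapM}(n)$, giving $\Volume\Par{\CubicReal\Par{\SetHill_\MapM(n)}} = \# \InputWings\Par{\SetHill_\MapM}(n)$.

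Finally I would conclude with Proposition~\ref{prop:hill_input_wings_hill_bijection}, which provides a poset isomorphism between $\SetHill_{\mathbf m - 1}(n)$ and $\InputWings\Par{\SetHill_\MapM}(n)$; since $\# \SetHill_{\mathbf m - 1}(n) = \FussCatalan_{m - 1}(n)$, this yields $\# \InputWings\Par{\SetHill_\MapM}(n) = \FussCatalan_{m - 1}(n)$, which is the claimed value.

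I do not expect any real obstacle here: every ingredient is already available, and the proof is essentially a copy of the avalanche case. The only points deserving a moment of care are (i) applying the volume formula in the ``$\rho$ from $\InputWings$ to $\OutputWings$'' form rather than the form literally displayed as \eqref{equ:volume_computation_from_output_wings}, as was done for $\SetAvalanche_\MapM(n)$, and (ii) observing that the cells $\Angle{\theta(u), u}$ all have volume $1$ because $\theta$ decreases every coordinate but the first by exactly~$1$.
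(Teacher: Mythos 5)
Your proposal is correct and follows essentially the same route as the paper: invoke Proposition~\ref{prop:cells_hill} for the description, purity, and disjointness of the maximal cells $\Angle{\theta(u), u}$, apply the volume formula \eqref{equ:volume_computation_from_output_wings}, note that each such cell has volume $1$, and count the input-wings via Proposition~\ref{prop:hill_input_wings_hill_bijection}. Your added justification that each cell has volume $1$ (since $u_1 = 0$ for input-wings and $\theta$ lowers every other coordinate by exactly $1$) is a correct elaboration of the paper's briefer ``by definition of $\theta$'' remark.
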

\begin{proof}
    Proposition~\ref{prop:cells_hill} describes all the cells of maximal dimension of
    $\CubicReal\Par{\SetHill_\MapM(n)}$ as cells $\Angle{\varphi_2(u)}, u$ where $u$ is an
    input-wing of $\SetHill_\MapM(n)$. Since all these cells are pairwise disjoint, the
    volume of $\CubicReal\Par{\SetHill_\MapM(n)}$ expresses
    as~\eqref{equ:volume_computation_from_output_wings}. Moreover, observe that the volume
    of each cell $\Angle{\varphi_2(u), u}$ where $u$ in an input-wing, is by definition of
    $\varphi_2$ equal to $1$. Therefore, $\Volume\Par{\CubicReal\Par{\SetHill_\MapM(n)}}$ is
    equal to the number of input-wings of $\SetHill_\MapM(n)$. The statement of the
    proposition follows now from Theorem~\ref{thm:poset_morphisms_hill}.
\end{proof}
\medbreak

%%%%%%%%%%%%%%%%%%%%%%%%%%%%%%%%%%%%%%%%%%%%%%%%%%%%%%%%%%%%%%%%%%%%%%%%%%%%%%%%%%%%%%%%%%%%
%%%%%%%%%%%%%%%%%%%%%%%%%%%%%%%%%%%%%%%%%%%%%%%%%%%%%%%%%%%%%%%%%%%%%%%%%%%%%%%%%%%%%%%%%%%%
\subsection{$\delta$-canyon posets}
We introduce here our last family of posets. They are defined on particular $\delta$-cliffs
called $\delta$-canyons. As we shall see, under some conditions these posets are lattices
but not sublattices of $\delta$-cliff lattices.
\medbreak

%%%%%%%%%%%%%%%%%%%%%%%%%%%%%%%%%%%%%%%%%%%%%%%%%%%%%%%%%%%%%%%%%%%%%%%%%%%%%%%%%%%%%%%%%%%%
\subsubsection{Objects} \label{subsubsec:canyon_objects}
For any range map $\delta$, let $\SetCanyon_\delta$ be the graded subset of
$\SetCliff_\delta$ containing all $\delta$-cliffs such that $u_{i - j} \leq u_i - j$, for
all $i \in [|u|]$ and $j \in \Han{u_i}$ satisfying $i - j \geq 1$. Any element of
$\SetCanyon_\delta$ is a \Def{$\delta$-canyon}. For instance
\begin{equation}
    \SetCanyon_\MapTwo(3) =
    \Bra{000, 010, 020, 001, 002, 012, 003, 013, 023, 004, 014, 024}.
\end{equation}
As a larger example, the $\MapTwo$-cliff $u := 020100459002301$ is a ${\mathbf
2}$-canyon. Indeed, by picturing an $\MapM$-canyon $u$ by drawing for each position $i \in
[|u|]$ a segment from the point $(i - 1, 0)$ to the point $\Par{i - 1, u_i}$ in the
Cartesian plane, the previous condition says that one can draw lines of slope $1$ passing
through the $x$-axis and the top of each segment without crossing any segment. For instance,
the previous $u$ is drawn as
\begin{equation}
    \scalebox{.9}{
    \begin{tikzpicture}[scale=.3,Centering]
        \draw[Grid](1,0)grid(15,9);
        \node[PathNode](1)at(1,0){};
        \node[PathNode](2)at(2,2){};
        \node[PathNode](3)at(3,0){};
        \node[PathNode](4)at(4,1){};
        \node[PathNode](5)at(5,0){};
        \node[PathNode](6)at(6,0){};
        \node[PathNode](7)at(7,4){};
        \node[PathNode](8)at(8,5){};
        \node[PathNode](9)at(9,9){};
        \node[PathNode](10)at(10,0){};
        \node[PathNode](11)at(11,0){};
        \node[PathNode](12)at(12,2){};
        \node[PathNode](13)at(13,3){};
        \node[PathNode](14)at(14,0){};
        \node[PathNode](15)at(15,1){};
        \draw[PathStep](2,0)--(2);
        \draw[PathStep](4,0)--(4);
        \draw[PathStep](7,0)--(7);
        \draw[PathStep](8,0)--(8);
        \draw[PathStep](9,0)--(9);
        \draw[PathStep](12,0)--(12);
        \draw[PathStep](13,0)--(13);
        \draw[PathStep](15,0)--(15);
        \draw[PathDiag](0,0)--(2);
        \draw[PathDiag](3,0)--(4);
        \draw[PathDiag](3,0)--(7);
        \draw[PathDiag](3,0)--(8);
        \draw[PathDiag](0,0)--(9);
        \draw[PathDiag](10,0)--(12);
        \draw[PathDiag](10,0)--(13);
        \draw[PathDiag](14,0)--(15);
    \end{tikzpicture}}
\end{equation}
and one can observe that none of its diagonals, drawn as dotted lines, crosses a segment.
Besides, if $u$ is a $\delta$-cliff of size $n$ and $i, j \in [n]$ are two indices such that 
$i < j$, one has the three following possible configurations depending on the value
$\alpha := u_j - (j - i)$:
\begin{itemize}
    \item If $\alpha < 0$, then we say that $i$ and $j$ are \Def{independent} in $u$
    (graphically, the diagonal of $u_j$ falls under the $x$-axis before reaching the segment
    of $u_i$);
    \item If $\alpha \in \HanL{u_i - 1}$, then we say that $j$ is \Def{hindered} by $i$
    in $u$ (graphically, the diagonal of $u_j$ hits the segment of $u_i$);
    \item If $\alpha \geq u_i$, then we say that $j$ \Def{dominates} $i$ in $u$
    (graphically, the segment of $u_i$ is below or on the diagonal of $u_j$).
\end{itemize}
By definition, a $\delta$-cliff $u$ is a $\delta$-canyon if no index of $u$ is hindered by
another one.
\medbreak

\begin{Proposition} \label{prop:properties_canyon_objects}
    For any range map $\delta$, the graded set $\SetCanyon_\delta$ is
    \begin{enumerate}[label={\it (\roman*)}]
        \item \label{item:properties_canyon_objects_1}
        closed by prefix;

        \item \label{item:properties_canyon_objects_2}
        is minimally extendable;

        \item \label{item:properties_canyon_objects_3}
        is maximally extendable if $\delta$ is increasing.
    \end{enumerate}
\end{Proposition}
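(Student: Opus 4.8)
The plan is to verify the three assertions one at a time, guided by the observation that the defining condition of a $\delta$-canyon at an index $i$ only constrains the letters at positions $\leq i$; truncating on the right or appending a letter therefore cannot invalidate any condition that is already satisfied. Points \ref{item:properties_canyon_objects_1} and \ref{item:properties_canyon_objects_2} are immediate consequences of this locality, whereas \ref{item:properties_canyon_objects_3} is the only one that genuinely uses a hypothesis on $\delta$.

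For \ref{item:properties_canyon_objects_1}, I would take $u \in \SetCanyon_\delta$ and a prefix $u'$ of it: clearly $u'$ is again a $\delta$-cliff, and every canyon inequality $u'_{i - j} \leq u'_i - j$ for $u'$ is literally one of the canyon inequalities for $u$ (the involved positions are unchanged), so $u' \in \SetCanyon_\delta$. For \ref{item:properties_canyon_objects_2}, after noting $\epsilon \in \SetCanyon_\delta$ vacuously, I would check that appending the letter $0$ to $u \in \SetCanyon_\delta(n)$ yields a $\delta$-cliff, since $0 \leq \delta(n+1)$, and introduces no new canyon condition: the conditions indexed by $i \leq n$ are exactly those for $u$, and for $i = n+1$ the set $\Han{0}$ of relevant shifts $j$ is empty. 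Hence $u0 \in \SetCanyon_\delta$.

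The only assertion requiring work is \ref{item:properties_canyon_objects_3}, where the hypothesis that $\delta$ is increasing enters. Assuming it, I would set $a := \delta(n+1)$ for $u \in \SetCanyon_\delta(n)$; then $ua$ is a $\delta$-cliff, and the only canyon conditions to verify beyond those already holding for $u$ are, for $i = n+1$, the inequalities $u_{n+1-j} \leq a - j$ for $j \in \Han{a}$ with $n+1-j \geq 1$. Writing $k := n+1-j$ (so $k \in [n]$ and $j = n+1-k \geq 1$), this amounts to $u_k \leq \delta(n+1) - (n+1-k)$. I would then bound $u_k \leq \delta(k)$ because $u$ is a $\delta$-cliff, and estimate $\delta(n+1) - \delta(k) \geq n+1-k$ by summing the $n+1-k \geq 1$ inequalities $\delta(\ell+1) - \delta(\ell) \geq 1$ for $\ell$ ranging from $k$ to $n$; chaining the two estimates gives $u_k \leq \delta(n+1) - (n+1-k)$, as required. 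This telescoping inequality is the heart of the argument and the precise place where strict monotonicity is needed — a merely weakly increasing $\delta$ would not force the slope‑$1$ diagonal from the top of the last segment to clear the earlier ones — while everything else is a direct unwinding of the definition of $\delta$-canyons.
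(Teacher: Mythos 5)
Your proof is correct and follows essentially the same route as the paper's, which likewise disposes of points \emph{(i)} and \emph{(ii)} by the locality of the canyon conditions and reduces \emph{(iii)} to the inequality $u_{n+1-j} \leq \delta(n+1) - j$. The only difference is that you supply the telescoping estimate $\delta(n+1) - \delta(k) \geq n+1-k$ explicitly, a detail the paper leaves implicit.
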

\begin{proof}
    Let $u$ be a $\delta$-canyon of size $n \geq 0$. Immediately from the definition of the
    $\delta$-canyons, it follows that $u \, 0$ is a $\delta$-canyon of size $n + 1$, and
    that for any prefix $u'$ of $u$, $u'$ is a $\delta$-canyon. Therefore,
    Points~\ref{item:properties_canyon_objects_1} and~\ref{item:properties_canyon_objects_2}
    check out. Let us now consider the $\delta$-cliff $u' := u \, \delta(n + 1)$. If
    $\delta$ is increasing, for all $j \in [n]$, $u_{n + 1 - j} \leq u_{n + 1} - j$.
    Therefore, $u'$ is a $\delta$-canyon.  Therefore, \ref{item:properties_canyon_objects_3}
    holds.
\end{proof}
\medbreak

Let us now introduce a series of definitions and lemmas in order to show that the sets
$\SetCanyon_\delta(n)$ and $\SetHill_\delta(n)$ are in one-to-one correspondence when
$\delta$ is an increasing range map.
\medbreak

For any $\delta$-canyon $u$ of size $n$, let $\DominantCanyon(u)$ be the $\delta$-canyon
obtained by changing for each index $i \in [n]$ the letter $u_i$ into $0$ if $i$ is
dominated by another index $j \in [i + 1, n]$. For instance, when $\delta = \MapM$ with $m =
2$, $\DominantCanyon(020050012) = 000050002$. Observe that $u \in \SetCanyon_\delta$ is an
exuviae (see Section~\ref{subsubsec:elevation_maps}) if and only if $\DominantCanyon(u) =
u$.
\medbreak

\begin{Lemma} \label{lem:next_exuviae_canyons}
    For any range map $\delta$ and any $\delta$-canyon $u$, $\Next_{\SetCanyon_\delta}(u) =
    \Next_{\SetCanyon_\delta}(\DominantCanyon(u))$.
\end{Lemma}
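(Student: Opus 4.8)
The plan is to prove the equality of the two sets of admissible next letters by showing that $ua \in \SetCanyon_\delta$ if and only if $\DominantCanyon(u)\, a \in \SetCanyon_\delta$, for every $a \in \Han{0, \delta(|u|+1)}$. Writing $n := |u|$ and $u' := \DominantCanyon(u)$, I would unwind the definition of $\delta$-canyon: appending a letter $a$ in position $n+1$ to a word $w$ of size $n$ preserves the canyon property exactly when, for every $j \in \Han{a}$ with $n+1-j \geq 1$, one has $w_{n+1-j} \leq a - j$. So the condition ``$wa \in \SetCanyon_\delta$'' is a conjunction of inequalities $w_{n+1-j} \leq a-j$ indexed by $j \in \Han{1, \min(a, n)}$, together with the (already assumed) fact that $w$ itself is a canyon. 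Both $u$ and $u'$ are canyons (the latter because zeroing out some letters of a canyon clearly leaves a canyon), so the only thing to check is that the family of inequalities $\{u_{n+1-j} \leq a-j\}_j$ holds if and only if $\{u'_{n+1-j} \leq a-j\}_j$ holds.

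Next I would compare $u$ and $u'$ letter by letter. By construction $u'_i \leq u_i$ for all $i$, with $u'_i = 0$ precisely when $i$ is dominated by some index in $[i+1, n]$. One direction is then immediate: if all inequalities $u_{n+1-j} \leq a-j$ hold, then since $u'_{n+1-j} \leq u_{n+1-j}$ the same inequalities hold for $u'$. For the converse, suppose all inequalities $u'_{n+1-j} \leq a-j$ hold and fix $j \in \Han{1,\min(a,n)}$; I must show $u_{n+1-j} \leq a - j$. Set $i := n+1-j$. If $u'_i = u_i$ there is nothing to do, so assume $u'_i = 0$ and $u_i > 0$, meaning $i$ is dominated by some $k \in [i+1, n]$, i.e. $u_i \leq u_k - (k - i)$. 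Now $k = n+1-j'$ for some $j'$ with $1 \leq j' < j \leq \min(a,n)$, so in particular $j' \in \Han{1,\min(a,n)}$ and the inequality $u'_k \leq a - j'$ is among our hypotheses; the hard part is to relate $u'_k$ back to $u_k$.

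The key observation that makes the converse work is that the domination relation among the nonzero letters of $u$ is ``transitive enough'': if $i$ is dominated by $k$ then, chasing a maximal chain of dominations starting from $i$, we reach an index $k^\star$ which is \emph{not} dominated by anything to its right, hence $u'_{k^\star} = u_{k^\star}$, and moreover $u_i \leq u_{k^\star} - (k^\star - i)$ by adding up the domination inequalities along the chain (each step $u_{k_r} \leq u_{k_{r+1}} - (k_{r+1} - k_r)$ telescopes, using $k_0 = i < k_1 < \dots < k^\star$). Writing $k^\star = n+1 - j^\star$ with $1 \leq j^\star \leq j' < j$, the hypothesis gives $u'_{k^\star} \leq a - j^\star$, and since $u'_{k^\star} = u_{k^\star}$ we get $u_{k^\star} \leq a - j^\star$. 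Combining, $u_i \leq u_{k^\star} - (k^\star - i) = u_{k^\star} - (j - j^\star) \leq (a - j^\star) - (j - j^\star) = a - j$, as desired. This closes the converse and hence the lemma.

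The main obstacle I anticipate is making the telescoping argument along a domination chain fully rigorous: one must choose the chain so that it terminates at a non-dominated index (so that $u' $ and $u$ agree there), verify that the partial sums of the step inequalities indeed give $u_i \leq u_{k^\star} - (k^\star - i)$, and confirm that all intermediate indices stay within the range of indices relevant to the letter $a$ (they do, since they all lie strictly between $i$ and $n$, and $j^\star < j \leq \min(a,n)$ forces $j^\star \leq a$). A minor subtlety worth a sentence is the boundary case $a = 0$, where $\Next$ trivially contains $0$ and the inequality family is empty, so the equality is automatic; and the case where $u$ is already an exuviae, where $u' = u$ and there is nothing to prove. Everything else is a routine unpacking of the canyon condition.
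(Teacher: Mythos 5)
Your proof is correct and takes essentially the same route as the paper's: the easy inclusion follows from $\DominantCanyon(u) \Leq u$ componentwise, and the converse from locating a non-dominated index at which $u$ and $\DominantCanyon(u)$ agree and at which the constraint imposed by the new letter $a$ can be read off. The only cosmetic difference is that the paper picks a maximal index hinding $n+1$ (arguing by contradiction that it cannot be dominated), whereas you telescope along a domination chain to its terminal element; both hinge on the same observation that domination transfers the violation to a larger index.
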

\begin{proof}
    Assume that $u$ is of size $n$ and set $w := \DominantCanyon(u)$. Assume that $u a$ is a
    $\delta$-canyon for a letter $a \in \N$. Then, the index $n + 1$ is hindered by no other
    index in $u a$. Since $w$ is obtained by changing to $0$ some letters of $u$, the index
    $n + 1$ remains hindered by no other index in $w a$. Therefore, $w a$ is also a
    $\delta$-canyon.  Conversely, assume that $w a$ is a $\delta$-canyon for a letter $a \in
    \N$. Then, the index $n + 1$ is hindered by no other index in $w a$. By contradiction,
    assume that $u a$ is not a $\delta$-canyon. This implies that the index $n + 1$ is
    hindered by an index $i$ in $u a$. Let us take $i$ maximal among all indices satisfying
    this property. Due to the maximality of $i$, $i$ is dominated by no other index in $u$
    so that we have $u_i = w_i$. This implies that $n + 1$ is hindered by $i$ in $w a$,
    which contradicts our hypothesis. Therefore, $u a$ is a $\delta$-canyon.
\end{proof}
\medbreak

\begin{Lemma} \label{lem:next_canyons}
    Let $\delta$ be a range map a $u$ be a $\delta$-canyon of size $n \geq 0$. Then,
    \begin{equation}
        \Next_{\SetCanyon_\delta}(u)
        = \HanL{\delta(n + 1)}
        \setminus \bigsqcup_{\substack{i \in [n] \\ \DominantCanyon(u)_i \ne 0}}
        \Han{n + 1 - i, n +\DominantCanyon(u)_i - i}.
    \end{equation}
\end{Lemma}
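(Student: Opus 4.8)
The plan is to reduce to the exuviae $w := \DominantCanyon(u)$ and then describe directly which single letters may be appended to it. By Lemma~\ref{lem:next_exuviae_canyons} one has $\Next_{\SetCanyon_\delta}(u) = \Next_{\SetCanyon_\delta}(w)$, and since the right-hand side of the claimed identity is expressed only through the letters of $w$, it suffices to prove the statement with $u$ replaced by $w$. Recall that $w$ is by construction a $\delta$-canyon of size $n$, so for any $a \in \Han{0, \delta(n+1)}$ the word $wa$ is a $\delta$-cliff of size $n+1$, and $wa$ is a $\delta$-canyon if and only if no index is hinded by another in $wa$. Since the prefix $w$ of $wa$ is already a canyon, the hindrances between two indices in $[n]$ are the same in $wa$ as in $w$, hence none; and the last index $n+1$ can hind no index. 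So the only way $wa$ can fail to be a canyon is that $n+1$ is hinded by some index $i \in [n]$.

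Next I would turn the hindrance condition into a forbidden interval. Using the trichotomy independent/hinded/dominated from Section~\ref{subsubsec:canyon_objects}, the index $n+1$ is hinded by $i$ in $wa$ exactly when $\alpha := a - (n+1-i)$ belongs to $\Han{0, w_i - 1}$, i.e.\ when $a \in \Han{n+1-i,\, n + w_i - i}$; if $w_i = 0$ this interval is empty, matching the fact that $n+1$ is then never hinded by $i$. Therefore $wa$ is a $\delta$-canyon if and only if $a \in \Han{0,\delta(n+1)}$ and $a$ avoids every interval $\Han{n+1-i,\, n+w_i-i}$ with $w_i \ne 0$, which is precisely the asserted description of $\Next_{\SetCanyon_\delta}(w)$ apart from the claim that these intervals are pairwise disjoint.

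The one nonroutine point, which I expect to be the main (minor) obstacle, is the disjointness of the intervals $I_i := \Han{n+1-i,\, n+w_i-i}$ over indices $i$ with $w_i \ne 0$, and here both facts about $w$ get used. Take $i < i'$ with $w_i, w_{i'} \ne 0$. Since $w = \DominantCanyon(u)$ and $w_i \ne 0$, we have $w_i = u_i$ and $i$ is not dominated by $i'$ in $u$, which unwinds to $u_{i'} - (i'-i) < w_i$; as $w_{i'} \leq u_{i'}$ this gives $w_{i'} - (i'-i) < w_i$, so the right endpoint $n+w_{i'}-i'$ of $I_{i'}$ is strictly smaller than the right endpoint $n+w_i-i$ of $I_i$, while its left endpoint $n+1-i'$ is strictly smaller than $n+1-i$. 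If $I_i$ and $I_{i'}$ met, this forces $n+w_{i'}-i' \geq n+1-i$, i.e.\ $w_{i'} \geq i'-i+1$; then $j := i'-i$ satisfies $1 \leq j \leq w_{i'}$, so $j \in \Han{w_{i'}}$, and $i'-j = i \geq 1$, so the canyon inequality for $w$ at position $i'$ yields $w_i = w_{i'-j} \leq w_{i'} - j = w_{i'} - (i'-i)$, contradicting $w_{i'} - (i'-i) < w_i$. Hence the $I_i$ are pairwise disjoint, completing the proof; everything else is bookkeeping with endpoints and with the definitions of hinded and dominated indices.
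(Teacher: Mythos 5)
Your proof is correct and follows essentially the same route as the paper: reduce to the exuviae $\DominantCanyon(u)$ via Lemma~\ref{lem:next_exuviae_canyons}, translate the hindrance of the new index $n+1$ by an index $i$ into the forbidden interval $\Han{n+1-i,\,n+w_i-i}$, and check that these intervals are pairwise disjoint. Your disjointness argument is just a more explicit (contradiction-based) version of the paper's one-line observation that the nonzero-letter indices of $\DominantCanyon(u)$ are pairwise independent, which directly places $I_{i'}$ strictly to the left of $I_i$ for $i<i'$.
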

\begin{proof}
    Let $w$ be a $\delta$-canyon of size $n$ and let $w := \DominantCanyon(u)$. For any
    letter $a \in \HanL{\delta(n + 1)}$, the $\delta$-cliff $w a$ is a $\delta$-canyon if and
    only if the index $n + 1$ is hindered by no index in $w a$. Now, for any $i \in [n]$
    such that $w_i \ne 0$, the index $i$ hinds the index $n + 1$ in $w a$ if and only if $a
    \in \Han{n + 1 - i, n + w_i - i}$.  By definition of $\DominantCanyon$, all indices of
    $w$ are pairwise independent. Therefore, for any $i, i' \in [n]$ such that $i \ne i'$
    and $w_i \ne 0 \ne w_{i'}$, the sets $\Han{n + 1 - i, n + w_i - i}$ and $\Han{n + 1 -
    i', n + w_{i'} - i'}$ are disjoint.  Lemma~\ref{lem:next_exuviae_canyons} and the fact
    that $\DominantCanyon$ is an idempotent map imply the stated formula.
\end{proof}
\medbreak

\begin{Lemma} \label{lem:weight_exuviae_canyons}
    Let $\delta$ be a range map and $u$ be a $\delta$-canyon. Then,
    \begin{math}
        \Weight\Par{\ElevationMap_{\SetCanyon_\delta}(u)}
        = \Weight\Par{\DominantCanyon(u)}.
    \end{math}
\end{Lemma}
\begin{proof}
    This follows by induction on the size of $u$, by using the relation
    \begin{math}
        \DominantCanyon(u) = \ElevationMap_{\SetCanyon_\delta}(\DominantCanyon(u)),
    \end{math}
    and by using Lemma~\ref{lem:next_exuviae_canyons}.
\end{proof}
\medbreak

\begin{Proposition} \label{prop:image_elevation_map_canyon}
    For any increasing range map $\delta$ and any $n \geq 0$,
    \begin{math}
        \ElevationImage_{\SetCanyon_\delta}(n) = \SetAvalanche_\delta(n).
    \end{math}
\end{Proposition}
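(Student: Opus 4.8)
The plan is to prove the two inclusions $\ElevationImage_{\SetCanyon_\delta}(n) \subseteq \SetAvalanche_\delta(n)$ and $\SetAvalanche_\delta(n) \subseteq \ElevationImage_{\SetCanyon_\delta}(n)$ separately, mirroring closely the structure of the proof of Proposition~\ref{prop:image_elevation_map_hill}, but using the lemmas on $\delta$-canyons (Lemmas~\ref{lem:next_exuviae_canyons}, \ref{lem:next_canyons}, and~\ref{lem:weight_exuviae_canyons}) in place of the elementary identities available for $\delta$-hills. First I would note that, by Proposition~\ref{prop:properties_canyon_objects}\ref{item:properties_canyon_objects_1}, $\SetCanyon_\delta$ is closed by prefix, so the $\SetCanyon_\delta$-elevation map and the $\SetCanyon_\delta$-elevation image are well-defined.

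For the inclusion $\ElevationImage_{\SetCanyon_\delta}(n) \subseteq \SetAvalanche_\delta(n)$, I would take $u \in \SetCanyon_\delta(n)$ and set $v := \ElevationMap_{\SetCanyon_\delta}(u)$. The key point is to control the weight of each prefix $v_1 \dots v_j$ of $v$. Since $\SetCanyon_\delta$ is closed by prefix, the prefix $u_1 \dots u_j$ is itself a $\delta$-canyon, and $\ElevationMap_{\SetCanyon_\delta}(u)_1 \dots \ElevationMap_{\SetCanyon_\delta}(u)_j = \ElevationMap_{\SetCanyon_\delta}(u_1 \dots u_j)$ by definition of the elevation map. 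Hence $\Weight(v_1 \dots v_j) = \Weight(\ElevationMap_{\SetCanyon_\delta}(u_1 \dots u_j))$, which by Lemma~\ref{lem:weight_exuviae_canyons} equals $\Weight(\DominantCanyon(u_1 \dots u_j))$. Since $\DominantCanyon$ only changes some letters to $0$, we get $\Weight(\DominantCanyon(u_1 \dots u_j)) \leq \Weight(u_1 \dots u_j) \leq \delta(j)$, the last inequality holding because $u$ is a $\delta$-canyon — wait, this needs more care: being a $\delta$-cliff only gives $u_j \leq \delta(j)$ letterwise, not a bound on the whole weight. Here I expect the genuine obstacle: I would need to argue that for a $\delta$-canyon $u_1 \dots u_j$, the dominant cliff $\DominantCanyon(u_1 \dots u_j)$ has total weight at most $\delta(j)$. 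This should follow from the geometric/combinatorial picture: after passing to $\DominantCanyon$, the remaining nonzero letters sit at pairwise independent positions whose diagonals of slope $1$ do not overlap, so their total height is bounded by the horizontal span $j$ of the diagram plus the allowed range at position $j$; more precisely, using Lemma~\ref{lem:next_canyons} one can see that the intervals $\Han{j + 1 - i,\, j + \DominantCanyon(u)_i - i}$ are pairwise disjoint subsets of a suitable range, forcing $\sum_i \DominantCanyon(u)_i \leq \delta(j)$ when $\delta$ is increasing. Nailing down this bound is the crux of the first inclusion.

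For the reverse inclusion $\SetAvalanche_\delta(n) \subseteq \ElevationImage_{\SetCanyon_\delta}(n)$, I would proceed by induction on $n$, exactly as in Proposition~\ref{prop:image_elevation_map_hill}. For $n = 0$ the claim is trivial. For $n \geq 1$, given a $\delta$-avalanche $u = u' a$ with $u' \in \SetAvalanche_\delta(n-1)$ (using that $\SetAvalanche_\delta$ is closed by prefix, Proposition~\ref{prop:properties_avalanche_objects}), the induction hypothesis yields a $\delta$-canyon $w' \in \SetCanyon_\delta(n-1)$ with $\ElevationMap_{\SetCanyon_\delta}(w') = u'$. I then need to choose a letter $b$ to append so that $w := w' b$ is a $\delta$-canyon and $\ElevationMap_{\SetCanyon_\delta}(w)_n = a$; by definition of the elevation map, $\ElevationMap_{\SetCanyon_\delta}(w)_n = \#(\Next_{\SetCanyon_\delta}(w') \cap [0, b-1])$, so I want $b$ to be the $(a+1)$-st smallest element of $\Next_{\SetCanyon_\delta}(w')$ (with $0$ counted as the first). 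Such an element exists provided $\# \Next_{\SetCanyon_\delta}(w') \geq a + 1$, and here Lemma~\ref{lem:next_canyons} gives the explicit description of $\Next_{\SetCanyon_\delta}(w')$ as $[0, \delta(n)]$ minus a disjoint union of intervals; combined with Lemma~\ref{lem:weight_exuviae_canyons} (to relate $\Weight(u') = \Weight(\ElevationMap_{\SetCanyon_\delta}(w'))$ to $\Weight(\DominantCanyon(w'))$, which is the total length of the removed intervals) and the avalanche constraint $\Weight(u') + a = \Weight(u) \leq \delta(n)$, one checks that $[0,\delta(n)]$ has at least $\Weight(u') + a + 1 \geq a+1$ elements not removed. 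Finally, since $\SetCanyon_\delta$ is closed by prefix and $w'$ is a $\delta$-canyon with $w' b \in \SetCanyon_\delta$ by construction, $w$ is a $\delta$-canyon of size $n$ with $\ElevationMap_{\SetCanyon_\delta}(w) = u$, completing the induction.

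I expect the main obstacle to be the weight bound $\Weight(\DominantCanyon(u)) \leq \delta(\lvert u \rvert)$ for $\delta$-canyons, i.e.\ the first inclusion; the second inclusion is essentially a bookkeeping argument with Lemma~\ref{lem:next_canyons} once that bound (or an equivalent counting statement about $\#\Next_{\SetCanyon_\delta}$) is in hand. It may well be cleanest to prove a single lemma packaging ``$\Weight(\DominantCanyon(u)) = \delta(\lvert u \rvert) - \#\Next_{\SetCanyon_\delta}(u) + 1$'' (using increasingness of $\delta$ and the disjointness in Lemma~\ref{lem:next_canyons}), from which both inclusions follow formally.
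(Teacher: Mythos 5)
Your proposal is correct and follows essentially the same route as the paper: both inclusions rest on Lemmas~\ref{lem:next_canyons} and~\ref{lem:weight_exuviae_canyons}, the reverse inclusion is the same induction choosing the $(a+1)$-st smallest element of $\Next_{\SetCanyon_\delta}\Par{w'}$ using the count $\# \Next_{\SetCanyon_\delta}\Par{w'} = 1 + \delta(n) - \Weight\Par{\ElevationMap_{\SetCanyon_\delta}\Par{w'}}$, and your direct disjoint-intervals bound $\Weight\Par{\DominantCanyon\Par{u_1 \dots u_j}} \leq \delta(j)$ is exactly the content of the paper's inductive step for the forward inclusion. Two cosmetic slips to fix: the number of elements of $[0, \delta(n)]$ not removed is $1 + \delta(n) - \Weight\Par{u'}$, which is at least $a + 1$ by the avalanche condition (not ``at least $\Weight\Par{u'} + a + 1$''), and the packaged identity you suggest at the end should read $\Weight\Par{\DominantCanyon(u)} = 1 + \delta(|u| + 1) - \# \Next_{\SetCanyon_\delta}(u)$, with $\delta(|u| + 1)$ rather than $\delta(|u|)$ --- note that this identity alone only yields the weaker bound $\Weight\Par{\DominantCanyon(u)} \leq \delta(|u| + 1)$, so the sharper direct argument you give in the body is genuinely needed for the first inclusion.
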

\begin{proof}
    First, since $\SetCanyon_\delta$ is by Proposition~\ref{prop:properties_canyon_objects}
    closed by prefix, the $\SetCanyon_\delta$-elevation map and so the
    $\SetCanyon_\delta$-elevation image are well-defined.
    \smallbreak

    By Lemmas~\ref{lem:next_canyons} and~\ref{lem:weight_exuviae_canyons}, and since
    $\delta$ is increasing, for any $\delta$-canyon $u$ of size $n \geq 0$, one has
    \begin{equation} \label{equ:image_elevation_map_canyon}
        \# \Next_{\SetCanyon_\delta}\Par{u} =
        1 + \delta(n + 1) - \Weight\Par{\ElevationMap_{\SetCanyon_\delta}\Par{u}}.
    \end{equation}
    \smallbreak

    Let us proceed by induction on $n$ to prove that for any $u \in \SetCanyon_\delta(n)$,
    $\ElevationMap_{\SetCanyon_\delta}(u)$ is a $\delta$-avalanche. If $n = 0$, the property
    holds immediately. Let $u = u' a$ be a $\delta$-canyon of size $n + 1$ where $u' \in
    \SetCanyon_\delta(n)$ and $a \in \N$. By induction hypothesis,
    $\ElevationMap_{\SetCanyon_\delta}\Par{u'}$ is a $\delta$-avalanche. Therefore, in
    particular, $\Weight\Par{\ElevationMap_{\SetCanyon_\delta}\Par{u'}} \leq \delta(n)$.
    Moreover, by~\eqref{equ:image_elevation_map_canyon}, we have
    \begin{equation} \begin{split}
        \Weight\Par{\ElevationMap_{\SetCanyon_\delta}\Par{u' a}}
        & =
        \Weight\Par{\ElevationMap_{\SetCanyon_\delta}\Par{u'}}
        + \# \Par{\Next_{\SetCanyon_\delta}\Par{u'} \cap \HanL{a - 1}} \\
        & \leq
        \Weight\Par{\ElevationMap_{\SetCanyon_\delta}\Par{u'}}
        + 1 + \delta(n + 1) - \Weight\Par{\ElevationMap_{\SetCanyon_\delta}\Par{u'}} - 1
        =
        \delta(n + 1),
    \end{split} \end{equation}
    showing that $u'a$ is a $\delta$-canyon.
    \smallbreak

    Conversely, let us prove by induction on $n$ that for any $v \in
    \SetAvalanche_\delta(n)$, there exists a $\delta$-canyon $u$ such that
    $\ElevationMap_{\SetCanyon_\delta}(u) = v$.  If $n = 0$, the property holds immediately.
    Let $v = v' b$ be a $\delta$-avalanche of size $n + 1$ where $v' \in
    \SetAvalanche_\delta(n)$ and $b \in \N$. By induction hypothesis, there is $u' \in
    \SetCanyon_\delta(n)$ such that $\ElevationMap_{\SetCanyon_\delta}\Par{u'} = v'$. Since
    $v$ is a $\delta$-avalanche, $b \leq \delta(n + 1) - \Weight\Par{v'}$.  Now,
    by~\eqref{equ:image_elevation_map_canyon}, since there are $1 + \delta(n + 1) -
    \Weight\Par{v'}$ different letters $a$ such that $u' a$ is a $\delta$-canyon, there is
    in particular a $\delta$-canyon $u = u' a$ such that
    $\ElevationMap_{\SetCanyon_\delta}(u) = v$.
\end{proof}
\medbreak

\begin{Proposition} \label{prop:bijection_canyon_hill}
    For any increasing range map $\delta$ and any $n \geq 0$, the map
    \begin{math}
        \ElevationMap_{\SetHill_\delta}^{-1} \circ \ElevationMap_{\SetCanyon_\delta}
    \end{math}
    is a bijection between $\SetCanyon_\delta(n)$ and $\SetHill_\delta(n)$.
\end{Proposition}
\begin{proof}
    First, since $\delta$ is increasing, by Propositions~\ref{prop:properties_hill_objects}
    and~\ref{prop:properties_canyon_objects}, both $\SetHill_\delta$ and $\SetCanyon_\delta$
    are closed by prefix. Therefore, the maps $\ElevationMap_{\SetHill_\delta}$ and
    $\ElevationMap_{\SetCanyon_\delta}$ are well-defined.  By
    Proposition~\ref{prop:elevation_map_injectivity}, the maps
    $\ElevationMap_{\SetCanyon_\MapM}$ and $\ElevationMap_{\SetHill_\MapM}$ are injective,
    and by Propositions~\ref{prop:image_elevation_map_hill}
    and~\ref{prop:image_elevation_map_canyon}, they both share the same image
    $\SetAvalanche_\MapM(n)$. This implies that $\ElevationMap_{\SetCanyon_\MapM}$ is a
    bijection from $\SetCanyon_\MapM$ to $\SetAvalanche_\MapM(n)$, and that
    $\ElevationMap_{\SetHill_\MapM}^{-1}$ is a well-defined map and is a bijection from
    $\SetAvalanche_\MapM(n)$ to $\SetHill_\MapM(n)$. Therefore, the statement of the
    proposition follows.
\end{proof}
\medbreak

As a consequence of Proposition~\ref{prop:bijection_canyon_hill}, for any $m \geq 0$,
$\MapM$-canyons are enumerated by $\MapM$-Fuss-Catalan numbers.
\medbreak

%%%%%%%%%%%%%%%%%%%%%%%%%%%%%%%%%%%%%%%%%%%%%%%%%%%%%%%%%%%%%%%%%%%%%%%%%%%%%%%%%%%%%%%%%%%%
\subsubsection{Posets}
For any $n \geq 0$, the subposet $\SetCanyon_\delta(n)$ is the \Def{$\delta$-canyon poset}
of order $n$. Figure~\ref{fig:examples_canyon_posets} shows the Hasse diagrams of some
$\MapM$-canyon posets.
\begin{figure}[ht]
    \centering
    \subfloat[][$\SetCanyon_\MapOne(3)$.]{
    \centering
    \scalebox{.7}{
    \begin{tikzpicture}[Centering,xscale=.8,yscale=.8,rotate=-135]
        \draw[Grid](0,0)grid(1,2);
        \node[NodeGraph](000)at(0,0){};
        \node[NodeGraph](001)at(0,1){};
        \node[NodeGraph](002)at(0,2){};
        \node[NodeGraph](010)at(1,0){};
        \node[NodeGraph](012)at(1,2){};
        \node[NodeLabelGraph,above of=000]{$000$};
        \node[NodeLabelGraph,right of=001]{$001$};
        \node[NodeLabelGraph,right of=002]{$002$};
        \node[NodeLabelGraph,left of=010]{$010$};
        \node[NodeLabelGraph,below of=012]{$012$};
        \draw[EdgeGraph](000)--(001);
        \draw[EdgeGraph](000)--(010);
        \draw[EdgeGraph](001)--(002);
        \draw[EdgeGraph](002)--(012);
        \draw[EdgeGraph](010)--(012);
    \end{tikzpicture}}
    \label{subfig:canyon_poset_1_3}}
    \qquad \quad
    \subfloat[][$\SetCanyon_\MapTwo(3)$.]{
    \centering
    \scalebox{.7}{
    \begin{tikzpicture}[Centering,xscale=.8,yscale=.8,rotate=-135]
        \draw[Grid](0,0)grid(2,4);
        \node[NodeGraph](000)at(0,0){};
        \node[NodeGraph](001)at(0,1){};
        \node[NodeGraph](002)at(0,2){};
        \node[NodeGraph](003)at(0,3){};
        \node[NodeGraph](004)at(0,4){};
        \node[NodeGraph](010)at(1,0){};
        \node[NodeGraph](012)at(1,2){};
        \node[NodeGraph](013)at(1,3){};
        \node[NodeGraph](014)at(1,4){};
        \node[NodeGraph](020)at(2,0){};
        \node[NodeGraph](023)at(2,3){};
        \node[NodeGraph](024)at(2,4){};
        \node[NodeLabelGraph,above of=000]{$000$};
        \node[NodeLabelGraph,right of=001]{$001$};
        \node[NodeLabelGraph,right of=002]{$002$};
        \node[NodeLabelGraph,right of=003]{$003$};
        \node[NodeLabelGraph,right of=004]{$004$};
        \node[NodeLabelGraph,left of=010]{$010$};
        \node[NodeLabelGraph,below of=012]{$012$};
        \node[NodeLabelGraph,below of=013]{$013$};
        \node[NodeLabelGraph,right of=014]{$014$};
        \node[NodeLabelGraph,left of=020]{$020$};
        \node[NodeLabelGraph,left of=023]{$023$};
        \node[NodeLabelGraph,below of=024]{$024$};
        \draw[EdgeGraph](000)--(001);
        \draw[EdgeGraph](000)--(010);
        \draw[EdgeGraph](001)--(002);
        \draw[EdgeGraph](002)--(003);
        \draw[EdgeGraph](002)--(012);
        \draw[EdgeGraph](003)--(004);
        \draw[EdgeGraph](003)--(013);
        \draw[EdgeGraph](004)--(014);
        \draw[EdgeGraph](010)--(012);
        \draw[EdgeGraph](010)--(020);
        \draw[EdgeGraph](012)--(013);
        \draw[EdgeGraph](013)--(014);
        \draw[EdgeGraph](013)--(023);
        \draw[EdgeGraph](014)--(024);
        \draw[EdgeGraph](020)--(023);
        \draw[EdgeGraph](023)--(024);
    \end{tikzpicture}}
    \label{subfig:canyon_poset_2_3}}
    \qquad \quad
    \subfloat[][$\SetCanyon_\MapOne(4)$.]{
    \centering
    \scalebox{.7}{
    \begin{tikzpicture}[Centering,xscale=.7,yscale=.7,
        x={(0,-.5cm)}, y={(-1.0cm,-1.0cm)}, z={(1.0cm,-1.0cm)}]
        \DrawGridSpace{1}{2}{3}
        \node[NodeGraph](0000)at(0,0,0){};
        \node[NodeGraph](0001)at(0,0,1){};
        \node[NodeGraph](0002)at(0,0,2){};
        \node[NodeGraph](0003)at(0,0,3){};
        \node[NodeGraph](0010)at(0,1,0){};
        \node[NodeGraph](0012)at(0,1,2){};
        \node[NodeGraph](0013)at(0,1,3){};
        \node[NodeGraph](0020)at(0,2,0){};
        \node[NodeGraph](0023)at(0,2,3){};
        \node[NodeGraph](0100)at(1,0,0){};
        \node[NodeGraph](0101)at(1,0,1){};
        \node[NodeGraph](0103)at(1,0,3){};
        \node[NodeGraph](0120)at(1,2,0){};
        \node[NodeGraph](0123)at(1,2,3){};
        \node[NodeLabelGraph,above of=0000]{$0000$};
        \node[NodeLabelGraph,right of=0001]{$0001$};
        \node[NodeLabelGraph,right of=0002]{$0002$};
        \node[NodeLabelGraph,right of=0003]{$0003$};
        \node[NodeLabelGraph,left of=0010]{$0010$};
        \node[NodeLabelGraph,above of=0012]{$0012$};
        \node[NodeLabelGraph,above of=0013]{$0013$};
        \node[NodeLabelGraph,left of=0020]{$0020$};
        \node[NodeLabelGraph,above of=0023]{$0023$};
        \node[NodeLabelGraph,below of=0100]{$0100$};
        \node[NodeLabelGraph,below of=0101]{$0101$};
        \node[NodeLabelGraph,right of=0103]{$0103$};
        \node[NodeLabelGraph,left of=0120]{$0120$};
        \node[NodeLabelGraph,below of=0123]{$0123$};
        \draw[EdgeGraph](0000)--(0001);
        \draw[EdgeGraph](0000)--(0010);
        \draw[EdgeGraph](0000)--(0100);
        \draw[EdgeGraph](0001)--(0002);
        \draw[EdgeGraph](0001)--(0101);
        \draw[EdgeGraph](0002)--(0003);
        \draw[EdgeGraph](0002)--(0012);
        \draw[EdgeGraph](0003)--(0013);
        \draw[EdgeGraph](0003)--(0103);
        \draw[EdgeGraph](0010)--(0012);
        \draw[EdgeGraph](0010)--(0020);
        \draw[EdgeGraph](0012)--(0013);
        \draw[EdgeGraph](0013)--(0023);
        \draw[EdgeGraph](0020)--(0023);
        \draw[EdgeGraph](0020)--(0120);
        \draw[EdgeGraph](0023)--(0123);
        \draw[EdgeGraph](0100)--(0101);
        \draw[EdgeGraph](0100)--(0120);
        \draw[EdgeGraph](0101)--(0103);
        \draw[EdgeGraph](0103)--(0123);
        \draw[EdgeGraph](0120)--(0123);
    \end{tikzpicture}}
    \label{subfig:canyon_poset_1_4}}
    \caption{\footnotesize Hasse diagrams of some $\delta$-canyon posets.}
    \label{fig:examples_canyon_posets}
\end{figure}
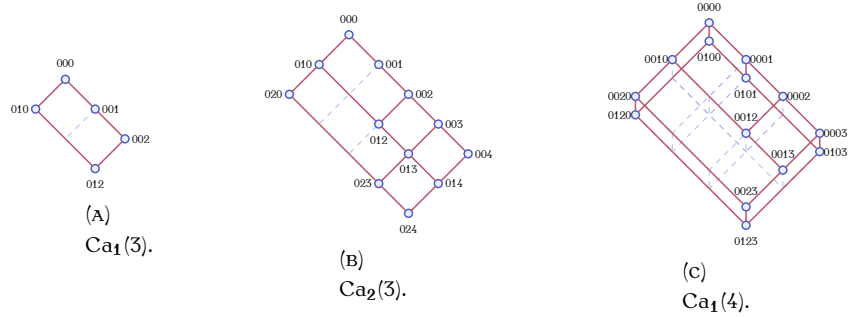
The $\MapOne$-canyons are also known as Tamari diagrams and have been introduced
in~\cite{Pal86}. The set of these objects of size $n$ is in one-to-one correspondence with
the set of binary trees with $n$ internal nodes. It is also known that the componentwise
comparison of Tamari diagrams is the Tamari order~\cite{Pal86}.  Recall that the Tamari
order admits, as covering relation, the right rotation operation in binary trees. It has
also the nice property to endow the set of binary trees of a given size with a lattice
structure~\cite{HT72}. Besides, a study of the posets of the intervals of the Tamari order,
based upon a generalization of Tamari diagrams, has been performed in~\cite{Com19}. The
Tamari posets admit a lot of generalizations, for instance through the so-called $m$-Tamari
posets~\cite{BPR12} where $m \geq 0$, and through the $\nu$-Tamari posets~\cite{PRV17} where
$\nu$ is a binary word.  Our $\delta$-canyon posets can be seen as different generalizations
of Tamari posets.  For any $m \geq 2$, the ${\mathbf m}$-canyon posets are not isomorphic to
the $m$-Tamari posets. Moreover, we shall prove in the sequel that for any increasing map
$\delta$, $\SetCanyon_\delta$ is a lattice.  As already mentioned, Tamari posets have the
nice property to be lattices~\cite{HT72}, are also EL-shellable~\cite{BW97}, and
constructible by interval doubling~\cite{Gey94b}. The same properties hold for $m$-Tamari
lattices, see respectively~\cite{BFP11} and~\cite{Muh15} for the first two ones. The last
one is a consequence of the fact that $m$-Tamari lattices are intervals of $1$-Tamari
lattices~\cite{BFP11} and the fact that the property to be constructible by interval
doubling is preserved for all sublattices of a lattice~\cite{Day79}.  As we shall see here,
the $\delta$-canyon posets have the same three properties.
\medbreak

\begin{Proposition} \label{prop:properties_canyon_posets}
    For any increasing range map $\delta$ and $n \geq 0$, the poset $\SetCanyon_\delta(n)$
    is
    \begin{enumerate}[label={\it (\roman*)}]
        \item \label{item:properties_canyon_posets_1}
        straight;

        \item \label{item:properties_canyon_posets_2}
        coated;

        \item \label{item:properties_canyon_posets_3}
        nested;

        \item \label{item:properties_canyon_posets_4}
        EL-shellable;

        \item \label{item:properties_canyon_posets_5}
        a meet semi-sublattice of $\SetCliff_\delta(n)$;

        \item \label{item:properties_canyon_posets_6}
        a lattice;

        \item \label{item:properties_canyon_posets_7}
        constructible by interval doubling.
    \end{enumerate}
\end{Proposition}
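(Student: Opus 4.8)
The plan is to obtain five of the seven items as instances of the general results of Section~\ref{sec:cliff_posets}, after checking two elementary closure properties of $\SetCanyon_\delta$ by hand, and to establish the remaining one — nestedness — by writing down explicitly the intervals involved; item~\ref{item:properties_canyon_posets_7} is then a formal consequence of nestedness.

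First I would dispatch the bookkeeping. By Proposition~\ref{prop:properties_canyon_objects}, $\SetCanyon_\delta$ is closed by prefix and minimally extendable for any range map, and is maximally extendable since $\delta$ is increasing; in particular $0^n$ and $\GreatestElement_\delta(n)$ are $\delta$-canyons, so $\SetCanyon_\delta$ is spread and every $\SetCanyon_\delta(n)$ is bounded. Next I would verify that $\SetCanyon_\delta$ is coated: given $\delta$-canyons $u \Leq v$ of size $n$ and $i \in [n-1]$, put $w := u_1 \dots u_i v_{i+1} \dots v_n$, a $\delta$-cliff; for positions $p < q$ one shows that $q$ is not hinded by $p$ in $w$ by cases, reading the conclusion off from $u$ if $q \leq i$, off from $v$ if $p > i$, and, when $p \leq i < q$, using $w_p = u_p \leq v_p$ so that $\Han{0, w_p - 1} \subseteq \Han{0, v_p - 1}$ and the non-hinding of $q$ by $p$ in $v$ (where $v_q = w_q$) transfers to $w$. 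This is item~\ref{item:properties_canyon_posets_2}; then item~\ref{item:properties_canyon_posets_1} follows from Lemma~\ref{lem:coated_implies_straight}, and item~\ref{item:properties_canyon_posets_4} from Theorem~\ref{thm:subposets_el_shellability} together with the boundedness of $\SetCanyon_\delta(n)$. I would also check directly that $u \Meet v$ is a $\delta$-canyon whenever $u$ and $v$ are: were $q$ hinded by $p$ in $u \Meet v$, taking the coordinate achieving the minimum at $q$ — say $(u \Meet v)_q = u_q$ — would give $u_q - (q-p) \in \Han{0, \min\Bra{u_p, v_p} - 1} \subseteq \Han{0, u_p - 1}$, contradicting that $u$ is a canyon. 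As $\SetCanyon_\delta$ is spread, part~\ref{item:meet_semi_lattice_inheritance} of Proposition~\ref{prop:subposets_lattices} now gives both item~\ref{item:properties_canyon_posets_5} and item~\ref{item:properties_canyon_posets_6} (alternatively, item~\ref{item:properties_canyon_posets_6} follows from Theorem~\ref{thm:decr_incr_meet_join} since $\SetCanyon_\delta$ is closed by prefix and both minimally and maximally extendable).

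The substantive point is item~\ref{item:properties_canyon_posets_3}. Fix $\PosetP := \SetCanyon_\delta(n)$. Since $0^{n-1}\delta(n)$ is a canyon, $\MaxLastLetter(\PosetP) = \delta(n)$, and since $0^{n-1}a$ is a canyon for all $a \in \Han{0, \delta(n)}$, condition~\ref{item:nested_1} holds. For~\ref{item:nested_2}, observe that $T := \Bra{w\,\delta(n) : w \in \SetCanyon_\delta(n-1)}$ is contained in $\SetCanyon_\delta(n)$ by maximal extendability, equals the interval $\Han{0^{n-1}\delta(n),\, \GreatestElement_\delta(n)}$ of $\PosetP$, and is isomorphic to $\SetCanyon_\delta(n-1)$ by deleting the last letter; hence $\PosetP_{a,\delta(n)}$ is an interval of $\PosetP$ exactly when $X_a := \Bra{w \in \SetCanyon_\delta(n-1) : w a \in \SetCanyon_\delta(n)}$ is an interval of $\SetCanyon_\delta(n-1)$. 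Unravelling the condition that position $n$ be hinded by no earlier index in $w a$ (cf.\ Lemmas~\ref{lem:next_exuviae_canyons} and~\ref{lem:next_canyons}) yields $X_a = \Bra{w \in \SetCanyon_\delta(n-1) : w \Leq g}$, where $g \in \SetCliff_\delta(n-1)$ has $g_i := \min\Bra{\delta(i),\, a - n + i}$ for $i$ in the range $\Han{\max\Bra{1, n-a},\, n-1}$ and $g_i := \delta(i)$ for the earlier indices. The crux is that this $g$ is itself a $\delta$-canyon: because $\delta$ is increasing and the caps $a - n + i$ increase by exactly $1$ in $i$, a short case analysis — on whether $i'$ and $i$ fall in the capped range, and on whether $g_{i'}$ and $g_i$ are given by $\delta$ or by the cap there — shows $g_{i'} \leq g_i - (i - i')$ whenever $1 \leq i' < i \leq n-1$ and $i - i' \leq g_i$. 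Hence $X_a$ is the principal order ideal of $g$ in $\SetCanyon_\delta(n-1)$, i.e.\ the interval $\Han{0^{n-1},\, g}$, so $\PosetP_{a,\delta(n)}$ is an interval of $\PosetP$ and item~\ref{item:properties_canyon_posets_3} holds. Finally, item~\ref{item:properties_canyon_posets_7} follows by applying Theorem~\ref{thm:constructible_by_interval_doubling_subfamilly} to $\SetCanyon_\delta$, which is nested and closed by prefix (directly when $\delta$ is rooted, in particular for the Fuss--Catalan case $\delta = \MapM$; a non-rooted increasing $\delta$ reduces to the fact that finite chains are constructible by interval doubling).

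The step I expect to be the main obstacle is item~\ref{item:properties_canyon_posets_3}. Because $\SetCanyon_\delta(n-1)$ is \emph{not} a sublattice of $\SetCliff_\delta(n-1)$, the naive candidates for the greatest element of $X_a$ — the componentwise maximum of two of its elements, or $\DecrMap_{\SetCanyon_\delta}$ applied to an arbitrary capping cliff — can fail, since the set of $\delta$-canyons below a given cliff may well have several maximal elements. The way out is precisely the remark that the staircase capping cliff $g$ occurring here is already a canyon, which exhibits $X_a$ as a principal order ideal with no further work. Everything else — coatedness, the meet-closure of $\SetCanyon_\delta$, and the casework showing $g$ is a canyon — is routine once the right statements are isolated.
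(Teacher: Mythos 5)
Your proposal is correct and follows essentially the same route as the paper: coatedness is verified directly and yields straightness and EL-shellability via Lemma~\ref{lem:coated_implies_straight} and Theorem~\ref{thm:subposets_el_shellability}, meet-closure gives the meet semi-sublattice property, the lattice structure comes from Theorem~\ref{thm:decr_incr_meet_join}, and constructibility by interval doubling from nestedness together with Theorem~\ref{thm:constructible_by_interval_doubling_subfamilly}. The only difference is that the paper dismisses nestedness as immediate, whereas you supply a genuine (and correct) argument identifying $\PosetP_{a,\delta(n)}$ with the principal order ideal of the staircase canyon $g$, which is a worthwhile addition rather than a divergence.
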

\begin{proof}
    Point~\ref{item:properties_canyon_posets_3} is immediate.  Assume that $u$ and $v$ are
    two $\delta$-canyons of size $n$ such that $u \Leq v$.  Let $k \in [n - 1]$ and consider
    the $\delta$-cliff $w := u_1 \dots u_k v_{k + 1} \dots v_n$. Now, since for any $i \in
    [k]$, $w_i = u_i \leq v_i$, and for any $i \in [k + 1, n]$, $w_i = v_i \geq u_i$, the
    fact that $u$ and $v$ are $\delta$-caynons implies that for any $i \in [n]$ and $j \in
    \Han{w_i}$ such that $i - j \geq 1$, the inequality $w_j \geq w_{i - j} + j$ holds.
    Thus, $w$ is an $\delta$-canyon, so that~\ref{item:properties_canyon_posets_2} holds.
    Now, by Lemma~\ref{lem:coated_implies_straight}, \ref{item:properties_canyon_posets_1}
    checks out, and by Theorem~\ref{thm:subposets_el_shellability},
    \ref{item:properties_canyon_posets_4} also.  Let $u$ and $v$ be two $\delta$-canyons of
    size $n$ and set $w$ as the $\delta$-cliff $u \Meet v$.  For all $j \in \Han{w_i}$ such
    that $i - j \geq 1$, $w_{i - j} \leq w_i - j$.  Indeed, either $w_{i - j} = u_{i - j}$
    or $w_{i- j} = v_{i - j}$, and in the two cases $w_{i - j} \leq (u \Meet v)_i - j$. For
    this reason, $w$ is a $\delta$-canyon. This shows~\ref{item:properties_canyon_posets_5}.
    Besides, due to the fact that by Proposition~\ref{prop:properties_canyon_objects},
    $\SetCanyon_\delta$ is closed by prefix and is maximally extendable,
    Theorem~\ref{thm:decr_incr_meet_join} implies~\ref{item:properties_canyon_posets_6}.
    Point~\ref{item:properties_canyon_posets_7} is a consequence of
    Theorem~\ref{thm:constructible_by_interval_doubling_subfamilly}
    since~\ref{item:properties_canyon_posets_3} holds and $\SetCanyon_\delta$ is closed by
    prefix.
\end{proof}
\medbreak

One can observe that $\SetCanyon_\MapM(n)$ is not a join semi-sublattice of the lattice of
$\delta$-cliffs. Indeed, by setting $u := 0124$ and $v := 0205$, even if $u$ and $v$ are
$\MapTwo$-canyons, $u \JJoin v = 0225$ is not. By
Proposition~\ref{prop:properties_canyon_posets}, the posets $\SetCanyon_\MapM(n)$ are
lattices and Theorem~\ref{thm:decr_incr_meet_join} provides a way to compute the join of two
of their elements. For instance, in $\SetCanyon_\MapOne$, one has
\begin{equation}
    00120 \JJoin_{\SetCanyon_\MapOne} 00201
    = \IncrMap_{\SetCanyon_\MapOne}(00120 \JJoin 00201)
    = \IncrMap_{\SetCanyon_\MapOne}(00221)
    = 00234,
\end{equation}
and, in $\SetCanyon_\MapTwo$, one has
\begin{equation}
    0124 \JJoin_{\SetCanyon_\MapTwo} 0205
    = \IncrMap_{\SetCanyon_\MapTwo}(0124 \JJoin 0205)
    = \IncrMap_{\SetCanyon_\MapTwo}(0225)
    = 0235.
\end{equation}
These computations of the join of two elements are similar to the ones described
in~\cite{Mar92} (see also~\cite{Gey94b}) for Tamari lattices.
\medbreak

Besides, as pointed out by Proposition~\ref{prop:properties_canyon_posets}, when $\delta$ is
an increasing range map, each $\SetCanyon_\delta(n)$ is constructible by interval doubling.
Figure~\ref{fig:contractions_canyon_2_4} shows a sequence of interval contractions performed
from $\SetCanyon_\MapTwo(4)$ in order to obtain~$\SetCanyon_\MapTwo(3)$.
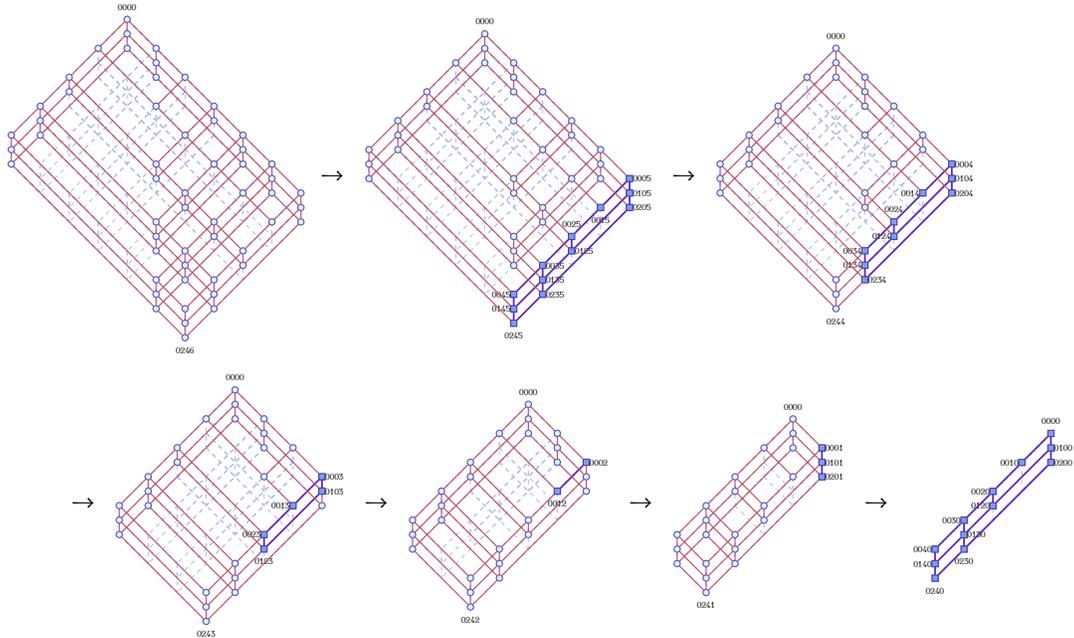
\begin{figure}[ht]
    \centering
    \begin{multline*}
        % Canyon, m = 2, n = 4
        \scalebox{.55}{
        \begin{tikzpicture}[Centering,xscale=.7,yscale=.7,
            x={(0,-.5cm)}, y={(-1.0cm,-1.0cm)}, z={(1.0cm,-1.0cm)}]
            \DrawGridSpace{2}{4}{6}
            \node[NodeGraph](0000)at(0,0,0){};
            \node[NodeGraph](0001)at(0,0,1){};
            \node[NodeGraph](0002)at(0,0,2){};
            \node[NodeGraph](0003)at(0,0,3){};
            \node[NodeGraph](0004)at(0,0,4){};
            \node[NodeGraph](0005)at(0,0,5){};
            \node[NodeGraph](0006)at(0,0,6){};
            \node[NodeGraph](0010)at(0,1,0){};
            \node[NodeGraph](0012)at(0,1,2){};
            \node[NodeGraph](0013)at(0,1,3){};
            \node[NodeGraph](0014)at(0,1,4){};
            \node[NodeGraph](0015)at(0,1,5){};
            \node[NodeGraph](0016)at(0,1,6){};
            \node[NodeGraph](0020)at(0,2,0){};
            \node[NodeGraph](0023)at(0,2,3){};
            \node[NodeGraph](0024)at(0,2,4){};
            \node[NodeGraph](0025)at(0,2,5){};
            \node[NodeGraph](0026)at(0,2,6){};
            \node[NodeGraph](0030)at(0,3,0){};
            \node[NodeGraph](0034)at(0,3,4){};
            \node[NodeGraph](0035)at(0,3,5){};
            \node[NodeGraph](0036)at(0,3,6){};
            \node[NodeGraph](0040)at(0,4,0){};
            \node[NodeGraph](0045)at(0,4,5){};
            \node[NodeGraph](0046)at(0,4,6){};
            \node[NodeGraph](0100)at(1,0,0){};
            \node[NodeGraph](0101)at(1,0,1){};
            \node[NodeGraph](0103)at(1,0,3){};
            \node[NodeGraph](0104)at(1,0,4){};
            \node[NodeGraph](0105)at(1,0,5){};
            \node[NodeGraph](0106)at(1,0,6){};
            \node[NodeGraph](0120)at(1,2,0){};
            \node[NodeGraph](0123)at(1,2,3){};
            \node[NodeGraph](0124)at(1,2,4){};
            \node[NodeGraph](0125)at(1,2,5){};
            \node[NodeGraph](0126)at(1,2,6){};
            \node[NodeGraph](0130)at(1,3,0){};
            \node[NodeGraph](0134)at(1,3,4){};
            \node[NodeGraph](0135)at(1,3,5){};
            \node[NodeGraph](0136)at(1,3,6){};
            \node[NodeGraph](0140)at(1,4,0){};
            \node[NodeGraph](0145)at(1,4,5){};
            \node[NodeGraph](0146)at(1,4,6){};
            \node[NodeGraph](0200)at(2,0,0){};
            \node[NodeGraph](0201)at(2,0,1){};
            \node[NodeGraph](0204)at(2,0,4){};
            \node[NodeGraph](0205)at(2,0,5){};
            \node[NodeGraph](0206)at(2,0,6){};
            \node[NodeGraph](0230)at(2,3,0){};
            \node[NodeGraph](0234)at(2,3,4){};
            \node[NodeGraph](0235)at(2,3,5){};
            \node[NodeGraph](0236)at(2,3,6){};
            \node[NodeGraph](0240)at(2,4,0){};
            \node[NodeGraph](0245)at(2,4,5){};
            \node[NodeGraph](0246)at(2,4,6){};
            \node[NodeLabelGraph,above of=0000]{$0000$};
            \node[NodeLabelGraph,below of=0246]{$0246$};
            \draw[EdgeGraph](0000)--(0001);
            \draw[EdgeGraph](0000)--(0010);
            \draw[EdgeGraph](0000)--(0100);
            \draw[EdgeGraph](0001)--(0002);
            \draw[EdgeGraph](0001)--(0101);
            \draw[EdgeGraph](0002)--(0003);
            \draw[EdgeGraph](0002)--(0012);
            \draw[EdgeGraph](0003)--(0004);
            \draw[EdgeGraph](0003)--(0013);
            \draw[EdgeGraph](0003)--(0103);
            \draw[EdgeGraph](0004)--(0005);
            \draw[EdgeGraph](0004)--(0014);
            \draw[EdgeGraph](0004)--(0104);
            \draw[EdgeGraph](0005)--(0006);
            \draw[EdgeGraph](0005)--(0015);
            \draw[EdgeGraph](0005)--(0105);
            \draw[EdgeGraph](0006)--(0016);
            \draw[EdgeGraph](0006)--(0106);
            \draw[EdgeGraph](0010)--(0012);
            \draw[EdgeGraph](0010)--(0020);
            \draw[EdgeGraph](0012)--(0013);
            \draw[EdgeGraph](0013)--(0014);
            \draw[EdgeGraph](0013)--(0023);
            \draw[EdgeGraph](0014)--(0015);
            \draw[EdgeGraph](0014)--(0024);
            \draw[EdgeGraph](0015)--(0016);
            \draw[EdgeGraph](0015)--(0025);
            \draw[EdgeGraph](0016)--(0026);
            \draw[EdgeGraph](0020)--(0023);
            \draw[EdgeGraph](0020)--(0030);
            \draw[EdgeGraph](0020)--(0120);
            \draw[EdgeGraph](0023)--(0024);
            \draw[EdgeGraph](0023)--(0123);
            \draw[EdgeGraph](0024)--(0025);
            \draw[EdgeGraph](0024)--(0034);
            \draw[EdgeGraph](0024)--(0124);
            \draw[EdgeGraph](0025)--(0026);
            \draw[EdgeGraph](0025)--(0035);
            \draw[EdgeGraph](0025)--(0125);
            \draw[EdgeGraph](0026)--(0036);
            \draw[EdgeGraph](0026)--(0126);
            \draw[EdgeGraph](0030)--(0034);
            \draw[EdgeGraph](0030)--(0040);
            \draw[EdgeGraph](0030)--(0130);
            \draw[EdgeGraph](0034)--(0035);
            \draw[EdgeGraph](0034)--(0134);
            \draw[EdgeGraph](0035)--(0036);
            \draw[EdgeGraph](0035)--(0045);
            \draw[EdgeGraph](0035)--(0135);
            \draw[EdgeGraph](0036)--(0046);
            \draw[EdgeGraph](0036)--(0136);
            \draw[EdgeGraph](0040)--(0045);
            \draw[EdgeGraph](0040)--(0140);
            \draw[EdgeGraph](0045)--(0046);
            \draw[EdgeGraph](0045)--(0145);
            \draw[EdgeGraph](0046)--(0146);
            \draw[EdgeGraph](0100)--(0101);
            \draw[EdgeGraph](0100)--(0120);
            \draw[EdgeGraph](0100)--(0200);
            \draw[EdgeGraph](0101)--(0103);
            \draw[EdgeGraph](0101)--(0201);
            \draw[EdgeGraph](0103)--(0104);
            \draw[EdgeGraph](0103)--(0123);
            \draw[EdgeGraph](0104)--(0105);
            \draw[EdgeGraph](0104)--(0124);
            \draw[EdgeGraph](0104)--(0204);
            \draw[EdgeGraph](0105)--(0106);
            \draw[EdgeGraph](0105)--(0125);
            \draw[EdgeGraph](0105)--(0205);
            \draw[EdgeGraph](0106)--(0126);
            \draw[EdgeGraph](0106)--(0206);
            \draw[EdgeGraph](0120)--(0123);
            \draw[EdgeGraph](0120)--(0130);
            \draw[EdgeGraph](0123)--(0124);
            \draw[EdgeGraph](0124)--(0125);
            \draw[EdgeGraph](0124)--(0134);
            \draw[EdgeGraph](0125)--(0126);
            \draw[EdgeGraph](0125)--(0135);
            \draw[EdgeGraph](0126)--(0136);
            \draw[EdgeGraph](0130)--(0134);
            \draw[EdgeGraph](0130)--(0140);
            \draw[EdgeGraph](0130)--(0230);
            \draw[EdgeGraph](0134)--(0135);
            \draw[EdgeGraph](0134)--(0234);
            \draw[EdgeGraph](0135)--(0136);
            \draw[EdgeGraph](0135)--(0145);
            \draw[EdgeGraph](0135)--(0235);
            \draw[EdgeGraph](0136)--(0146);
            \draw[EdgeGraph](0136)--(0236);
            \draw[EdgeGraph](0140)--(0145);
            \draw[EdgeGraph](0140)--(0240);
            \draw[EdgeGraph](0145)--(0146);
            \draw[EdgeGraph](0145)--(0245);
            \draw[EdgeGraph](0146)--(0246);
            \draw[EdgeGraph](0200)--(0201);
            \draw[EdgeGraph](0200)--(0230);
            \draw[EdgeGraph](0201)--(0204);
            \draw[EdgeGraph](0204)--(0205);
            \draw[EdgeGraph](0204)--(0234);
            \draw[EdgeGraph](0205)--(0206);
            \draw[EdgeGraph](0205)--(0235);
            \draw[EdgeGraph](0206)--(0236);
            \draw[EdgeGraph](0230)--(0234);
            \draw[EdgeGraph](0230)--(0240);
            \draw[EdgeGraph](0234)--(0235);
            \draw[EdgeGraph](0235)--(0236);
            \draw[EdgeGraph](0235)--(0245);
            \draw[EdgeGraph](0236)--(0246);
            \draw[EdgeGraph](0240)--(0245);
            \draw[EdgeGraph](0245)--(0246);
        \end{tikzpicture}}
        \; \to \;
        %
        % D(Canyon), m = 2, n = 4
        \scalebox{.55}{
        \begin{tikzpicture}[Centering,xscale=.7,yscale=.7,
            x={(0,-.5cm)}, y={(-1.0cm,-1.0cm)}, z={(1.0cm,-1.0cm)}]
            \DrawGridSpace{2}{4}{5}
            \node[NodeGraph](0000)at(0,0,0){};
            \node[NodeGraph](0001)at(0,0,1){};
            \node[NodeGraph](0002)at(0,0,2){};
            \node[NodeGraph](0003)at(0,0,3){};
            \node[NodeGraph](0004)at(0,0,4){};
            \node[MarkedNodeGraph](0005)at(0,0,5){};
            \node[NodeGraph](0010)at(0,1,0){};
            \node[NodeGraph](0012)at(0,1,2){};
            \node[NodeGraph](0013)at(0,1,3){};
            \node[NodeGraph](0014)at(0,1,4){};
            \node[MarkedNodeGraph](0015)at(0,1,5){};
            \node[NodeGraph](0020)at(0,2,0){};
            \node[NodeGraph](0023)at(0,2,3){};
            \node[NodeGraph](0024)at(0,2,4){};
            \node[MarkedNodeGraph](0025)at(0,2,5){};
            \node[NodeGraph](0030)at(0,3,0){};
            \node[NodeGraph](0034)at(0,3,4){};
            \node[MarkedNodeGraph](0035)at(0,3,5){};
            \node[NodeGraph](0040)at(0,4,0){};
            \node[MarkedNodeGraph](0045)at(0,4,5){};
            \node[NodeGraph](0100)at(1,0,0){};
            \node[NodeGraph](0101)at(1,0,1){};
            \node[NodeGraph](0103)at(1,0,3){};
            \node[NodeGraph](0104)at(1,0,4){};
            \node[MarkedNodeGraph](0105)at(1,0,5){};
            \node[NodeGraph](0120)at(1,2,0){};
            \node[NodeGraph](0123)at(1,2,3){};
            \node[NodeGraph](0124)at(1,2,4){};
            \node[MarkedNodeGraph](0125)at(1,2,5){};
            \node[NodeGraph](0130)at(1,3,0){};
            \node[NodeGraph](0134)at(1,3,4){};
            \node[MarkedNodeGraph](0135)at(1,3,5){};
            \node[NodeGraph](0140)at(1,4,0){};
            \node[MarkedNodeGraph](0145)at(1,4,5){};
            \node[NodeGraph](0200)at(2,0,0){};
            \node[NodeGraph](0201)at(2,0,1){};
            \node[NodeGraph](0204)at(2,0,4){};
            \node[MarkedNodeGraph](0205)at(2,0,5){};
            \node[NodeGraph](0230)at(2,3,0){};
            \node[NodeGraph](0234)at(2,3,4){};
            \node[MarkedNodeGraph](0235)at(2,3,5){};
            \node[NodeGraph](0240)at(2,4,0){};
            \node[MarkedNodeGraph](0245)at(2,4,5){};
            \node[NodeLabelGraph,above of=0000]{$0000$};
            \node[NodeLabelGraph,right of=0005]{$0005$};
            \node[NodeLabelGraph,below of=0015]{$0015$};
            \node[NodeLabelGraph,above of=0025]{$0025$};
            \node[NodeLabelGraph,right of=0035]{$0035$};
            \node[NodeLabelGraph,left of=0045]{$0045$};
            \node[NodeLabelGraph,right of=0105]{$0105$};
            \node[NodeLabelGraph,right of=0125]{$0125$};
            \node[NodeLabelGraph,right of=0135]{$0135$};
            \node[NodeLabelGraph,left of=0145]{$0145$};
            \node[NodeLabelGraph,right of=0205]{$0205$};
            \node[NodeLabelGraph,right of=0235]{$0235$};
            \node[NodeLabelGraph,below of=0245]{$0245$};
            \draw[EdgeGraph](0000)--(0001);
            \draw[EdgeGraph](0000)--(0010);
            \draw[EdgeGraph](0000)--(0100);
            \draw[EdgeGraph](0001)--(0002);
            \draw[EdgeGraph](0001)--(0101);
            \draw[EdgeGraph](0002)--(0003);
            \draw[EdgeGraph](0002)--(0012);
            \draw[EdgeGraph](0003)--(0004);
            \draw[EdgeGraph](0003)--(0013);
            \draw[EdgeGraph](0003)--(0103);
            \draw[EdgeGraph](0004)--(0005);
            \draw[EdgeGraph](0004)--(0014);
            \draw[EdgeGraph](0004)--(0104);
            \draw[MarkedEdgeGraph](0005)--(0015);
            \draw[MarkedEdgeGraph](0005)--(0105);
            \draw[EdgeGraph](0010)--(0012);
            \draw[EdgeGraph](0010)--(0020);
            \draw[EdgeGraph](0012)--(0013);
            \draw[EdgeGraph](0013)--(0014);
            \draw[EdgeGraph](0013)--(0023);
            \draw[EdgeGraph](0014)--(0015);
            \draw[EdgeGraph](0014)--(0024);
            \draw[MarkedEdgeGraph](0015)--(0025);
            \draw[EdgeGraph](0020)--(0023);
            \draw[EdgeGraph](0020)--(0030);
            \draw[EdgeGraph](0020)--(0120);
            \draw[EdgeGraph](0023)--(0024);
            \draw[EdgeGraph](0023)--(0123);
            \draw[EdgeGraph](0024)--(0025);
            \draw[EdgeGraph](0024)--(0034);
            \draw[EdgeGraph](0024)--(0124);
            \draw[MarkedEdgeGraph](0025)--(0035);
            \draw[MarkedEdgeGraph](0025)--(0125);
            \draw[EdgeGraph](0030)--(0034);
            \draw[EdgeGraph](0030)--(0040);
            \draw[EdgeGraph](0030)--(0130);
            \draw[EdgeGraph](0034)--(0035);
            \draw[EdgeGraph](0034)--(0134);
            \draw[MarkedEdgeGraph](0035)--(0045);
            \draw[MarkedEdgeGraph](0035)--(0135);
            \draw[EdgeGraph](0040)--(0045);
            \draw[EdgeGraph](0040)--(0140);
            \draw[MarkedEdgeGraph](0045)--(0145);
            \draw[EdgeGraph](0100)--(0101);
            \draw[EdgeGraph](0100)--(0120);
            \draw[EdgeGraph](0100)--(0200);
            \draw[EdgeGraph](0101)--(0103);
            \draw[EdgeGraph](0101)--(0201);
            \draw[EdgeGraph](0103)--(0104);
            \draw[EdgeGraph](0103)--(0123);
            \draw[EdgeGraph](0104)--(0105);
            \draw[EdgeGraph](0104)--(0124);
            \draw[EdgeGraph](0104)--(0204);
            \draw[MarkedEdgeGraph](0105)--(0125);
            \draw[MarkedEdgeGraph](0105)--(0205);
            \draw[EdgeGraph](0120)--(0123);
            \draw[EdgeGraph](0120)--(0130);
            \draw[EdgeGraph](0123)--(0124);
            \draw[EdgeGraph](0124)--(0125);
            \draw[EdgeGraph](0124)--(0134);
            \draw[MarkedEdgeGraph](0125)--(0135);
            \draw[EdgeGraph](0130)--(0134);
            \draw[EdgeGraph](0130)--(0140);
            \draw[EdgeGraph](0130)--(0230);
            \draw[EdgeGraph](0134)--(0135);
            \draw[EdgeGraph](0134)--(0234);
            \draw[MarkedEdgeGraph](0135)--(0145);
            \draw[MarkedEdgeGraph](0135)--(0235);
            \draw[EdgeGraph](0140)--(0145);
            \draw[EdgeGraph](0140)--(0240);
            \draw[MarkedEdgeGraph](0145)--(0245);
            \draw[EdgeGraph](0200)--(0201);
            \draw[EdgeGraph](0200)--(0230);
            \draw[EdgeGraph](0201)--(0204);
            \draw[EdgeGraph](0204)--(0205);
            \draw[EdgeGraph](0204)--(0234);
            \draw[MarkedEdgeGraph](0205)--(0235);
            \draw[EdgeGraph](0230)--(0234);
            \draw[EdgeGraph](0230)--(0240);
            \draw[EdgeGraph](0234)--(0235);
            \draw[MarkedEdgeGraph](0235)--(0245);
            \draw[EdgeGraph](0240)--(0245);
        \end{tikzpicture}}
        \; \to \;
        %
        % D^2(Canyon), m = 2, n = 4
        \scalebox{.55}{
        \begin{tikzpicture}[Centering,xscale=.7,yscale=.7,
            x={(0,-.5cm)}, y={(-1.0cm,-1.0cm)}, z={(1.0cm,-1.0cm)}]
            \DrawGridSpace{2}{4}{4}
            \node[NodeGraph](0000)at(0,0,0){};
            \node[NodeGraph](0001)at(0,0,1){};
            \node[NodeGraph](0002)at(0,0,2){};
            \node[NodeGraph](0003)at(0,0,3){};
            \node[MarkedNodeGraph](0004)at(0,0,4){};
            \node[NodeGraph](0010)at(0,1,0){};
            \node[NodeGraph](0012)at(0,1,2){};
            \node[NodeGraph](0013)at(0,1,3){};
            \node[MarkedNodeGraph](0014)at(0,1,4){};
            \node[NodeGraph](0020)at(0,2,0){};
            \node[NodeGraph](0023)at(0,2,3){};
            \node[MarkedNodeGraph](0024)at(0,2,4){};
            \node[NodeGraph](0030)at(0,3,0){};
            \node[MarkedNodeGraph](0034)at(0,3,4){};
            \node[NodeGraph](0040)at(0,4,0){};
            \node[NodeGraph](0044)at(0,4,4){};
            \node[NodeGraph](0100)at(1,0,0){};
            \node[NodeGraph](0101)at(1,0,1){};
            \node[NodeGraph](0103)at(1,0,3){};
            \node[MarkedNodeGraph](0104)at(1,0,4){};
            \node[NodeGraph](0120)at(1,2,0){};
            \node[NodeGraph](0123)at(1,2,3){};
            \node[MarkedNodeGraph](0124)at(1,2,4){};
            \node[NodeGraph](0130)at(1,3,0){};
            \node[MarkedNodeGraph](0134)at(1,3,4){};
            \node[NodeGraph](0140)at(1,4,0){};
            \node[NodeGraph](0144)at(1,4,4){};
            \node[NodeGraph](0200)at(2,0,0){};
            \node[NodeGraph](0201)at(2,0,1){};
            \node[MarkedNodeGraph](0204)at(2,0,4){};
            \node[NodeGraph](0230)at(2,3,0){};
            \node[MarkedNodeGraph](0234)at(2,3,4){};
            \node[NodeGraph](0240)at(2,4,0){};
            \node[NodeGraph](0244)at(2,4,4){};
            \node[NodeLabelGraph,above of=0000]{$0000$};
            \node[NodeLabelGraph,right of=0004]{$0004$};
            \node[NodeLabelGraph,left of=0014]{$0014$};
            \node[NodeLabelGraph,above of=0024]{$0024$};
            \node[NodeLabelGraph,left of=0034]{$0034$};
            \node[NodeLabelGraph,right of=0104]{$0104$};
            \node[NodeLabelGraph,left of=0124]{$0124$};
            \node[NodeLabelGraph,left of=0134]{$0134$};
            \node[NodeLabelGraph,right of=0204]{$0204$};
            \node[NodeLabelGraph,right of=0234]{$0234$};
            \node[NodeLabelGraph,below of=0244]{$0244$};
            \draw[EdgeGraph](0000)--(0001);
            \draw[EdgeGraph](0000)--(0010);
            \draw[EdgeGraph](0000)--(0100);
            \draw[EdgeGraph](0001)--(0002);
            \draw[EdgeGraph](0001)--(0101);
            \draw[EdgeGraph](0002)--(0003);
            \draw[EdgeGraph](0002)--(0012);
            \draw[EdgeGraph](0003)--(0004);
            \draw[EdgeGraph](0003)--(0013);
            \draw[EdgeGraph](0003)--(0103);
            \draw[MarkedEdgeGraph](0004)--(0014);
            \draw[MarkedEdgeGraph](0004)--(0104);
            \draw[EdgeGraph](0010)--(0012);
            \draw[EdgeGraph](0010)--(0020);
            \draw[EdgeGraph](0012)--(0013);
            \draw[EdgeGraph](0013)--(0014);
            \draw[EdgeGraph](0013)--(0023);
            \draw[MarkedEdgeGraph](0014)--(0024);
            \draw[EdgeGraph](0020)--(0023);
            \draw[EdgeGraph](0020)--(0030);
            \draw[EdgeGraph](0020)--(0120);
            \draw[EdgeGraph](0023)--(0024);
            \draw[EdgeGraph](0023)--(0123);
            \draw[MarkedEdgeGraph](0024)--(0034);
            \draw[MarkedEdgeGraph](0024)--(0124);
            \draw[EdgeGraph](0030)--(0034);
            \draw[EdgeGraph](0030)--(0040);
            \draw[EdgeGraph](0030)--(0130);
            \draw[EdgeGraph](0034)--(0044);
            \draw[MarkedEdgeGraph](0034)--(0134);
            \draw[EdgeGraph](0040)--(0044);
            \draw[EdgeGraph](0040)--(0140);
            \draw[EdgeGraph](0044)--(0144);
            \draw[EdgeGraph](0100)--(0101);
            \draw[EdgeGraph](0100)--(0120);
            \draw[EdgeGraph](0100)--(0200);
            \draw[EdgeGraph](0101)--(0103);
            \draw[EdgeGraph](0101)--(0201);
            \draw[EdgeGraph](0103)--(0104);
            \draw[EdgeGraph](0103)--(0123);
            \draw[MarkedEdgeGraph](0104)--(0124);
            \draw[MarkedEdgeGraph](0104)--(0204);
            \draw[EdgeGraph](0120)--(0123);
            \draw[EdgeGraph](0120)--(0130);
            \draw[EdgeGraph](0123)--(0124);
            \draw[MarkedEdgeGraph](0124)--(0134);
            \draw[EdgeGraph](0130)--(0134);
            \draw[EdgeGraph](0130)--(0140);
            \draw[EdgeGraph](0130)--(0230);
            \draw[EdgeGraph](0134)--(0144);
            \draw[MarkedEdgeGraph](0134)--(0234);
            \draw[EdgeGraph](0140)--(0144);
            \draw[EdgeGraph](0140)--(0240);
            \draw[EdgeGraph](0144)--(0244);
            \draw[EdgeGraph](0200)--(0201);
            \draw[EdgeGraph](0200)--(0230);
            \draw[EdgeGraph](0201)--(0204);
            \draw[MarkedEdgeGraph](0204)--(0234);
            \draw[EdgeGraph](0230)--(0234);
            \draw[EdgeGraph](0230)--(0240);
            \draw[EdgeGraph](0234)--(0244);
            \draw[EdgeGraph](0240)--(0244);
        \end{tikzpicture}}
        \\
        \; \to \;
        %
        % D^3(Canyon), m = 2, n = 4
        \scalebox{.55}{
        \begin{tikzpicture}[Centering,xscale=.7,yscale=.7,
            x={(0,-.5cm)}, y={(-1.0cm,-1.0cm)}, z={(1.0cm,-1.0cm)}]
            \DrawGridSpace{2}{4}{3}
            \node[NodeGraph](0000)at(0,0,0){};
            \node[NodeGraph](0001)at(0,0,1){};
            \node[NodeGraph](0002)at(0,0,2){};
            \node[MarkedNodeGraph](0003)at(0,0,3){};
            \node[NodeGraph](0010)at(0,1,0){};
            \node[NodeGraph](0012)at(0,1,2){};
            \node[MarkedNodeGraph](0013)at(0,1,3){};
            \node[NodeGraph](0020)at(0,2,0){};
            \node[MarkedNodeGraph](0023)at(0,2,3){};
            \node[NodeGraph](0030)at(0,3,0){};
            \node[NodeGraph](0033)at(0,3,3){};
            \node[NodeGraph](0040)at(0,4,0){};
            \node[NodeGraph](0043)at(0,4,3){};
            \node[NodeGraph](0100)at(1,0,0){};
            \node[NodeGraph](0101)at(1,0,1){};
            \node[MarkedNodeGraph](0103)at(1,0,3){};
            \node[NodeGraph](0120)at(1,2,0){};
            \node[MarkedNodeGraph](0123)at(1,2,3){};
            \node[NodeGraph](0130)at(1,3,0){};
            \node[NodeGraph](0133)at(1,3,3){};
            \node[NodeGraph](0140)at(1,4,0){};
            \node[NodeGraph](0143)at(1,4,3){};
            \node[NodeGraph](0200)at(2,0,0){};
            \node[NodeGraph](0201)at(2,0,1){};
            \node[NodeGraph](0203)at(2,0,3){};
            \node[NodeGraph](0230)at(2,3,0){};
            \node[NodeGraph](0233)at(2,3,3){};
            \node[NodeGraph](0240)at(2,4,0){};
            \node[NodeGraph](0243)at(2,4,3){};
            \node[NodeLabelGraph,above of=0000]{$0000$};
            \node[NodeLabelGraph,right of=0003]{$0003$};
            \node[NodeLabelGraph,left of=0013]{$0013$};
            \node[NodeLabelGraph,left of=0023]{$0023$};
            \node[NodeLabelGraph,right of=0103]{$0103$};
            \node[NodeLabelGraph,below of=0123]{$0123$};
            \node[NodeLabelGraph,below of=0243]{$0243$};
            \draw[EdgeGraph](0000)--(0001);
            \draw[EdgeGraph](0000)--(0010);
            \draw[EdgeGraph](0000)--(0100);
            \draw[EdgeGraph](0001)--(0002);
            \draw[EdgeGraph](0001)--(0101);
            \draw[EdgeGraph](0002)--(0003);
            \draw[EdgeGraph](0002)--(0012);
            \draw[MarkedEdgeGraph](0003)--(0013);
            \draw[MarkedEdgeGraph](0003)--(0103);
            \draw[EdgeGraph](0010)--(0012);
            \draw[EdgeGraph](0010)--(0020);
            \draw[EdgeGraph](0012)--(0013);
            \draw[MarkedEdgeGraph](0013)--(0023);
            \draw[EdgeGraph](0020)--(0023);
            \draw[EdgeGraph](0020)--(0030);
            \draw[EdgeGraph](0020)--(0120);
            \draw[EdgeGraph](0023)--(0033);
            \draw[MarkedEdgeGraph](0023)--(0123);
            \draw[EdgeGraph](0030)--(0033);
            \draw[EdgeGraph](0030)--(0040);
            \draw[EdgeGraph](0030)--(0130);
            \draw[EdgeGraph](0033)--(0043);
            \draw[EdgeGraph](0033)--(0133);
            \draw[EdgeGraph](0040)--(0043);
            \draw[EdgeGraph](0040)--(0140);
            \draw[EdgeGraph](0043)--(0143);
            \draw[EdgeGraph](0100)--(0101);
            \draw[EdgeGraph](0100)--(0120);
            \draw[EdgeGraph](0100)--(0200);
            \draw[EdgeGraph](0101)--(0103);
            \draw[EdgeGraph](0101)--(0201);
            \draw[MarkedEdgeGraph](0103)--(0123);
            \draw[EdgeGraph](0103)--(0203);
            \draw[EdgeGraph](0120)--(0123);
            \draw[EdgeGraph](0120)--(0130);
            \draw[EdgeGraph](0123)--(0133);
            \draw[EdgeGraph](0130)--(0133);
            \draw[EdgeGraph](0130)--(0140);
            \draw[EdgeGraph](0130)--(0230);
            \draw[EdgeGraph](0133)--(0143);
            \draw[EdgeGraph](0133)--(0233);
            \draw[EdgeGraph](0140)--(0143);
            \draw[EdgeGraph](0140)--(0240);
            \draw[EdgeGraph](0143)--(0243);
            \draw[EdgeGraph](0200)--(0201);
            \draw[EdgeGraph](0200)--(0230);
            \draw[EdgeGraph](0201)--(0203);
            \draw[EdgeGraph](0203)--(0233);
            \draw[EdgeGraph](0230)--(0233);
            \draw[EdgeGraph](0230)--(0240);
            \draw[EdgeGraph](0233)--(0243);
            \draw[EdgeGraph](0240)--(0243);
        \end{tikzpicture}}
        \; \to \;
        %
        % D^4(Canyon), m = 2, n = 4
        \scalebox{.55}{
        \begin{tikzpicture}[Centering,xscale=.7,yscale=.7,
            x={(0,-.5cm)}, y={(-1.0cm,-1.0cm)}, z={(1.0cm,-1.0cm)}]
            \DrawGridSpace{2}{4}{2}
            \node[NodeGraph](0000)at(0,0,0){};
            \node[NodeGraph](0001)at(0,0,1){};
            \node[MarkedNodeGraph](0002)at(0,0,2){};
            \node[NodeGraph](0010)at(0,1,0){};
            \node[MarkedNodeGraph](0012)at(0,1,2){};
            \node[NodeGraph](0020)at(0,2,0){};
            \node[NodeGraph](0022)at(0,2,2){};
            \node[NodeGraph](0030)at(0,3,0){};
            \node[NodeGraph](0032)at(0,3,2){};
            \node[NodeGraph](0040)at(0,4,0){};
            \node[NodeGraph](0042)at(0,4,2){};
            \node[NodeGraph](0100)at(1,0,0){};
            \node[NodeGraph](0101)at(1,0,1){};
            \node[NodeGraph](0102)at(1,0,2){};
            \node[NodeGraph](0120)at(1,2,0){};
            \node[NodeGraph](0122)at(1,2,2){};
            \node[NodeGraph](0130)at(1,3,0){};
            \node[NodeGraph](0132)at(1,3,2){};
            \node[NodeGraph](0140)at(1,4,0){};
            \node[NodeGraph](0142)at(1,4,2){};
            \node[NodeGraph](0200)at(2,0,0){};
            \node[NodeGraph](0201)at(2,0,1){};
            \node[NodeGraph](0202)at(2,0,2){};
            \node[NodeGraph](0230)at(2,3,0){};
            \node[NodeGraph](0232)at(2,3,2){};
            \node[NodeGraph](0240)at(2,4,0){};
            \node[NodeGraph](0242)at(2,4,2){};
            \node[NodeLabelGraph,above of=0000]{$0000$};
            \node[NodeLabelGraph,right of=0002]{$0002$};
            \node[NodeLabelGraph,below of=0012]{$0012$};
            \node[NodeLabelGraph,below of=0242]{$0242$};
            \draw[EdgeGraph](0000)--(0001);
            \draw[EdgeGraph](0000)--(0010);
            \draw[EdgeGraph](0000)--(0100);
            \draw[EdgeGraph](0001)--(0002);
            \draw[EdgeGraph](0001)--(0101);
            \draw[MarkedEdgeGraph](0002)--(0012);
            \draw[EdgeGraph](0002)--(0102);
            \draw[EdgeGraph](0010)--(0012);
            \draw[EdgeGraph](0010)--(0020);
            \draw[EdgeGraph](0012)--(0022);
            \draw[EdgeGraph](0020)--(0022);
            \draw[EdgeGraph](0020)--(0030);
            \draw[EdgeGraph](0020)--(0120);
            \draw[EdgeGraph](0022)--(0032);
            \draw[EdgeGraph](0022)--(0122);
            \draw[EdgeGraph](0030)--(0032);
            \draw[EdgeGraph](0030)--(0040);
            \draw[EdgeGraph](0030)--(0130);
            \draw[EdgeGraph](0032)--(0042);
            \draw[EdgeGraph](0032)--(0132);
            \draw[EdgeGraph](0040)--(0042);
            \draw[EdgeGraph](0040)--(0140);
            \draw[EdgeGraph](0042)--(0142);
            \draw[EdgeGraph](0100)--(0101);
            \draw[EdgeGraph](0100)--(0120);
            \draw[EdgeGraph](0100)--(0200);
            \draw[EdgeGraph](0101)--(0102);
            \draw[EdgeGraph](0101)--(0201);
            \draw[EdgeGraph](0102)--(0122);
            \draw[EdgeGraph](0102)--(0202);
            \draw[EdgeGraph](0120)--(0122);
            \draw[EdgeGraph](0120)--(0130);
            \draw[EdgeGraph](0122)--(0132);
            \draw[EdgeGraph](0130)--(0132);
            \draw[EdgeGraph](0130)--(0140);
            \draw[EdgeGraph](0130)--(0230);
            \draw[EdgeGraph](0132)--(0142);
            \draw[EdgeGraph](0132)--(0232);
            \draw[EdgeGraph](0140)--(0142);
            \draw[EdgeGraph](0140)--(0240);
            \draw[EdgeGraph](0142)--(0242);
            \draw[EdgeGraph](0200)--(0201);
            \draw[EdgeGraph](0200)--(0230);
            \draw[EdgeGraph](0201)--(0202);
            \draw[EdgeGraph](0202)--(0232);
            \draw[EdgeGraph](0230)--(0232);
            \draw[EdgeGraph](0230)--(0240);
            \draw[EdgeGraph](0232)--(0242);
            \draw[EdgeGraph](0240)--(0242);
        \end{tikzpicture}}
        \; \to \;
        %
        % D^5(Canyon), m = 2, n = 4
        \scalebox{.55}{
        \begin{tikzpicture}[Centering,xscale=.7,yscale=.7,
            x={(0,-.5cm)}, y={(-1.0cm,-1.0cm)}, z={(1.0cm,-1.0cm)}]
            \DrawGridSpace{2}{4}{1}
            \node[NodeGraph](0000)at(0,0,0){};
            \node[MarkedNodeGraph](0001)at(0,0,1){};
            \node[NodeGraph](0010)at(0,1,0){};
            \node[NodeGraph](0011)at(0,1,1){};
            \node[NodeGraph](0020)at(0,2,0){};
            \node[NodeGraph](0021)at(0,2,1){};
            \node[NodeGraph](0030)at(0,3,0){};
            \node[NodeGraph](0031)at(0,3,1){};
            \node[NodeGraph](0040)at(0,4,0){};
            \node[NodeGraph](0041)at(0,4,1){};
            \node[NodeGraph](0100)at(1,0,0){};
            \node[MarkedNodeGraph](0101)at(1,0,1){};
            \node[NodeGraph](0120)at(1,2,0){};
            \node[NodeGraph](0121)at(1,2,1){};
            \node[NodeGraph](0130)at(1,3,0){};
            \node[NodeGraph](0131)at(1,3,1){};
            \node[NodeGraph](0140)at(1,4,0){};
            \node[NodeGraph](0141)at(1,4,1){};
            \node[NodeGraph](0200)at(2,0,0){};
            \node[MarkedNodeGraph](0201)at(2,0,1){};
            \node[NodeGraph](0230)at(2,3,0){};
            \node[NodeGraph](0231)at(2,3,1){};
            \node[NodeGraph](0240)at(2,4,0){};
            \node[NodeGraph](0241)at(2,4,1){};
            \node[NodeLabelGraph,above of=0000]{$0000$};
            \node[NodeLabelGraph,right of=0001]{$0001$};
            \node[NodeLabelGraph,right of=0101]{$0101$};
            \node[NodeLabelGraph,right of=0201]{$0201$};
            \node[NodeLabelGraph,below of=0241]{$0241$};
            \draw[EdgeGraph](0000)--(0001);
            \draw[EdgeGraph](0000)--(0010);
            \draw[EdgeGraph](0000)--(0100);
            \draw[EdgeGraph](0001)--(0011);
            \draw[MarkedEdgeGraph](0001)--(0101);
            \draw[EdgeGraph](0010)--(0011);
            \draw[EdgeGraph](0010)--(0020);
            \draw[EdgeGraph](0011)--(0021);
            \draw[EdgeGraph](0020)--(0021);
            \draw[EdgeGraph](0020)--(0030);
            \draw[EdgeGraph](0020)--(0120);
            \draw[EdgeGraph](0021)--(0031);
            \draw[EdgeGraph](0021)--(0121);
            \draw[EdgeGraph](0030)--(0031);
            \draw[EdgeGraph](0030)--(0040);
            \draw[EdgeGraph](0030)--(0130);
            \draw[EdgeGraph](0031)--(0041);
            \draw[EdgeGraph](0031)--(0131);
            \draw[EdgeGraph](0040)--(0041);
            \draw[EdgeGraph](0040)--(0140);
            \draw[EdgeGraph](0041)--(0141);
            \draw[EdgeGraph](0100)--(0101);
            \draw[EdgeGraph](0100)--(0120);
            \draw[EdgeGraph](0100)--(0200);
            \draw[EdgeGraph](0101)--(0121);
            \draw[MarkedEdgeGraph](0101)--(0201);
            \draw[EdgeGraph](0120)--(0121);
            \draw[EdgeGraph](0120)--(0130);
            \draw[EdgeGraph](0121)--(0131);
            \draw[EdgeGraph](0130)--(0131);
            \draw[EdgeGraph](0130)--(0140);
            \draw[EdgeGraph](0130)--(0230);
            \draw[EdgeGraph](0131)--(0141);
            \draw[EdgeGraph](0131)--(0231);
            \draw[EdgeGraph](0140)--(0141);
            \draw[EdgeGraph](0140)--(0240);
            \draw[EdgeGraph](0141)--(0241);
            \draw[EdgeGraph](0200)--(0201);
            \draw[EdgeGraph](0200)--(0230);
            \draw[EdgeGraph](0201)--(0231);
            \draw[EdgeGraph](0230)--(0231);
            \draw[EdgeGraph](0230)--(0240);
            \draw[EdgeGraph](0231)--(0241);
            \draw[EdgeGraph](0240)--(0241);
        \end{tikzpicture}}
        \; \to \;
        %
        % D^6(Canyon), m = 2, n = 4
        \scalebox{.55}{
        \begin{tikzpicture}[Centering,xscale=.7,yscale=.7,
            x={(0,-.5cm)}, y={(-1.0cm,-1.0cm)}, z={(1.0cm,-1.0cm)}]
            \DrawGridSpace{2}{4}{0}
            \node[MarkedNodeGraph](0000)at(0,0,0){};
            \node[MarkedNodeGraph](0010)at(0,1,0){};
            \node[MarkedNodeGraph](0020)at(0,2,0){};
            \node[MarkedNodeGraph](0030)at(0,3,0){};
            \node[MarkedNodeGraph](0040)at(0,4,0){};
            \node[MarkedNodeGraph](0100)at(1,0,0){};
            \node[MarkedNodeGraph](0120)at(1,2,0){};
            \node[MarkedNodeGraph](0130)at(1,3,0){};
            \node[MarkedNodeGraph](0140)at(1,4,0){};
            \node[MarkedNodeGraph](0200)at(2,0,0){};
            \node[MarkedNodeGraph](0230)at(2,3,0){};
            \node[MarkedNodeGraph](0240)at(2,4,0){};
            \node[NodeLabelGraph,above of=0000]{$0000$};
            \node[NodeLabelGraph,left of=0010]{$0010$};
            \node[NodeLabelGraph,left of=0020]{$0020$};
            \node[NodeLabelGraph,left of=0030]{$0030$};
            \node[NodeLabelGraph,left of=0040]{$0040$};
            \node[NodeLabelGraph,right of=0100]{$0100$};
            \node[NodeLabelGraph,left of=0120]{$0120$};
            \node[NodeLabelGraph,right of=0130]{$0130$};
            \node[NodeLabelGraph,left of=0140]{$0140$};
            \node[NodeLabelGraph,right of=0200]{$0200$};
            \node[NodeLabelGraph,below of=0230]{$0230$};
            \node[NodeLabelGraph,below of=0240]{$0240$};
            \draw[MarkedEdgeGraph](0000)--(0010);
            \draw[MarkedEdgeGraph](0000)--(0100);
            \draw[MarkedEdgeGraph](0010)--(0020);
            \draw[MarkedEdgeGraph](0020)--(0030);
            \draw[MarkedEdgeGraph](0020)--(0120);
            \draw[MarkedEdgeGraph](0030)--(0040);
            \draw[MarkedEdgeGraph](0030)--(0130);
            \draw[MarkedEdgeGraph](0040)--(0140);
            \draw[MarkedEdgeGraph](0100)--(0120);
            \draw[MarkedEdgeGraph](0100)--(0200);
            \draw[MarkedEdgeGraph](0120)--(0130);
            \draw[MarkedEdgeGraph](0130)--(0140);
            \draw[MarkedEdgeGraph](0130)--(0230);
            \draw[MarkedEdgeGraph](0140)--(0240);
            \draw[MarkedEdgeGraph](0200)--(0230);
            \draw[MarkedEdgeGraph](0230)--(0240);
        \end{tikzpicture}}
    \end{multline*}
    \caption{\footnotesize A sequence of interval contractions from $\SetCanyon_{\mathbf
    2}(4)$ to a poset isomorphic to $\SetCanyon_\MapTwo(3)$. These interval contractions are
    poset derivations as introduced in Section~\ref{subsubsec:interval_doubling}. The marked
    intervals are the ones involved in the interval doubling operations.}
    \label{fig:contractions_canyon_2_4}
\end{figure}
\medbreak

\begin{Proposition} \label{prop:wings_butterflies_canyon}
    For any $m \geq 0$,
    \begin{enumerate}[label={\it (\roman*)}]
        \item \label{item:wings_butterflies_canyon_1}
        the graded set $\InputWings\Par{\SetCanyon_\MapM}$ contains all the $\MapM$-cliffs
        $u$ satisfying $u_i < u_{i + 1}$ for all $i \in [|u| - 1]$;

        \item \label{item:wings_butterflies_canyon_2}
        the graded set $\OutputWings\Par{\SetCanyon_\MapM}$ contains all the $\MapM$-cliffs
        $u$ satisfying, for all $i \in [2, |u|]$, $u_i < \MapM(i)$, and, for all $i \in
        [|u|]$, if $u_i \ne 0$, then for all $j \in [i - 2]$, $u_{i - j} < u_i - j$;

        \item \label{item:wings_butterflies_canyon_3}
        the graded set $\Butterflies\Par{\SetCanyon_\MapM}$ contains all the $\MapM$-cliffs
        $u$ satisfying $1 \leq u_i < {\mathbf m}(i)$ for all $i \in [2, |u|]$, and $u_i -
        u_{i - 1} \geq 2$ for all $i \in [3, |u|]$.
    \end{enumerate}
\end{Proposition}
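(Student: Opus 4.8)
The plan is to obtain the three descriptions from a single reduction, then intersect them for part~\ref{item:wings_butterflies_canyon_3}. The case $m = 0$ is immediate (then $\SetCanyon_{\mathbf 0}(n) = \Bra{0^n}$), so assume $m \geq 1$; note that $\DimensionDelta_n(\MapM) = \# \Bra{i \in [n] : \MapM(i) \neq 0} = n - 1$ for $n \geq 1$. First I would invoke Proposition~\ref{prop:properties_canyon_posets}: $\SetCanyon_\MapM(n)$ is straight and coated, so the bijective argument behind the proof of Proposition~\ref{prop:join_meet_irreducible_elements_subfamilies} shows that a $\MapM$-canyon $u$ covers (resp.\ is covered by) as many elements as there are indices $i \in [|u|]$ with $\DecreaseLetter_i^k(u) \in \SetCanyon_\MapM(|u|)$ (resp.\ $\IncreaseLetter_i^k(u) \in \SetCanyon_\MapM(|u|)$) for some $k \geq 1$, the least such $k$ realizing the cover. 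As $u_1 = 0 = \MapM(1)$, index $1$ is inert, so $u$ is an input-wing (resp.\ output-wing) exactly when every $i \in [2, |u|]$ is \emph{decreasable} (resp.\ \emph{increasable}).

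For part~\ref{item:wings_butterflies_canyon_1} I would prove that $i \in [2, |u|]$ is decreasable if and only if $u_i \geq 1$: when $u_i \geq 1$, lowering the $i$-th letter of $u$ all the way to $0$ keeps $u$ a $\MapM$-canyon, since the defining inequalities in which $i$ is the upper index become vacuous and those in which it is a lower index are only relaxed; if $u_i = 0$ there is nothing to lower. Hence $u$ is an input-wing if and only if $u_i \geq 1$ for all $i \in [2, |u|]$. Then I would note that in a $\MapM$-canyon the defining inequality for $j = 1$ gives $u_i \leq u_{i+1} - 1$ whenever $u_{i+1} \geq 1$, while any strictly increasing $\MapM$-cliff is automatically a $\MapM$-canyon (Section~\ref{subsubsec:canyon_objects}); so ``$u_i \geq 1$ for all $i \in [2, |u|]$'' is the same condition as ``$u_1 < \dots < u_{|u|}$'', which is the stated description.

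For part~\ref{item:wings_butterflies_canyon_2} I would analyze increasability through the graphical picture of Section~\ref{subsubsec:canyon_objects}. Put $B_i := \min \Bra{u_\ell - (\ell - i) : \ell > i,\ \ell - i \leq u_\ell}$, and $B_i := \MapM(i)$ if this set is empty. The key claim is that, provided $u_i < \MapM(i)$ and $u_i < B_i$, raising the height of segment $i$ to $v := \min(B_i, \MapM(i))$ again gives a $\MapM$-canyon: at height $\MapM(i) = m(i-1)$ the diagonal issued from the top of segment $i$ starts at or to the left of the origin, hence runs above segments $1, \dots, i-1$; at height $B_i = u_\ell - (\ell - i)$ that diagonal coincides with the left portion of the diagonal of segment $\ell$, which misses segments $1, \dots, i-1$ because $u$ is a canyon; and no diagonal of a later segment meets segment $i$ below height $B_i$. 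Conversely, if $B_i = u_i$ (some later index is tight against $i$) the $i$-th letter cannot be increased at all, and if $\MapM(i) = u_i$ there is no room. Thus $i$ is increasable if and only if $u_i < \MapM(i)$ and no $\ell > i$ with $\ell - i \leq u_\ell$ satisfies $u_\ell - (\ell - i) = u_i$. The remaining---and, I expect, most delicate---step is a purely combinatorial one: to check that, for a $\MapM$-canyon, imposing this for all $i \in [2, |u|]$ is equivalent to the stated conjunction, namely $u_i < \MapM(i)$ for $i \in [2, |u|]$ together with $u_{i-j} < u_i - j$ for all $i$ with $u_i \neq 0$ and all $j \in [i-2]$; here one uses that in a canyon a nonzero letter $u_\ell$ with $\ell - i > u_\ell$ for some $i \geq 2$ cannot occur without an even closer tight pair, so the ``no later tight index'' condition forces the full strict-gap inequalities.

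Finally, for part~\ref{item:wings_butterflies_canyon_3}, intersecting parts~\ref{item:wings_butterflies_canyon_1} and~\ref{item:wings_butterflies_canyon_2} describes $\Butterflies\Par{\SetCanyon_\MapM}$ as the set of strictly increasing $\MapM$-cliffs that also satisfy $u_i < \MapM(i)$ for $i \in [2, |u|]$ and the strict-gap condition. For a strictly increasing word the strict-gap condition at $j = 1$ reads $u_i - u_{i-1} \geq 2$ for $i \in [3, |u|]$, and strict monotonicity makes this family of inequalities imply all the others (it yields $u_i - u_{i-j} \geq 2j > j$ for $j \in [i-2]$); combined with $u_i \geq 1$ for $i \in [2, |u|]$, which is automatic for a strictly increasing canyon, this is exactly the stated description ``$1 \leq u_i < \MapM(i)$ for $i \in [2, |u|]$ and $u_i - u_{i-1} \geq 2$ for $i \in [3, |u|]$''.
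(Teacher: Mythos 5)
The paper offers no proof of this proposition: it is among those declared straightforward and omitted at the start of Section~\ref{sec:fuss_catalan_posets}, so there is no official argument to compare yours against. Your proof is correct. The reduction via straightness to ``every index $i \in [2,|u|]$ is decreasable (resp.\ increasable)'' is the right starting point, part~\ref{item:wings_butterflies_canyon_1} is complete, and the real content is your analysis of increasability in part~\ref{item:wings_butterflies_canyon_2}: the bound $B_i$, together with the check that the new inequalities created by raising $u_i$ hold at both candidate heights $\MapM(i)$ and $B_i$ (and hence at their minimum --- which matters, since the set of admissible heights is not an interval), is exactly the needed point. The step you flag as delicate closes in one line: your characterization imposes $u_{i-j} < u_i - j$ only for $j \leq \min\Bra{u_i, i-2}$, but if $1 \leq u_i \leq i-2$ then $j = u_i$ already forces $u_{i-u_i} < 0$, which is impossible, while on your side the index $\ell - u_\ell \geq 2$ is then tight; so both conditions exclude precisely the canyons having a letter with $1 \leq u_i \leq i-2$ and coincide otherwise. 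Two minor points. For the claimed equality of sets one should also note that a cliff satisfying description~\ref{item:wings_butterflies_canyon_2} is automatically a canyon (the strict inequalities give the weak ones when $i - j \geq 2$, and the case $i - j = 1$ is only required when $u_i \geq i-1$, where it reads $0 \leq u_i - (i-1)$). And for $m = 0$ the word $0^n$ covers $0 = \DimensionDelta_n(\mathbf{0})$ elements and is therefore an input-wing without being strictly increasing, so the statement is an equality only for $m \geq 1$; your choice to treat $m = 0$ separately as a degenerate case is the right reading.
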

\medbreak

Remark that, from the definition of $\MapM$-canyons and the description of
$\InputWings\Par{\SetCanyon_\MapM}$ brought by
Proposition~\ref{prop:wings_butterflies_canyon}, for any $u \in
\InputWings\Par{\SetCanyon_\MapM}$, all $\MapM$-canyons $v$ such that $u \Leq v$ are also
input-wings of $\SetCanyon_\MapM$. For this reason, for any $n \geq 0$,
$\InputWings\Par{\SetCanyon_\MapM}(n)$ is an order filter of $\SetCanyon_\MapM(n)$.
\medbreak

\begin{Proposition} \label{prop:irreducibles_canyons}
    For any $m \geq 1$ and $n \geq 1$, the set $\JoinIrreducibles\Par{\SetCanyon_\MapM(n)}$
    contains all $\MapM$-canyons having exactly one letter different from~$0$.
\end{Proposition}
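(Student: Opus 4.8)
My plan is to deduce this from Proposition~\ref{prop:join_meet_irreducible_elements_subfamilies}. Since $m \geq 1$, the range map $\MapM$ is increasing, so Proposition~\ref{prop:properties_canyon_posets} tells us that $\SetCanyon_\MapM(n)$ is a lattice and that $\SetCanyon_\MapM$ is straight (Point~\ref{item:properties_canyon_posets_1}); hence the characterization of join-irreducibles in straight subposets applies with $\SubFamilly := \SetCanyon_\MapM$, and it suffices to show that any $\MapM$-canyon $u$ of size $n$ with exactly one nonzero letter covers a single element of $\SetCanyon_\MapM(n)$.

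Write $u_i = a \geq 1$ with $u_p = 0$ for all $p \in [n] \setminus \{i\}$. Note that the existence of such a $u$ forces $i \geq 2$ and $1 \leq a \leq \MapM(i) = m(i-1)$, so in particular $n \geq 2$ (for $n = 1$ we have $\SetCanyon_\MapM(1) = \{0\}$ and the statement is vacuous). First I would observe that for every index $p \neq i$ and every $k \geq 1$, the word $\DecreaseLetter_p^k(u)$ has the negative letter $-k$ in position $p$, so it is not an $\MapM$-cliff, a fortiori not an $\MapM$-canyon. Next, $\DecreaseLetter_i^a(u) = 0^n = \LeastElement_\MapM(n)$, which is an $\MapM$-canyon. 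Consequently $i$ is the unique index $i' \in [n]$ for which $\DecreaseLetter_{i'}^{k}(u) \in \SetCanyon_\MapM(n)$ for some $k \geq 1$, and Proposition~\ref{prop:join_meet_irreducible_elements_subfamilies} yields that $u$ is a join-irreducible element of $\SetCanyon_\MapM(n)$.

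As an independent check (and an alternative route not using Proposition~\ref{prop:join_meet_irreducible_elements_subfamilies}), one can describe the principal order ideal of $u$ directly: since $\Leq$ is the componentwise order and $u_p = 0$ for $p \neq i$, every $w \Leq u$ has the form $w = 0^{i-1}\, b\, 0^{n-i}$ with $0 \leq b \leq a$, and each such word is an $\MapM$-canyon — it is an $\MapM$-cliff because $b \leq a \leq \MapM(i)$, and its only nontrivial canyon inequalities, $w_{i-j} = 0 \leq b - j$, hold precisely for $j \in \Han{b}$ with $i - j \geq 1$, while the inequalities attached to the other (zero) positions are vacuous. Hence this ideal is the chain $0^n \Covered 0^{i-1}\, 1\, 0^{n-i} \Covered \dots \Covered u$, so $u$ covers exactly the element $\DecreaseLetter_i(u)$ and is therefore join-irreducible.

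I do not expect a genuine obstacle here: the whole argument reduces to the elementary fact that decrementing the unique nonzero letter of such a $u$ keeps it a $\MapM$-canyon, together with the trivial remark that no other letter of $u$ can be decremented. The only point requiring a little care is the matching of the range $j \in \Han{b}$ with the inequality $0 \leq b - j$ when verifying the canyon condition for $w = 0^{i-1}\, b\, 0^{n-i}$, which is immediate.
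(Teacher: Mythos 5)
Your proof is correct. The paper in fact omits the proof of this proposition (it announces at the start of Section~\ref{sec:fuss_catalan_posets} that the descriptions of join-irreducible elements are left as straightforward verifications), so there is nothing to compare against; your first route, via straightness of $\SetCanyon_\MapM$ (Proposition~\ref{prop:properties_canyon_posets}) and the criterion of Proposition~\ref{prop:join_meet_irreducible_elements_subfamilies}, is exactly the argument the paper's machinery is designed to supply, and your direct verification that the principal order ideal of $u = 0^{i-1} a\, 0^{n-i}$ in $\SetCanyon_\MapM(n)$ is the chain $0^n \Covered \dots \Covered u$ is a valid self-contained alternative.
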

\medbreak

By Proposition~\ref{prop:irreducibles_canyons}, the number of join-irreducibles elements
of $\SetCanyon_\MapM(n)$ satisfies, for any $m \geq 1$ and $n \geq 1$,
\begin{math}
    \# \JoinIrreducibles\Par{\SetCanyon_\MapM(n)}
    = m \binom{n}{2}.
\end{math}
Since by Proposition~\ref{prop:properties_canyon_posets}, $\SetCanyon_\MapM(n)$ is
constructible by interval doubling, this is also the number of its meet-irreducible
elements~\cite{GW16}.
\medbreak

%%%%%%%%%%%%%%%%%%%%%%%%%%%%%%%%%%%%%%%%%%%%%%%%%%%%%%%%%%%%%%%%%%%%%%%%%%%%%%%%%%%%%%%%%%%%
\subsubsection{Cubic realization}
Let $m \geq 1$ and $n \geq 0$. For any output-wing $u$ of $\SetCanyon_\MapM(n)$, we define
$\rho(u)$ as the $\MapM$-canyon $\IncrMap_{\SetCanyon_m}\Par{u'}$, where $u'$ is the
$\MapM$-cliff obtained by incrementing by $1$ all letters of $u$ except the first one.  For
instance, the output-wing $01007$ of $\SetCanyon_\MapTwo(5)$ is sent by $\rho$ to the
$\MapTwo$-canyon $\IncrMap_{\SetCanyon_2}(02118) = 02348$. We call $\rho(u)$ the
\Def{left-to-right increasing} of~$u$.  This map is not a poset embedding because, for
$\MapM := 2$ and $n := 3$, $\rho(010) = 023 \Leq 013 = \rho(002)$ but $010 \cancel{\Leq}
002$.
\medbreak

\begin{Proposition} \label{prop:cells_canyon}
    For any $m \geq 1$, $n \geq 0$, and $u \in \OutputWings\Par{\SetCanyon_\MapM}(n)$,
    \begin{enumerate}[label={\it (\roman*)}]
        \item \label{item:cells_canyon_1}
        the map $\rho$ is a poset morphism and a bijection between
        $\OutputWings\Par{\SetCanyon_\MapM}(n)$ and $\InputWings\Par{\SetCanyon_\MapM}(n)$;

        \item \label{item:cells_canyon_2}
        the $\MapM$-canyon $u$ is cell-compatible with the $\MapM$-canyon $\rho(u)$;

        \item \label{item:cells_canyon_3}
        the cell $\Angle{u, \rho(u)}$ is pure;

        \item \label{item:cells_canyon_4}
        all cells of
        \begin{math}
            \Bra{\Angle{u, \rho(u)} : u \in \OutputWings\Par{\SetCanyon_\MapM}(n)}
        \end{math}
        are pairwise disjoint.
    \end{enumerate}
\end{Proposition}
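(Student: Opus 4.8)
The whole proof hinges on a closed formula for $\rho$. First I would show, by induction on the position $i$, that for any $u \in \OutputWings\Par{\SetCanyon_\MapM}(n)$ the $\MapM$-canyon $v := \rho(u)$ satisfies $v_1 = 0$, is strictly increasing, and obeys
\[
    v_i = \max\Bra{u_i + 1, \, v_{i - 1} + 1}
    \qquad \text{for } i \in [2, n].
\]
Indeed, by the definition of $\IncrMap_{\SetCanyon_m}$ and since $\SetCanyon_m$ is closed by prefix (Proposition~\ref{prop:properties_canyon_objects}), $v_i$ is the least integer $b \geq u_i + 1$ such that $v_1 \dots v_{i - 1} b$ is a $\MapM$-canyon; as the prefix $v_1 < \dots < v_{i - 1}$ is strictly increasing, appending $b$ keeps it a canyon exactly when $b = 0$ or $b \geq v_{i - 1} + 1$, whereas $1 \leq u_i + 1 \leq v_{i - 1}$ would make index $i$ hinded by index $i - 1$. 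The range bound $v_i \leq \MapM(i)$ follows from $u_i \leq \MapM(i) - 1$ (description of output-wings, Proposition~\ref{prop:wings_butterflies_canyon}) and $v_{i - 1} + 1 \leq \MapM(i - 1) + 1 \leq \MapM(i)$ since $m \geq 1$; strict monotonicity of $v$ and the membership $v \in \InputWings\Par{\SetCanyon_\MapM}(n)$ are then immediate from Proposition~\ref{prop:wings_butterflies_canyon}.

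Next I would record two structural facts about an output-wing $u$. (L1): if $u_i \neq 0$ then $u_i \geq i - 1$, which is immediate from Proposition~\ref{prop:wings_butterflies_canyon} on taking $j = i - 2$ together with $u_2 \geq 0$. (L2): if $u_i \neq 0$ then $\rho(u)_k \leq u_i - (i - k)$ for all $k \in [i - 1]$, proved by induction on $k$ using the closed formula, the output-wing inequalities $u_k \leq u_i - (i - k) - 1$ for $k \in [2, i - 1]$, and (L1) for the base $k = 1$. Feeding (L2) with $k = i - 1$ back into the closed formula yields the refinement $\rho(u)_i = u_i + 1$ when $u_i \neq 0$, and $\rho(u)_i = \rho(u)_{i - 1} + 1$ when $u_i = 0$.

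With these in hand the four items are short. For (i): monotonicity of $\rho$ is immediate from the closed formula ($\max$ is monotone), so $\rho$ is a poset morphism; and the refinement lets one define the inverse $\sigma$ on a strictly increasing $v = \rho(u)$ by $\sigma(v)_1 := 0$, and $\sigma(v)_i := v_i - 1$ if $v_i > v_{i - 1} + 1$, $\sigma(v)_i := 0$ if $v_i = v_{i - 1} + 1$ — a quick check shows $\sigma(v)$ is an output-wing and $\sigma$, $\rho$ are mutually inverse, so $\rho$ is a bijection onto $\InputWings\Par{\SetCanyon_\MapM}(n)$. For (ii): $u \Leq \rho(u)$ is clear; for cell-compatibility, if in some word $w$ with $w_i \in \Bra{u_i, \rho(u)_i}$ an index $j$ were hinded by $i < j$, the pair $(w_i, w_j)$ lies in one of four cases: $(u_i, u_j)$ contradicts that $u$ is a canyon, $(\rho(u)_i, \rho(u)_j)$ contradicts that $\rho(u)$ is a canyon, and the mixed cases are ruled out with the refinement, (L1) and (L2). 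For (iii): since $\rho(u)_2 - u_2 = 1$ when $n \geq 2$, every $w \in [u, \rho(u)]$ has $w_2 \in \Bra{u_2, \rho(u)_2}$, so $\Angle{u, \rho(u)}$ is pure (the cases $n \leq 1$ being trivial). For (iv): if a point $x$ is inside both $\Angle{u, \rho(u)}$ and $\Angle{u', \rho(u')}$, then $u = u'$ by induction on $i$: from the length-$(i - 1)$ prefixes of $u$ and $u'$ agreeing one gets $\rho(u)_{i - 1} = \rho(u')_{i - 1}$, and the constraints $u_i < x_i < \rho(u)_i$ and $u'_i < x_i < \rho(u')_i$ then force $u_i = u'_i$, the only delicate case (one of $u_i, u'_i$ zero, the other not) being excluded by (L2).

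The main obstacle is the first step: pinning down the closed formula for $\rho$, i.e. correctly analysing which appended letters keep a strictly increasing prefix a $\MapM$-canyon under the incrementation map. Everything else is routine bookkeeping once that formula and the two lemmas are available.
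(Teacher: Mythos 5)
Your proof is correct, and it follows the same overall strategy as the paper's: make $\rho$ explicit, exhibit its inverse, and run the case analysis for compatibility, purity, and disjointness. The organizational difference is that you front-load a closed recursive formula $\rho(u)_i = \max\Bra{u_i + 1, \rho(u)_{i-1} + 1}$ together with the two inequalities (L1) and (L2), and deduce the refinement $\rho(u)_i = u_i + 1$ when $u_i \neq 0$ (and $= \rho(u)_{i-1}+1$ otherwise); the paper instead works directly with the inverse map $\rho'$ given by the formula $\rho'(v)_i = \IndicatorFunction_{i \ne 1}\IndicatorFunction_{v_{i-1} \leq v_i - 2}\Par{v_i - 1}$, which encodes exactly your refinement. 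Your route buys a few things the paper leaves implicit: the poset-morphism half of item {\it (i)} falls out of the monotonicity of $\max$ (the paper's proof only establishes the bijection), and your purity argument for item {\it (iii)} — the gap at position $2$ is exactly $1$, so no integer point fits strictly inside — is sharper than the paper's appeal to minimality of the incrementation. The one place where your sketch compresses a genuine verification is the claim that $\sigma(v)$ is an output-wing: the strict inequality $\sigma(v)_{i-j} < \sigma(v)_i - j$ needs the observation that $v_i > v_{i-1}+1$ together with strict increase forces $v_i \geq v_{i-j} + j + 1$; this does go through, but it is slightly more than "a quick check" and should be written out.
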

\begin{proof}
    Let us first prove that $\rho$ is a well-defined map.  By
    Proposition~\ref{prop:wings_butterflies_canyon}, since for all $i \in [2, n]$, $u_i <
    \MapM(i)$, the word $u'$ obtained by incrementing by $1$ all its letters except the
    first one is an $\MapM$-cliff. Moreover, since by
    Proposition~\ref{prop:properties_canyon_objects}, $\SetCanyon_\MapM$ is maximally
    extendable, $v := \IncrMap_{\SetCanyon_\MapM}\Par{u'}$ is a well-defined $\MapM$-canyon.
    Since by construction, for all $i \in [2, n]$, $v_i \ne 0$, each word obtained by
    replacing by $0$ a letter $v_i$ in $v$ is an $\MapM$-canyon. Therefore, $v$ covers $n -
    1$ elements of $\SetCanyon_\MapM(n)$. These elements are obtained by decreasing $v_i$ by
    some value, due to the fact that by Proposition~\ref{prop:properties_canyon_posets},
    $\SetCanyon_\MapM$ is straight.  For this reason, $v$ is an input-wing, showing that
    $\rho$ is a well-defined map from $\OutputWings\Par{\SetCanyon_\MapM}(n)$ to
    $\InputWings\Par{\SetCanyon_\MapM}(n)$.  Let us now define the map $\rho' :
    \InputWings\Par{\SetCanyon_\MapM}(n) \to \OutputWings\Par{\SetCanyon_\MapM}(n)$ as
    follows. For any $v \in \InputWings\Par{\SetCanyon_\MapM}(n)$, $u := \rho'(v)$ is the
    $\MapM$-cliff satisfying
    \begin{math}
        u_i = \IndicatorFunction_{i \ne 1} \IndicatorFunction_{u_{i - 1} \leq u_i - 2}
        \Par{u_i - 1}
    \end{math}
    for any $i \in [n]$.  It is straightforward to prove that $\rho'$ is a well-defined map.
    Moreover, by induction on $n \geq 0$, one can prove that both $\rho \circ \rho'$ and
    $\rho' \circ \rho$ are identity maps.  This establishes~\ref{item:cells_canyon_1}.
    \smallbreak

    Let $v$ be an $\MapM$-cliff satisfying $v_i \in \Bra{u_i, \rho(u)_i}$ for any $i \in
    [n]$. Since $\rho'$ is the inverse map of $\rho$, this is equivalent to the fact that
    $v_i \in \Bra{\rho'(w)_i, w_i}$ for all $i \in [n]$, where $w$ is the input-wing
    $\rho(u)$ of $\SetCanyon_\MapM(n)$.  Therefore, by definition of $\rho'$, $v_1 = 0$ and
    $v_i \in \HanL{w_i - 1}$ for any $i \in [2, n]$. The fact that $w$ is an input-wing
    implies, by Proposition~\ref{prop:wings_butterflies_canyon}, that $u_i < u_{i + 1}$ for
    all $i \in [n - 1]$. This implies that $v$ is an $\MapM$-canyon, so
    that~\ref{item:cells_canyon_2} checks out.
    \smallbreak

    Point~\ref{item:cells_canyon_3} follows directly from the definition of $\rho$: since
    $\rho(u)$ is obtained by incrementing all the letters of $u$, except the first, in a
    minimal way so that the obtained $\MapM$-cliff is an $\MapM$-canyon, there cannot be any
    $\MapM$-canyon inside the cell $\Angle{u, \rho(u)}$.
    \smallbreak

    Finally, assume that there are two input-wings $v$ and $w$ of $\SetCanyon_\MapM(n)$ such
    that there is a point $x := \Par{x_1, \dots, x_n} \in \R^n$ such that $x$ is inside both
    the cells $\Angle{\rho'(v), v}$ and $\Angle{\rho'(w), w}$. By contradiction, let us
    assume that $v \ne w$ and let us set $i \in [2, n]$ as the smallest position such that
    $v_i \ne w_i$. Therefore, we have in particular
    \begin{equation} \label{equ:cells_canyon_4}
        \rho'(v)_i < x_i < v_i
        \enspace \mbox{ and } \enspace
        \rho'(w) < x_i < w_i.
    \end{equation}
    Without loss of generality, we assume that $v_i < w_i$. Now, if $v_i - 2 \geq v_{i -
    1}$, then $\rho'(v)_i = v_i - 1$ and $\rho'(w)_i = w_i - 1$.  It follows
    from~\eqref{equ:cells_canyon_4} that $v_i = w_i$.  Otherwise, when $v_i - 2 < v_{i -
    1}$, we have $\rho'(v)_i = 0$ and $\rho'(w)_i = w_i - 1$.  It follows again,
    from~\eqref{equ:cells_canyon_4}, that $v_i = w_i$.  This contradicts our hypothesis and
    shows that $v = w$. Therefore, \ref{item:cells_canyon_4} holds.
\end{proof}
\medbreak

This algorithm $\rho$ brought by Proposition~\ref{prop:cells_canyon} describes the cells of
maximal dimension of the cubic realization of $\SetCanyon_\MapM(n)$.  The definition of
$\rho$ is inspired by an analogous algorithm introduced by the first author in~\cite{Com19}
to describe the cells of a geometric realization of the lattices of Tamari intervals.
Figure~\ref{fig:output_to_input_wings_bijection_canyon} shows some examples of images of
output-wings of $\SetCanyon_\MapM(n)$ by~$\rho$.
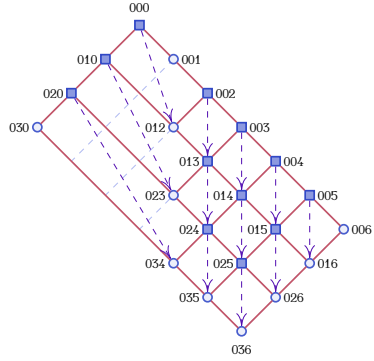
\begin{figure}[ht]
    \centering
    \scalebox{.8}{
    \begin{tikzpicture}[Centering,xscale=.8,yscale=.8,rotate=-135]
        \draw[Grid](0,0)grid(3,6);
        \node[MarkedNodeGraph](000)at(0,0){};
        \node[NodeGraph](001)at(0,1){};
        \node[MarkedNodeGraph](002)at(0,2){};
        \node[MarkedNodeGraph](003)at(0,3){};
        \node[MarkedNodeGraph](004)at(0,4){};
        \node[MarkedNodeGraph](005)at(0,5){};
        \node[NodeGraph](006)at(0,6){};
        \node[MarkedNodeGraph](010)at(1,0){};
        \node[NodeGraph](012)at(1,2){};
        \node[MarkedNodeGraph](013)at(1,3){};
        \node[MarkedNodeGraph](014)at(1,4){};
        \node[MarkedNodeGraph](015)at(1,5){};
        \node[NodeGraph](016)at(1,6){};
        \node[MarkedNodeGraph](020)at(2,0){};
        \node[NodeGraph](023)at(2,3){};
        \node[MarkedNodeGraph](024)at(2,4){};
        \node[MarkedNodeGraph](025)at(2,5){};
        \node[NodeGraph](026)at(2,6){};
        \node[NodeGraph](030)at(3,0){};
        \node[NodeGraph](034)at(3,4){};
        \node[NodeGraph](035)at(3,5){};
        \node[NodeGraph](036)at(3,6){};
        \node[NodeLabelGraph,above of=000]{$000$};
        \node[NodeLabelGraph,right of=001]{$001$};
        \node[NodeLabelGraph,right of=002]{$002$};
        \node[NodeLabelGraph,right of=003]{$003$};
        \node[NodeLabelGraph,right of=004]{$004$};
        \node[NodeLabelGraph,right of=005]{$005$};
        \node[NodeLabelGraph,right of=006]{$006$};
        \node[NodeLabelGraph,left of=010]{$010$};
        \node[NodeLabelGraph,left of=012]{$012$};
        \node[NodeLabelGraph,left of=013]{$013$};
        \node[NodeLabelGraph,left of=014]{$014$};
        \node[NodeLabelGraph,left of=015]{$015$};
        \node[NodeLabelGraph,right of=016]{$016$};
        \node[NodeLabelGraph,left of=020]{$020$};
        \node[NodeLabelGraph,left of=023]{$023$};
        \node[NodeLabelGraph,left of=024]{$024$};
        \node[NodeLabelGraph,left of=025]{$025$};
        \node[NodeLabelGraph,right of=026]{$026$};
        \node[NodeLabelGraph,left of=030]{$030$};
        \node[NodeLabelGraph,left of=034]{$034$};
        \node[NodeLabelGraph,left of=035]{$035$};
        \node[NodeLabelGraph,below of=036]{$036$};
        \draw[EdgeGraph](000)--(001);
        \draw[EdgeGraph](000)--(010);
        \draw[EdgeGraph](001)--(002);
        \draw[EdgeGraph](002)--(003);
        \draw[EdgeGraph](002)--(012);
        \draw[EdgeGraph](003)--(004);
        \draw[EdgeGraph](003)--(013);
        \draw[EdgeGraph](004)--(005);
        \draw[EdgeGraph](004)--(014);
        \draw[EdgeGraph](005)--(006);
        \draw[EdgeGraph](005)--(015);
        \draw[EdgeGraph](006)--(016);
        \draw[EdgeGraph](010)--(012);
        \draw[EdgeGraph](010)--(020);
        \draw[EdgeGraph](012)--(013);
        \draw[EdgeGraph](013)--(014);
        \draw[EdgeGraph](013)--(023);
        \draw[EdgeGraph](014)--(015);
        \draw[EdgeGraph](014)--(024);
        \draw[EdgeGraph](015)--(016);
        \draw[EdgeGraph](015)--(025);
        \draw[EdgeGraph](016)--(026);
        \draw[EdgeGraph](020)--(023);
        \draw[EdgeGraph](020)--(030);
        \draw[EdgeGraph](023)--(024);
        \draw[EdgeGraph](024)--(025);
        \draw[EdgeGraph](024)--(034);
        \draw[EdgeGraph](025)--(026);
        \draw[EdgeGraph](025)--(035);
        \draw[EdgeGraph](026)--(036);
        \draw[EdgeGraph](030)--(034);
        \draw[EdgeGraph](034)--(035);
        \draw[EdgeGraph](035)--(036);
        % Arrows.
        \draw[MapGraph](000)--(012);
        \draw[MapGraph](010)--(023);
        \draw[MapGraph](002)--(013);
        \draw[MapGraph](020)--(034);
        \draw[MapGraph](013)--(024);
        \draw[MapGraph](003)--(014);
        \draw[MapGraph](004)--(015);
        \draw[MapGraph](024)--(035);
        \draw[MapGraph](014)--(025);
        \draw[MapGraph](005)--(016);
        \draw[MapGraph](015)--(026);
        \draw[MapGraph](025)--(036);
    \end{tikzpicture}}
    \caption{\footnotesize The poset $\SetCanyon_{\mathbf 3}(3)$ wherein output-wings are
    marked. The arrows connect these elements to their images by the bijection $\rho$.}
    \label{fig:output_to_input_wings_bijection_canyon}
\end{figure}
\medbreak

The three posets of the input-wings, output-wings, and butterflies of canyon posets are
linked in the following way.
\medbreak

\begin{Theorem} \label{thm:poset_morphisms_canyon}
    For any $m \geq 1$ and $n \geq 0$,
    \begin{equation}
        \begin{tikzpicture}[Centering,xscale=3.5,yscale=1.6,font=\small]
            \node(InputWingsCanyon)at(0,0){$\InputWings\Par{\SetCanyon_\MapM}(n)$};
            \node(OutputWingsCanyon)at(-1,0){$\OutputWings\Par{\SetCanyon_\MapM}(n)$};
            \node(ButterfliesCanyon)at(1,0)
                {$\Butterflies\Par{\SetCanyon_{\mathbf m + 1}}(n)$};
            \draw[MapIsomorphism](InputWingsCanyon)--(ButterfliesCanyon)node[midway,above]
                {$\varphi_4$};
            \draw[Map](OutputWingsCanyon)--(InputWingsCanyon)node[midway,above]{$\rho$};
        \end{tikzpicture}
    \end{equation}
    is a diagram of poset morphisms or isomorphisms, where the map $\varphi_4$ is defined,
    for any $u \in \N^n$ and $i \in [n]$, by
    \begin{math}
        \varphi_4(u)_i := \IndicatorFunction_{i \ne 1} \Par{u_i + i - 2}.
    \end{math}
\end{Theorem}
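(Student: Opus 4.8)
The plan is to simply collect the two propositions that were stated just above, since Theorem~\ref{thm:poset_morphisms_canyon} is explicitly announced as a direct consequence of Propositions~\ref{prop:input_wings_butterflies_bijection_canyons} and~\ref{prop:cells_canyon}. Concretely, the diagram in the statement consists of exactly two arrows: the horizontal double arrow $\theta : \InputWings\Par{\SetCanyon_\MapM}(n) \to \Butterflies\Par{\SetCanyon_{\mathbf m + 1}}(n)$, and the vertical simple arrow $\rho : \OutputWings\Par{\SetCanyon_\MapM}(n) \to \InputWings\Par{\SetCanyon_\MapM}(n)$. The first would be provided verbatim by Proposition~\ref{prop:input_wings_butterflies_bijection_canyons}, which already asserts $\theta$ is a poset isomorphism for all $m \geq 1$ and $n \geq 0$. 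The second would be provided by item~\ref{item:cells_canyon_1} of Proposition~\ref{prop:cells_canyon}, which asserts that $\rho$ is a poset morphism and a bijection between $\OutputWings\Par{\SetCanyon_\MapM}(n)$ and $\InputWings\Par{\SetCanyon_\MapM}(n)$.

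So the proof I would write is essentially one sentence: "The horizontal arrow is a poset isomorphism by Proposition~\ref{prop:input_wings_butterflies_bijection_canyons} and the vertical arrow is a poset morphism (in fact a bijection that is not a poset embedding, as noted before Proposition~\ref{prop:cells_canyon}) by item~\ref{item:cells_canyon_1} of Proposition~\ref{prop:cells_canyon}; hence the diagram consists of poset morphisms or isomorphisms." There is no commutativity assertion to check here — unlike the earlier avalanche/hill diagrams, the canyon diagram is not claimed to commute, and in any case it is not even a "chain" shape (the two arrows share the target $\InputWings\Par{\SetCanyon_\MapM}(n)$ rather than composing), so nothing beyond the two individual claims needs verification.

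The honest answer to "which step is the main obstacle" is that there is none at the level of Theorem~\ref{thm:poset_morphisms_canyon} itself: all the real work sits in the two cited propositions, and in particular in Proposition~\ref{prop:cells_canyon}, whose proof (already given) is the substantive argument — establishing that $\rho$ and its inverse $\rho'$ are well-defined via the maximal-extendability and straightness of $\SetCanyon_\MapM$ from Propositions~\ref{prop:properties_canyon_objects} and~\ref{prop:properties_canyon_posets}, and checking $\rho \circ \rho' = \rho' \circ \rho = \mathrm{id}$ by induction on $n$. If I were forced to reprove things from scratch, the delicate point would be verifying that $\rho'(v)_i = \IndicatorFunction_{i \ne 1}\IndicatorFunction_{u_{i-1} \leq u_i - 2}(u_i - 1)$ genuinely lands in $\OutputWings\Par{\SetCanyon_\MapM}(n)$ and is inverse to $\rho$ — that is where one must reconcile the "increment then apply $\IncrMap$" description of $\rho$ with the explicit closed form, using the characterization of output-wings in Proposition~\ref{prop:wings_butterflies_canyon}. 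But since that is already done, the proof of the theorem reduces to citing the two results and observing that together they furnish every arrow in the diagram.

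\begin{proof}
    The diagram consists of the two arrows $\theta$ and $\rho$. By
    Proposition~\ref{prop:input_wings_butterflies_bijection_canyons}, the map $\theta :
    \InputWings\Par{\SetCanyon_\MapM}(n) \to \Butterflies\Par{\SetCanyon_{\mathbf m +
    1}}(n)$ is a poset isomorphism. By Point~\ref{item:cells_canyon_1} of
    Proposition~\ref{prop:cells_canyon}, the map $\rho : \OutputWings\Par{\SetCanyon_\MapM}(n)
    \to \InputWings\Par{\SetCanyon_\MapM}(n)$ is a poset morphism and a bijection (and, as
    observed just before Proposition~\ref{prop:cells_canyon}, it is not a poset embedding in
    general). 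Therefore, the displayed diagram is a diagram of poset morphisms or
    isomorphisms.
\end{proof}
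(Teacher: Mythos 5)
Your proposal is correct and matches the paper's own treatment: the paper offers no separate proof of Theorem~\ref{thm:poset_morphisms_canyon}, introducing it only as a summary of Propositions~\ref{prop:input_wings_butterflies_bijection_canyons} and~\ref{prop:cells_canyon}, which is exactly the citation-and-assemble argument you give. You are also right that, unlike the avalanche and hill diagrams, there is no commutativity or composition to verify here, so nothing further is needed.
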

\begin{proof}
    This follows from Point~\ref{item:cells_canyon_1} of Proposition~\ref{prop:cells_canyon}
    and from the descriptions of the input-wings, output-wings, and butterflies of
    $\SetCanyon_\MapM(n)$ provided by Proposition~\ref{prop:wings_butterflies_canyon}.
\end{proof}
\medbreak

Figure~\ref{fig:isomorphism_input_wings_butterflies_canyons} gives an example of the poset
morphisms or isomorphisms described by the statement of
Theorem~\ref{thm:poset_morphisms_canyon}.
\begin{figure}[ht]
    \centering
    \scalebox{.8}{
    \begin{tikzpicture}[Centering,xscale=.8,yscale=.8]
        % Output-wings canyon, m = 2, n = 3.
        \begin{scope}[xshift=-7cm,yshift=0cm,rotate=-135]
        \draw[Grid](0,0)grid(2,4);
        \node[Marked2NodeGraph](A000)at(0,0){};
        \node[NodeGraph](A001)at(0,1){};
        \node[Marked2NodeGraph](A002)at(0,2){};
        \node[Marked2NodeGraph](A003)at(0,3){};
        \node[NodeGraph](A004)at(0,4){};
        \node[Marked2NodeGraph](A010)at(1,0){};
        \node[NodeGraph](A012)at(1,2){};
        \node[Marked2NodeGraph](A013)at(1,3){};
        \node[NodeGraph](A014)at(1,4){};
        \node[NodeGraph](A020)at(2,0){};
        \node[NodeGraph](A023)at(2,3){};
        \node[NodeGraph](A024)at(2,4){};
        \node[NodeLabelGraph,above of=A000]{$000$};
        \node[NodeLabelGraph,above of=A003]{$003$};
        \node[NodeLabelGraph,above of=A013]{$013$};
        \node[NodeLabelGraph,below right of=A024]{$024$};
        \draw[Marked2EdgeGraph](A000)--(A001);
        \draw[Marked2EdgeGraph](A000)--(A010);
        \draw[Marked2EdgeGraph](A001)--(A002);
        \draw[Marked2EdgeGraph](A002)--(A003);
        \draw[Marked2EdgeGraph](A002)--(A012);
        \draw[EdgeGraph](A003)--(A004);
        \draw[Marked2EdgeGraph](A003)--(A013);
        \draw[EdgeGraph](A004)--(A014);
        \draw[Marked2EdgeGraph](A010)--(A012);
        \draw[EdgeGraph](A010)--(A020);
        \draw[Marked2EdgeGraph](A012)--(A013);
        \draw[EdgeGraph](A013)--(A014);
        \draw[EdgeGraph](A013)--(A023);
        \draw[EdgeGraph](A014)--(A024);
        \draw[EdgeGraph](A020)--(A023);
        \draw[EdgeGraph](A023)--(A024);
        \end{scope}
        %
        % Input-wings canyon, m = 2, n = 3.
        \begin{scope}[xshift=0cm,yshift=0cm,rotate=-135]
        \draw[Grid](0,0)grid(2,4);
        \node[NodeGraph](B000)at(0,0){};
        \node[NodeGraph](B001)at(0,1){};
        \node[NodeGraph](B002)at(0,2){};
        \node[NodeGraph](B003)at(0,3){};
        \node[NodeGraph](B004)at(0,4){};
        \node[NodeGraph](B010)at(1,0){};
        \node[MarkedNodeGraph](B012)at(1,2){};
        \node[MarkedNodeGraph](B013)at(1,3){};
        \node[MarkedNodeGraph](B014)at(1,4){};
        \node[NodeGraph](B020)at(2,0){};
        \node[MarkedNodeGraph](B023)at(2,3){};
        \node[MarkedNodeGraph](B024)at(2,4){};
        \node[NodeLabelGraph,above left of=B000]{$000$};
        \node[NodeLabelGraph,below of=B012]{$012$};
        \node[NodeLabelGraph,below of=B014]{$014$};
        \node[NodeLabelGraph,left of=B023]{$023$};
        \node[NodeLabelGraph,below of=B024]{$024$};
        \draw[EdgeGraph](B000)--(B001);
        \draw[EdgeGraph](B000)--(B010);
        \draw[EdgeGraph](B001)--(B002);
        \draw[EdgeGraph](B002)--(B003);
        \draw[EdgeGraph](B002)--(B012);
        \draw[EdgeGraph](B003)--(B004);
        \draw[EdgeGraph](B003)--(B013);
        \draw[EdgeGraph](B004)--(B014);
        \draw[EdgeGraph](B010)--(B012);
        \draw[EdgeGraph](B010)--(B020);
        \draw[MarkedEdgeGraph](B012)--(B013);
        \draw[MarkedEdgeGraph](B013)--(B014);
        \draw[MarkedEdgeGraph](B013)--(B023);
        \draw[MarkedEdgeGraph](B014)--(B024);
        \draw[EdgeGraph](B020)--(B023);
        \draw[MarkedEdgeGraph](B023)--(B024);
        \end{scope}
        %
        % Butterflies canyon, m = 3, n = 3.
        \begin{scope}[xshift=7cm,yshift=1cm,rotate=-135]
        \draw[Grid](0,0)grid(3,6);
        \node[NodeGraph](C000)at(0,0){};
        \node[NodeGraph](C001)at(0,1){};
        \node[NodeGraph](C002)at(0,2){};
        \node[NodeGraph](C003)at(0,3){};
        \node[NodeGraph](C004)at(0,4){};
        \node[NodeGraph](C005)at(0,5){};
        \node[NodeGraph](C006)at(0,6){};
        \node[NodeGraph](C010)at(1,0){};
        \node[NodeGraph](C012)at(1,2){};
        \node[MarkedNodeGraph](C013)at(1,3){};
        \node[MarkedNodeGraph](C014)at(1,4){};
        \node[MarkedNodeGraph](C015)at(1,5){};
        \node[NodeGraph](C016)at(1,6){};
        \node[NodeGraph](C020)at(2,0){};
        \node[NodeGraph](C023)at(2,3){};
        \node[MarkedNodeGraph](C024)at(2,4){};
        \node[MarkedNodeGraph](C025)at(2,5){};
        \node[NodeGraph](C026)at(2,6){};
        \node[NodeGraph](C030)at(3,0){};
        \node[NodeGraph](C034)at(3,4){};
        \node[NodeGraph](C035)at(3,5){};
        \node[NodeGraph](C036)at(3,6){};
        \node[NodeLabelGraph,above of=C000]{$000$};
        \node[NodeLabelGraph,above of=C013]{$013$};
        \node[NodeLabelGraph,above of=C015]{$015$};
        \node[NodeLabelGraph,left of=C024]{$024$};
        \node[NodeLabelGraph,below of=C025]{$025$};
        \node[NodeLabelGraph,below of=C036]{$036$};
        \draw[EdgeGraph](C000)--(C001);
        \draw[EdgeGraph](C000)--(C010);
        \draw[EdgeGraph](C001)--(C002);
        \draw[EdgeGraph](C002)--(C003);
        \draw[EdgeGraph](C002)--(C012);
        \draw[EdgeGraph](C003)--(C004);
        \draw[EdgeGraph](C003)--(C013);
        \draw[EdgeGraph](C004)--(C005);
        \draw[EdgeGraph](C004)--(C014);
        \draw[EdgeGraph](C005)--(C006);
        \draw[EdgeGraph](C005)--(C015);
        \draw[EdgeGraph](C006)--(C016);
        \draw[EdgeGraph](C010)--(C012);
        \draw[EdgeGraph](C010)--(C020);
        \draw[EdgeGraph](C012)--(C013);
        \draw[MarkedEdgeGraph](C013)--(C014);
        \draw[EdgeGraph](C013)--(C023);
        \draw[MarkedEdgeGraph](C014)--(C015);
        \draw[MarkedEdgeGraph](C014)--(C024);
        \draw[EdgeGraph](C015)--(C016);
        \draw[MarkedEdgeGraph](C015)--(C025);
        \draw[EdgeGraph](C016)--(C026);
        \draw[EdgeGraph](C020)--(C023);
        \draw[EdgeGraph](C020)--(C030);
        \draw[EdgeGraph](C023)--(C024);
        \draw[MarkedEdgeGraph](C024)--(C025);
        \draw[EdgeGraph](C024)--(C034);
        \draw[EdgeGraph](C025)--(C026);
        \draw[EdgeGraph](C025)--(C035);
        \draw[EdgeGraph](C026)--(C036);
        \draw[EdgeGraph](C030)--(C034);
        \draw[EdgeGraph](C034)--(C035);
        \draw[EdgeGraph](C035)--(C036);
        \end{scope}
        %
        % Arrows.
        \draw[MapGraph](A000)edge[bend left=16](B012);
        \draw[MapGraph](A003)edge[bend left=16](B014);
        \draw[MapGraph](A013)edge[bend left=8](B024);
        \draw[MapGraph](B012)edge[bend right=16](C013);
        \draw[MapGraph](B014)edge[bend right=16](C015);
        \draw[MapGraph](B024)edge[bend right=16](C025);
    \end{tikzpicture}}
    \caption{\footnotesize From the top to bottom and left to right, here are the posets
    $\SetCanyon_\MapTwo(3)$, $\SetCanyon_\MapTwo(3)$, and $\SetCanyon_{\mathbf 3}(3)$. The
    two last posets contain $\InputWings\Par{\SetHill_\MapOne}(3)$ as subposets. There is a
    poset morphism between the output-wings of the first one and the input-wings of the
    second one.}
    \label{fig:isomorphism_input_wings_butterflies_canyons}
\end{figure}
\medbreak

\begin{Proposition} \label{prop:volume_canyon}
    For any $m \geq 1$ and $n \geq 1$,
    \begin{equation}
        \Volume\Par{\CubicReal\Par{\SetCanyon_\MapM(n)}}
        = \Volume\Par{\CubicReal\Par{\SetCliff_\MapM(n)}}
        = m^{n - 1} (n - 1)!.
    \end{equation}
\end{Proposition}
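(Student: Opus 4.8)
The plan is to reduce the statement to the description of the maximal cells of the two cubic realizations and to an explicit tiling argument. The second equality is immediate: since $\MapM(i) = m(i-1)$, every integer point of the orthotope $O_n := \Angle{\LeastElement_\MapM(n), \GreatestElement_\MapM(n)} = \Bra{x \in \R^n : 0 \leq x_i \leq \MapM(i) \text{ for all } i \in [n]}$ is an $\MapM$-cliff, so $\LeastElement_\MapM(n)$ is cell-compatible with $\GreatestElement_\MapM(n)$ and $O_n$ is itself a cell of $\CubicReal\Par{\SetCliff_\MapM(n)}$, of dimension $n - 1 = \DimensionDelta_n(\MapM)$, hence of maximal dimension; as any cell of $\CubicReal\Par{\SetCliff_\MapM(n)}$ is contained in $O_n$, I get $\Volume\Par{\CubicReal\Par{\SetCliff_\MapM(n)}} = \Volume(O_n) = \prod_{i = 2}^{n} m(i-1) = m^{n-1}(n-1)!$.

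For the first equality I would invoke Proposition~\ref{prop:cells_canyon} together with the discussion preceding Proposition~\ref{prop:subfamillies_order_dimension}: for each output-wing $u$ of $\SetCanyon_\MapM(n)$, the set $\Angle{u, \rho(u)}$ is a cell of $\CubicReal\Par{\SetCanyon_\MapM(n)}$, these cells are pairwise disjoint, each has dimension $n - 1$ (from the description of $\rho'$ in that proof, $u$ and $\rho(u)$ differ in all positions $2, \dots, n$), hence maximal dimension, and each lies inside $O_n$ since $u$ and $\rho(u)$ are $\MapM$-cliffs. As every cell of $\CubicReal\Par{\SetCanyon_\MapM(n)}$ also lies inside $O_n$, the $(n-1)$-volume of $\CubicReal\Par{\SetCanyon_\MapM(n)}$ — which is its volume, $n-1$ being the maximal cell dimension — is squeezed between $\Volume\bigl(\bigcup_{u} \Angle{u, \rho(u)}\bigr)$ and $\Volume(O_n)$. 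So the whole content reduces to proving that the cells $\Angle{u, \rho(u)}$, $u \in \OutputWings\Par{\SetCanyon_\MapM}(n)$, cover $O_n$ up to a set of $(n-1)$-dimensional volume zero, whence $\Volume\Par{\CubicReal\Par{\SetCanyon_\MapM(n)}} = \Volume(O_n) = m^{n-1}(n-1)!$ and, in particular, equals $\Volume\Par{\CubicReal\Par{\SetCliff_\MapM(n)}}$. The case $n = 1$ is trivial, both sides being $1$.

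To establish the covering (taking $n \geq 2$), I would pass to the inverse bijection $\rho'$ of Proposition~\ref{prop:cells_canyon}, under which the cells are the $\Angle{\rho'(v), v}$ with $v$ ranging over the input-wings of $\SetCanyon_\MapM(n)$, i.e. the strictly increasing $\MapM$-cliffs of size $n$; recall $\rho'(v)_1 = 0$ and, for $i \geq 2$, $\rho'(v)_i = v_i - 1$ if $v_{i-1} \leq v_i - 2$ and $\rho'(v)_i = 0$ if $v_{i-1} = v_i - 1$. Given $x \in O_n$ with no coordinate in $\Z$ among positions $2, \dots, n$ (the remaining points of $O_n$ form a finite union of hyperplanes, of $(n-1)$-volume zero), I would set $v(x)_1 := 0$ and $v(x)_i := \max\Par{\lceil x_i \rceil,\, v(x)_{i-1} + 1}$ for $i \geq 2$, then check that $v(x)$ is a strictly increasing $\MapM$-cliff (the bound $v(x)_{i-1} + 1 \leq \MapM(i)$ uses $m \geq 1$; the bound $\lceil x_i \rceil \leq \MapM(i)$ uses $x_i < \MapM(i)$ and $x_i \notin \Z$), hence an input-wing, and finally that $x$ lies inside $\Angle{\rho'(v(x)), v(x)}$: for each $i \geq 2$ one has $x_i < v(x)_i$ since $v(x)_i \geq \lceil x_i \rceil > x_i$, and for the lower bound one splits into the case $v(x)_i = v(x)_{i-1} + 1$, where $\rho'(v(x))_i = 0 < x_i$ because $x_i > 0$, and the case $v(x)_i = \lceil x_i \rceil > v(x)_{i-1} + 1$, where $v(x)_{i-1} \leq v(x)_i - 2$ and $\rho'(v(x))_i = \lceil x_i \rceil - 1 < x_i$.

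The main obstacle is this covering step, and precisely the calibration of the greedy definition of $v(x)$: each $v(x)_i$ must be small enough that $x_i$ stays in the possibly very short interval $\bigl[\rho'(v)_i, v_i\bigr]$, yet large enough to keep $v(x)$ strictly increasing and to respect $v(x)_i \leq \MapM(i)$. The observation that makes the two requirements compatible is that exactly when the greedy rule is forced to take $v(x)_i = v(x)_{i-1} + 1$ — the situation in which the short interval $\bigl[v_i - 1, v_i\bigr]$ could miss $x_i$ — the relevant interval is in fact the long one $\bigl[0, v_i\bigr]$, so $x_i$ is captured regardless. Everything else (the second equality, disjointness, containment in $O_n$, and the additivity of volume over the essentially disjoint union) is routine once Propositions~\ref{prop:cells_canyon} and~\ref{prop:wings_butterflies_canyon} are available.
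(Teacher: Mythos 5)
Your proposal is correct, but it takes a genuinely different and considerably longer route than the paper for the first equality. The paper observes that, directly from the definition of $\MapM$-canyons, $\LeastElement_\MapM(n)$ is cell-compatible with $\GreatestElement_\MapM(n)$: every corner word $w$ with $w_i \in \Bra{0, m(i-1)}$ is an $\MapM$-canyon, because for $j \in \Han{m(i-1)}$ one has $0 \leq m(i-1) - j$ and $m(i-j-1) = m(i-1) - mj \leq m(i-1) - j$ whenever $m \geq 1$. Hence the entire realization $\CubicReal\Par{\SetCanyon_\MapM(n)}$ is the single orthotope $\Angle{\LeastElement_\MapM(n), \GreatestElement_\MapM(n)}$, which contains all other cells, and the volume is read off immediately --- exactly as in your treatment of $\SetCliff_\MapM(n)$. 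You instead keep the decomposition of Proposition~\ref{prop:cells_canyon} into the maximal cells $\Angle{u, \rho(u)}$ and prove, via the greedy rounding $v(x)_i := \max\Par{\lceil x_i \rceil, v(x)_{i-1} + 1}$, that these cells tile the orthotope up to a set of measure zero; your case analysis on whether the greedy rule is forced (so that $\rho'(v(x))_i = 0$) or free (so that $\rho'(v(x))_i = \lceil x_i\rceil - 1$) is sound, as are the verifications that $v(x)$ is a strictly increasing $\MapM$-cliff and that all maximal cells have dimension $n-1$. What your approach buys is strictly more information: an explicit essentially-disjoint covering of the orthotope by the cells indexed by output-wings, hence the identity $m^{n-1}(n-1)! = \sum_{u \in \OutputWings\Par{\SetCanyon_\MapM}(n)} \Volume\Angle{u, \rho(u)}$, which does not follow from the paper's one-line argument. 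What it costs is that the whole covering machinery is unnecessary for the stated proposition, which the cell-compatibility of the two extreme elements settles at once.
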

\begin{proof}
    Directly from the definition of $\MapM$-canyons, one has that the $\MapM$-canyon
    $\LeastElement_\MapM(n)$ is cell-compatible with $\GreatestElement_\MapM(n)$.
    Therefore, $\Angle{\LeastElement_\MapM(n), \GreatestElement_\MapM(n)}$ is a cell of
    $\CubicReal\Par{\SetCanyon_\MapM(n)}$. Since all others cells of this cubic realization
    are contained in this one, one obtains that $\CubicReal\Par{\SetCanyon_\MapM(n)}$ is an
    orthotope. This leads to the stated expression for the volume of the cubic realization
    of~$\SetCanyon_\MapM(n)$.
\end{proof}
\medbreak

%%%%%%%%%%%%%%%%%%%%%%%%%%%%%%%%%%%%%%%%%%%%%%%%%%%%%%%%%%%%%%%%%%%%%%%%%%%%%%%%%%%%%%%%%%%%
%%%%%%%%%%%%%%%%%%%%%%%%%%%%%%%%%%%%%%%%%%%%%%%%%%%%%%%%%%%%%%%%%%%%%%%%%%%%%%%%%%%%%%%%%%%%
\subsection{Poset morphisms and other interactions}
The purpose of this part is to state the main links between the three posets
$\SetAvalanche_\delta$, $\SetHill_\delta$, and $\SetCanyon_\delta$ when $\delta$ is an
increasing range map. We shall also consider their subposets formed by their input-wings,
output-wings, and butterflies elements in the particular case where $\delta = \MapM$ for an
$m \geq 0$.
\medbreak

%%%%%%%%%%%%%%%%%%%%%%%%%%%%%%%%%%%%%%%%%%%%%%%%%%%%%%%%%%%%%%%%%%%%%%%%%%%%%%%%%%%%%%%%%%%%
\subsubsection{Order extensions}
Observe that the map $\ElevationMap_{\SetCanyon_\delta}$ is not a poset morphism. Indeed,
for instance in $\SetCanyon_\MapOne$ one has $002 \Leq 012$ but
\begin{math}
    \ElevationMap_{\SetCanyon_\MapOne}(002)
    = 002 \; \cancel{\Leq} \; 011
    = \ElevationMap_{\SetCanyon_\MapOne}(012).
\end{math}
Nevertheless, by composing this map on the left with the inverse of the
$\SetHill_\delta$-elevation map, we obtain a poset morphism, as stated by the next theorem.
\medbreak

\begin{Lemma} \label{lem:weight_elevation_canyon}
    Let $\delta$ be a range map, and $u$ and $v$ be two $\delta$-canyons of size $n$. If $u
    \Leq v$, then
    \begin{math}
        \Weight\Par{\ElevationMap_{\SetCanyon_\delta}(u)}
        \leq
        \Weight\Par{\ElevationMap_{\SetCanyon_\delta}(v)}.
    \end{math}
\end{Lemma}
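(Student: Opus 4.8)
The plan is to prove the inequality by induction on the common size $n$ of the two $\delta$-canyons, exploiting the recursive nature of both the elevation map and the $\DominantCanyon$ operation. First I would recall from Lemma~\ref{lem:weight_exuviae_canyons} that $\Weight\Par{\ElevationMap_{\SetCanyon_\delta}(u)} = \Weight\Par{\DominantCanyon(u)}$ for any $\delta$-canyon $u$, which immediately reduces the statement to showing that $u \Leq v$ implies $\Weight\Par{\DominantCanyon(u)} \leq \Weight\Par{\DominantCanyon(v)}$. Writing $u = u' a$ and $v = v' b$ with $u', v' \in \SetCanyon_\delta(n-1)$ and $a \leq b$ (using that $\SetCanyon_\delta$ is closed by prefix, by Proposition~\ref{prop:properties_canyon_objects}), and using the definition of $\ElevationMap_{\SetCanyon_\delta}$ together with Lemma~\ref{lem:next_exuviae_canyons}, one has
\begin{equation}
    \Weight\Par{\ElevationMap_{\SetCanyon_\delta}(u' a)}
    = \Weight\Par{\ElevationMap_{\SetCanyon_\delta}(u')}
    + \# \Par{\Next_{\SetCanyon_\delta}(u') \cap [0, a - 1]},
\end{equation}
and similarly for $v' b$. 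By the induction hypothesis applied to $u' \Leq v'$, the first summands already satisfy the required inequality, so the task is to control the two ``increment'' terms.

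The key step, and the one I expect to be the main obstacle, is comparing $\# \Par{\Next_{\SetCanyon_\delta}(u') \cap [0, a-1]}$ with $\# \Par{\Next_{\SetCanyon_\delta}(v') \cap [0, b-1]}$ when $u' \Leq v'$ and $a \leq b$. The subtlety is that $\Next_{\SetCanyon_\delta}$ is \emph{not} monotone in the naive sense: enlarging a canyon can both open up and close off admissible continuations. The right way to handle this is via Lemma~\ref{lem:next_canyons}, which expresses $\Next_{\SetCanyon_\delta}(u')$ as $[0, \delta(n)]$ minus a disjoint union of intervals $\Han{n - i, n + \DominantCanyon(u')_i - i - 1}$ indexed by positions $i$ with $\DominantCanyon(u')_i \neq 0$ (shifted appropriately for size $n-1$). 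I would combine this with the identity $\Weight\Par{\ElevationMap_{\SetCanyon_\delta}(u')} = \Weight\Par{\DominantCanyon(u')}$ from Lemma~\ref{lem:weight_exuviae_canyons} and the counting relation from the proof of Proposition~\ref{prop:image_elevation_map_canyon}, namely $\# \Next_{\SetCanyon_\delta}(u') = 1 + \delta(n) - \Weight\Par{\ElevationMap_{\SetCanyon_\delta}(u')}$ when $\delta$ is increasing; however, since the present lemma is stated for a general range map $\delta$, I should avoid relying on increasingness and instead work directly with the combinatorial description of which letters are forbidden.

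Concretely, the cleanest route I would take is to argue on the ``forbidden intervals'': decrementing letters of $v'$ to obtain $u'$ can only shrink each forbidden interval (since $\DominantCanyon(u')_i \leq \DominantCanyon(v')_i$ is, however, \emph{not} automatic — this is exactly where care is needed). Rather than trying to compare the canyons position by position, I would instead bound the increment term $\# \Par{\Next_{\SetCanyon_\delta}(u') \cap [0, a - 1]}$ from above by $a - \Weight\Par{\DominantCanyon(u')} + \Weight\Par{\DominantCanyon(u' a) \text{ restricted to first } n-1}$-type quantities, tracking how much weight of $\DominantCanyon$ gets ``killed'' when the letter $a$ is appended versus when $b$ is appended. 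Appending a larger letter at the end can dominate (and thus zero out) more earlier positions, which \emph{decreases} the prefix contribution but is exactly compensated by the larger increment term; making this trade-off precise is the heart of the argument. I would therefore phrase the induction hypothesis in the stronger form that for $u \Leq v$ of size $n$, not only $\Weight\Par{\DominantCanyon(u)} \leq \Weight\Par{\DominantCanyon(v)}$ but also the full multiset of forbidden intervals of $u$ ``refines'' that of $v$ in a suitable sense, from which the weight inequality for any common extension follows by direct counting. The remaining verifications — that appending a common-respecting pair of letters preserves this refinement property — are routine case analyses on whether a given earlier position becomes dominated, hinds the new last position, or remains independent.
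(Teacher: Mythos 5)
Your reduction via Lemma~\ref{lem:weight_exuviae_canyons} to comparing $\Weight\Par{\DominantCanyon(u)}$ with $\Weight\Par{\DominantCanyon(v)}$, and your setup of an induction on the size with $u = u'a$, $v = v'b$ and the recursion $\Weight\Par{\ElevationMap_{\SetCanyon_\delta}(u'a)} = \Weight\Par{\ElevationMap_{\SetCanyon_\delta}(u')} + \#\Par{\Next_{\SetCanyon_\delta}(u') \cap [0, a-1]}$, both match the paper's proof in spirit. You also correctly isolate the crux: $\Next_{\SetCanyon_\delta}$ is not monotone, so the two increment terms cannot be compared independently of the prefix contributions, and appending a larger last letter can dominate (hence zero out) more earlier positions, trading prefix weight against increment. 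The problem is that you stop exactly there. The ``refinement of forbidden intervals'' invariant that is supposed to carry the strengthened induction is never defined; the claim that it is preserved when a pair of letters $a \leq b$ is appended is never checked (this is precisely where the trade-off you describe has to be quantified); and the derivation of the weight inequality from the invariant is asserted, not given. You label this step ``the heart of the argument'' and then leave it as a plan, so what you have is an outline whose central step is missing rather than a proof.

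For comparison, the paper argues the contrapositive: assuming $\Weight\Par{\DominantCanyon(u)} > \Weight\Par{\DominantCanyon(v)}$, it suffices to exhibit a \emph{single} index $i$ with $u_i > v_i$. The induction then splits into two cases: either $\Weight\Par{\DominantCanyon(u')} > \Weight\Par{\DominantCanyon(v')}$ already holds and the inductive hypothesis supplies a witness among the first $n-1$ letters, or it does not, and one argues from the definition of $\DominantCanyon$ that the last letters must satisfy $a > b$. This reorganization means the inductive step never has to establish the full weight inequality for both extensions simultaneously, which is exactly the comparison your approach gets stuck on. If you want to keep your direct formulation, you must actually state and prove the stronger invariant and the routine-looking case analysis; the more economical fix is to pass to the contrapositive as the paper does.
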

\begin{proof}
    First, since by Proposition~\ref{prop:properties_canyon_objects}, $\SetCanyon_\delta$ is
    closed by prefix, $\ElevationMap_{\SetCanyon_\delta}$ is well-defined. By considering
    the contrapositive of the statement of the lemma and by
    Lemma~\ref{lem:weight_exuviae_canyons}, we have to show that for any $\delta$-canyons
    $u$ and $v$ of size $n$, $\Weight(\DominantCanyon(u)) > \Weight(\DominantCanyon(v))$
    implies that there exists $i \in [n]$ such that $u_i > v_i$. We proceed by induction on
    $n$. If $n = 0$, the property holds immediately. Assume now that $u = u' a$ and $v = v'
    b$ are two $\delta$-canyons of size $n + 1$ such that
    $\Weight\Par{\DominantCanyon\Par{u' a}} > \Weight\Par{\DominantCanyon\Par{v' b}}$ where
    $u'$ and $v'$ are $\delta$-canyons of size $n$ and $a, b \in \N$. If
    $\Weight\Par{\DominantCanyon\Par{u'}} > \Weight\Par{\DominantCanyon\Par{v'}}$, then by
    induction hypothesis, there is $i \in [n]$ such that $u'_i > v'_i$. Since $u_i = u'_i$
    and $v_i = v'_i$, the property holds. Otherwise, $\Weight\Par{\DominantCanyon\Par{u'}}
    \leq \Weight\Par{\DominantCanyon\Par{v'}}$. Since $\Weight(\DominantCanyon(u)) >
    \Weight(\DominantCanyon(v))$ and by definition of the map $\DominantCanyon$, we
    necessarily have $a > b$. Therefore one has $u_{n + 1}
    > v_{n + 1}$, showing that the property holds.
\end{proof}
\medbreak

\begin{Theorem} \label{thm:morphism_canyon_hill}
    For any increasing range map $\delta$ and any $n \geq 0$, the map
    \begin{math}
        \ElevationMap_{\SetHill_\delta}^{-1} \circ \ElevationMap_{\SetCanyon_\delta}
    \end{math}
    from $\SetCanyon_\delta(n)$ to $\SetHill_\delta(n)$ is a poset morphism.
\end{Theorem}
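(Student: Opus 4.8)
The plan is to prove the statement by induction on $n$, exploiting the recursive structure that all the relevant maps respect prefixes. Write $\phi := \ElevationMap_{\SetHill_\delta}^{-1} \circ \ElevationMap_{\SetCanyon_\delta}$; this is well defined and a bijection by Proposition~\ref{prop:bijection_canyon_hill}. The base case $n = 0$ is trivial. For the inductive step, take two $\delta$-canyons $u, v \in \SetCanyon_\delta(n)$ with $u \Leq v$ and, using that $\SetCanyon_\delta$ is closed by prefix (Proposition~\ref{prop:properties_canyon_objects}), decompose $u = u' a$ and $v = v' b$ with $u', v' \in \SetCanyon_\delta(n - 1)$ and $a \leq b$. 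Then $u' \Leq v'$, so by the induction hypothesis $\phi(u') \Leq \phi(v')$ in $\SetHill_\delta(n - 1)$. The goal is to show that the final letters of $\phi(u)$ and $\phi(v)$ are ordered compatibly, i.e.\ that $\phi(u)_n \leq \phi(v)_n$.

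The key computational input is to unfold the definitions of the two elevation maps in the last coordinate. By definition of $\ElevationMap_{\SetCanyon_\delta}$, the last letter of $\ElevationMap_{\SetCanyon_\delta}(u)$ equals $\#\Par{\Next_{\SetCanyon_\delta}(u') \cap [0, a - 1]}$, and similarly for $v$. Then applying $\ElevationMap_{\SetHill_\delta}^{-1}$ recovers, from the avalanche $\ElevationMap_{\SetCanyon_\delta}(u)$, the corresponding $\delta$-hill; since (as computed in the proof of Proposition~\ref{prop:image_elevation_map_hill}) a $\delta$-hill $w$ maps under $\ElevationMap_{\SetHill_\delta}$ to the avalanche with letters $w_1, w_2 - w_1, \dots, w_n - w_{n-1}$, the inverse is given by taking partial sums. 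Hence $\phi(u)_n = \phi(u')_{n-1} + \#\Par{\Next_{\SetCanyon_\delta}(u') \cap [0, a-1]}$ and $\phi(v)_n = \phi(v')_{n-1} + \#\Par{\Next_{\SetCanyon_\delta}(v') \cap [0, b-1]}$, with the convention that the $(n-1)$-st coordinate of a size $n-1$ object plays the role of the previous partial sum. Since $\phi(u') \Leq \phi(v')$ gives $\phi(u')_{n-1} \leq \phi(v')_{n-1}$, it remains to compare the two counting terms.

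The main obstacle is precisely this last comparison: it is \emph{not} true in general that $\Next_{\SetCanyon_\delta}(v') \subseteq \Next_{\SetCanyon_\delta}(u')$ when $u' \Leq v'$ (this is exactly why $\ElevationMap_{\SetCanyon_\delta}$ itself fails to be a poset morphism, as the remark before the theorem shows). Instead the plan is to invoke Lemma~\ref{lem:weight_elevation_canyon}, which asserts $\Weight\Par{\ElevationMap_{\SetCanyon_\delta}(u)} \leq \Weight\Par{\ElevationMap_{\SetCanyon_\delta}(v)}$ whenever $u \Leq v$. Since for a $\delta$-hill $w$ one has $\Weight\Par{\ElevationMap_{\SetHill_\delta}(w)} = w_n$ (telescoping, again from the proof of Proposition~\ref{prop:image_elevation_map_hill}), this weight equals $\phi(u)_n$; more precisely $\phi(u)_n = \Weight\Par{\ElevationMap_{\SetCanyon_\delta}(u)}$ and $\phi(v)_n = \Weight\Par{\ElevationMap_{\SetCanyon_\delta}(v)}$. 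Thus Lemma~\ref{lem:weight_elevation_canyon} directly delivers $\phi(u)_n \leq \phi(v)_n$.

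Putting these together, one has $\phi(u)_i \leq \phi(v)_i$ for $i \in [n-1]$ by the induction hypothesis applied to the prefixes and $\phi(u)_n \leq \phi(v)_n$ by the weight lemma, hence $\phi(u) \Leq \phi(v)$. This completes the induction and the proof. The only real subtlety is making sure the bookkeeping between "last letter of the hill," "weight of the avalanche," and the telescoping identity $\Weight\Par{\ElevationMap_{\SetHill_\delta}(w)} = w_{|w|}$ is stated cleanly; once that identity is isolated, Lemma~\ref{lem:weight_elevation_canyon} does all the work and no coordinate-wise containment of $\Next$ sets is needed.
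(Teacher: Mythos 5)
Your proposal is correct and follows essentially the same route as the paper: the substance in both cases is the identity $\phi(w)_i = \Weight\Par{\ElevationMap_{\SetCanyon_\delta}(w_1 \dots w_i)}$ combined with Lemma~\ref{lem:weight_elevation_canyon} applied to corresponding prefixes of $u$ and $v$. Your induction on $n$ is merely a repackaging of the paper's direct coordinate-by-coordinate argument, and your observation that no containment of $\Next$ sets is needed is exactly the point the paper makes.
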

\begin{proof}
    First of all, by Proposition~\ref{prop:bijection_canyon_hill}, the map
    \begin{math}
        \phi := \ElevationMap_{\SetHill_\delta}^{-1} \circ \ElevationMap_{\SetCanyon_\delta}
    \end{math}
    is well-defined.  By definition of the maps $\ElevationMap_{\SetHill_\delta}$ and
    $\ElevationMap_{\SetCanyon_\delta}$, for any $\delta$-canyon $w$ of size $n$ and any $i
    \in [n]$,
    \begin{math}
        \phi(w)_i =
        \Weight\Par{\ElevationMap_{\SetCanyon_\delta}(w_1 \dots w_i)}.
    \end{math}
    Assume now that $u$ and $v$ are two $\delta$-canyons of size $n$ such that $u \Leq v$.
    Then, for any $i \in [n]$, $u_1 \dots u_i \Leq v_1 \dots v_i$. By
    Lemma~\ref{lem:weight_elevation_canyon}, this implies
    \begin{math}
        \Weight\Par{\ElevationMap_{\SetCanyon_\delta}(u_1 \dots u_i)}
        \leq
        \Weight\Par{\ElevationMap_{\SetCanyon_\delta}(v_1 \dots v_i)}.
    \end{math}
    Moreover, by the above remark, this implies $\phi(u)_i \leq \phi(v)_i$. Therefore, we
    have $\phi(u) \Leq \phi(v)$, establishing the statement of the theorem.
\end{proof}
\medbreak

Even if, by Proposition~\ref{prop:bijection_canyon_hill},
\begin{math}
    \ElevationMap_{\SetHill_\delta}^{-1} \circ \ElevationMap_{\SetCanyon_\delta}
    : \SetCanyon_\delta(n) \to \SetHill_\delta(n)
\end{math}
is a bijection, this map is not a poset isomorphism. This is the case since there does not
exist for instance a poset isomorphism between $\SetCanyon_\MapOne(3)$ and
$\SetHill_\MapOne(3)$ ---their Hasse diagrams are not superimposable.  Moreover, as a
consequence of Theorem~\ref{thm:morphism_canyon_hill}, for any $n \geq 0$,
$\SetHill_\delta(n)$ is an order extension of $\SetCanyon_\delta(n)$.  Furthermore, it is
possible to show by induction on the length of the $\delta$-canyons and by using
Lemma~\ref{lem:next_canyons} that $\SetCanyon_\delta$ satisfies the prerequisites of
Proposition~\ref{prop:elevation_map_poset_morphism}.  Therefore, $\SetCanyon_\delta(n)$ is
an order extension of $\SetAvalanche_\delta(n)$.
\medbreak

To summarize the situation, when $\delta$ is an increasing range map, the three
families of Fuss-Catalan posets fit into the chain
\begin{equation} \label{equ:chain_extensions_posets}
    \begin{tikzpicture}[Centering,xscale=3.5,yscale=2.6,font=\small]
        \node(Avalanche)at(0,0){$\SetAvalanche_\delta(n)$};
        \node(Canyon)at(1,0){$\SetCanyon_\delta(n)$};
        \node(Hill)at(2,0){$\SetHill_\delta(n)$};
        \draw[Map](Avalanche)--(Canyon)node[midway,above]
            {$\ElevationMap_{\SetCanyon_\delta}^{-1}$};
        \draw[Map](Canyon)--(Hill)node[midway,above]
            {$\ElevationMap_{\SetHill_\delta}^{-1}
        \circ
        \ElevationMap_{\SetCanyon_\delta}$};
    \end{tikzpicture}
\end{equation}
of posets for the order extension relation. This phenomenon is analogous to the one stating
that Stanley lattices are order extensions of Tamari lattices, which in turn are order
extension of Kreweras lattices~\cite{Kre72} (see for instance~\cite{BB09}). In the present
case, avalanche posets play the role of the Kreweras lattices, canyon posets play the role
of the Tamari posets, and hill posets play the role of Stanley posets. The difference is
that for $\delta = \MapOne$, even if canyon lattices coincide with Tamari lattices and hill
lattices coincide with Stanley lattices, the avalanche posets are not isomorphic to Kreweras
lattices. Figure~\ref{fig:poset_extensions_avalanche_canyon_hill} gives an example of an
instance of~\eqref{equ:chain_extensions_posets}.
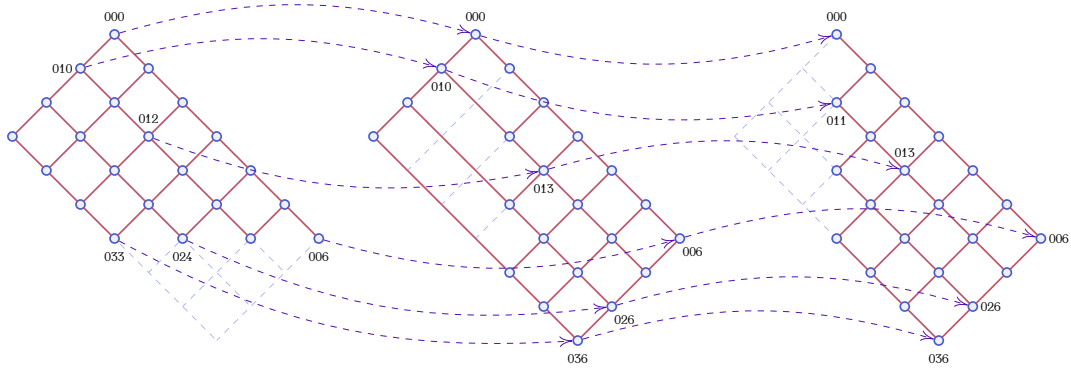
\begin{figure}[ht]
    \centering
    \scalebox{.8}{
    \begin{tikzpicture}[Centering,xscale=.8,yscale=.8]
        % Avalanche, m = 3, n = 3.
        \begin{scope}[xshift=0cm,yshift=0cm,rotate=-135]
        \draw[Grid](0,0)grid(3,6);
        \node[NodeGraph](A000)at(0,0){};
        \node[NodeGraph](A001)at(0,1){};
        \node[NodeGraph](A002)at(0,2){};
        \node[NodeGraph](A003)at(0,3){};
        \node[NodeGraph](A004)at(0,4){};
        \node[NodeGraph](A005)at(0,5){};
        \node[NodeGraph](A006)at(0,6){};
        \node[NodeGraph](A010)at(1,0){};
        \node[NodeGraph](A011)at(1,1){};
        \node[NodeGraph](A012)at(1,2){};
        \node[NodeGraph](A013)at(1,3){};
        \node[NodeGraph](A014)at(1,4){};
        \node[NodeGraph](A015)at(1,5){};
        \node[NodeGraph](A020)at(2,0){};
        \node[NodeGraph](A021)at(2,1){};
        \node[NodeGraph](A022)at(2,2){};
        \node[NodeGraph](A023)at(2,3){};
        \node[NodeGraph](A024)at(2,4){};
        \node[NodeGraph](A030)at(3,0){};
        \node[NodeGraph](A031)at(3,1){};
        \node[NodeGraph](A032)at(3,2){};
        \node[NodeGraph](A033)at(3,3){};
        \node[NodeLabelGraph,above of=A000]{$000$};
        \node[NodeLabelGraph,below of=A006]{$006$};
        \node[NodeLabelGraph,left of=A010]{$010$};
        \node[NodeLabelGraph,above of=A012]{$012$};
        \node[NodeLabelGraph,below of=A024]{$024$};
        \node[NodeLabelGraph,below of=A033]{$033$};
        \draw[EdgeGraph](A000)--(A001);
        \draw[EdgeGraph](A000)--(A010);
        \draw[EdgeGraph](A001)--(A002);
        \draw[EdgeGraph](A001)--(A011);
        \draw[EdgeGraph](A002)--(A003);
        \draw[EdgeGraph](A002)--(A012);
        \draw[EdgeGraph](A003)--(A004);
        \draw[EdgeGraph](A003)--(A013);
        \draw[EdgeGraph](A004)--(A005);
        \draw[EdgeGraph](A004)--(A014);
        \draw[EdgeGraph](A005)--(A006);
        \draw[EdgeGraph](A005)--(A015);
        \draw[EdgeGraph](A010)--(A011);
        \draw[EdgeGraph](A010)--(A020);
        \draw[EdgeGraph](A011)--(A012);
        \draw[EdgeGraph](A011)--(A021);
        \draw[EdgeGraph](A012)--(A013);
        \draw[EdgeGraph](A012)--(A022);
        \draw[EdgeGraph](A013)--(A014);
        \draw[EdgeGraph](A013)--(A023);
        \draw[EdgeGraph](A014)--(A015);
        \draw[EdgeGraph](A014)--(A024);
        \draw[EdgeGraph](A020)--(A021);
        \draw[EdgeGraph](A020)--(A030);
        \draw[EdgeGraph](A021)--(A022);
        \draw[EdgeGraph](A021)--(A031);
        \draw[EdgeGraph](A022)--(A023);
        \draw[EdgeGraph](A022)--(A032);
        \draw[EdgeGraph](A023)--(A024);
        \draw[EdgeGraph](A023)--(A033);
        \draw[EdgeGraph](A030)--(A031);
        \draw[EdgeGraph](A031)--(A032);
        \draw[EdgeGraph](A032)--(A033);
        \end{scope}
        %
        % Canyon, m = 3, n = 3.
        \begin{scope}[xshift=7.5cm,yshift=0cm,rotate=-135]
        \draw[Grid](0,0)grid(3,6);
        \node[NodeGraph](B000)at(0,0){};
        \node[NodeGraph](B001)at(0,1){};
        \node[NodeGraph](B002)at(0,2){};
        \node[NodeGraph](B003)at(0,3){};
        \node[NodeGraph](B004)at(0,4){};
        \node[NodeGraph](B005)at(0,5){};
        \node[NodeGraph](B006)at(0,6){};
        \node[NodeGraph](B010)at(1,0){};
        \node[NodeGraph](B012)at(1,2){};
        \node[NodeGraph](B013)at(1,3){};
        \node[NodeGraph](B014)at(1,4){};
        \node[NodeGraph](B015)at(1,5){};
        \node[NodeGraph](B016)at(1,6){};
        \node[NodeGraph](B020)at(2,0){};
        \node[NodeGraph](B023)at(2,3){};
        \node[NodeGraph](B024)at(2,4){};
        \node[NodeGraph](B025)at(2,5){};
        \node[NodeGraph](B026)at(2,6){};
        \node[NodeGraph](B030)at(3,0){};
        \node[NodeGraph](B034)at(3,4){};
        \node[NodeGraph](B035)at(3,5){};
        \node[NodeGraph](B036)at(3,6){};
        \node[NodeLabelGraph,above of=B000]{$000$};
        \node[NodeLabelGraph,below right of=B006]{$006$};
        \node[NodeLabelGraph,below of=B010]{$010$};
        \node[NodeLabelGraph,below of=B013]{$013$};
        \node[NodeLabelGraph,below right of=B026]{$026$};
        \node[NodeLabelGraph,below of=B036]{$036$};
        \draw[EdgeGraph](B000)--(B001);
        \draw[EdgeGraph](B000)--(B010);
        \draw[EdgeGraph](B001)--(B002);
        \draw[EdgeGraph](B002)--(B003);
        \draw[EdgeGraph](B002)--(B012);
        \draw[EdgeGraph](B003)--(B004);
        \draw[EdgeGraph](B003)--(B013);
        \draw[EdgeGraph](B004)--(B005);
        \draw[EdgeGraph](B004)--(B014);
        \draw[EdgeGraph](B005)--(B006);
        \draw[EdgeGraph](B005)--(B015);
        \draw[EdgeGraph](B006)--(B016);
        \draw[EdgeGraph](B010)--(B012);
        \draw[EdgeGraph](B010)--(B020);
        \draw[EdgeGraph](B012)--(B013);
        \draw[EdgeGraph](B013)--(B014);
        \draw[EdgeGraph](B013)--(B023);
        \draw[EdgeGraph](B014)--(B015);
        \draw[EdgeGraph](B014)--(B024);
        \draw[EdgeGraph](B015)--(B016);
        \draw[EdgeGraph](B015)--(B025);
        \draw[EdgeGraph](B016)--(B026);
        \draw[EdgeGraph](B020)--(B023);
        \draw[EdgeGraph](B020)--(B030);
        \draw[EdgeGraph](B023)--(B024);
        \draw[EdgeGraph](B024)--(B025);
        \draw[EdgeGraph](B024)--(B034);
        \draw[EdgeGraph](B025)--(B026);
        \draw[EdgeGraph](B025)--(B035);
        \draw[EdgeGraph](B026)--(B036);
        \draw[EdgeGraph](B030)--(B034);
        \draw[EdgeGraph](B034)--(B035);
        \draw[EdgeGraph](B035)--(B036);
        \end{scope}
        %
        % Hill, m = 3, n = 3.
        \begin{scope}[xshift=15cm,yshift=0cm,rotate=-135]
        \draw[Grid](0,0)grid(3,6);
        \node[NodeGraph](C000)at(0,0){};
        \node[NodeGraph](C001)at(0,1){};
        \node[NodeGraph](C002)at(0,2){};
        \node[NodeGraph](C003)at(0,3){};
        \node[NodeGraph](C004)at(0,4){};
        \node[NodeGraph](C005)at(0,5){};
        \node[NodeGraph](C006)at(0,6){};
        \node[NodeGraph](C011)at(1,1){};
        \node[NodeGraph](C012)at(1,2){};
        \node[NodeGraph](C013)at(1,3){};
        \node[NodeGraph](C014)at(1,4){};
        \node[NodeGraph](C015)at(1,5){};
        \node[NodeGraph](C016)at(1,6){};
        \node[NodeGraph](C022)at(2,2){};
        \node[NodeGraph](C023)at(2,3){};
        \node[NodeGraph](C024)at(2,4){};
        \node[NodeGraph](C025)at(2,5){};
        \node[NodeGraph](C026)at(2,6){};
        \node[NodeGraph](C033)at(3,3){};
        \node[NodeGraph](C034)at(3,4){};
        \node[NodeGraph](C035)at(3,5){};
        \node[NodeGraph](C036)at(3,6){};
        \node[NodeLabelGraph,above of=C000]{$000$};
        \node[NodeLabelGraph,right of=C006]{$006$};
        \node[NodeLabelGraph,below of=C011]{$011$};
        \node[NodeLabelGraph,above of=C013]{$013$};
        \node[NodeLabelGraph,right of=C026]{$026$};
        \node[NodeLabelGraph,below of=C036]{$036$};
        \draw[EdgeGraph](C000)--(C001);
        \draw[EdgeGraph](C001)--(C002);
        \draw[EdgeGraph](C001)--(C011);
        \draw[EdgeGraph](C002)--(C003);
        \draw[EdgeGraph](C002)--(C012);
        \draw[EdgeGraph](C003)--(C004);
        \draw[EdgeGraph](C003)--(C013);
        \draw[EdgeGraph](C004)--(C005);
        \draw[EdgeGraph](C004)--(C014);
        \draw[EdgeGraph](C005)--(C006);
        \draw[EdgeGraph](C005)--(C015);
        \draw[EdgeGraph](C006)--(C016);
        \draw[EdgeGraph](C011)--(C012);
        \draw[EdgeGraph](C012)--(C013);
        \draw[EdgeGraph](C012)--(C022);
        \draw[EdgeGraph](C013)--(C014);
        \draw[EdgeGraph](C013)--(C023);
        \draw[EdgeGraph](C014)--(C015);
        \draw[EdgeGraph](C014)--(C024);
        \draw[EdgeGraph](C015)--(C016);
        \draw[EdgeGraph](C015)--(C025);
        \draw[EdgeGraph](C016)--(C026);
        \draw[EdgeGraph](C022)--(C023);
        \draw[EdgeGraph](C023)--(C024);
        \draw[EdgeGraph](C023)--(C033);
        \draw[EdgeGraph](C024)--(C025);
        \draw[EdgeGraph](C024)--(C034);
        \draw[EdgeGraph](C025)--(C026);
        \draw[EdgeGraph](C025)--(C035);
        \draw[EdgeGraph](C026)--(C036);
        \draw[EdgeGraph](C033)--(C034);
        \draw[EdgeGraph](C034)--(C035);
        \draw[EdgeGraph](C035)--(C036);
        \end{scope}
        %
        % Arrows.
        \draw[MapGraph](A000)edge[bend left=16](B000);
        \draw[MapGraph](A010)edge[bend left=16](B010);
        \draw[MapGraph](A012)edge[bend right=16](B013);
        \draw[MapGraph](A033)edge[bend right=16](B036);
        \draw[MapGraph](A024)edge[bend right=16](B026);
        \draw[MapGraph](A006)edge[bend right=16](B006);
        \draw[MapGraph](B000)edge[bend right=16](C000);
        \draw[MapGraph](B010)edge[bend right=16](C011);
        \draw[MapGraph](B013)edge[bend left=16](C013);
        \draw[MapGraph](B036)edge[bend left=16](C036);
        \draw[MapGraph](B026)edge[bend left=16](C026);
        \draw[MapGraph](B006)edge[bend left=16](C006);
    \end{tikzpicture}}
    \caption{\footnotesize From the left to the right, here are the posets
    $\SetAvalanche_{\mathbf 3}(3)$, $\SetCanyon_{\mathbf 3}(3)$, and $\SetHill_{\mathbf
    3}(3)$. The poset on the right is an order extension of the one at middle, which is
    itself an order extension of the one at the left.}
    \label{fig:poset_extensions_avalanche_canyon_hill}
\end{figure}
\medbreak

%%%%%%%%%%%%%%%%%%%%%%%%%%%%%%%%%%%%%%%%%%%%%%%%%%%%%%%%%%%%%%%%%%%%%%%%%%%%%%%%%%%%%%%%%%%%
\subsubsection{Isomorphisms between subposets}
There is a link between the hill and the canyon posets, as stated by the following result.
\medbreak

\begin{Theorem} \label{thm:input_wings_canyons_hills_bijection}
    For any $m \geq 1$ and $n \geq 0$,
    \begin{equation}
        \begin{tikzpicture}[Centering,xscale=3.5,yscale=2.2,font=\small]
            \node(Hills)at(0,0){$\SetHill_{\mathbf m - 1}(n)$};
            \node(InputWingsCanyon)at(1,0){$\InputWings\Par{\SetCanyon_\MapM}(n)$};
            \draw[MapIsomorphism](Hills)--(InputWingsCanyon)node[midway,above]{$\varphi_3$};
        \end{tikzpicture}
    \end{equation}
    is a poset isomorphism, where $\varphi_3$ is the map defined in the statement of
    Theorem~\ref{thm:poset_morphisms_hill}.
\end{Theorem}
\begin{proof}
    This follows from the description of the input-wings of $\SetCanyon_\MapM(n)$ provided
    by Proposition~\ref{prop:wings_butterflies_canyon}.
\end{proof}
\medbreak

Figure~\ref{fig:isomorphism_input_wings_canyon_hills} gives an example of the poset
isomorphism described by the statement of
Theorem~\ref{thm:input_wings_canyons_hills_bijection}.
\begin{figure}[ht]
    \centering
    \scalebox{.8}{
    \begin{tikzpicture}[Centering,xscale=.7,yscale=.7,
        x={(0,-.5cm)}, y={(-1.0cm,-1.0cm)}, z={(1.0cm,-1.0cm)}]
        % Hill, m = 1, n = 4.
        \begin{scope}[xshift=0cm,yshift=-7cm]
        \DrawGridSpace{1}{2}{3}
        \node[MarkedNodeGraph](A0000)at(0,0,0){};
        \node[MarkedNodeGraph](A0001)at(0,0,1){};
        \node[MarkedNodeGraph](A0002)at(0,0,2){};
        \node[MarkedNodeGraph](A0003)at(0,0,3){};
        \node[MarkedNodeGraph](A0011)at(0,1,1){};
        \node[MarkedNodeGraph](A0012)at(0,1,2){};
        \node[MarkedNodeGraph](A0013)at(0,1,3){};
        \node[MarkedNodeGraph](A0022)at(0,2,2){};
        \node[MarkedNodeGraph](A0023)at(0,2,3){};
        \node[MarkedNodeGraph](A0111)at(1,1,1){};
        \node[MarkedNodeGraph](A0112)at(1,1,2){};
        \node[MarkedNodeGraph](A0113)at(1,1,3){};
        \node[MarkedNodeGraph](A0122)at(1,2,2){};
        \node[MarkedNodeGraph](A0123)at(1,2,3){};
        \node[NodeLabelGraph,above of=A0000]{$0000$};
        \node[NodeLabelGraph,right of=A0002]{$0002$};
        \node[NodeLabelGraph,right of=A0003]{$0003$};
        \node[NodeLabelGraph,above of=A0011]{$0011$};
        \node[NodeLabelGraph,above left of=A0022]{$0022$};
        \node[NodeLabelGraph,below of=A0123]{$0123$};
        \draw[MarkedEdgeGraph](A0000)--(A0001);
        \draw[MarkedEdgeGraph](A0001)--(A0002);
        \draw[MarkedEdgeGraph](A0001)--(A0011);
        \draw[MarkedEdgeGraph](A0002)--(A0003);
        \draw[MarkedEdgeGraph](A0002)--(A0012);
        \draw[MarkedEdgeGraph](A0003)--(A0013);
        \draw[MarkedEdgeGraph](A0011)--(A0012);
        \draw[MarkedEdgeGraph](A0011)--(A0111);
        \draw[MarkedEdgeGraph](A0012)--(A0013);
        \draw[MarkedEdgeGraph](A0012)--(A0022);
        \draw[MarkedEdgeGraph](A0012)--(A0112);
        \draw[MarkedEdgeGraph](A0013)--(A0023);
        \draw[MarkedEdgeGraph](A0013)--(A0113);
        \draw[MarkedEdgeGraph](A0022)--(A0023);
        \draw[MarkedEdgeGraph](A0022)--(A0122);
        \draw[MarkedEdgeGraph](A0023)--(A0123);
        \draw[MarkedEdgeGraph](A0111)--(A0112);
        \draw[MarkedEdgeGraph](A0112)--(A0113);
        \draw[MarkedEdgeGraph](A0112)--(A0122);
        \draw[MarkedEdgeGraph](A0113)--(A0123);
        \draw[MarkedEdgeGraph](A0122)--(A0123);
        \end{scope}
        %
        % Input-wings, canyon, m = 2, n = 4.
        \begin{scope}[xshift=12cm,yshift=-1.5cm]
        \DrawGridSpace{2}{4}{6}
        \node[NodeGraph](B0000)at(0,0,0){};
        \node[NodeGraph](B0001)at(0,0,1){};
        \node[NodeGraph](B0002)at(0,0,2){};
        \node[NodeGraph](B0003)at(0,0,3){};
        \node[NodeGraph](B0004)at(0,0,4){};
        \node[NodeGraph](B0005)at(0,0,5){};
        \node[NodeGraph](B0006)at(0,0,6){};
        \node[NodeGraph](B0010)at(0,1,0){};
        \node[NodeGraph](B0012)at(0,1,2){};
        \node[NodeGraph](B0013)at(0,1,3){};
        \node[NodeGraph](B0014)at(0,1,4){};
        \node[NodeGraph](B0015)at(0,1,5){};
        \node[NodeGraph](B0016)at(0,1,6){};
        \node[NodeGraph](B0020)at(0,2,0){};
        \node[NodeGraph](B0023)at(0,2,3){};
        \node[NodeGraph](B0024)at(0,2,4){};
        \node[NodeGraph](B0025)at(0,2,5){};
        \node[NodeGraph](B0026)at(0,2,6){};
        \node[NodeGraph](B0030)at(0,3,0){};
        \node[NodeGraph](B0034)at(0,3,4){};
        \node[NodeGraph](B0035)at(0,3,5){};
        \node[NodeGraph](B0036)at(0,3,6){};
        \node[NodeGraph](B0040)at(0,4,0){};
        \node[NodeGraph](B0045)at(0,4,5){};
        \node[NodeGraph](B0046)at(0,4,6){};
        \node[NodeGraph](B0100)at(1,0,0){};
        \node[NodeGraph](B0101)at(1,0,1){};
        \node[NodeGraph](B0103)at(1,0,3){};
        \node[NodeGraph](B0104)at(1,0,4){};
        \node[NodeGraph](B0105)at(1,0,5){};
        \node[NodeGraph](B0106)at(1,0,6){};
        \node[NodeGraph](B0120)at(1,2,0){};
        \node[MarkedNodeGraph](B0123)at(1,2,3){};
        \node[MarkedNodeGraph](B0124)at(1,2,4){};
        \node[MarkedNodeGraph](B0125)at(1,2,5){};
        \node[MarkedNodeGraph](B0126)at(1,2,6){};
        \node[NodeGraph](B0130)at(1,3,0){};
        \node[MarkedNodeGraph](B0134)at(1,3,4){};
        \node[MarkedNodeGraph](B0135)at(1,3,5){};
        \node[MarkedNodeGraph](B0136)at(1,3,6){};
        \node[NodeGraph](B0140)at(1,4,0){};
        \node[MarkedNodeGraph](B0145)at(1,4,5){};
        \node[MarkedNodeGraph](B0146)at(1,4,6){};
        \node[NodeGraph](B0200)at(2,0,0){};
        \node[NodeGraph](B0201)at(2,0,1){};
        \node[NodeGraph](B0204)at(2,0,4){};
        \node[NodeGraph](B0205)at(2,0,5){};
        \node[NodeGraph](B0206)at(2,0,6){};
        \node[NodeGraph](B0230)at(2,3,0){};
        \node[MarkedNodeGraph](B0234)at(2,3,4){};
        \node[MarkedNodeGraph](B0235)at(2,3,5){};
        \node[MarkedNodeGraph](B0236)at(2,3,6){};
        \node[NodeGraph](B0240)at(2,4,0){};
        \node[MarkedNodeGraph](B0245)at(2,4,5){};
        \node[MarkedNodeGraph](B0246)at(2,4,6){};
        \node[NodeLabelGraph,above of=B0000]{$0000$};
        \node[NodeLabelGraph,below of=B0123]{$0123$};
        \node[NodeLabelGraph,below of=B0125]{$0125$};
        \node[NodeLabelGraph,below of=B0126]{$0126$};
        \node[NodeLabelGraph,left of=B0134]{$0134$};
        \node[NodeLabelGraph,left of=B0145]{$0145$};
        \node[NodeLabelGraph,below of=B0246]{$0246$};
        \draw[EdgeGraph](B0000)--(B0001);
        \draw[EdgeGraph](B0000)--(B0010);
        \draw[EdgeGraph](B0000)--(B0100);
        \draw[EdgeGraph](B0001)--(B0002);
        \draw[EdgeGraph](B0001)--(B0101);
        \draw[EdgeGraph](B0002)--(B0003);
        \draw[EdgeGraph](B0002)--(B0012);
        \draw[EdgeGraph](B0003)--(B0004);
        \draw[EdgeGraph](B0003)--(B0013);
        \draw[EdgeGraph](B0003)--(B0103);
        \draw[EdgeGraph](B0004)--(B0005);
        \draw[EdgeGraph](B0004)--(B0014);
        \draw[EdgeGraph](B0004)--(B0104);
        \draw[EdgeGraph](B0005)--(B0006);
        \draw[EdgeGraph](B0005)--(B0015);
        \draw[EdgeGraph](B0005)--(B0105);
        \draw[EdgeGraph](B0006)--(B0016);
        \draw[EdgeGraph](B0006)--(B0106);
        \draw[EdgeGraph](B0010)--(B0012);
        \draw[EdgeGraph](B0010)--(B0020);
        \draw[EdgeGraph](B0012)--(B0013);
        \draw[EdgeGraph](B0013)--(B0014);
        \draw[EdgeGraph](B0013)--(B0023);
        \draw[EdgeGraph](B0014)--(B0015);
        \draw[EdgeGraph](B0014)--(B0024);
        \draw[EdgeGraph](B0015)--(B0016);
        \draw[EdgeGraph](B0015)--(B0025);
        \draw[EdgeGraph](B0016)--(B0026);
        \draw[EdgeGraph](B0020)--(B0023);
        \draw[EdgeGraph](B0020)--(B0030);
        \draw[EdgeGraph](B0020)--(B0120);
        \draw[EdgeGraph](B0023)--(B0024);
        \draw[EdgeGraph](B0023)--(B0123);
        \draw[EdgeGraph](B0024)--(B0025);
        \draw[EdgeGraph](B0024)--(B0034);
        \draw[EdgeGraph](B0024)--(B0124);
        \draw[EdgeGraph](B0025)--(B0026);
        \draw[EdgeGraph](B0025)--(B0035);
        \draw[EdgeGraph](B0025)--(B0125);
        \draw[EdgeGraph](B0026)--(B0036);
        \draw[EdgeGraph](B0026)--(B0126);
        \draw[EdgeGraph](B0030)--(B0034);
        \draw[EdgeGraph](B0030)--(B0040);
        \draw[EdgeGraph](B0030)--(B0130);
        \draw[EdgeGraph](B0034)--(B0035);
        \draw[EdgeGraph](B0034)--(B0134);
        \draw[EdgeGraph](B0035)--(B0036);
        \draw[EdgeGraph](B0035)--(B0045);
        \draw[EdgeGraph](B0035)--(B0135);
        \draw[EdgeGraph](B0036)--(B0046);
        \draw[EdgeGraph](B0036)--(B0136);
        \draw[EdgeGraph](B0040)--(B0045);
        \draw[EdgeGraph](B0040)--(B0140);
        \draw[EdgeGraph](B0045)--(B0046);
        \draw[EdgeGraph](B0045)--(B0145);
        \draw[EdgeGraph](B0046)--(B0146);
        \draw[EdgeGraph](B0100)--(B0101);
        \draw[EdgeGraph](B0100)--(B0120);
        \draw[EdgeGraph](B0100)--(B0200);
        \draw[EdgeGraph](B0101)--(B0103);
        \draw[EdgeGraph](B0101)--(B0201);
        \draw[EdgeGraph](B0103)--(B0104);
        \draw[EdgeGraph](B0103)--(B0123);
        \draw[EdgeGraph](B0104)--(B0105);
        \draw[EdgeGraph](B0104)--(B0124);
        \draw[EdgeGraph](B0104)--(B0204);
        \draw[EdgeGraph](B0105)--(B0106);
        \draw[EdgeGraph](B0105)--(B0125);
        \draw[EdgeGraph](B0105)--(B0205);
        \draw[EdgeGraph](B0106)--(B0126);
        \draw[EdgeGraph](B0106)--(B0206);
        \draw[EdgeGraph](B0120)--(B0123);
        \draw[EdgeGraph](B0120)--(B0130);
        \draw[MarkedEdgeGraph](B0123)--(B0124);
        \draw[MarkedEdgeGraph](B0124)--(B0125);
        \draw[MarkedEdgeGraph](B0124)--(B0134);
        \draw[MarkedEdgeGraph](B0125)--(B0126);
        \draw[MarkedEdgeGraph](B0125)--(B0135);
        \draw[MarkedEdgeGraph](B0126)--(B0136);
        \draw[EdgeGraph](B0130)--(B0134);
        \draw[EdgeGraph](B0130)--(B0140);
        \draw[EdgeGraph](B0130)--(B0230);
        \draw[MarkedEdgeGraph](B0134)--(B0135);
        \draw[MarkedEdgeGraph](B0134)--(B0234);
        \draw[MarkedEdgeGraph](B0135)--(B0136);
        \draw[MarkedEdgeGraph](B0135)--(B0145);
        \draw[MarkedEdgeGraph](B0135)--(B0235);
        \draw[MarkedEdgeGraph](B0136)--(B0146);
        \draw[MarkedEdgeGraph](B0136)--(B0236);
        \draw[EdgeGraph](B0140)--(B0145);
        \draw[EdgeGraph](B0140)--(B0240);
        \draw[MarkedEdgeGraph](B0145)--(B0146);
        \draw[MarkedEdgeGraph](B0145)--(B0245);
        \draw[MarkedEdgeGraph](B0146)--(B0246);
        \draw[EdgeGraph](B0200)--(B0201);
        \draw[EdgeGraph](B0200)--(B0230);
        \draw[EdgeGraph](B0201)--(B0204);
        \draw[EdgeGraph](B0204)--(B0205);
        \draw[EdgeGraph](B0204)--(B0234);
        \draw[EdgeGraph](B0205)--(B0206);
        \draw[EdgeGraph](B0205)--(B0235);
        \draw[EdgeGraph](B0206)--(B0236);
        \draw[EdgeGraph](B0230)--(B0234);
        \draw[EdgeGraph](B0230)--(B0240);
        \draw[MarkedEdgeGraph](B0234)--(B0235);
        \draw[MarkedEdgeGraph](B0235)--(B0236);
        \draw[MarkedEdgeGraph](B0235)--(B0245);
        \draw[MarkedEdgeGraph](B0236)--(B0246);
        \draw[EdgeGraph](B0240)--(B0245);
        \draw[MarkedEdgeGraph](B0245)--(B0246);
        \end{scope}
        %
        % Arrows.
        \draw[MapGraph](A0000)edge[bend left=16](B0123);
        \draw[MapGraph](A0011)edge[bend left=16](B0134);
        \draw[MapGraph](A0002)edge[bend right=16](B0125);
        \draw[MapGraph](A0003)edge[bend right=16](B0126);
        \draw[MapGraph](A0022)edge[bend right=16](B0145);
        \draw[MapGraph](A0123)edge[bend right=16](B0246);
    \end{tikzpicture}}
    \caption{\footnotesize The subposet of $\SetCanyon_\MapTwo(4)$ formed by its input-wings
    is isomorphic to $\SetHill_\MapOne(4)$.}
    \label{fig:isomorphism_input_wings_canyon_hills}
\end{figure}
A consequence of Theorem~\ref{thm:input_wings_canyons_hills_bijection} is that, for any $m
\geq 2$ and $n \geq 0$, the image by~$\varphi_3^{-1}$ of
\begin{math}
    \SetCanyon_{\mathbf m - 1}(n) \cap \InputWings\Par{\SetCanyon_\MapM}(n)
\end{math}
is $\SetHill_{\mathbf m - 2}(n)$. Indeed, the set
\begin{math}
    \SetCanyon_{\mathbf m - 1}(n) \cap \InputWings\Par{\SetCanyon_\MapM}(n)
\end{math}
is nothing but the set $\InputWings\Par{\SetCanyon_{\mathbf m - 1}}(n)$.
\medbreak

To summarize the whole situation of the links between the three studied Fuss-Catalan posets
stated by Theorems~\ref{thm:poset_morphisms_avalanche}, \ref{thm:poset_morphisms_hill},
\ref{thm:poset_morphisms_canyon}, \ref{thm:morphism_canyon_hill},
\ref{thm:input_wings_canyons_hills_bijection}, and
Proposition~\ref{prop:elevation_map_poset_morphism}, one has for any $m \geq 1$ and $n \geq
0$ the diagram
\begin{equation} \begin{split}
    \begin{tikzpicture}[Centering,xscale=2.8,yscale=1.6,font=\footnotesize]
        \node(Avalanches)at(2,2){$\SetAvalanche_{\mathbf m - 1}(n)$};
        \node(InputWingsAvalanche)at(3,2){$\InputWings\Par{\SetAvalanche_\MapM}(n)$};
        \node(OutputWingsAvalanche)at(4,2){$\OutputWings\Par{\SetAvalanche_\MapM}(n)$};
        \node(ButterfliesAvalanche)at(5,2)
            {$\Butterflies\Par{\SetAvalanche_{\mathbf m + 1}}(n)$};
        \node(Canyons)at(2,1){$\SetCanyon_{\mathbf m - 1}(n)$};
        \node(OutputWingsCanyon)at(1,1){$\OutputWings\Par{\SetCanyon_\MapM}(n)$};
        \node(InputWingsCanyon)at(1,0){$\InputWings\Par{\SetCanyon_\MapM}(n)$};
        \node(ButterfliesCanyon)at(1,-1){$\Butterflies\Par{\SetCanyon_{\mathbf m + 1}}(n)$};
        \node(Hills)at(2,0){$\SetHill_{\mathbf m - 1}(n)$};
        \node(InputWingsHill)at(3,0){$\InputWings\Par{\SetHill_\MapM}(n)$};
        \node(OutputWingsHill)at(4,0){$\OutputWings\Par{\SetHill_\MapM}(n)$};
        \node(ButterfliesHill)at(3,-1){$\Butterflies\Par{\SetHill_{\mathbf m + 1}}(n)$};
        \draw[Map](Avalanches)--(Canyons)node[midway,right]
            {$\ElevationMap_{\SetCanyon_{\mathbf m - 1}}^{-1}$};
        \draw[Map](Canyons)--(Hills)node[midway,right]
            {$\ElevationMap_{\SetHill_{\mathbf m - 1}}^{-1} \circ
            \ElevationMap_{\SetCanyon_{\mathbf m - 1}}$};
        \draw[Map](OutputWingsCanyon)--(InputWingsCanyon)node[midway,left]{$\rho$};
        \draw[MapEmbedding](InputWingsAvalanche)--(OutputWingsAvalanche)node[midway,above]
            {$\varphi_2$};
        \draw[MapEmbedding](InputWingsHill)--(ButterfliesHill)node[midway,right]{$\Id$};
        \draw[MapIsomorphism](Avalanches)--(InputWingsAvalanche)node[midway,above]
            {$\varphi_1$};
        \draw[MapIsomorphism](OutputWingsAvalanche)--(ButterfliesAvalanche)
            node[midway,above]
            {$\varphi_1$};
        \draw[MapIsomorphism](Hills)--(InputWingsCanyon)node[midway,below]{$\varphi_3$};
        \draw[MapIsomorphism](InputWingsCanyon)--(ButterfliesCanyon)node[midway,left]
            {$\varphi_4$};
        \draw[MapIsomorphism](Hills)--(InputWingsHill)node[midway,below]{$\varphi_3$};
        \draw[MapIsomorphism](InputWingsHill)--(OutputWingsHill)node[midway,below]
            {$\varphi_2$};
    \end{tikzpicture}
\end{split} \end{equation}
of poset morphisms, embeddings, or isomorphisms.
\medbreak

%%%%%%%%%%%%%%%%%%%%%%%%%%%%%%%%%%%%%%%%%%%%%%%%%%%%%%%%%%%%%%%%%%%%%%%%%%%%%%%%%%%%%%%%%%%%
%%%%%%%%%%%%%%%%%%%%%%%%%%%%%%%%%%%%%%%%%%%%%%%%%%%%%%%%%%%%%%%%%%%%%%%%%%%%%%%%%%%%%%%%%%%%
%%%%%%%%%%%%%%%%%%%%%%%%%%%%%%%%%%%%%%%%%%%%%%%%%%%%%%%%%%%%%%%%%%%%%%%%%%%%%%%%%%%%%%%%%%%%
\section{Associative algebras of $\delta$-cliffs} \label{sec:cliff_algebras}
This part of the work is devoted to endow the sets of $\delta$-cliffs with algebraic
structures. We describe a graded associative algebra on $\delta$-cliffs motivated by a
connection with the $\delta$-cliff posets. Indeed, the product of two $\delta$-cliffs is a
sum of $\delta$-cliffs forming an interval of a $\delta$-cliff poset. This property is
shared by a lot of combinatorial and algebraic structures. For instance, the algebra
$\FQSym$ of permutations is related to the weak Bruhat order~\cite{DHT02,AS05}, the algebra
$\PBT$ of binary trees is related to the Tamari order~\cite{LR02,HNT05}, and the algebra
$\Sym$ of integer compositions is related to the hypercube~\cite{GKLLRT94}.
\medbreak

%%%%%%%%%%%%%%%%%%%%%%%%%%%%%%%%%%%%%%%%%%%%%%%%%%%%%%%%%%%%%%%%%%%%%%%%%%%%%%%%%%%%%%%%%%%%
%%%%%%%%%%%%%%%%%%%%%%%%%%%%%%%%%%%%%%%%%%%%%%%%%%%%%%%%%%%%%%%%%%%%%%%%%%%%%%%%%%%%%%%%%%%%
\subsection{Coalgebras and algebras}
We introduce here a cograded coalgebra structure on the linear span of all $\delta$-cliffs
and then, by considering the dual structure, we obtain a graded algebra. When $\delta$
satisfies some properties, this gives an associative algebra.
\medbreak

From now, $\K$ is the ground field (of characteristic zero) of all algebraic structures to
be defined next. For any graded vector space $\SpaceV$, we denote by
$\HilbertSeries_\SpaceV(t)$ the Hilbert series of~$\SpaceV$.
\medbreak

We use here the notions of valleys in range maps and of valley-free range maps, defined in
Section~\ref{subsubsec:first_definitions_cliffs}.
\medbreak

%%%%%%%%%%%%%%%%%%%%%%%%%%%%%%%%%%%%%%%%%%%%%%%%%%%%%%%%%%%%%%%%%%%%%%%%%%%%%%%%%%%%%%%%%%%%
\subsubsection{Coalgebras of $\delta$-cliffs} \label{subsubsec:cliff_coalgebras}
For any range map $\delta$, let $\SpaceCliff_\delta$ be the linear span of all
$\delta$-cliffs. This space is graded and decomposes as
\begin{math}
    \SpaceCliff_\delta = \bigoplus_{n \geq 0} \SpaceCliff_\delta(n),
\end{math}
where $\SpaceCliff_\delta(n)$, $n \geq 0$, is the linear span of all $\delta$-cliffs of size
$n$.  By definition, the set $\Bra{\BasisF_u : u \in \SetCliff_\delta}$ is a basis of
$\SpaceCliff_\delta$, and we shall refer to it as the \Def{fundamental basis} or as the
\Def{$\BasisF$-basis}.  Let also $c : \SpaceCliff_\delta \to \K$ be the linear map defined
by $c\Par{\BasisF_\epsilon} := 1$ and by $c\Par{\BasisF_u} := 0$ for any $u \in
\SetCliff_\delta \setminus \{\epsilon\}$.
\medbreak

For any $n \geq 0$, the \Def{$\delta$-reduction map} is the map
\begin{math}
    \Reduction_\delta : \N^n \to \SetCliff_\delta(n)
\end{math}
defined for any word $u \in \N^n$ and any $i \in [n]$ by
\begin{math}
    \Par{\Reduction_\delta(u)}_i := \min \Bra{u_i, \delta(i)}.
\end{math}
For instance, $\Reduction_\MapOne(212066) = 012045$ and $\Reduction_\MapTwo(212066) =
012066$.
\medbreak

Let
\begin{math}
    \Coproduct : \SpaceCliff_\delta
    \to \SpaceCliff_\delta \otimes \SpaceCliff_\delta
\end{math}
be the cobinary coproduct defined, for any $w \in \SetCliff_\delta$, by
\begin{equation}
    \Delta\Par{\BasisF_w} :=
    \sum_{\substack{
        u, v \in \N^* \\
        w = uv
    }}
    \BasisF_u \otimes \BasisF_{\Reduction_\delta(v)},
\end{equation}
where $\N^*$ denotes the set of all words on $\N$.  This coproduct is well-defined since any
prefix of a $\delta$-cliff is a $\delta$-cliff and the image of a word on $\N$ by the
$\delta$-reduction map is by definition a $\delta$-cliff.  For instance, for $\delta :=
1221013^\omega$, we have in $\SpaceCliff_\delta$,
\begin{equation}
    \Coproduct\Par{\BasisF_{1021}} =
    \BasisF_{\epsilon} \otimes \BasisF_{1021}
    + \BasisF_{1} \otimes \BasisF_{021}
    + \BasisF_{10} \otimes \BasisF_{11}
    + \BasisF_{102} \otimes \BasisF_{1}
    + \BasisF_{1021} \otimes \BasisF_{\epsilon},
\end{equation}
and
\begin{equation} \begin{split}
    \Coproduct\Par{\BasisF_{1211010}} & =
    \BasisF_{\epsilon} \otimes \BasisF_{1211010}
    + \BasisF_{1} \otimes \BasisF_{111000}
    + \BasisF_{12} \otimes \BasisF_{11010}
    + \BasisF_{121} \otimes \BasisF_{1010} \\
    & \quad + \BasisF_{1211} \otimes \BasisF_{010}
    + \BasisF_{12110} \otimes \BasisF_{10}
    + \BasisF_{121101} \otimes \BasisF_{0}
    + \BasisF_{1211010} \otimes \BasisF_{\epsilon}.
\end{split} \end{equation}
\medbreak

\begin{Theorem} \label{thm:cliff_coassociative_coalgebra}
    Let $\delta$ be a range map. The space $\SpaceCliff_\delta$ endowed with the coproduct
    $\Coproduct$ and the counit $c$ is a counital cograded coalgebra. Moreover, $\Coproduct$
    is coassociative if and only if $\delta$ is valley-free.
\end{Theorem}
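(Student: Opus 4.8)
The plan is to verify the coalgebra axioms directly on the fundamental basis, using the explicit description of $\Coproduct$ via factorizations $w = uv$ and the reduction map $\Reduction_\delta$. First I would check that $\Coproduct$ is \emph{cograded}: for $w \in \SetCliff_\delta(n)$, each term $\BasisF_u \otimes \BasisF_{\Reduction_\delta(v)}$ in $\Coproduct(\BasisF_w)$ has $|u| + |\Reduction_\delta(v)| = |u| + |v| = n$, so $\Coproduct(\SpaceCliff_\delta(n)) \subseteq \bigoplus_{i+j=n} \SpaceCliff_\delta(i) \otimes \SpaceCliff_\delta(j)$. The counit axiom is immediate: applying $c \otimes \mathrm{id}$ to $\Coproduct(\BasisF_w)$ kills every term except $u = \epsilon$, $v = w$, which contributes $\BasisF_{\Reduction_\delta(w)} = \BasisF_w$ since $w$ is already a $\delta$-cliff; symmetrically for $\mathrm{id} \otimes c$, where the surviving term is $u = w$, $v = \epsilon$.

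The substance is the coassociativity statement. I would compare $(\Coproduct \otimes \mathrm{id}) \circ \Coproduct$ and $(\mathrm{id} \otimes \Coproduct) \circ \Coproduct$ applied to $\BasisF_w$ with $w$ of length $n$. Both sides are indexed by ordered triples $(u, v', v'')$ of words with $w = u v' v''$ (say $v = v' v''$ is the suffix after the first cut, then cut again). The left-hand side yields $\BasisF_u \otimes \BasisF_{\Reduction_\delta(v')^{(1)}} \otimes \BasisF_{\Reduction_\delta(v'')^{(2)}}$ where the middle and last factors are obtained by reducing the \emph{suffix} $v = v'v''$, then cutting it and reducing again — crucially, reduction on a suffix $v$ uses the shift $\delta(i)$ but measured from the start of $v$, i.e. the position offsets differ. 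The right-hand side reduces $w$ to $u v$ by one cut, but the second cut of the suffix and its reduction again use offsets from the start of that suffix. The key computation is to compare, for a middle/last block sitting at absolute position $p$ in $w$, the value one gets by reducing directly versus reducing-then-reducing. The relevant identity is that $\min\{\min\{a, \delta(p)\}, \delta(q)\} = \min\{a, \delta(q)\}$ holds for all $a$ \emph{iff} $\delta(q) \le \delta(p)$; here $p$ is an earlier position (start of the big suffix) and $q$ is a later position (same letter's position measured inside the smaller suffix — which is larger in absolute terms). So coassociativity on the level of individual terms requires $\delta$ to be nonincreasing after a certain point relative to earlier cuts; chasing which pairs $(p,q)$ actually arise across all factorizations is what yields the valley-free condition. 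I expect this bookkeeping — carefully tracking absolute versus relative positions of each letter in the three-fold coproduct — to be the main obstacle, and the place where a concrete counterexample built around a valley $\delta(i_1) > \delta(i_2) < \delta(i_3)$ must be exhibited for the ``only if'' direction.

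For the ``if'' direction, assuming $\delta$ valley-free (unimodal), I would argue that on any suffix of a $\delta$-cliff the relevant double reduction collapses: once $w$ is a $\delta$-cliff, the letters are already bounded by the $\delta$-values at their absolute positions, and unimodality guarantees that reducing a suffix block with offsets starting later never over-truncates relative to an earlier reduction, because the sequence of bounds encountered is monotone where it matters. Concretely, I would set up the comparison term-by-term: fix a factorization $w = u\,v_1\,v_2$, and show $\Reduction_\delta$ applied to the last block inside $\Reduction_\delta(v_1 v_2)$ equals $\Reduction_\delta$ applied to the last block directly inside $w$, using that for positions $1 \le i$ inside $v_2$, the two relevant bounds $\delta(|v_1| + i)$ and $\delta(i)$ satisfy the needed inequality precisely because $\delta$ has no valley and the letter in question is already $\le \delta$ of its absolute position in $w$. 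This makes the two iterated coproducts agree term by term, hence as elements of the threefold tensor power.

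For the ``only if'' direction, suppose $\delta$ has a valley, so there exist $1 \le i_1 < i_2 < i_3$ with $\delta(i_1) > \delta(i_2) < \delta(i_3)$; I would pick the shortest offending configuration and build a single $\delta$-cliff $w$ of length $i_3$ (or slightly longer) together with a term in the threefold tensor that appears in one iteration order but not the other, because a letter that survives as $\delta(i_1)$ under one grouping gets truncated to $\delta(i_2)$ and then cannot recover to $\delta(i_3)$ under the other grouping — exploiting exactly that $\min\{\min\{a,\delta(i_2)\}, \delta(i_3)\} \ne \min\{a, \delta(i_3)\}$ when $\delta(i_2) < a \le \delta(i_3)$. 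Displaying one such $w$ and one mismatched coefficient suffices, as was done implicitly in the worked $\delta = 1211010\ldots$ example; I would reuse that shape of example, choosing the letters to equal the relevant $\delta$-bounds at the valley positions so the failure is visible in a single tensor component.
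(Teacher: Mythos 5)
Your plan follows the paper's proof essentially verbatim: both iterated coproducts are expanded as sums over threefold factorizations $w = xyz$, the discrepancy is confined to the last tensor factor, where one side applies $\Reduction_\delta$ once and the other applies it twice with shifted offsets, and the resulting failure of the identity $\min\{a,\delta(|y|+i),\delta(i)\} = \min\{a,\delta(i)\}$ combined with the cliff bound $a = z_i \leq \delta(|x|+|y|+i)$ is exactly the existence of a valley. One bookkeeping caution: in the double reduction the bound at the \emph{larger} argument $\delta(|y|+i)$ is applied first and $\delta(i)$ second, so in your mismatch formula the roles you assign to the three valley positions need to be permuted (the middle position $i_2$ is the one that over-truncates, the outer two are the single-reduction offset and the absolute position), but this is precisely the positional care you flag and does not affect the structure of the argument.
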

\begin{proof}
    The first part of the statement is a direct consequence of the definition of
    $\Coproduct$.
    \smallbreak

    To establish the second part, let us compute the two ways to apply twice the coproduct
    $\Coproduct$ on a basis element of $\SpaceCliff_\delta$. For any $w \in
    \SetCliff_\delta$, we have
    \begin{equation} \label{equ:cliff_coassociative_coalgebra_1}
        (\Delta \otimes I) \Delta\Par{\BasisF_w}
        =
        \sum_{\substack{
            x, y, z \in \N^* \\
            w = x y z
        }}
        \BasisF_x \otimes \BasisF_{\Reduction_\delta(y)}
        \otimes \BasisF_{\Reduction_\delta(z)}
    \end{equation}
    and
    \begin{equation} \label{equ:cliff_coassociative_coalgebra_2}
        (I \otimes \Delta) \Delta\Par{\BasisF_w}
        %& =
        =
        \sum_{\substack{
            u, v \in \N^* \\
            w = u v
        }}
        \sum_{\substack{
            y', z' \in \N^* \\
            \Reduction_\delta(v) = y' z'
        }}
        \BasisF_u \otimes \BasisF_{y'} \otimes \BasisF_{\Reduction_\delta\Par{z'}}
        =
        \sum_{\substack{
            x, y, z \in \N^* \\
            w = x y z
        }}
        \BasisF_x \otimes \BasisF_{\Reduction_\delta(y)}
        \otimes \BasisF_{\Reduction_{\delta_{|y|}}(z)},
    \end{equation}
    where for any $k \geq 0$, $\delta_k$ is the range map satisfying $\delta_k(i) = \min
    \Bra{\delta(i), \delta(k + i)}$ for any $i \geq 1$. The second equality
    of~\eqref{equ:cliff_coassociative_coalgebra_2} comes from the two following facts.
    First, for any $i \in \Han{\left| y' \right|}$,
    \begin{math}
        y'_i = \Reduction_\delta(v)_i = \Reduction_\delta(y)_i
    \end{math}
    where $y$ is the factor $w_{|u| + 1} \dots w_{|u| + \left| y' \right|}$ of $w$.  Second,
    we have for any $j \in \Han{\left| z' \right|}$,
    \begin{math}
        z'_j
        = {\Reduction_\delta(v)}_{\left| y' \right| + j}
        = \min \Bra{v_{\left| y' \right| + j}, \delta\Par{\left| y' \right| + j}},
    \end{math}
    so that for any $i \in \Han{\left| z' \right|}$,
    \begin{math}
        {\Reduction_\delta\Par{z'}}_i
        = \min \Bra{z'_i, \delta(i)}
        = \min \Bra{v_{\left| y' \right| + i}, \delta\Par{\left| y' \right| + i}, \delta(i)}
        = \Reduction_{\delta_{\left| y' \right|}}(z)_i,
    \end{math}
    where $z$ is the suffix of length $\left| z' \right|$ of $w$.
    \smallbreak

    Let us now prove that~\eqref{equ:cliff_coassociative_coalgebra_1}
    and~\eqref{equ:cliff_coassociative_coalgebra_2} are different if and only if $\delta$
    has a valley. These two elements are different if and only if there exists a
    factorization $w = x y z$ with $x, y, z \in \N^*$ such that $\Reduction_\delta(z) \ne
    \Reduction_{\delta_{|y|}}(z)$. This is equivalent to the fact there exists an index $i
    \in [|z|]$ such that $\Reduction_\delta(z)_i \ne \Reduction_{\delta_{|y|}}(z)_i$.  Since
    $z$ is a suffix of $w$, there exists a $j \in [|x| + |y| + 1, |w|]$ such that $z = w_j
    w_{j + 1} \dots w_{|w|}$. Now, we have
    \begin{equation}
        {\Reduction_\delta(z)}_i
        = \min \Bra{w_{j + i - 1}, \delta(i)}
        \ne \min \Bra{w_{j + i - 1}, \delta(|y| + i), \delta(i)}
        = {\Reduction_{\delta_{|y|}}(z)}_i.
    \end{equation}
    To have this difference, we necessarily have $\delta(|y| + i) < z_i$ and $\delta(|y| +
    i) < \delta(i)$.  Now, since $w$ is in particular a $\delta$-cliff, we have $z_i = w_{j
    + i - 1} \leq \delta(j + i - 1)$. Therefore, we obtain
    \begin{math}
        \delta(i) > \delta(|y| + i) < \delta(j + i - 1).
    \end{math}
    Since $j \geq |y| + 1$, this leads to the fact that $\delta$ has a valley. This
    establishes that $\Coproduct$ is coassociative if and only $\delta$ is valley-free.
\end{proof}
\medbreak

%%%%%%%%%%%%%%%%%%%%%%%%%%%%%%%%%%%%%%%%%%%%%%%%%%%%%%%%%%%%%%%%%%%%%%%%%%%%%%%%%%%%%%%%%%%%
\subsubsection{Algebras of $\delta$-cliffs}
Let
\begin{math}
    \Product : \SpaceCliff_\delta \otimes \SpaceCliff_\delta \to \SpaceCliff_\delta
\end{math}
be the binary product defined as the dual of the coproduct $\Coproduct$ introduced in
Section~\ref{subsubsec:cliff_coalgebras}, where the graded dual space
${\SpaceCliff_\delta}^\ast$ is identified with $\SpaceCliff_\delta$.  By duality, this
product $\Product$ satisfies, for any $u, v \in \SetCliff_\delta$,
\begin{equation}
    \BasisF_u \Product \BasisF_v =
    \sum_{w \in \SetCliff_\delta}
    \Angle{\BasisF_u \otimes \BasisF_v, \Coproduct\Par{\BasisF_w}} \, \BasisF_w,
\end{equation}
where, for any $w \in \SetCliff_\delta$, $\Angle{\BasisF_u \otimes \BasisF_v,
\Coproduct\Par{\BasisF_w}}$ is the coefficient of $\BasisF_u \otimes \BasisF_v$ in
$\Coproduct\Par{\BasisF_w}$. Therefore,
\begin{equation} \label{equ:product_cliff_algebra}
    \BasisF_u \Product \BasisF_v =
    \sum_{\substack{
        v' \in \Reduction_\delta^{-1}(v) \\
        u v' \in \SetCliff_\delta
    }}
    \BasisF_{u v'},
\end{equation}
where $\Reduction_\delta^{-1}(v)$ is the fiber of $v$ under the map $\Reduction_\delta$. For
instance, in $\SpaceCliff_\MapOne$,
\begin{equation}
    \BasisF_{00} \Product \BasisF_{011}
    =
    \BasisF_{00011} + \BasisF_{00021} + \BasisF_{00031} + \BasisF_{00111} + \BasisF_{00121}
    + \BasisF_{00131} + \BasisF_{00211} + \BasisF_{00221} + \BasisF_{00231},
\end{equation}
in $\SpaceCliff_\MapTwo$,
\begin{equation}
    \BasisF_{00} \Product \BasisF_{011} =
    \BasisF_{00011} + \BasisF_{00111} + \BasisF_{00211} + \BasisF_{00311} + \BasisF_{00411},
\end{equation}
and in $\SpaceCliff_\delta$, where $\delta = 01312^\omega$, we have both
\begin{equation}
    \BasisF_{00} \Product \BasisF_{011} =
    \BasisF_{00011} + \BasisF_{00111} + \BasisF_{00211} + \BasisF_{00311}
\end{equation}
and
\begin{equation}
    \BasisF_{00} \Product \BasisF_{013} = 0.
\end{equation}
\medbreak

By Theorem~\ref{thm:cliff_coassociative_coalgebra}, the product $\Product$ admits the linear
map $\Unit : \K \to \SpaceCliff_\delta$ satisfying $\Unit(1) = \BasisF_\epsilon$ as unit,
and is graded.  Moreover, again by this last theorem, $\Product$ is associative if and only
if $\delta$ is valley-free.  For instance, for $\delta := 101^\omega$, $\delta$ has a valley
    and since
\begin{equation}
    \Par{\BasisF_0 \Product \BasisF_0} \Product \BasisF_0
    - \BasisF_0 \Product \Par{\BasisF_0 \Product \BasisF_0}
    = \BasisF_{000} - \Par{\BasisF_{000} + \BasisF_{001}}
    = - \BasisF_{001}
    \ne 0,
\end{equation}
the product $\Product$ of $\SpaceCliff_\delta$ is not associative.
\medbreak

We now establish a link between this product $\Product$ on the $\BasisF$-basis of
$\SpaceCliff_\delta$ and the posets $\SetCliff_\delta(n)$, $n \geq 0$, introduced and
studied in the previous sections. For this, let for any $n_1, n_2 \geq 0$ the two binary
operations
\begin{math}
    \Over, \, \Under : \SetCliff_\delta\Par{n_1} \times
    \SetCliff_\delta\Par{n_2} \to \N^{n_1 + n_2}
\end{math}
defined, for any $u, v \in \SetCliff_\delta$, by $u \Over v := u v$ and $u \Under v := u v'$
where $v'$ is the word on $\N$ of length $|v|$ satisfying, for any $i \in [|v|]$,
\begin{equation}
    v'_i =
    \begin{cases}
        \delta(|u| + i) & \mbox{if } v_i = \delta(i), \\
        v_i & \mbox{otherwise}.
    \end{cases}
\end{equation}
For instance, for $\delta = 112334^\omega$, $010 \Over 1021 = 010 1021$ and $010 \Under 1021
= 010 3041$.  For $\delta = 210^\omega$, $21 \Under 11 = 2110$. Observe that this last word
is not a $\delta$-cliff.
\medbreak

\begin{Lemma} \label{lem:over_under_well_definitions}
    Let $\delta$ be a range map and $u, v \in \SetCliff_\delta$. If the word $u \Over v$ is
    a $\delta$-cliff, then $u \Under v$ also is.
\end{Lemma}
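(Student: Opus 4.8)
The statement to prove is: if $u \Over v = uv$ is a $\delta$-cliff, then $u \Under v = uv'$ is also a $\delta$-cliff, where $v'_i = \delta(|u|+i)$ if $v_i = \delta(i)$ and $v'_i = v_i$ otherwise. The plan is a direct verification at each position. Write $n_1 := |u|$ and $n_2 := |v|$, so $n := n_1 + n_2$ is the length of both $uv$ and $uv'$.

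First I would observe that the prefix $u$ of $uv'$ equals the prefix $u$ of $uv$, which is a $\delta$-cliff by hypothesis (or simply because $u \in \SetCliff_\delta$ to begin with); so the constraints $0 \leq (uv')_j \leq \delta(j)$ for $j \in [n_1]$ are automatic. It remains to check positions $j = n_1 + i$ for $i \in [n_2]$, i.e.\ that $0 \leq v'_i \leq \delta(n_1 + i)$. I would split into the two cases of the definition of $v'$. If $v_i \ne \delta(i)$, then $v'_i = v_i$; since $v$ is a $\delta$-cliff we have $0 \leq v_i \leq \delta(i)$, and since $i < v_i$ \emph{cannot} be used directly, instead I would invoke the hypothesis that $uv$ is a $\delta$-cliff: this gives $v_i = (uv)_{n_1 + i} \leq \delta(n_1 + i)$, and $v_i \geq 0$ trivially, so $v'_i = v_i$ lies in $[0, \delta(n_1+i)]$. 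If $v_i = \delta(i)$, then by definition $v'_i = \delta(n_1 + i)$, which visibly lies in $[0, \delta(n_1+i)]$ as well. Hence every letter of $uv'$ satisfies the required bound, so $uv' = u \Under v$ is a $\delta$-cliff.

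The argument is essentially bookkeeping: the only slightly non-obvious point — and the place where the hypothesis ``$u \Over v$ is a $\delta$-cliff'' is genuinely needed rather than just ``$v$ is a $\delta$-cliff'' — is the first case, where one must control $v_i$ against $\delta(n_1 + i)$ rather than against $\delta(i)$; here $\delta$ need not be monotone, so $\delta(i) \leq \delta(n_1+i)$ may fail, and one must fall back on the fact that the concatenated word $uv$ already respects the range map at position $n_1 + i$. I do not anticipate any real obstacle beyond making this case distinction carefully and noting that $v'_i \geq 0$ in both cases (immediate, since $\delta$ takes values in $\N$).

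\begin{proof}
    Write $n_1 := |u|$ and $n_2 := |v|$. The words $u \Over v = uv$ and $u \Under v = uv'$
    both have length $n := n_1 + n_2$, and they agree on their first $n_1$ letters, namely
    those of $u$. Since $u \in \SetCliff_\delta$, we have $0 \leq (uv')_j = u_j \leq
    \delta(j)$ for all $j \in [n_1]$. It remains to verify that $0 \leq v'_i \leq \delta(n_1
    + i)$ for all $i \in [n_2]$. If $v_i = \delta(i)$, then $v'_i = \delta(n_1 + i)$ by
    definition, and since $\delta(n_1 + i) \in \N$, we indeed have $0 \leq v'_i \leq
    \delta(n_1 + i)$. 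If $v_i \ne \delta(i)$, then $v'_i = v_i$; as $v \in \SetCliff_\delta$
    we have $v_i \geq 0$, and since $uv$ is by hypothesis a $\delta$-cliff, $v_i = (uv)_{n_1
    + i} \leq \delta(n_1 + i)$. In both cases $0 \leq v'_i \leq \delta(n_1 + i)$, so all
    letters of $uv'$ respect the range map $\delta$. Therefore $u \Under v = uv'$ is a
    $\delta$-cliff.
\end{proof}
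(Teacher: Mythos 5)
Your proof is correct and follows essentially the same route as the paper's: both arguments use the hypothesis that $u \Over v$ is a $\delta$-cliff to obtain $v_i \leq \delta(|u| + i)$, and then observe that each letter of $u \Under v$ at position $|u| + i$ lies in $\Bra{v_i, \delta(|u| + i)}$, hence is bounded by $\delta(|u| + i)$, while the prefix $u$ is handled by $u \in \SetCliff_\delta$. Your explicit case split on whether $v_i = \delta(i)$ is just a slightly more verbose phrasing of the same verification.
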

\begin{proof}
    Assume that $w := u \Over v \in \SetCliff_\delta$. Hence, for any $i \in [|w|]$, $w_i
    \leq \delta(i)$. In particular, this implies that for any $i \in [|v|]$, $v_i = w_{|u| +
    i} \leq \delta(|u| + i)$. By definition of the operation $\Under$, the word $w' := u
    \Under v$ satisfies $w_{|u| + i} \in \Bra{v_i, \delta(|u| + i)}$.  Moreover, the fact
    that $u$ is a $\delta$-cliff implies that for any $i \in [|u|]$, $u_i = w'_i \leq
    \delta(i)$. Therefore, $w'$ is a $\delta$-cliff.
\end{proof}
\medbreak

\begin{Lemma} \label{lem:over_under_weakly_increasing_range_map}
    A range map $\delta$ is weakly increasing if and only if for any $u, v \in
    \SetCliff_\delta$, $u \Over v$ is a $\delta$-cliff.
\end{Lemma}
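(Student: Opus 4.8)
\textbf{Proof plan for Lemma~\ref{lem:over_under_weakly_increasing_range_map}.}
The statement is an ``if and only if'', so I would prove the two implications separately, and the natural strategy is to prove the contrapositive in each direction, since both the hypothesis ``$\delta$ weakly increasing'' and the conclusion ``$u \Over v$ is a $\delta$-cliff for all $u,v$'' are universally quantified. Recall that by definition $u \Over v = uv$ is the plain concatenation, so the question is precisely when concatenating two $\delta$-cliffs yields another $\delta$-cliff.

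For the easy direction, assume $\delta$ is weakly increasing and let $u, v \in \SetCliff_\delta$ with $|u| = n_1$, $|v| = n_2$. For any index $i \in [n_1]$ the letter $(uv)_i = u_i \leq \delta(i)$ since $u$ is a $\delta$-cliff, so the prefix condition is automatic. For an index $i \in [n_1 + 1, n_1 + n_2]$, write $i = n_1 + j$ with $j \in [n_2]$; then $(uv)_i = v_j \leq \delta(j)$ because $v$ is a $\delta$-cliff, and since $\delta$ is weakly increasing and $j \leq n_1 + j = i$, we have $\delta(j) \leq \delta(i)$. Hence $(uv)_i \leq \delta(i)$ for all $i$, so $uv$ is a $\delta$-cliff.

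For the converse, I would argue by contraposition: suppose $\delta$ is not weakly increasing, so there is an index $k \geq 1$ with $\delta(k) > \delta(k+1)$. The plan is to exhibit two $\delta$-cliffs $u, v$ whose concatenation fails the cliff condition at position $k+1$. Take $u := 0^{k-1}\,\delta(k)$, which is a $\delta$-cliff of size $k$ (all letters but the last are $0$, and the last is $\delta(k) \leq \delta(k)$), and take $v := \delta(1)$, a $\delta$-cliff of size $1$. Then $u \Over v = uv = 0^{k-1}\,\delta(k)\,\delta(1)$, and its $(k+1)$-st letter is $\delta(1)$. This need not violate anything yet, so the cleaner choice is $v := \delta(1) \cdots$ — more simply, choose $v$ of size $1$ with $v_1$ as large as allowed, i.e. $v := \delta(1)$; if $\delta(1) > \delta(k+1)$ we are done, but this is not guaranteed. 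The robust construction is instead $u := 0^k$ (a $\delta$-cliff of size $k$) and note this gives concatenation $0^k v$ which never fails; so one really must put a large letter at position $k+1$ of the concatenation. The correct witness: let $u := 0^{k}$ have size $k$ and let $v$ have size at least $1$ with first letter $v_1$ chosen as $\delta(1)$; this still may not work, so the genuinely correct choice is $u$ of size $k$ equal to $0^k$ is wrong — one wants $|u| = k$ and then $v_1$ should exceed $\delta(k+1)$ while satisfying $v_1 \leq \delta(1)$. Thus the construction works precisely when $\delta(1) > \delta(k+1)$; in general pick the \emph{smallest} $k$ with $\delta(k) > \delta(k+1)$, so that $\delta$ is weakly increasing on $[1,k]$, hence $\delta(k) \geq \delta(1)$ fails in general too. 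The clean fix: choose $u := 0^{k-1}\delta(k)$ of size $k$ and $v := \delta(1)$ of size $1$ is not enough; instead take $v$ to be the single-letter word equal to $\min\{\delta(1), \text{something} > \delta(k+1)\}$. The main obstacle is exactly this: producing a one-letter (or short) $\delta$-cliff $v$ whose sole letter strictly exceeds $\delta(k+1)$, which requires $\delta(1) > \delta(k+1)$; to handle the general case I would instead let $|u| = k$ be arbitrary and use $v$ of length larger than $1$, placing the offending large value at a position $j$ of $v$ with $\delta(j) > \delta(k+j)$ — such $j$ exists by the failure of weak monotonicity — and set $v := 0^{j-1}\delta(j)$, $u := 0^{k}$ with $k$ chosen so that $k + j$ is the witnessing index, so that $(uv)_{k+j} = \delta(j) > \delta(k+j)$, contradicting that $uv$ is a $\delta$-cliff. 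This is the step to write carefully; the rest is routine.
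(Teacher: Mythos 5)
Your final argument is correct and is essentially the paper's: the forward direction is the same letter-by-letter check, and the converse in both cases comes down to concatenating with a $\delta$-cliff that carries the maximal letter $\delta(j)$ at position $j$, which after prepending a word of length $k$ forces $\delta(j) \leq \delta(k+j)$ (the paper states this directly; you take the contrapositive with the explicit witness $u := 0^{k}$, $v := 0^{j-1}\delta(j)$). The long sequence of abandoned constructions should be deleted in the write-up: once $\delta(p) > \delta(p+1)$ is given, the choice $k := 1$, $j := p$ immediately yields the counterexample, and none of the intermediate attempts is needed.
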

\begin{proof}
    Assume that $\delta$ is weakly increasing and let $w := u \Over v$ where $u, v \in
    \SetCliff_\delta$.  Hence, since $v$ is a $\delta$-cliff, for any $i \in [|v|]$, $w_{|u|
    + i} = v_i \leq \delta(i)$.  Since $\delta$ is weakly increasing, we have $\delta(i)
    \leq \delta(|u| + i)$. This implies that $w_{|u| + i} \leq \delta(|u| + i)$.  Moreover,
    the fact that $u$ is $\delta$-cliff implies that, for any $i \in [|u|]$, $w_i = u_i \leq
    \delta(i)$.  Therefore, $u \Over v$ is a $\delta$-cliff.
    \smallbreak

    Conversely, assume that all $w := u \Over v$ are $\delta$-cliffs for all $u, v \in
    \SetCliff_\delta$. In particular, this holds for $u := 0$ and $v :=
    \GreatestElement_\delta(n)$ for any $n \geq 0$. We have $\delta(i - 1) = v_{i - 1} = w_i
    \leq \delta(i)$ for any $i \in [2, n + 1]$. Therefore, $\delta$ is weakly increasing.
\end{proof}
\medbreak

Let $\CharacteristicCliff_\delta : \N^* \to \K$ be the map defined for any $u \in \N^*$ by
$\CharacteristicCliff_\delta(u) := \IndicatorFunction_{u \Over v \in \SetCliff_\delta}$.
\medbreak

\begin{Theorem} \label{thm:product_cliff_intervals}
    For any range map $\delta$, the product $\Product$ of $\SpaceCliff_\delta$ satisfies,
    for any $u, v \in \SetCliff_\delta$,
    \begin{equation}
        \BasisF_u \Product \BasisF_v =
        \CharacteristicCliff_\delta\Par{u \Over v}
        \sum_{w \in \Han{u \Over v, u \Under v}}
        \BasisF_w
    \end{equation}
    where $\Han{u \Over v, u \Under v}$ is an interval of the poset
    $\SetCliff_\delta\Par{|u| + |v|}$.
\end{Theorem}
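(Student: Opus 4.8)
The plan is to start from the explicit formula for the product given in~\eqref{equ:product_cliff_algebra}, namely $\BasisF_u \Product \BasisF_v = \sum_{v' \in \Reduction_\delta^{-1}(v),\ u v' \in \SetCliff_\delta} \BasisF_{u v'}$, and to identify the indexing set with the interval $\Han{u \Over v, u \Under v}$ when $u \Over v$ is a $\delta$-cliff, and with the empty set otherwise. First I would dispose of the degenerate case: if $u \Over v = uv$ is not a $\delta$-cliff, then $\CharacteristicCliff_\delta(u \Over v) = 0$, so I must show the sum in~\eqref{equ:product_cliff_algebra} is empty. If $uv \notin \SetCliff_\delta$, there is an index $i \in [|v|]$ with $v_i > \delta(|u|+i)$; but any $v' \in \Reduction_\delta^{-1}(v)$ satisfies $v'_i \geq \min\{v'_i,\delta(i)\} = v_i$ (using that $v$ is already a $\delta$-cliff so $v_i \leq \delta(i)$, hence $\Reduction_\delta(v) = v$ forces $v_i = \min\{v'_i, \delta(i)\}$, i.e. $v'_i \geq v_i$ when $v_i < \delta(i)$ and $v'_i$ arbitrary $\geq \delta(i) = v_i$ when $v_i = \delta(i)$); in all cases $v'_i \geq v_i > \delta(|u|+i)$, so $uv' \notin \SetCliff_\delta$. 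This shows the product vanishes, matching the right-hand side.

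Next I would handle the main case, assuming $uv = u \Over v \in \SetCliff_\delta$. By Lemma~\ref{lem:over_under_well_definitions}, $u \Under v$ is then also a $\delta$-cliff, so the interval $\Han{u \Over v, u \Under v}$ makes sense in $\SetCliff_\delta(|u|+|v|)$ — provided I check $u \Over v \Leq u \Under v$, which is immediate since $\Over$ and $\Under$ agree on the first $|u|$ letters and on the remaining letters one has $(u \Under v)_{|u|+i} = \delta(|u|+i) \geq v_i = (u \Over v)_{|u|+i}$ when $v_i = \delta(i)$, and equality otherwise. The core claim is that the map $w \mapsto$ (the suffix of $w$ of length $|v|$), restricted to $\Han{u \Over v, u \Under v}$, is a bijection onto $\{v' \in \Reduction_\delta^{-1}(v) : uv' \in \SetCliff_\delta\}$. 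Every $w$ in the interval has prefix $u$ (both endpoints do, and the componentwise order preserves this), so $w = uv'$ for a unique word $v'$; I would check (a) $uv' \in \SetCliff_\delta$ since $w$ is a $\delta$-cliff, and (b) $\Reduction_\delta(v') = v$ by a letterwise argument: for $i$ with $v_i < \delta(i)$, the interval constraint forces $(u \Over v)_{|u|+i} = v_i = (u \Under v)_{|u|+i}$, hence $v'_i = v_i = \min\{v_i, \delta(i)\}$; for $i$ with $v_i = \delta(i)$, one has $v_i = \delta(i) \leq v'_i \leq \delta(|u|+i)$, so $\min\{v'_i, \delta(i)\} = \delta(i) = v_i$. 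Conversely, given $v'$ with $\Reduction_\delta(v') = v$ and $uv' \in \SetCliff_\delta$, I would verify that $uv'$ lies in the interval by the same case split: $v'_i = v_i$ exactly when $v_i < \delta(i)$, giving agreement with both endpoints there, and $\delta(i) = v_i \leq v'_i \leq \delta(|u|+i) = (u \Under v)_{|u|+i}$ when $v_i = \delta(i)$, so $(u \Over v)_{|u|+i} \leq v'_i \leq (u \Under v)_{|u|+i}$.

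The two maps are visibly mutually inverse (both are ``read off the suffix''), so this establishes the bijection and hence the equality of the two sums, completing the proof. I expect the only genuinely delicate point to be the bookkeeping in step (b) and its converse — correctly tracking, position by position, the distinction between coordinates where $v_i < \delta(i)$ (which are rigidly fixed throughout the interval) and coordinates where $v_i = \delta(i)$ (which are free to range up to $\delta(|u|+i)$) — since this is precisely where the definition of $\Under$ and the definition of $\Reduction_\delta$ interlock. Everything else is routine: the vanishing case is a one-line inequality, the well-definedness of the interval follows from Lemma~\ref{lem:over_under_well_definitions} plus a trivial order check, and the bijection itself is formally transparent once the two fiber descriptions are matched up.
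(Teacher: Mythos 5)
Your proposal is correct and follows essentially the same route as the paper's proof: both start from formula~\eqref{equ:product_cliff_algebra}, invoke Lemma~\ref{lem:over_under_well_definitions} to get that $u \Under v$ is a $\delta$-cliff, identify the fiber set $\Bra{v' \in \Reduction_\delta^{-1}(v) : uv' \in \SetCliff_\delta}$ with the interval $\Han{u \Over v, u \Under v}$, and dispose of the vanishing case via the inequality $v'_i \geq v_i > \delta(|u|+i)$. If anything, your letterwise case split between positions with $v_i < \delta(i)$ and $v_i = \delta(i)$ is spelled out more explicitly than in the paper, which compresses that step into a single ``this is equivalent to'' claim.
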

\begin{proof}
    Assume first that $w := u \Over v \in \SetCliff_\delta$. By
    Lemma~\ref{lem:over_under_well_definitions}, $u \Under v \in \SetCliff_\delta$.
    By~\eqref{equ:product_cliff_algebra}, for any $w' \in \SetCliff_\delta$, $\BasisF_{w'}$
    appears in $\BasisF_u \Product \BasisF_v$ if and only if there is $v' \in
    \Reduction_\delta^{-1}(v)$ such that $u v' = w'$. This implies that
    $\Reduction_\delta\Par{v'} = v$ and, by definition of the $\delta$-reduction map, for
    any $i \in [|v|]$, $v'_i \geq v_i$.  Moreover, since $w'$ is a $\delta$-cliff, we have
    for any $i \in [|v|]$, $v'_i = w'_{|u| + i} \leq \delta(|u| + i)$. Therefore, for all
    $i \in [|v|]$, $v_i \leq v'_i \leq \delta(|u| + i)$. This is equivalent to the fact
    that $u \Over v \Leq w' \Leq u \Under v$ and leads to the expression of the statement of
    theorem.
    \smallbreak

    Assume finally that $w := u \Over v \notin \SetCliff_\delta$. Since $u$ and $v$ are
    $\delta$-cliffs, there exists an index $i \in [|v|]$ such that $w_{|u| + i} > \delta(|u|
    + i)$. Since $w_{|u| + i} = v_i$, this implies that $v_i > \delta(|u| + i)$. Observe
    that by definition of the $\delta$-reduction map, for all $v' \in
    \Reduction_\delta^{-1}(v)$ and $j \in [|v|]$, $v'_j \geq v_j$.  Therefore, no $u v'$ can
    be a $\delta$-cliff.  By inspecting Formula~\eqref{equ:product_cliff_algebra} for the
    product~$\Product$, we obtain that the sum is empty, so that $\BasisF_u \Product
    \BasisF_v = 0$.
\end{proof}
\medbreak

For instance, for $\delta := 01120^\omega$,
\begin{equation*}
    \BasisF_{01} \Product \BasisF_{010}
    = \BasisF_{01010} + \BasisF_{01020} + \BasisF_{01110} + \BasisF_{01120},
\end{equation*}
and, since $01 \Over 011 = 01 011 \notin \SetCliff_\delta$,
\begin{equation*}
    \BasisF_{01} \Product \BasisF_{011} = 0.
\end{equation*}
\medbreak

In particular when $\delta$ is weakly increasing,
Lemma~\ref{lem:over_under_weakly_increasing_range_map} and
Theorem~\ref{thm:product_cliff_intervals} state that any product of two elements of the
$\BasisF$-basis of $\SpaceCliff_\delta$ is a sum of elements of the $\BasisF$-basis ranging
in an interval of a $\delta$-cliff poset.
\medbreak

%%%%%%%%%%%%%%%%%%%%%%%%%%%%%%%%%%%%%%%%%%%%%%%%%%%%%%%%%%%%%%%%%%%%%%%%%%%%%%%%%%%%%%%%%%%%
%%%%%%%%%%%%%%%%%%%%%%%%%%%%%%%%%%%%%%%%%%%%%%%%%%%%%%%%%%%%%%%%%%%%%%%%%%%%%%%%%%%%%%%%%%%%
\subsection{Bases and algebraic study}
We construct alternative bases for $\SpaceCliff_\delta$ and establish several properties of
this structure w.r.t. properties of the range map $\delta$. The main results are summarized
in Table~\ref{tab:overview_properties_cliff_algebra}.
\begin{table}[ht]
    \centering
    \renewcommand{\arraystretch}{1.15}
    \small
    \begin{tabular}{|c|c|} \hline
        {\bf Properties of $\delta$} & {\bf Properties of $\SpaceCliff_\delta$}
            \\ \hline \hline
        \multirow{2}{*}{None} & Unital graded magmatic algebra \\
        & Products on the $\BasisF$-basis are intervals in $\delta$-cliff posets \\ \hline
        Valley-free & Associative algebra \\ \hline
        Valley-free and $1$-dominated & Finite presentation \\ \hline
        Weakly increasing & Free as unital associative algebra \\ \hline
    \end{tabular}
    \smallbreak

    \caption{\footnotesize Properties of the algebras $\SpaceCliff_\delta$ implied by
    properties of range maps~$\delta$.}
    \label{tab:overview_properties_cliff_algebra}
\end{table}
We also provide a classification of all associative algebras $\SpaceCliff_\delta$ in four
classes, depending on properties of the valley-free range map~$\delta$.
\medbreak

We use here the notion of $j$-dominated range maps, defined in
Section~\ref{subsubsec:first_definitions_cliffs}.
\medbreak

%%%%%%%%%%%%%%%%%%%%%%%%%%%%%%%%%%%%%%%%%%%%%%%%%%%%%%%%%%%%%%%%%%%%%%%%%%%%%%%%%%%%%%%%%%%%
\subsubsection{$\BasisG$-basis}
For any $u \in \SetCliff_\delta$, let
\begin{equation}
    \BasisG_u := \BasisF_{\Complementary_\delta(u)},
\end{equation}
where $\Complementary(u)$ is the complementary of $u$ as defined in
Section~\ref{subsubsec:first_definitions_cliff_posets}.  Due to the fact that
$\Complementary$ is an involution, the set $\Bra{\BasisG_u : u \in \SetCliff_\delta}$ is a
basis of~$\SpaceCliff_\delta$, called \Def{$\BasisG$-basis}.
\medbreak

We now consider that $\delta$ is a weakly increasing range map.  We need, to state the next
result, to introduce for any $n_1, n_2 \geq 0$ the two binary operations
\begin{math}
    \OverG, \UnderG :
    \SetCliff_\delta\Par{n_1} \times \SetCliff_\delta\Par{n_2} \to \N^{n_1 + n_2}
\end{math}
defined, for any $u, v \in \SetCliff_\delta$, by $u \OverG v := u v'$ where $v'$ is the word
on $\N$ of length $|v|$ satisfying, for any $i \in [|v|]$,
\begin{equation}
    v'_i =
    \begin{cases}
        \delta(|u| + i) - \delta(i) + v_i & \mbox{if } v_i \ne 0, \\
        0 & \mbox{otherwise},
    \end{cases}
\end{equation}
and by $u \UnderG v := u v'$ where $v'$ is the word on $\N$ of length $|v|$ satisfying, for
any $i \in [|v|]$,
\begin{equation}
    v'_i = \delta(|u| + i) - \delta(i) + v_i.
\end{equation}
For instance, for $\delta = 112334^\omega$, $010 \Over 1021 = 010 3042$ and $010 \Under 1021
= 010 3242$.  Observe that in the case where $\delta = \mathbf{m}$ for an $m \geq 0$, $u
\OverG v$ is the word obtained by concatenating $u$ and $v$ and by incrementing by $m|u|$
the letters coming from $v$ that are different from $0$, and $u \UnderG v$ is the word
obtained by concatenating $u$ and $v$ and by incrementing by $m|u|$ all the letters coming
from~$v$.
\medbreak

\begin{Lemma} \label{lem:over_under_alternative}
    For any weakly increasing range map $\delta$ and any $u, v \in \SetCliff_\delta$,
    \begin{math}
        u \OverG v =
        \Complementary_\delta\Par{\Complementary_\delta(u) \Under \Complementary_\delta(v)},
    \end{math}
    and
    \begin{math}
        u \UnderG v =
        \Complementary_\delta\Par{\Complementary_\delta(u) \Over \Complementary_\delta(v)}.
    \end{math}
\end{Lemma}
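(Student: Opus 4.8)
The plan is to prove the two identities of Lemma~\ref{lem:over_under_alternative} by a direct computation, letter by letter, using the definitions of the four operations $\Over$, $\Under$, $\OverG$, $\UnderG$ and the definition of the $\delta$-complementary map $\Complementary_\delta$ given in Section~\ref{subsubsec:first_definitions_cliff_posets}. Recall that for $w \in \SetCliff_\delta(n)$ one has $\Complementary_\delta(w)_i = \delta(i) - w_i$ for all $i \in [n]$, and that $\Complementary_\delta$ is an involution (also on words on $\N$, interpreting the formula the same way, as long as we stay within the range). Set $n_1 := |u|$ and $n_2 := |v|$. Since both sides of each asserted identity have $u$ as their prefix of length $n_1$ (the operations $\Over$, $\Under$, $\OverG$, $\UnderG$ all leave the first factor untouched, and $\Complementary_\delta$ acts coordinatewise), it suffices to check that the two words agree on positions $n_1 + 1, \dots, n_1 + n_2$; equivalently, writing $i \in [n_2]$ for the index into the second factor, that the $i$-th letter of the suffix coming from $v$ matches.

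First I would handle the second identity, $u \UnderG v = \Complementary_\delta(\Complementary_\delta(u) \Over \Complementary_\delta(v))$, as it is the simpler one. On the right-hand side, $\Complementary_\delta(u) \Over \Complementary_\delta(v)$ is the plain concatenation, so its $(n_1 + i)$-th letter is $\Complementary_\delta(v)_i = \delta(i) - v_i$. Applying $\Complementary_\delta$ to a word of length $n_1 + n_2$ turns this into $\delta(n_1 + i) - (\delta(i) - v_i) = \delta(n_1 + i) - \delta(i) + v_i$, which is exactly the $i$-th letter $v'_i$ of the suffix in the definition of $u \UnderG v$. Because $\delta$ is weakly increasing, $\delta(n_1+i) \geq \delta(i)$, so $v'_i \geq v_i \geq 0$, and moreover $v'_i = \delta(n_1+i) - \delta(i) + v_i \leq \delta(n_1+i)$ since $v_i \leq \delta(i)$; hence all intermediate words lie in the appropriate range and the coordinatewise complementation formula is legitimately applicable at each step. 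This settles the second identity.

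Next I would do the first identity, $u \OverG v = \Complementary_\delta(\Complementary_\delta(u) \Under \Complementary_\delta(v))$. Here the only subtlety is the case distinction in the definitions of $\OverG$ and of $\Under$. On the right, $\Complementary_\delta(u) \Under \Complementary_\delta(v)$ has $(n_1+i)$-th letter equal to $\delta(n_1+i)$ when $\Complementary_\delta(v)_i = \delta(i)$ — i.e.\ when $v_i = 0$ — and equal to $\Complementary_\delta(v)_i = \delta(i) - v_i$ otherwise. Applying $\Complementary_\delta$ then gives, in the first case, $\delta(n_1+i) - \delta(n_1+i) = 0$, and in the second case $\delta(n_1+i) - (\delta(i) - v_i) = \delta(n_1+i) - \delta(i) + v_i$. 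Comparing with the definition of $u \OverG v$, whose $i$-th suffix letter is $0$ when $v_i = 0$ and $\delta(n_1+i) - \delta(i) + v_i$ when $v_i \neq 0$, we see the two match in both cases. As before, the weak monotonicity of $\delta$ guarantees all words stay in range so that $\Complementary_\delta$ behaves as the coordinatewise operation.

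I do not anticipate a genuine obstacle: the proof is a routine unfolding of definitions combined with the bookkeeping observation that $u$ is a common prefix, so the whole argument reduces to the two short case-checks above on a single generic index $i$. The one point to keep an eye on is to make sure, when invoking $\Complementary_\delta$, that one is always applying it to genuine $\delta$-cliffs (or at least to words whose letters lie in $[0,\delta(\cdot)]$), which is why the hypothesis that $\delta$ is weakly increasing is needed and where Lemma~\ref{lem:over_under_well_definitions} and Lemma~\ref{lem:over_under_weakly_increasing_range_map} implicitly enter; I would remark on this rather than belabor it.
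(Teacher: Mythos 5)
Your proof is correct and takes essentially the same route as the paper, which simply asserts that the identities ``follow by straightforward computations based upon the definitions'' of $\Over$, $\Under$, $\OverG$, $\UnderG$, and $\Complementary_\delta$; you have carried out exactly that coordinatewise computation, including the case split on $v_i = 0$ and the range-checking remark that justifies applying $\Complementary_\delta$ to the intermediate words.
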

\begin{proof}
    These identities follow by straightforward computations based upon the definitions of
    the operations $\Over$, $\Under$, $\OverG$, $\UnderG$, and $\Complementary_\delta$.
\end{proof}
\medbreak

Notice that, by Lemmas~\ref{lem:over_under_alternative},
\ref{lem:over_under_well_definitions}, and~\ref{lem:over_under_weakly_increasing_range_map},
if $\delta$ is weakly increasing and $u$ and $v$ are $\delta$-cliffs, both $u \OverG v$ and
$u \UnderG v$ are $\delta$-cliffs.
\medbreak

\begin{Proposition} \label{prop:product_cliff_g_basis}
    For any weakly increasing range map $\delta$, the product $\Product$ of
    $\SpaceCliff_\delta$ satisfies, for any $u, v \in \SetCliff_\delta$,
    \begin{equation} \label{equ:product_cliff_g_basis}
        \BasisG_u \Product \BasisG_v =
        \sum_{w \in \Han{u \OverG v, u \UnderG v}} \BasisG_w.
    \end{equation}
    where $\Han{u \OverG v, u \UnderG v}$ is an interval of the poset
    $\SetCliff_\delta\Par{|u| + |v|}$.
\end{Proposition}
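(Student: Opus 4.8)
The plan is to deduce Proposition~\ref{prop:product_cliff_g_basis} from Theorem~\ref{thm:product_cliff_intervals} by translating everything through the $\delta$-complementary involution $\Complementary_\delta$. Recall that $\BasisG_u = \BasisF_{\Complementary_\delta(u)}$ by definition, and that $\Complementary_\delta$ is an involution (Section~\ref{subsubsec:first_definitions_cliff_posets}). So the first step is simply to rewrite the left-hand side: $\BasisG_u \Product \BasisG_v = \BasisF_{\Complementary_\delta(u)} \Product \BasisF_{\Complementary_\delta(v)}$, and then apply Theorem~\ref{thm:product_cliff_intervals} to the pair of $\delta$-cliffs $\Complementary_\delta(u)$, $\Complementary_\delta(v)$.

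Next I would handle the characteristic function factor. Theorem~\ref{thm:product_cliff_intervals} produces a coefficient $\CharacteristicCliff_\delta\Par{\Complementary_\delta(u) \Over \Complementary_\delta(v)}$, which is $1$ exactly when $\Complementary_\delta(u) \Over \Complementary_\delta(v)$ is a $\delta$-cliff. But since $\delta$ is assumed weakly increasing here, Lemma~\ref{lem:over_under_weakly_increasing_range_map} tells us that $u' \Over v'$ is always a $\delta$-cliff for any $\delta$-cliffs $u', v'$; hence this coefficient is always $1$ and the factor disappears. Thus $\BasisG_u \Product \BasisG_v = \sum_{w' \in \Han{\Complementary_\delta(u) \Over \Complementary_\delta(v),\, \Complementary_\delta(u) \Under \Complementary_\delta(v)}} \BasisF_{w'}$, and this is an interval of $\SetCliff_\delta\Par{|u|+|v|}$ by the theorem.

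The third step is the reindexing of the sum. I would substitute $w' = \Complementary_\delta(w)$, which is legitimate because $\Complementary_\delta$ is a bijection on $\SetCliff_\delta\Par{|u|+|v|}$; this turns $\BasisF_{w'}$ into $\BasisG_w$. It remains to identify the index set: $w$ ranges over $\Complementary_\delta\Par{\Han{\Complementary_\delta(u) \Over \Complementary_\delta(v),\, \Complementary_\delta(u) \Under \Complementary_\delta(v)}}$. Since $\Complementary_\delta$ is a poset anti-automorphism of $\SetCliff_\delta(n)$ (stated in Section~\ref{subsubsec:first_properties_cliffs}), it sends the interval $[a,b]$ to the interval $[\Complementary_\delta(b), \Complementary_\delta(a)]$. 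Applying this with $a = \Complementary_\delta(u) \Over \Complementary_\delta(v)$ and $b = \Complementary_\delta(u) \Under \Complementary_\delta(v)$, and using Lemma~\ref{lem:over_under_alternative}, which gives exactly $\Complementary_\delta\Par{\Complementary_\delta(u) \Under \Complementary_\delta(v)} = u \OverG v$ and $\Complementary_\delta\Par{\Complementary_\delta(u) \Over \Complementary_\delta(v)} = u \UnderG v$, the index set becomes precisely $\Han{u \OverG v,\, u \UnderG v}$, which is an interval of $\SetCliff_\delta\Par{|u|+|v|}$ as required. This yields~\eqref{equ:product_cliff_g_basis}.

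The proof is essentially a bookkeeping argument, so there is no serious obstacle; the only point requiring a little care is to make sure the endpoints swap correctly when passing an interval through the anti-automorphism $\Complementary_\delta$, i.e.\ that Lemma~\ref{lem:over_under_alternative} pairs $\Over$ with $\UnderG$ and $\Under$ with $\OverG$ in the right order (note the order reversal in ``$\OverG v = \Complementary_\delta(\Complementary_\delta(u) \Under \Complementary_\delta(v))$''), and that one should double-check all these words are genuinely $\delta$-cliffs — which is guaranteed by the remark following Lemma~\ref{lem:over_under_alternative}. I would write the whole argument as a short chain of displayed equalities rather than splitting it into cases.
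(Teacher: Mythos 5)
Your proof is correct and follows essentially the same route as the paper's: rewrite $\BasisG_u \Product \BasisG_v$ as $\BasisF_{\Complementary_\delta(u)} \Product \BasisF_{\Complementary_\delta(v)}$, apply Theorem~\ref{thm:product_cliff_intervals}, and reindex through the anti-automorphism $\Complementary_\delta$ using Lemma~\ref{lem:over_under_alternative}. If anything, you are slightly more careful than the paper, which silently drops the factor $\CharacteristicCliff_\delta$ where you justify it via Lemma~\ref{lem:over_under_weakly_increasing_range_map}.
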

\begin{proof}
    By Lemma~\ref{lem:over_under_alternative}, we have
    \begin{equation}
        \BasisG_u \Product \BasisG_v
        =
        \BasisF_{\Complementary_\delta(u)} \Product \BasisF_{\Complementary_\delta(v)}
        =
        \sum_{
            w \in \Han{\Complementary_\delta(u) \Over \Complementary_\delta(v),
            \Complementary_\delta(u) \Under \Complementary_\delta(v)}
        }
        \BasisF_w
        =
        \sum_{
            w
            \in \Han{
                \Complementary_\delta\Par{
                    \Complementary_\delta(u) \Under \Complementary_\delta(v)},
                \Complementary_\delta\Par{
                    \Complementary_\delta(u) \Over \Complementary_\delta(v})}
        }
        \BasisG_w.
    \end{equation}
    Now, again by Lemma~\ref{lem:over_under_alternative}, we obtain the stated expression.
\end{proof}
\medbreak

For instance, in $\SpaceCliff_\MapOne$,
\begin{equation}
    \BasisG_{01} \Product \BasisG_{010}
    =
    \BasisG_{01030} + \BasisG_{01031} + \BasisG_{01032} + \BasisG_{01130} + \BasisG_{01131}
    + \BasisG_{01132} + \BasisG_{01230} + \BasisG_{01231} + \BasisG_{01232},
\end{equation}
and in $\SpaceCliff_\MapTwo$,
\begin{equation} \begin{split}
    \BasisG_{01} \Product \BasisG_{010}
    & =
    \BasisG_{01050} + \BasisG_{01051} + \BasisG_{01052}
    + \BasisG_{01053} + \BasisG_{01054} + \BasisG_{01150}
    + \BasisG_{01151} + \BasisG_{01152} + \BasisG_{01153} \\
    & \enspace + \BasisG_{01154} + \BasisG_{01250} + \BasisG_{01251}
    + \BasisG_{01252} + \BasisG_{01253} + \BasisG_{01254}
    + \BasisG_{01350} + \BasisG_{01351} + \BasisG_{01352} \\
    & \enspace + \BasisG_{01353}
    + \BasisG_{01354} + \BasisG_{01450} + \BasisG_{01451}
    + \BasisG_{01452} + \BasisG_{01453} + \BasisG_{01454}.
\end{split} \end{equation}
\medbreak

%%%%%%%%%%%%%%%%%%%%%%%%%%%%%%%%%%%%%%%%%%%%%%%%%%%%%%%%%%%%%%%%%%%%%%%%%%%%%%%%%%%%%%%%%%%%
\subsubsection{$\BasisE$ and $\BasisH$-bases}
\label{subsubsec:e_h_bases_cliff}
By mimicking the construction of bases of several combinatorial spaces by using a particular
partial order on their basis element (see for instance~\cite{DHT02,HNT05}), let for any $u
\in \SetCliff_\delta$,
\begin{multicols}{2}
\begin{equation} \label{equ:basis_e_cliff}
    \BasisE_u :=
    \sum_{\substack{v \in \SetCliff_\delta \\
        u \Leq v
    }}
    \BasisF_v,
\end{equation}

\begin{equation} \label{equ:basis_h_cliff}
    \BasisH_u :=
    \sum_{\substack{v \in \SetCliff_\delta \\
        v \Leq u
    }}
    \BasisF_v.
\end{equation}
\end{multicols}
\noindent By triangularity, the sets $\Bra{\BasisE_u : u \in \SetCliff_\delta}$ and
$\Bra{\BasisH_u : u \in \SetCliff_\delta}$ are bases of $\SpaceCliff_\delta$, called
respectively \Def{elementary basis} and \Def{homogeneous basis}, or respectively
\Def{$\BasisE$-basis} and \Def{$\BasisH$-basis}. For instance, for $\delta := 1021^\omega$,
\begin{equation}
    \BasisE_{10010} =
    \BasisF_{10010} + \BasisF_{10011} + \BasisF_{10110} + \BasisF_{10111} + \BasisF_{10210}
    + \BasisF_{10211},
\end{equation}
and
\begin{equation}
    \BasisH_{10010} = \BasisF_{10010} + \BasisF_{10000} + \BasisF_{00010} + \BasisF_{00000}.
\end{equation}
\medbreak

\begin{Proposition} \label{prop:product_cliff_e_basis}
    For any range map $\delta$, the product $\Product$ of $\SpaceCliff_\delta$ satisfies,
    for any $u, v \in \SetCliff_\delta$,
    \begin{equation}
        \BasisE_u \Product \BasisE_v =
        \CharacteristicCliff_\delta\Par{u \Over v} \; \BasisE_{u \Over v}
    \end{equation}
\end{Proposition}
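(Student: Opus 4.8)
The plan is to compute $\BasisE_u \Product \BasisE_v$ directly by expanding both $\BasisE$-basis elements in the $\BasisF$-basis via~\eqref{equ:basis_e_cliff}, using bilinearity of the product, and then applying Theorem~\ref{thm:product_cliff_intervals} to each term. First I would write
\begin{equation}
    \BasisE_u \Product \BasisE_v =
    \sum_{\substack{u' \in \SetCliff_\delta \\ u \Leq u'}}
    \sum_{\substack{v' \in \SetCliff_\delta \\ v \Leq v'}}
    \BasisF_{u'} \Product \BasisF_{v'}
    =
    \sum_{\substack{u' \in \SetCliff_\delta \\ u \Leq u'}}
    \sum_{\substack{v' \in \SetCliff_\delta \\ v \Leq v'}}
    \CharacteristicCliff_\delta\Par{u' \Over v'}
    \sum_{w \in \Han{u' \Over v', u' \Under v'}} \BasisF_w,
\end{equation}
so that the right-hand side is a nonnegative combination of $\BasisF$-basis elements and the coefficient of $\BasisF_w$ counts the pairs $(u', v')$ with $u \Leq u'$, $v \Leq v'$, $u' \Over v' \in \SetCliff_\delta$, and $u' \Over v' \Leq w \Leq u' \Under v'$.

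\textbf{Key steps.} The next step is to analyze when $\BasisF_w$ appears and with what coefficient. I would split $w$ as $w = w^{(1)} w^{(2)}$ with $\left|w^{(1)}\right| = |u|$ and $\left|w^{(2)}\right| = |v|$. A term indexed by $(u', v')$ contributes $\BasisF_w$ exactly when $u' = w^{(1)}$ (since $u' \Over v' = u' v'$ forces the prefix of length $|u|$ of $w$ to equal $u'$), and when $v' \in \SetCliff_\delta$ satisfies $v \Leq v'$ together with $v'_i \leq w^{(2)}_i \leq \delta(|u|+i)$ for all $i$ (this is the condition $w^{(1)} \Over v' \Leq w \Leq w^{(1)} \Under v'$, using the definition of $\Under$). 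Consequently $w$ appears in the expansion if and only if $u \Leq w^{(1)}$, $w^{(1}$ is a $\delta$-cliff (automatic), and there exists at least one $\delta$-cliff $v'$ with $v \Leq v' \Leq \Reduction_\delta\!\big(w^{(2)}\big)$ componentwise and such that $w^{(1)} \Over v'$ is a $\delta$-cliff — but in fact $v' := v$ works whenever $v \Leq \Reduction_\delta\!\big(w^{(2)}\big)$, i.e. whenever $v_i \leq w^{(2)}_i$ for all $i$ (using that $v$ is already a $\delta$-cliff), provided also $w^{(1)} \Over v \in \SetCliff_\delta$. Then I would show the coefficient is exactly $1$: the pair $(u', v')$ contributing $\BasisF_w$ must have $u' = w^{(1)}$, and for the $v'$-component, among all choices the conditions $v \Leq v'$ and $v' \Leq w^{(2)}$ pin down a whole range of $v'$, but each such $(w^{(1)}, v')$ contributes the single term $\BasisF_w$ once; summing the indicator over $v'$ would overcount, so the correct bookkeeping is to observe that $\BasisF_w$ with $w = w^{(1)} w^{(2)}$ appears once for each $v'$ with $v \Leq v' \Leq w^{(2)}$, and to see that this is not what we want — instead one reorganizes the double sum so that, for fixed $w$, the inner structure telescopes. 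The clean way is: $\BasisF_w$ appears in $\BasisE_u \Product \BasisE_v$ iff $u \Leq w^{(1)}$, $w^{(1)} \Over w^{(2)} \in \SetCliff_\delta$ is not the right condition — rather one uses that $\BasisE_{u \Over v} = \sum_{u \Over v \Leq w} \BasisF_w$ and matches the support. So the decisive computation is to verify the support equality: $\BasisF_w$ has nonzero coefficient in $\BasisE_u \Product \BasisE_v$ if and only if $u \Over v \Leq w$ and $\CharacteristicCliff_\delta(u \Over v) = 1$, and that the coefficient is then $1$.

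\textbf{Main obstacle.} The hard part will be the coefficient count: establishing that, for each $w$ in the claimed support, exactly one pair $(u', v')$ in the double sum contributes $\BasisF_w$. The resolution I expect is that $u' = w_1 \dots w_{|u|}$ is forced (prefix condition from $\Over$), and then among all $v'$ with $v \Leq v'$ and $v'_i \leq w_{|u|+i}$ for all $i$, the condition that $\BasisF_w$ genuinely lies in $\Han{u' \Over v', u' \Under v'}$ — namely $v'_i \leq w_{|u|+i} \leq \delta(|u|+i)$ and $w_{|u|+i} = \delta(|u|+i)$ forces $v'_i = \delta(i)$ — combined with the requirement $\Reduction_\delta(v') = $ (appropriate value) collapses the choice to the unique $v'$ determined by $v' = v$. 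More precisely, I would argue that the only pair producing $\BasisF_w$ is $\big(w^{(1)}, v\big)$ itself: any other $v' \gneq v$ with $v \Leq v'$ would need $w^{(1)} \Over v' \Leq w$, but then $w^{(1)} \Over v' $ would itself be a distinct $\BasisF$-term and the interval $\Han{w^{(1)} \Over v', w^{(1)} \Under v'}$ containing $w$ would coincide with $\Han{w^{(1)} \Over v, w^{(1)} \Under v}$ only on overlaps — and a careful check of the endpoints, using that $\Under$ pushes every maximal coordinate to its ceiling independently of $v'$, shows the intervals for different $v'$ are disjoint except that their union telescopes to a single interval. Once this disjointness-and-telescoping is pinned down, the identity $\BasisE_u \Product \BasisE_v = \CharacteristicCliff_\delta(u \Over v)\,\BasisE_{u \Over v}$ follows by comparing supports and coefficients with~\eqref{equ:basis_e_cliff} applied to $u \Over v$. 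I would also record the degenerate case: if $u \Over v \notin \SetCliff_\delta$ then $u' \Over v' \notin \SetCliff_\delta$ for every $u' \Geq u$, $v' \Geq v$ (since enlarging coordinates only makes the defect worse), so every term vanishes and both sides are $0$, consistent with the factor $\CharacteristicCliff_\delta(u \Over v)$.
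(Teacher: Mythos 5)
Your overall strategy — expand both factors in the $\BasisF$-basis, apply Theorem~\ref{thm:product_cliff_intervals} termwise, and then show that for each $w$ in the claimed support exactly one pair $\Par{u', v'}$ contributes — could be made to work, but your execution of the coefficient count contains a genuine error that derails the rest of the argument. You characterize the condition $u' \Over v' \Leq w \Leq u' \Under v'$ (with $u' = w^{(1)}$) as ``$v'_i \leq w^{(2)}_i \leq \delta(|u|+i)$ for all $i$''. This is wrong: by the definition of $\Under$, the upper bound $u' \Under v'$ lifts the $i$-th coordinate to $\delta(|u|+i)$ \emph{only when} $v'_i = \delta(i)$; in every other coordinate it equals $v'_i$. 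The correct condition is $w^{(2)}_i = v'_i$ whenever $v'_i < \delta(i)$, and $\delta(i) \leq w^{(2)}_i \leq \delta(|u|+i)$ whenever $v'_i = \delta(i)$ — equivalently, $v' = \Reduction_\delta\Par{w^{(2)}}$. This pins $v'$ down uniquely, so the coefficient of $\BasisF_w$ is $0$ or $1$ and there is no overcounting at all. Because you start from the weaker (incorrect) condition, you see an apparent multiplicity, and the remainder of your ``main obstacle'' paragraph never resolves it: you assert that the choice ``collapses to the unique $v'$ determined by $v' = v$'', which is also false — for $\delta = 12^\omega$ and $u = v = 0$, the term $\BasisF_{02}$ of $\BasisE_0 \Product \BasisE_0$ arises from the pair $\Par{0, 1}$, not from $\Par{0, 0}$ — and you explicitly leave the ``disjointness-and-telescoping'' of the intervals as something still ``to be pinned down''. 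As written, the central claim that each $\BasisF_w$ with $u \Over v \Leq w$ appears with coefficient exactly $1$ is not established.

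For comparison, the paper's proof sidesteps the interval description entirely and works from the raw product formula~\eqref{equ:product_cliff_algebra}: summing over $v'$ with $v \Leq v'$ and then over $v'' \in \Reduction_\delta^{-1}\Par{v'}$ collapses, because the fibers of $\Reduction_\delta$ partition $\N^{|v|}$, into a single sum over words $v''$ with $v \Leq \Reduction_\delta\Par{v''}$ and $u' v'' \in \SetCliff_\delta$; multiplicity one is then automatic, and the resulting index set is exactly the support of $\BasisE_{u \Over v}$. Your route can be repaired by replacing your membership condition with the correct one above, but the repair amounts to reconstructing the paper's fiber argument. Your treatment of the degenerate case $u \Over v \notin \SetCliff_\delta$ is correct.
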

\begin{proof}
    By~\eqref{equ:product_cliff_algebra}, we have
    \begin{equation} \label{equ:product_cliff_e_basis}
        \BasisE_u \Product \BasisE_v
        =
        \sum_{\substack{
            u', v' \in \SetCliff_\delta \\
            u \Leq u' \\
            v \Leq v'
        }}
        \sum_{\substack{
            v'' \in \Reduction_\delta^{-1}\Par{v'} \\
            u' v'' \in \SetCliff_\delta
        }}
        \BasisF_{u' v''}
        =
        \sum_{\substack{
            u' \in \SetCliff_\delta \\
            u \Leq u'
        }}
        \sum_{\substack{
            v'' \in \N^* \\
            v \Leq \Reduction_\delta\Par{v''} \\
            u' v'' \in \SetCliff_\delta
        }}
        \BasisF_{u' v''}
        =
        \sum_{\substack{
            u' \in \SetCliff_\delta \\
            u \Leq u'
        }}
        \enspace
        \sum_{\substack{
            v'' \in \N^{|v|} \\
            \forall i \in [|v|], v_i \leq v''_i \\
            u' v'' \in \SetCliff_\delta \\
        }}
        \BasisF_{u' v''}.
    \end{equation}
    The equality between the third and the last member of~\eqref{equ:product_cliff_e_basis}
    is a consequence of the fact that for any $v'' \in \N^*$, one has $v \Leq
    \Reduction_\delta\Par{v''}$ if and only if $v_i \leq v''_i$ for all $i \in [|v|]$. By
    definition of the $\BasisE$-basis provided by~\eqref{equ:basis_e_cliff}, the last member
    of~\eqref{equ:product_cliff_e_basis} is equal to the stated formula.
\end{proof}
\medbreak

\begin{Proposition} \label{prop:product_cliff_h_basis}
    For any range map $\delta$, the product $\Product$ of $\SpaceCliff_\delta$ satisfies,
    for any $u, v \in \SetCliff_\delta$,
    \begin{equation}
        \BasisH_u \Product \BasisH_v = \BasisH_{\Reduction_\delta\Par{u \Under v}}.
    \end{equation}
\end{Proposition}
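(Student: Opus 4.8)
The plan is to prove the formula $\BasisH_u \Product \BasisH_v = \BasisH_{\Reduction_\delta(u \Under v)}$ by unfolding both sides on the $\BasisF$-basis and matching the sets of $\delta$-cliffs that appear. First I would expand the left-hand side using the definition~\eqref{equ:basis_h_cliff} of the $\BasisH$-basis together with Formula~\eqref{equ:product_cliff_algebra} for the product $\Product$ on the $\BasisF$-basis, obtaining
\begin{equation}
    \BasisH_u \Product \BasisH_v
    =
    \sum_{\substack{u' \in \SetCliff_\delta \\ u' \Leq u}}
    \enspace
    \sum_{\substack{v' \in \SetCliff_\delta \\ v' \Leq v}}
    \enspace
    \sum_{\substack{v'' \in \Reduction_\delta^{-1}(v') \\ u' v'' \in \SetCliff_\delta}}
    \BasisF_{u' v''},
\end{equation}
exactly in the style of the proof of Proposition~\ref{prop:product_cliff_e_basis}. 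I would then rewrite the inner two sums: summing over $v' \Leq v$ and then over $v'' \in \Reduction_\delta^{-1}(v')$ amounts to summing over all words $v'' \in \N^{|v|}$ such that $\Reduction_\delta(v'') \Leq v$, and since $\Reduction_\delta$ acts coordinatewise by $\min$, the condition $\Reduction_\delta(v'') \Leq v$ is equivalent to $\min\{v''_i, \delta(i)\} \leq v_i$ for all $i$. The upshot is that $\BasisH_u \Product \BasisH_v$ is the sum of all $\BasisF_{w}$ with $w = u' v''$, $u' \Leq u$, $u' v'' \in \SetCliff_\delta$, and $v''$ subject to the coordinatewise constraint just described.

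Next I would identify this set of words $w$ with the order ideal generated by $\Reduction_\delta(u \Under v)$, i.e. with $\{w \in \SetCliff_\delta : w \Leq \Reduction_\delta(u \Under v)\}$, which by~\eqref{equ:basis_h_cliff} is precisely $\BasisH_{\Reduction_\delta(u \Under v)}$. The key computation here is to pin down $\Reduction_\delta(u \Under v)$ coordinatewise. Recall $u \Under v = u v'$ where $v'_i = \delta(|u|+i)$ if $v_i = \delta(i)$ and $v'_i = v_i$ otherwise; applying $\Reduction_\delta$ to this word leaves the prefix $u$ unchanged (as $u$ is already a $\delta$-cliff) and replaces the $(|u|+i)$-th letter by $\min\{v'_i, \delta(|u|+i)\}$, which equals $\delta(|u|+i)$ when $v_i = \delta(i)$ and $\min\{v_i, \delta(|u|+i)\}$ when $v_i < \delta(i)$. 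I would then check that a word $w = u' v''$ of size $|u| + |v|$ belongs to $\SetCliff_\delta$ and satisfies $w \Leq \Reduction_\delta(u \Under v)$ if and only if $u' \Leq u$, $u' v'' \in \SetCliff_\delta$, and $\min\{v''_i, \delta(i)\} \leq v_i$ for all $i \in [|v|]$ — splitting into the two cases $v_i = \delta(i)$ and $v_i < \delta(i)$ to verify that the bound $w_{|u|+i} \leq \Reduction_\delta(u \Under v)_{|u|+i}$ is exactly the constraint coming from the product.

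The main obstacle I anticipate is the bookkeeping in this last equivalence: one must be careful that the suffix condition obtained from the product — namely $\min\{v''_i, \delta(i)\} \leq v_i$ together with $w \in \SetCliff_\delta$ — genuinely coincides with the single inequality $w_{|u|+i} \leq \Reduction_\delta(u \Under v)_{|u|+i}$, and this requires using the fact that $w \in \SetCliff_\delta$ already forces $v''_i \leq \delta(|u|+i)$, so that when $v_i = \delta(i)$ the product constraint $\min\{v''_i,\delta(i)\} \leq v_i = \delta(i)$ is automatically satisfied and the only active bound is the cliff bound $v''_i \leq \delta(|u|+i) = \Reduction_\delta(u\Under v)_{|u|+i}$, while when $v_i < \delta(i)$ the constraint reads $v''_i \leq v_i$ (since then $v''_i \leq v_i < \delta(i)$ makes the $\min$ transparent), matching $\Reduction_\delta(u\Under v)_{|u|+i} = \min\{v_i,\delta(|u|+i)\}$ once we also note $v_i \leq \delta(i) \leq \delta(|u|+i)$ is not assumed (so the $\min$ with $\delta(|u|+i)$ can genuinely bite, but then the cliff bound handles it). Once this case analysis is laid out cleanly, both expansions coincide term by term and the identity follows; no appeal to associativity or to valley-freeness of $\delta$ is needed, so the statement holds for an arbitrary range map $\delta$.
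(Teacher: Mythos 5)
Your proposal is correct and follows essentially the same route as the paper: expand both $\BasisH$ factors on the $\BasisF$-basis, collapse the double sum over $v' \Leq v$ and $v'' \in \Reduction_\delta^{-1}(v')$ into a single sum over $v''$ with $\Reduction_\delta(v'') \Leq v$, and identify the resulting index set as the order ideal generated by $\Reduction_\delta(u \Under v)$. Your explicit coordinatewise case analysis on $v_i = \delta(i)$ versus $v_i < \delta(i)$ just spells out in detail what the paper compresses into the remark that $\Reduction_\delta(u \Under v)$ is the greatest index appearing in the sum.
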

\begin{proof}
    By~\eqref{equ:product_cliff_algebra}, we have
    \begin{equation} \label{equ:product_cliff_h_basis}
        \BasisH_u \Product \BasisH_v
        \sum_{\substack{
            u', v' \in \SetCliff_\delta \\
            u' \Leq u \\
            v' \Leq v
        }}
        \sum_{\substack{
            v'' \in \Reduction_\delta^{-1}\Par{v'} \\
            u' v'' \in \SetCliff_\delta
        }}
        \BasisF_{u' v''}
        \sum_{\substack{
            u' \in \SetCliff_\delta \\
            u' \Leq u
        }}
        \sum_{\substack{
            v'' \in \N^* \\
            \Reduction_\delta\Par{v''} \Leq v \\
            u' v'' \in \SetCliff_\delta
        }}
        \BasisF_{u' v''}
        \sum_{\substack{
            u' \in \SetCliff_\delta \\
            u' \Leq u
        }}
        \enspace
        \sum_{\substack{
            v'' \in \N^{|v|} \\
            \forall i \in [|v|],
                v_i < \delta(i) \Rightarrow v''_i \leq v_i \\
            u' v'' \in \SetCliff_\delta
        }}
        \BasisF_{u' v''}.
    \end{equation}
    The equality between the third and the last member of~\eqref{equ:product_cliff_h_basis}
    is a consequence of the fact that for any $v'' \in \N^*$, one has $
    \Reduction_\delta\Par{v''} \Leq v$ if and only if for all $i \in [|v|]$, $v_i <
    \delta(i)$ implies $v''_i \leq v_i$.  By definition of the $\BasisH$-basis provided
    by~\eqref{equ:basis_h_cliff}, and since $\BasisF_{\Reduction_\delta(u \Under v)}$ is the
    element with the greatest index appearing in the last member
    of~\eqref{equ:product_cliff_h_basis}, this expression is equal to the stated formula.
\end{proof}
\medbreak

%%%%%%%%%%%%%%%%%%%%%%%%%%%%%%%%%%%%%%%%%%%%%%%%%%%%%%%%%%%%%%%%%%%%%%%%%%%%%%%%%%%%%%%%%%%%
\subsubsection{Presentation by generators and relations}
\label{subsubsec:presentation_cliff_algebra}
A nonempty $\delta$-cliff $u$ is \Def{$\delta$-prime} if the relation $u = v \Over w$ with
$v, w \in \SetCliff_\delta$ implies $(v, w) \in \Bra{(u, \epsilon), (\epsilon, u)}$.  The
graded collection of all these elements is denoted by $\SetPrime_\delta$. For instance, for
$\delta := 021^\omega$, among others, the $\delta$-cliffs $0$, $01$, and $021111$ are
$\delta$-prime, and $0210 = 021 \Over 0$ and $011101 = 0111 \Over 01$ are not.
\medbreak

\begin{Proposition} \label{prop:cliff_algebra_minimal_generating_set}
    For any range map $\delta$, the set $\Bra{\BasisE_u : u \in \SetPrime_\delta}$ is a
    minimal generating family of the unital magmatic algebra $\Par{\SpaceCliff_\delta,
    \Product, \Unit}$.
\end{Proposition}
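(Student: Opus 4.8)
The plan is to show two things: first, that $\{\BasisE_u : u \in \SetPrime_\delta\}$ generates $\SpaceCliff_\delta$ as a unital magmatic algebra, and second, that no proper subset of this family generates it.

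For the generating property, I would argue by induction on the size $n$ of a $\delta$-cliff $u$ that $\BasisE_u$ lies in the subalgebra generated by $\{\BasisE_w : w \in \SetPrime_\delta\}$. The base case $n = 0$ is handled by the unit $\BasisE_\epsilon = \Unit(1)$ (or directly $n=1$ since every cliff of size $1$ is $\delta$-prime). For the inductive step, take $u$ of size $n \geq 1$. If $u$ is $\delta$-prime, there is nothing to prove. Otherwise, by definition of $\delta$-primality there is a nontrivial factorization $u = v \Over w$ with $v, w \in \SetCliff_\delta$ both nonempty, hence $|v|, |w| < n$. Since $u = v \Over w$ is itself a $\delta$-cliff, $\CharacteristicCliff_\delta(v \Over w) = 1$, so by Proposition~\ref{prop:product_cliff_e_basis} we have $\BasisE_v \Product \BasisE_w = \BasisE_{v \Over w} = \BasisE_u$. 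Applying the induction hypothesis to $\BasisE_v$ and $\BasisE_w$ finishes this direction. One subtlety: I would want to make explicit that every nonempty $\delta$-cliff can be written as an iterated $\Over$-product of $\delta$-prime cliffs, and in fact uniquely so, which follows from the associativity of $\Over$ on cliffs (concatenation) together with the definition of $\SetPrime_\delta$; this "free decomposition into $\delta$-primes" is the combinatorial heart of the argument.

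For minimality, I would show that for each $p \in \SetPrime_\delta$, the element $\BasisE_p$ cannot be obtained from the remaining generators $\{\BasisE_u : u \in \SetPrime_\delta,\ u \ne p\}$. The key observation is that Proposition~\ref{prop:product_cliff_e_basis} shows $\Product$ acts on the $\BasisE$-basis multiplicatively: a nontrivial product $\BasisE_v \Product \BasisE_w$ is either $0$ or a single basis element $\BasisE_{v \Over w}$ with $v, w$ both nonempty, i.e.\ with $v \Over w$ not $\delta$-prime. Consequently, any element of the subalgebra generated by a subset $X \subseteq \{\BasisE_u : u \in \SetPrime_\delta\}$ is a linear combination of $\BasisE_x$ for $x \in X$ together with $\BasisE_z$ for $z$ a nontrivial $\Over$-product of cliffs appearing in $X$ — and such $z$ are never $\delta$-prime. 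Hence if $p \in \SetPrime_\delta$ is omitted from the generating set, $\BasisE_p$ (a $\delta$-prime cliff, so not expressible as a nontrivial product) never appears, and the family $X$ fails to generate. The main obstacle will be phrasing this cleanly: one must be careful that "element of the magmatic subalgebra generated by $X$" really does decompose, by a straightforward structural induction on magmatic words (parenthesized products of generators), into a linear span of $\BasisE_z$ where each $z$ is either a single generator index or a nontrivial $\Over$-product; the $\delta$-primality of the omitted $p$ then excludes it from this span. I would set this up as a short lemma on the shape of products of $\BasisE$-basis elements before concluding.

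In summary: the generating part is a clean induction using $\BasisE_v \Product \BasisE_w = \BasisE_{v \Over v}$ when the concatenation stays a cliff, and the existence of $\delta$-prime factorizations; the minimality part rests on the fact that the $\BasisE$-basis is multiplicative with products landing on non-$\delta$-prime indices. The trickiest bookkeeping is the structural description of the subalgebra generated by a given set of $\BasisE$-generators, which I would isolate as an auxiliary claim.
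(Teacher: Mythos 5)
Your proposal is correct and follows essentially the same route as the paper's proof: generation by induction on size using $\BasisE_v \Product \BasisE_w = \BasisE_{v \Over w}$ together with the decomposition of a non-prime cliff into a nontrivial $\Over$-product, and minimality from the fact that products of $\BasisE$-generators land only on non-$\delta$-prime indices (or vanish), so an omitted $\BasisE_p$ with $p \in \SetPrime_\delta$ is unreachable. The only differences are cosmetic: you spell out the structural induction on parenthesized magmatic words that the paper leaves implicit, and note a harmless typo in your summary where $\BasisE_{v \Over w}$ appears as $\BasisE_{v \Over v}$.
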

\begin{proof}
    Let us call $G$ the set of the elements of $\SpaceCliff_\delta$ appearing in the
    statement of the proposition.  We proceed by proving that any $\BasisE_u$, $u \in
    \SetCliff_\delta$, can be expressed as a product of elements of $G$ by induction on the
    size $n$ of $u$. Since for any $\lambda \in \K$, $\Unit(\lambda) = \lambda
    \BasisE_\epsilon$, $\BasisE_\epsilon$ is the unit of $\SpaceCliff_\delta$. Since
    moreover $\BasisE_\epsilon$ is generated by the empty product of elements of $G$, the
    property is true for $n = 0$.  Assume now that $u$ is a nonempty $\delta$-cliff
    satisfying $u = u^{(1)} \Over \cdots \Over u^{(k - 1)} \Over u^{(k)}$ where $k \geq 1$
    is maximal and the $u^{(i)}$, $i \in [k]$, are some nonempty $\delta$-cliffs.  If $k =
    1$, since $k$ is maximal, $u$ is $\delta$-prime and therefore $\BasisE_u$ belongs to
    $G$.  Otherwise, by setting $u' := u^{(1)} \Over \cdots \Over u^{(k - 1)}$, we have $u =
    u' \Over u^{(k)}$ where $u'$ and $u^{(k)}$ are both nonempty $\delta$-cliffs. Then, by
    Proposition~\ref{prop:product_cliff_e_basis},
    \begin{math}
        \BasisE_u
        = \BasisE_{u' \Over u^{(k)}}
        = \BasisE_{u'} \Product \BasisE_{u^{(k)}}.
    \end{math}
    Since the degrees of $u'$ and of $u^{(k)}$ are both smaller than $n$, by induction
    hypothesis, $\BasisE_{u'}$ and $\BasisE_{u^{(k)}}$ are generated by $G$.  Therefore,
    $\BasisE_u$ also is.
    \smallbreak

    It remains to prove that $G$ is minimal w.r.t. set inclusion. For this, let any $u \in
    \SetPrime_\delta$ and set $G' := G \setminus \Bra{\BasisE_u}$. Since by definition of
    $\delta$-prime elements, and due to the product rule on the $\BasisE$-basis provided by
    Proposition~\ref{prop:product_cliff_e_basis}, $\BasisE_u$ cannot be expressed as a
    product of elements of $G'$, $\BasisE_u$ is not generated by $G'$. Therefore, $G$ is
    minimal.
\end{proof}
\medbreak

\begin{Lemma} \label{lem:unique_factorization_prime_cliffs}
    Let $\delta$ be a range map. If $u$ is a nonempty $\delta$-cliff, then $u$ admits as
    suffix a unique $\delta$-prime $\delta$-cliff.
\end{Lemma}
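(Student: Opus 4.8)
The statement asks to show that every nonempty $\delta$-cliff $u$ has a unique $\delta$-prime suffix. The plan is to exhibit this suffix explicitly as the shortest nonempty suffix of $u$ and then argue it is $\delta$-prime and that no other suffix can be $\delta$-prime. First I would recall from Section~\ref{subsubsec:presentation_cliff_algebra} that a nonempty $\delta$-cliff $v$ is $\delta$-prime exactly when the only ways to write $v = v' \Over v''$ with $v', v'' \in \SetCliff_\delta$ are the trivial ones $(v, \epsilon)$ and $(\epsilon, v)$; since $v' \Over v'' = v' v''$ is plain concatenation, this says that $v$ cannot be split as a concatenation of two nonempty $\delta$-cliffs, equivalently that no proper nonempty prefix $w$ of $v$ has the property that the corresponding suffix $z$ (with $v = wz$) is itself a $\delta$-cliff.

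For existence, let $z$ be the \emph{shortest} nonempty suffix of $u$; I claim $z$ is a $\delta$-cliff and is $\delta$-prime. That $z$ is a $\delta$-cliff: every suffix of a $\delta$-cliff need not be one in general, so here is the key point to check carefully — I would instead take $z$ to be the shortest suffix of $u$ that \emph{is} a $\delta$-cliff, which is well-defined because $u$ itself is such a suffix, so the set of cliff-suffixes is nonempty. Then $z$ must be $\delta$-prime: if $z = z' z''$ with $z', z''$ nonempty $\delta$-cliffs, then $z''$ would be a strictly shorter cliff-suffix of $u$, contradicting minimality of $z$. Hence the shortest cliff-suffix of $u$ is a $\delta$-prime suffix of $u$, giving existence.

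For uniqueness, suppose $z_1$ and $z_2$ are both $\delta$-prime suffixes of $u$ with, say, $|z_1| < |z_2|$. Then $z_1$ is a proper suffix of $z_2$, so we can write $z_2 = w z_1$ where $w$ is a nonempty prefix of $z_2$. Here $z_1$ is a $\delta$-cliff by hypothesis. The remaining obstacle — and I expect this to be the main subtlety — is to argue that $w$ is also a $\delta$-cliff, so that $z_2 = w \Over z_1$ is a nontrivial factorization, contradicting $\delta$-primality of $z_2$. But $w$ is a prefix of $z_2$, and $z_2$ is a $\delta$-cliff, so by the closure-by-prefix property (any prefix of a $\delta$-cliff is a $\delta$-cliff, as noted right after the definition of $\delta$-cliffs in Section~\ref{subsubsec:first_definitions_cliffs}) $w$ is indeed a $\delta$-cliff. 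This yields the contradiction, so $|z_1| = |z_2|$ and hence $z_1 = z_2$, since two suffixes of $u$ of the same length coincide. Combining this with existence establishes the lemma.
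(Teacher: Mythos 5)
Your proof is correct and its uniqueness argument is essentially the paper's own: two distinct $\delta$-prime suffixes would be nested, and since prefixes of $\delta$-cliffs are $\delta$-cliffs the longer one would factor nontrivially under $\Over$, contradicting primality. You additionally spell out existence (the shortest cliff-suffix of $u$ is $\delta$-prime), which the paper's proof leaves implicit; that is a harmless and correct supplement.
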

\begin{proof}
    Assume by contradiction that there are two different suffixes $w$ and $w'$ of $u$ which
    are $\delta$-prime. Therefore, $|w| \ne |w'|$, and the shortest word among $w$ and $w'$
    is the suffix of the other.  Assume without loss of generality that $w'$ is shorter than
    $w$. This implies that there is a nonempty word $v \in \N^*$ such that $w = v w'$.  Now,
    since by hypothesis $w$ is a $\delta$-cliff, and since any prefix of a $\delta$-cliff is
    a $\delta$-cliff, we obtain that $v$ is a $\delta$-cliff. Therefore, $w = v \Over w'$
    where $v$ and $w'$ are both nonempty $\delta$-cliffs. This implies that $w$ is not
    $\delta$-prime, which is in contradiction with our assumptions.
\end{proof}
\medbreak

Let $\AlphabetVar_{\SetPrime_\delta}$ be the alphabet $\Bra{a_u : u \in \SetPrime_\delta}$.
We denote by $\K \Angle{\AlphabetVar_{\SetPrime_\delta}}$ the free associative algebra
generated by $\AlphabetVar_{\SetPrime_\delta}$. By definition, the elements of this algebra
are noncommutative polynomials on $\AlphabetVar_{\SetPrime_\delta}$. For any $u \in
\SetCliff_\delta$, we denote by $a^u$ the monomial $a_{u^{(1)}} \dots a_{u^{(k)}}$ where
$\Par{u^{(1)}, \dots, u^{(k)}}$, $k \geq 0$, is the unique sequence of $\delta$-prime
$\delta$-cliffs such that $u = u^{(1)} \Over \cdots \Over u^{(k)}$.  By
Lemma~\ref{lem:unique_factorization_prime_cliffs}, this definition is consistent since any
$\delta$-cliff admits exactly one factorization on $\delta$-prime $\delta$-cliffs.
\medbreak

\begin{Proposition} \label{prop:cliff_algebra_presentation}
    For any valley-free range map $\delta$, the unital associative algebra
    $\Par{\SpaceCliff_\delta, \Product, \Unit}$ is isomorphic to
    \begin{math}
        \K \Angle{\AlphabetVar_{\SetPrime_\delta}}/_{\RelationSpace_\delta}
    \end{math}
    where $\RelationSpace_\delta$ is the subspace of $\K
    \Angle{\AlphabetVar_{\SetPrime_\delta}}$ defined as the linear span of the set
    \begin{equation}
        \Bra{a_{u^{(1)}} \dots a_{u^{(k)}} : k \geq 1,
        \mbox{ for all } i \in [k], u^{(i)} \in \SetPrime_\delta,
        \mbox{ and } u^{(1)} \dots u^{(k)} \notin \SetCliff_\delta}.
    \end{equation}
\end{Proposition}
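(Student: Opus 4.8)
The plan is to exhibit an explicit isomorphism and check it respects the quotient. First I would define the algebra morphism $\Phi : \K \Angle{\AlphabetVar_{\SetPrime_\delta}} \to \SpaceCliff_\delta$ by setting $\Phi\Par{a_u} := \BasisE_u$ for each $u \in \SetPrime_\delta$ and extending multiplicatively; since the source is free associative, this is well-defined. By Proposition~\ref{prop:cliff_algebra_minimal_generating_set} the $\BasisE_u$ with $u \in \SetPrime_\delta$ generate $\SpaceCliff_\delta$ as a unital algebra, so $\Phi$ is surjective. The heart of the argument is to identify $\ker \Phi$ with $\RelationSpace_\delta$. For a monomial $a_{u^{(1)}} \dots a_{u^{(k)}}$ with each $u^{(i)} \in \SetPrime_\delta$, iterating Proposition~\ref{prop:product_cliff_e_basis} gives
\begin{equation}
    \Phi\Par{a_{u^{(1)}} \dots a_{u^{(k)}}}
    = \BasisE_{u^{(1)}} \Product \cdots \Product \BasisE_{u^{(k)}}
    = \CharacteristicCliff_\delta\Par{u^{(1)} \Over \cdots \Over u^{(k)}} \, \BasisE_{u^{(1)} \Over \cdots \Over u^{(k)}},
\end{equation}
which is $\BasisE_{u^{(1)} \dots u^{(k)}}$ if $u^{(1)} \dots u^{(k)} \in \SetCliff_\delta$ and $0$ otherwise. (Here one uses that $\delta$ valley-free makes $\Product$ associative, by Theorem~\ref{thm:cliff_coassociative_coalgebra}, so the iterated product is unambiguous; one also needs the easy observation that $\CharacteristicCliff_\delta$ of a concatenation of prime factors detects exactly membership in $\SetCliff_\delta$.)

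Next I would argue that $\Phi$ descends to an isomorphism on the quotient. The displayed computation shows immediately that every spanning element of $\RelationSpace_\delta$ lies in $\ker \Phi$, so $\Phi$ factors through a surjection $\bar\Phi : \K \Angle{\AlphabetVar_{\SetPrime_\delta}}/_{\RelationSpace_\delta} \to \SpaceCliff_\delta$. For injectivity of $\bar\Phi$, the key point is that the quotient has a natural basis indexed by $\delta$-cliffs: the monomials $a^u$ for $u \in \SetCliff_\delta$ (in the notation introduced via Lemma~\ref{lem:unique_factorization_prime_cliffs}) span $\K \Angle{\AlphabetVar_{\SetPrime_\delta}}/_{\RelationSpace_\delta}$, because any monomial $a_{u^{(1)}} \dots a_{u^{(k)}}$ whose underlying word $u^{(1)} \dots u^{(k)}$ fails to be a $\delta$-cliff is zero in the quotient, while if it is a $\delta$-cliff then — using that the $\Over$-factorization into prime cliffs is unique and that $\Over$ is associative in the sense of concatenation — it already equals $a^{u^{(1)} \dots u^{(k)}}$. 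Since $\bar\Phi$ sends the class of $a^u$ to $\BasisE_u$ and the $\Bra{\BasisE_u : u \in \SetCliff_\delta}$ form a basis of $\SpaceCliff_\delta$ (by triangularity, Section~\ref{subsubsec:e_h_bases_cliff}), $\bar\Phi$ carries a spanning set bijectively onto a basis, hence is an isomorphism.

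I expect the main obstacle to be the careful bookkeeping around the $\Over$-operation and the prime factorization: one must verify that $\Over$ on $\delta$-cliffs, viewed as partial concatenation, is associative where defined, that $\CharacteristicCliff_\delta\Par{u^{(1)} \Over \cdots \Over u^{(k)}}$ equals $\IndicatorFunction_{u^{(1)} \dots u^{(k)} \in \SetCliff_\delta}$, and that the unique $\delta$-prime suffix decomposition from Lemma~\ref{lem:unique_factorization_prime_cliffs} is compatible with concatenation so that $a^u$ is genuinely well-defined and the monomials $a^u$ are linearly independent in the quotient (equivalently, that no relation in $\RelationSpace_\delta$ involves two distinct $a^u$, $a^{u'}$ with $u, u' \in \SetCliff_\delta$). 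The valley-free hypothesis enters precisely to guarantee associativity of $\Product$, without which the iterated-product computation above would not be meaningful. Everything else reduces to the triangularity of the $\BasisE$-basis and the generation statement already established, so the proof is mostly a matter of assembling these ingredients cleanly.
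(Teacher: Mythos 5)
Your proposal is correct, and it uses the same two ingredients as the paper's own proof, namely the product rule $\BasisE_u \Product \BasisE_v = \CharacteristicCliff_\delta\Par{u \Over v}\, \BasisE_{u \Over v}$ of Proposition~\ref{prop:product_cliff_e_basis} and the unique factorization into $\delta$-prime factors of Lemma~\ref{lem:unique_factorization_prime_cliffs}; but it is organized in the opposite direction, and that reversal is a real structural difference. The paper first proves by a direct combinatorial argument that $\RelationSpace_\delta$ is a two-sided ideal: the delicate case is left multiplication, where one must show that $v u \notin \SetCliff_\delta$ whenever $v \in \SetCliff_\delta$ and $u$ is a concatenation of primes lying outside $\SetCliff_\delta$, and this is precisely where the valley-free hypothesis does visible work (a letter $u_j > \delta(j)$ that is admissible both inside its own prime factor and after $v$ would force a valley). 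Only then does the paper define $\BasisE_u \mapsto a^u$ from $\SpaceCliff_\delta$ to the quotient and check multiplicativity. You instead map the free algebra onto $\SpaceCliff_\delta$ by $a_u \mapsto \BasisE_u$, compute the image of every monomial, and conclude $\RelationSpace_\delta = \ker\Phi$ from the spanning-set-onto-basis argument; the ideal property then comes for free, kernels of algebra morphisms being ideals, and valley-freeness enters only through the associativity of $\Product$ granted by Theorem~\ref{thm:cliff_coassociative_coalgebra}. Your route is shorter and spares the separate ideal verification; the paper's route isolates the second, independent role of the valley condition. Your argument is complete once the two points you flag are checked: a monomial of prime generators whose concatenated word is a $\delta$-cliff is literally $a^u$ (peel off the unique prime suffix inductively), and the partial indicators arising from iterating the binary product rule telescope to $\IndicatorFunction_{u^{(1)} \dots u^{(k)} \in \SetCliff_\delta}$ because every prefix of a $\delta$-cliff is a $\delta$-cliff. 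Both are immediate.
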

\begin{proof}
    Let us prove that $\RelationSpace_\delta$ is an ideal of $\K
    \Angle{\AlphabetVar_{\SetPrime_\delta}}$.  Let $u := u^{(1)} \dots u^{(k)}$ such that
    for all $i \in [k]$, $u^{(i)} \in \SetPrime_\delta$, and $u \notin \SetCliff_\delta$.
    Let also $v := v^{(1)} \dots v^{(\ell)}$ such that for all $i \in [\ell]$, $v^{(i)}
    \in \SetPrime_\delta$, and $v \in \SetCliff_\delta$.  Therefore, we have
    $a_{u^{(1)}} \dots a_{u^{(k)}} \in \RelationSpace_\delta$ and $a_{v^{(1)}} \dots
    a_{v^{(\ell)}} \in \K \Angle{\AlphabetVar_{\SetPrime_\delta}}$. Since $u \notin
    \SetCliff_\delta$, there is an index $j \in [|u|]$ such that $u_j > \delta(j)$.
    This implies that $(u v)_j = u_j > \delta(j)$, so that $u v \notin
    \SetCliff_\delta$.  Hence, $a_{u^{(1)}} \dots a_{u^{(k)}} a_{v^{(1)}} \dots
    a_{v^{(\ell)}} \in \RelationSpace_\delta$.  Now, assume by contradiction that $v u
    \in \SetCliff_\delta$.  Since $u \notin \SetCliff_\delta$, there is an index $j \in
    [|u|]$ such that $u_j > \delta(j)$. This letter $u_j$ appears at a certain position
    $j'$ of a factor $u^{(i)}$, $i \in [k]$, of $u$. Since $u^{(i)} \in
    \SetCliff_\delta$, we have $u_j = u^{(i)}_{j'} \leq \delta\Par{j'}$. Besides, since
    $v u \in \SetCliff_\delta$, we have $u_j = (v u)_{|v| + j} \leq \delta(|v| + j)$.
    Therefore, we obtain
    \begin{math}
        \delta\Par{j'} > \delta(j) < \delta\Par{|v| + j},
    \end{math}
    and since $j' < j < |v| + j$, this leads to the fact that $\delta$ has a valley, which
    is in contradiction with our hypothesis. Hence, $v u \notin \SetCliff_\delta$ so that
    $a_{v^{(1)}} \dots a_{v^{(\ell)}} a_{u^{(1)}} \dots a_{u^{(k)}} \in
    \RelationSpace_\delta$.  This shows that $\RelationSpace_\delta$ is an ideal of $\K
    \Angle{\AlphabetVar_{\SetPrime_\delta}}$.
    \smallbreak

    Let the linear map
    \begin{math}
        \phi :
        \SpaceCliff_\delta
        \to \K \Angle{\AlphabetVar_{\SetPrime_\delta}}/_{\RelationSpace_\delta}
    \end{math}
    satisfying $\phi\Par{\BasisE_u} = a^u$ for any $u \in \SetCliff_\delta$. Recall that
    Lemma~\ref{lem:unique_factorization_prime_cliffs} implies that $a^u$ is a well-defined
    monomial of
    \begin{math}
        \K \Angle{\AlphabetVar_{\SetPrime_\delta}}/_{\RelationSpace_\delta}.
    \end{math}
    Recall also that since $\delta$ is valley-free, by
    Theorem~\ref{thm:cliff_coassociative_coalgebra}, $\SpaceCliff_\delta$ is a unital
    associative algebra. Hence, it remains to prove that $\phi$ is a unital associative
    algebra isomorphism. First, $\phi$ is an isomorphism of spaces since it establishes a
    one-to-one correspondence between basis elements $\BasisE_u$, $u \in \SetCliff_\delta$,
    of $\SpaceCliff_\delta$ and basis elements $\phi\Par{\BasisE_u} = a^u$ of
    \begin{math}
        \K \Angle{\AlphabetVar_{\SetPrime_\delta}}/_{\RelationSpace_\delta}.
    \end{math}
    Let $u, v \in \SetCliff_\delta$.  By Proposition~\ref{prop:product_cliff_e_basis}, when
    $u \Over v \in
    \SetCliff_\delta$, we have
    \begin{equation}
        \phi\Par{\BasisE_u \Product \BasisE_v}
        = \phi\Par{\BasisE_{u \Over v}}
        = a^{u v}
        = a^u a^v
        = \phi\Par{\BasisE_u} \phi\Par{\BasisE_v}.
    \end{equation}
    Moreover, when $u \Over v \notin \SetCliff_\delta$, we have
    \begin{equation}
        \phi\Par{\BasisE_u \Product \BasisE_v}
        = \phi(0)
        = 0
        = a^u a^v
        = \phi\Par{\BasisE_u} \phi\Par{\BasisE_v}
    \end{equation}
    since $a^u a^v \in \RelationSpace_\delta$. Finally, we have $\phi\Par{\BasisE_\epsilon}
    = a^\epsilon = 1$.  This shows that $\phi$ is a unital associative algebra morphism.
\end{proof}
\medbreak

Let $\LeqSuffix$ be the partial order relation on the monomials of $\K
\Angle{\AlphabetVar_{\SetPrime_\delta}}$ wherein for any monomials $a_{u^{(1)}} \dots
a_{u^{(k)}}$ and $a_{v^{(1)}} \dots a_{v^{(\ell)}}$ of $\K
\Angle{\AlphabetVar_{\SetPrime_\delta}}$, one has
\begin{math}
    a_{u^{(1)}} \dots a_{u^{(k)}} 
    \LeqSuffix a_{v^{(1)}} \dots a_{v^{(\ell)}}
\end{math}
if the word $u^{(1)} \dots u^{(k)}$ is a suffix of $v^{(1)} \dots v^{(\ell)}$. Given a set
$M$ of monomials of $\K \Angle{\AlphabetVar_{\SetPrime_\delta}}$, we denote by
$\min_{\LeqSuffix} M$ the set of all minimal elements of $M$ w.r.t. the order
relation~$\LeqSuffix$.
\medbreak

\begin{Proposition} \label{prop:cliff_algebra_relations}
    For any valley-free range map $\delta$, the ideal $\RelationSpace_\delta$ of $\K
    \Angle{\AlphabetVar_{\SetPrime_\delta}}$ is minimally generated by the set
    \begin{equation}
        \min_{\LeqSuffix} \Bra{a^u a_v : u \in \SetCliff_\delta, v \in \SetPrime_\delta,
        \mbox{ and } u v \notin \SetCliff_\delta}.
    \end{equation}
\end{Proposition}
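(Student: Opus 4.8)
The plan is to show that the stated set of monomials both generates $\RelationSpace_\delta$ as an ideal and that none of its members can be removed without losing the generating property. Recall from Proposition~\ref{prop:cliff_algebra_presentation} that $\RelationSpace_\delta$ is the linear span of all monomials $a_{u^{(1)}} \dots a_{u^{(k)}}$ with each $u^{(i)} \in \SetPrime_\delta$ and $u^{(1)} \dots u^{(k)} \notin \SetCliff_\delta$. Equivalently, writing $w := u^{(1)} \Over \cdots \Over u^{(k)}$, these are exactly the monomials $a^w$ with $w$ a word on $\N$ (necessarily with $\delta$-cliff prime factors) that is not a $\delta$-cliff. So I must prove that the ideal generated by $S := \min_{\LeqSuffix} \Bra{a^u a_v : u \in \SetCliff_\delta, v \in \SetPrime_\delta, \text{ and } u v \notin \SetCliff_\delta}$ equals this span.

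First I would establish the containment $\langle S \rangle \subseteq \RelationSpace_\delta$: each generator $a^u a_v$ with $u \in \SetCliff_\delta$, $v \in \SetPrime_\delta$ and $uv \notin \SetCliff_\delta$ lies in $\RelationSpace_\delta$ by definition (its prime factorization is that of $u$ followed by $v$, and $uv$ is not a $\delta$-cliff), and $\RelationSpace_\delta$ is an ideal by Proposition~\ref{prop:cliff_algebra_presentation}. For the reverse containment, take any monomial $a^w \in \RelationSpace_\delta$, so $w \notin \SetCliff_\delta$. Let $j$ be the smallest index with $w_j > \delta(j)$. Split $w$ at the prime-factor boundary: there is a unique way to write $w = w' w''$ where $w'$ is a prefix that is itself a $\delta$-cliff and $w''$ begins with the prime factor containing position $j$ — more precisely, let $v$ be the (unique, by Lemma~\ref{lem:unique_factorization_prime_cliffs} applied appropriately) prime factor of $w$ whose span of positions contains $j$, let $u$ be the concatenation of the prime factors preceding $v$, and let $z$ be the concatenation of the prime factors after $v$. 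Then $u \in \SetCliff_\delta$, $v \in \SetPrime_\delta$, and because $j$ is minimal and falls inside $v$, the prefix $uv$ already fails to be a $\delta$-cliff at position $j$; so $a^u a_v$ is one of the monomials in the set $\Bra{a^u a_v : \dots}$. Consequently $a^w = a^u a_v a^z$ is a right multiple of $a^u a_v$, hence lies in the ideal generated by that set. Finally, since $S = \min_{\LeqSuffix}$ of that set, every monomial $a^u a_v$ in the set is of the form $a^x \cdot (a^u a_v)$ — wait, more carefully: $\LeqSuffix$ orders by the underlying word being a suffix, so a non-minimal $a^u a_v$ has some $a^{u_0} a_{v_0} \in S$ with $u_0 v_0$ a suffix of $u v$; since $v$ is the terminal prime factor, $v_0 = v$ and $u_0$ is a suffix of $u$, whence $a^u a_v = a^{(\text{prefix})} a^{u_0} a_v$ is a multiple of $a^{u_0} a_v \in S$. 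Thus the ideals generated by $S$ and by the full set coincide, proving $\langle S \rangle = \RelationSpace_\delta$.

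For minimality, I would argue that no single element $a^{u_0} a_{v_0} \in S$ can be generated by the others. Suppose it were; then $a^{u_0} a_{v_0} = \sum_i p_i \, (a^{u_i} a_{v_i}) \, q_i$ for monomials $p_i, q_i$ and elements $a^{u_i} a_{v_i} \in S \setminus \Bra{a^{u_0} a_{v_0}}$, and comparing monomials on both sides (everything is monomial here), $a^{u_0} a_{v_0}$ must equal $p \,(a^{u_i} a_{v_i})\, q$ for some $i$ and some monomials $p, q$. The key observation is that $q$ must be trivial: the terminal letter $a_{v_0}$ is a prime generator, and since $v_0 \in \SetPrime_\delta$ and $u_i v_i$ is a \emph{factor} of $u_0 v_0$ ending no later than its end, the prime-factorization uniqueness (Lemma~\ref{lem:unique_factorization_prime_cliffs}) forces $q = 1$ and $v_i = v_0$, so $u_i v_i$ is a suffix of $u_0 v_0$ strictly shorter — contradicting $a^{u_0} a_{v_0} \in \min_{\LeqSuffix}$. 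Hence $S$ is a minimal generating set of $\RelationSpace_\delta$ as an ideal.

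The main obstacle I anticipate is the bookkeeping around how a generator $a^u a_v$ of $S$ sits inside an arbitrary monomial $a^w$ of $\RelationSpace_\delta$ — specifically, verifying that the prime factor $v$ containing the first "bad" position $j$ can be peeled off with a genuine $\delta$-cliff $u$ to its left (this uses that $j$ is minimal, so the prefix up to the start of $v$ consists only of positions $\leq \delta(\cdot)$, i.e. is a $\delta$-cliff) and that the valley-free hypothesis guarantees the factorization is well-behaved. I would handle this exactly as in the proof of Proposition~\ref{prop:cliff_algebra_presentation}: the minimality of $j$ plus the fact that $\delta$ is valley-free ensures the left part $u$ and the prime $v$ are unambiguously determined, and that $uv$ genuinely fails the $\delta$-cliff condition at $j$. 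Everything else is the routine monomial-comparison and suffix-order arguments sketched above.
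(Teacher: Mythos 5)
Your overall architecture matches the paper's: show each $a^u a_v$ with $u v \notin \SetCliff_\delta$ lies in $\RelationSpace_\delta$, peel off the first offending $\delta$-prime factor of an arbitrary monomial of $\RelationSpace_\delta$ to exhibit it as a right multiple of such a generator, descend to the $\LeqSuffix$-minimal ones, and then argue minimality by showing that a minimal generator cannot be a two-sided multiple of another. The generation half is correct, and you are in fact more explicit than the paper in passing from the full set of generators to its $\min_{\LeqSuffix}$ subset.

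The gap is in the minimality half, at the step ``$q$ must be trivial.'' Unique factorization into $\delta$-primes (Lemma~\ref{lem:unique_factorization_prime_cliffs}) does not force $q = 1$: it determines the letter sequence of $a^{u_0} a_{v_0}$, but it does not forbid the letter word $a^{u_i} a_{v_i}$ from occurring as an \emph{internal} factor of that sequence. If $q \ne 1$, then $a_{v_i}$ is one of the letters of $a^{u_0}$, so $u_i v_i$ is a factor of the $\delta$-cliff $u_0$ beginning at a prime boundary, with $u_i \in \SetCliff_\delta$ and $u_i v_i \notin \SetCliff_\delta$; nothing about uniqueness of factorization excludes this. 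What excludes it is the valley-free hypothesis: if the factor starts at position $1$ it is a prefix of $u_0$, hence a $\delta$-cliff, a contradiction; if it starts at position $P + 1$ with $P \geq 1$, then a position $j$ with $\Par{u_i v_i}_j > \delta(j)$ gives $\delta\Par{j''} \geq \Par{u_i v_i}_j > \delta(j)$ (where $j'' \leq j$ is the position of that letter inside its own prime factor, which is a $\delta$-cliff) together with $\delta(P + j) \geq \Par{u_i v_i}_j > \delta(j)$, and since $j'' \leq j < P + j$ this is either immediately absurd ($j'' = j$) or a valley ($j'' < j$). This is precisely the computation used in the proof of Proposition~\ref{prop:cliff_algebra_presentation} to show that $\RelationSpace_\delta$ is stable under left multiplication; you need to import it here (you invoke valley-freeness only in the generation step, where it is in fact not needed). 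With that substitution your argument closes, and the suffix-order contradiction you give for the case $q = 1$ is correct.
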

\begin{proof}
    Let us call $G$ the set of monomials of $\K \Angle{\AlphabetVar_{\SetPrime_\delta}}$
    appearing in the statement of the proposition.  First, since for any $a^u a_v \in G$, we
    have $u v \notin \SetCliff_\delta$, by
    Proposition~\ref{prop:cliff_algebra_presentation}, $a^u a_v \in
    \RelationSpace_\delta$.  For this reason, the ideal generated by $G$ is a subspace of
    $\RelationSpace_\delta$. Now, let $x := a_{u^{(1)}} \dots a_{u^{(k)}}$ be a basis
    element of $\RelationSpace_\delta$. Since $u^{(1)} \dots u^{(k)} \notin
    \SetCliff_\delta$, there is a smallest index $\ell \in [k - 1]$ such that $u' :=
    u^{(1)} \dots u^{(\ell)} \in \SetCliff_\delta$ and $u^{(1)} \dots u^{(\ell)} u^{(\ell +
    1)} \notin \SetCliff_\delta$. Then, by setting $v' := u^{(\ell + 1)}$, we have $x =
    a^{u'} a_{v'} u^{(\ell + 2)} \dots u^{(k)}$. Therefore, $x$ belongs to the ideal
    generated by $G$. This shows that $G$ generates $\RelationSpace_\delta$ as an ideal.  It
    remains to show that $G$ is minimal w.r.t. set inclusion. For this, assume that $G$ is
    nonempty. Let any $x := a^u a_v \in G$ and set $G' := G \setminus \{x\}$.  Since $x$ is
    a minimal element in $G$ w.r.t. the order relation $\LeqSuffix$, there is no $a^{u'} a_v
    \in G$ such that $a^{u'} a_v \LeqSuffix a^u a_v$. For this reason, $x$ cannot be
    expressed as a product $y x'$ where $x' \in G'$ and $y \in \K
    \Angle{\AlphabetVar_{\SetPrime_\delta}}$. Moreover, since $u$ is a $\delta$-cliff and
    all its prefixes still are, $x$ cannot by expressed as a product $x' y$ where
    $x' \in G'$ and $y \in \K \Angle{\AlphabetVar_{\SetPrime_\delta}}$. Therefore, $G'$ does
    not generates $\RelationSpace_\delta$. This shows the minimality of~$G$.
\end{proof}
\medbreak

\begin{Lemma} \label{lem:cliff_algebra_free}
    Let $\delta$ be a valley-free range map. The unital associative algebra
    $\Par{\SpaceCliff_\delta, \Product, \Unit}$ is free if and only if $\delta$ is weakly
    increasing.
\end{Lemma}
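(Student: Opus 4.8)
The plan is to prove both implications using the presentation obtained in Proposition~\ref{prop:cliff_algebra_presentation}, together with the combinatorics of $\delta$-prime $\delta$-cliffs. Recall that a valley-free $\delta$ makes $\Par{\SpaceCliff_\delta, \Product, \Unit}$ a unital associative algebra isomorphic to $\K \Angle{\AlphabetVar_{\SetPrime_\delta}}/_{\RelationSpace_\delta}$, where $\RelationSpace_\delta$ is the ideal generated by the monomials $a^u a_v$ with $u \in \SetCliff_\delta$, $v \in \SetPrime_\delta$, and $uv \notin \SetCliff_\delta$ (Proposition~\ref{prop:cliff_algebra_relations}). The algebra is free exactly when $\RelationSpace_\delta = \{0\}$, i.e.\ when there are no such nontrivial relations: every concatenation $uv$ of a $\delta$-cliff $u$ with a $\delta$-prime $\delta$-cliff $v$ is again a $\delta$-cliff.

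First I would prove the easy direction: if $\delta$ is weakly increasing, then by Lemma~\ref{lem:over_under_weakly_increasing_range_map}, for any $u, v \in \SetCliff_\delta$, the word $u \Over v = uv$ is a $\delta$-cliff. In particular $uv \in \SetCliff_\delta$ for all $u \in \SetCliff_\delta$ and $v \in \SetPrime_\delta$, so the generating set of $\RelationSpace_\delta$ from Proposition~\ref{prop:cliff_algebra_relations} is empty, hence $\RelationSpace_\delta = \{0\}$. Therefore $\SpaceCliff_\delta \simeq \K \Angle{\AlphabetVar_{\SetPrime_\delta}}$ is free as a unital associative algebra, the free generators being the $\BasisE_u$ with $u$ ranging over $\SetPrime_\delta$ (this matches Proposition~\ref{prop:cliff_algebra_minimal_generating_set}).

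For the converse, I would argue by contraposition: assume $\delta$ is valley-free but not weakly increasing, and exhibit a nontrivial relation in $\RelationSpace_\delta$. Since $\delta$ is not weakly increasing, there is an index $k \geq 1$ with $\delta(k) > \delta(k + 1)$; as $\delta$ is valley-free, the sequence $\delta$ is unimodal, so once it strictly decreases it is weakly decreasing from position $k$ onward (indeed, a later strict increase would produce a valley). The idea is to build a short $\delta$-cliff $u$ and a one-letter $\delta$-prime $\delta$-cliff $v$ such that $uv$ fails the cliff condition precisely at a position beyond $k$. Concretely, take $u$ of length $k$ with $u_k = \delta(k)$ (for instance $u = 0^{k-1}\,\delta(k)$, which is a $\delta$-cliff since $\delta(k) \leq \delta(k)$), and $v = 0^{\,\ell - 1} c$ a single-letter-at-the-end $\delta$-cliff of length $\ell$ chosen so that $v$ is $\delta$-prime and so that $c > \delta(k + \ell)$ while $c \leq \delta(\ell)$; the latter two are compatible because $\delta(\ell) > \delta(k+\ell)$ when $\ell$ is small and $k+\ell$ is past the descent. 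One must check $v \in \SetPrime_\delta$: a word of the form $0^{\ell-1}c$ with $c \neq 0$ cannot be written as $v' \Over v''$ with both factors nonempty, since any proper suffix $0^{j-1}c$ would have to be a cliff with $c \le \delta(j)$, but one can ensure $c > \delta(j)$ for the relevant $j$ by unimodality, or more simply take $\ell = 1$, $v = (\delta(k)+1)$-th... — actually the cleanest choice is $v$ of length one is impossible if $\delta(1)=0$, so I would take $v$ of length $2$, $v = 0c$ with $0 < c \le \delta(2)$ and $c > \delta(k+2)$, valid whenever $\delta(2) > \delta(k+2)$, which holds for suitable $k$ by unimodality with a descent. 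Then $uv \notin \SetCliff_\delta$ because $(uv)_{k+2} = c > \delta(k+2)$, and $v$ being $\delta$-prime makes $a^u a_v = a^u a_v$ a genuine basis element of $\RelationSpace_\delta$, so $\RelationSpace_\delta \neq \{0\}$ and $\SpaceCliff_\delta$ is not free.

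\textbf{Main obstacle.} The delicate part is the converse: one must choose the witness pair $(u,v)$ so that simultaneously $u \in \SetCliff_\delta$, $v \in \SetPrime_\delta$, and $uv \notin \SetCliff_\delta$, and handle the boundary cases (e.g.\ $\delta(1) = 0$ forcing $v$ to have length at least $2$, or the position of the descent being large). Unimodality of valley-free $\delta$ is exactly what guarantees that a descent, once it appears, persists, which is what lets us find an index $j = k + \ell$ where $\delta(j) < \delta(\ell)$ with $\ell < j$; I would isolate this as a short preliminary observation before constructing the witness. A secondary point is verifying $\delta$-primality of $v$ directly from the definition, which is routine once $v$ is taken of the simple form $0^{\ell-1}c$.
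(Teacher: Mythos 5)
Your argument follows the same route as the paper's proof: both implications are read off the presentation of Proposition~\ref{prop:cliff_algebra_presentation} together with the description of the relation ideal, the forward direction via the fact that concatenations of $\delta$-cliffs remain $\delta$-cliffs when $\delta$ is weakly increasing (so $\RelationSpace_\delta = 0$), and the converse by exhibiting $u, v \in \SetCliff_\delta$ with $uv \notin \SetCliff_\delta$. The paper leaves the witness implicit, and the two places where you make it explicit both need repair. First, the assertion that $\delta(\ell) > \delta(k+\ell)$ holds ``when $\ell$ is small'', and in particular your final choice $\ell = 2$, is false in general: for $\delta = 0\,1\,5\,3^\omega$ the only descent is at $k = 3$, yet $\delta(2) = 1 < 3 = \delta(5)$. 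Second, requiring $v = 0^{\ell-1}c$ to be $\delta$-prime is not always achievable for the $c$ you have room to choose (for $\delta = 0\,5\,5\,3^\omega$ with $\ell = k = 3$, every admissible $c \in \{4,5\}$ satisfies $c \leq \delta(2)$, so $00c = 0 \Over 0c$ is never prime), and it is also unnecessary. The clean fix, which is what the paper does, is to drop primality altogether: take $i$ with $\delta(i) > \delta(i+1)$, set $u := 0$ and $v := 0^{i-1}\delta(i)$, so that $(uv)_{i+1} = \delta(i) > \delta(i+1)$ and $uv \notin \SetCliff_\delta$; then factor $u$ and $v$ into $\delta$-primes and observe that the resulting monomial lies in $\RelationSpace_\delta$ as defined in Proposition~\ref{prop:cliff_algebra_presentation} simply because its concatenation is not a $\delta$-cliff. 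With these adjustments your proof is complete and coincides with the paper's.
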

\begin{proof}
    We use here the fact that, by Proposition~\ref{prop:cliff_algebra_presentation},
    $\SpaceCliff_\delta$ is isomorphic as a unital associative algebra to the quotient
    \begin{math}
        \K \Angle{\AlphabetVar_{\SetPrime_\delta}}/_{\RelationSpace_\delta}
    \end{math}
    and the description of the generating familly of $\RelationSpace_\delta$ provided by
    Proposition~\ref{prop:cliff_algebra_relations}.
    \smallbreak

    When $\delta$ is weakly increasing, for all $u, v \in \SetCliff_\delta$, $u v \in
    \SetCliff_\delta$. Therefore, $\RelationSpace_\delta$ is the null space so that
    $\SpaceCliff_\delta$ is free as a unital associative algebra. Conversely, when $\delta$
    is not weakly increasing, there is an $i \geq 1$ such that $\delta(i) > \delta(i + 1)$.
    In this case, there are $u, v \in \SetCliff_\delta$ such that $u v \notin
    \SetCliff_\delta$.  By expressing $u$ and $v$ respectively as products of $\delta$-prime
    $\delta$-cliffs, this leads to the existence of a relation in $\RelationSpace_\delta$.
\end{proof}
\medbreak

\begin{Lemma} \label{lem:cliff_algebra_finite_type}
    Let $\delta$ be a valley-free range map. The unital associative algebra
    $\Par{\SpaceCliff_\delta, \Product, \Unit}$ admits a finite number of generators and a
    finite number of nontrivial relations between the generators if and only if $\delta$ is
    $1$-dominated.
\end{Lemma}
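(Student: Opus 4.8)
The plan is to use the presentation of $\SpaceCliff_\delta$ established in Proposition~\ref{prop:cliff_algebra_presentation}, namely the isomorphism with $\K \Angle{\AlphabetVar_{\SetPrime_\delta}}/_{\RelationSpace_\delta}$, together with the explicit minimal generating families for the generators (Proposition~\ref{prop:cliff_algebra_minimal_generating_set}, via $\SetPrime_\delta$) and for the relations (Proposition~\ref{prop:cliff_algebra_relations}). So the claim reduces to the equivalence: $\SetPrime_\delta$ is finite and the minimal generating set $\min_{\LeqSuffix} \Bra{a^u a_v : u \in \SetCliff_\delta, v \in \SetPrime_\delta, uv \notin \SetCliff_\delta}$ is finite, if and only if $\delta$ is $1$-dominated. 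Recall that $\delta$ being $1$-dominated means there is a $k \geq 1$ such that $\delta(k') \leq \delta(1)$ for all $k' \geq k$; since $\delta$ is valley-free, this is a strong eventual-boundedness statement.

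First I would prove the forward implication by contraposition: assume $\delta$ is valley-free but \emph{not} $1$-dominated. Then $\delta(1) < \delta(k')$ for infinitely many indices $k'$. Valley-freeness forces $\delta$ to be, past some point, non-decreasing on the set of indices where it exceeds $\delta(1)$; more precisely, since there is no valley, once $\delta$ rises above $\delta(1)$ it can never dip back below $\delta(1)$ and rise again — so there is an index $j_0$ with $\delta(j) > \delta(1)$ for all $j \geq j_0$ (otherwise we would find a valley $\delta(i_1) > \delta(i_2) < \delta(i_3)$ with $\delta(i_2) \leq \delta(1) < \delta(i_1), \delta(i_3)$). Now I would exhibit infinitely many distinct $\delta$-prime cliffs: for each large $j$, consider a cliff of the form $0^{j-1} a$ with $1 \leq a \leq \delta(j)$; such a cliff cannot be written nontrivially as $v \Over w$ since the only nonzero letter is in last position and any nontrivial factorization $v \Over w$ would force $w = a' \dots$ with first letter $a' \leq \delta(1) < a$ — wait, more carefully, $w$'s letters must be valid for $\delta$ starting from position $1$, so the last letter is $\leq \delta(|w|)$, but to be $\delta$-prime one checks $w$ has a nonzero first letter forcing $|w|=1$ and then $a \leq \delta(1)$, contradiction when $a > \delta(1)$. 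Hence $0^{j-1}\,\delta(1){+}1$ (say) is $\delta$-prime for every $j$ with $\delta(j) > \delta(1)$, giving infinitely many generators.

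Next I would prove the converse: assume $\delta$ is valley-free and $1$-dominated, with threshold $k$ so that $\delta(k') \leq \delta(1)$ for all $k' \geq k$. Valley-freeness plus $1$-domination forces $\delta$ to be eventually \emph{constant} equal to some value $d \leq \delta(1)$: if $\delta$ took two different values $d_1 > d_2$ infinitely often past $k$, with $d_1 \leq \delta(1)$, one still needs no valley, which is fine, but the key point is that past index $k$ the sequence is bounded by $\delta(1)$ and valley-free, hence eventually monotone, hence eventually constant. Call $N$ an index past which $\delta$ is constant equal to $d$. Then for the finiteness of generators: any $\delta$-prime cliff $u$ must have $|u| \leq N$ or else its tail lies in the constant region and one shows $u$ factors as $v \Over w$ with $w$ a single-letter cliff (since in the constant-$d$ region any prefix extends, cliffs decompose), contradicting primality unless $|u|$ is bounded; combined with each position having at most $\delta(i)+1$ values, $\SetPrime_\delta$ is finite. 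For the finiteness of relations: a relation comes from $u \in \SetCliff_\delta$, $v \in \SetPrime_\delta$ with $uv \notin \SetCliff_\delta$, meaning some letter of $v$ at global position $|u|+i$ exceeds $\delta(|u|+i)$; minimality under $\LeqSuffix$ bounds $|u|$ — the obstruction to $uv$ being a cliff persists under lengthening the suffix only if $\delta$ keeps dropping, but past $N$ it is constant, so the position $|u|+i$ causing the failure must satisfy $|u|+i \leq N$, bounding $|u|$ by $N$; with $v$ ranging over the finite set $\SetPrime_\delta$, there are finitely many such minimal relations.

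The main obstacle I expect is the careful structural analysis of valley-free range maps: pinning down precisely that valley-freeness together with (failure of) $1$-domination forces the "eventually above $\delta(1)$" behavior in the forward direction and the "eventually constant" behavior in the converse. These are elementary but must be done with care — in particular handling the boundary cases where $\delta(1) = 0$ (so $\delta$ rooted) and where $\delta$ might oscillate among several values all $\leq \delta(1)$ without forming a valley. Once that structural lemma is in hand, the finiteness arguments for $\SetPrime_\delta$ and for the minimal relation set follow from the explicit descriptions in Propositions~\ref{prop:cliff_algebra_minimal_generating_set} and~\ref{prop:cliff_algebra_relations} by routine position-counting, since every position contributes only boundedly many letter values.
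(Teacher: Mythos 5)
Your overall route --- reducing the statement, via the presentation of Proposition~\ref{prop:cliff_algebra_presentation} and the explicit descriptions in Propositions~\ref{prop:cliff_algebra_minimal_generating_set} and~\ref{prop:cliff_algebra_relations}, to the finiteness of $\SetPrime_\delta$ and of the set of $\LeqSuffix$-minimal relations --- is exactly the paper's, and your ``if'' direction (valley-freeness plus $1$-domination forces $\delta$ to be eventually constant equal to some $d \leq \delta(1)$, whence any cliff longer than that threshold splits off a one-letter cliff at its end and cannot be $\delta$-prime) is correct, and if anything slightly cleaner than the paper's, which splits off a longer tail of letters bounded by~$\delta(1)$.

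The ``only if'' direction, however, contains a genuine error: the words $0^{j-1}\,(\delta(1)+1)$ are not $\delta$-prime in general. Your justification only rules out the factorization in which the right factor starts at the nonzero letter, but primality requires that \emph{no} proper suffix be a $\delta$-cliff, including the suffixes $0^{j-1-p}\,(\delta(1)+1)$ beginning with zeros; such a suffix is a $\delta$-cliff as soon as $\delta(1)+1 \leq \delta(j-p)$, which by your own structural observation holds whenever $j-p \geq j_0$, i.e.\ for essentially every $j$ in the range you consider. Concretely, for $\delta = \MapOne$ the word $0001$ factors as $00 \Over 01$ and is not prime (it is absent from the paper's list of generators of $\SpaceCliff_\MapOne$), so your family yields only finitely many primes and the contrapositive does not go through. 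The paper instead takes as witnesses the maximal cliffs $u = \delta(1)\delta(2)\cdots\delta(k)$: a proper suffix starting at position $|v|+1$ would need first letter $\delta(|v|+1) \leq \delta(1)$, forcing $|v|+1 \leq j$ where $\delta(1)=\cdots=\delta(j)$ and $\delta(i)>\delta(1)$ for $i>j$, and then the letter $\delta(j+1)$ of that suffix sits at a position whose bound is $\delta(j+1-|v|)=\delta(1)<\delta(j+1)$, a contradiction. (Your finiteness argument for the relations is also stated imprecisely --- the failing position of $uv$ need not satisfy $|u|+i \leq N$ --- but, unlike the primality issue, this is repairable by replacing $u$ with a suffix lying in the eventually-constant region.)
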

\begin{proof}
    We use here the fact that, by Proposition~\ref{prop:cliff_algebra_presentation},
    $\SpaceCliff_\delta$ is isomorphic as a unital associative algebra to the quotient
    \begin{math}
        \K \Angle{\AlphabetVar_{\SetPrime_\delta}}/_{\RelationSpace_\delta}
    \end{math}
    and the description of the generating familly of $\RelationSpace_\delta$ provided by
    Proposition~\ref{prop:cliff_algebra_relations}.
    \smallbreak

    Assume that $\delta$ is $1$-dominated. The $\delta$-cliffs $0$, $1$, \dots, $\delta(1)$
    are $\delta$-prime. Moreover, since $\delta$ is $1$-dominated, there is an $\ell \geq 0$
    such that any $\delta$-cliff $u$ of size $\ell$ or more decomposes as $u = v \Over w$
    such that $v \in \SetCliff_\delta(\ell)$ and $w$ is a $\delta$-cliff having only letters
    nongreater than $\delta(1)$. This implies that all $\delta$-prime $\delta$-cliffs have
    $\ell$ as maximal size. Therefore, $\SpaceCliff_\delta$ is finitely generated.
    Moreover, the finite number of nontrivial relations in $\SpaceCliff_\delta$ is the
    consequence of the finiteness of the generating set of $\SpaceCliff_\delta$ and the
    description of the relations of $\RelationSpace_\delta$.
    Indeed, there is a finite number of monomials $a^u a_v$ with $u \in \SetCliff_\delta$,
    $v \in \SetPrime_\delta$, and $u v \notin \SetCliff_\delta$ that are not suffixes of any
    other one satisfying the same description.  Conversely, assume that $\delta$ is not
    $1$-dominated. Thus, since $\delta$ is valley-free, there is an index $j \geq 1$ such
    that $\delta(1) = \dots = \delta(j)$ and for all $i \geq j + 1$, $\delta(i) >
    \delta(1)$. For any $k \geq j + 1$, set $u$ as the $\delta$-cliff of size $k$ defined by
    $u_i := \delta(i)$ for all $i \in [k]$. By
    Lemma~\ref{lem:unique_factorization_prime_cliffs}, there is a unique $\delta$-prime
    $\delta$-cliff $u'$ being a suffix of $u$.  Since $u'$ is in particular a
    $\delta$-cliff, one must have $u'_1 \leq \delta(1)$. Due to the previous description of
    $\delta$, we necessarily have $u = u'$. Therefore, $u$ is $\delta$-prime. This shows
    that there are infinitely many $\delta$-prime $\delta$-cliffs and thus, that
    $\SpaceCliff_\delta$ admits an infinite number of generators.
\end{proof}
\medbreak

The set of all valley-free range maps can be partitioned into the following four classes:
\begin{itemize}
    \item The class of \Def{type {\bf A}} range maps, containing all constant range maps;

    \item The class of \Def{type {\bf B}} range maps, containing all weakly increasing
    range maps having at least one ascent;

    \item The class of \Def{type {\bf C}} range maps, containing all $1$-dominated range
    maps having at least one descent;

    \item The class of \Def{type {\bf D}} range maps, containing all range maps that are not
    $1$-dominated and having at least one descent.
\end{itemize}
\medbreak

\begin{Theorem} \label{thm:classification_cliff_algebras}
    Let $\delta$ be a valley-free range map. Each unital associative algebra
    $\Par{\SpaceCliff_\delta, \Product, \Unit}$ admits the presentation
    \begin{math}
        \K \Angle{\AlphabetVar_{\SetPrime_\delta}}/_{\RelationSpace_\delta}
    \end{math}
    which fits into one of the following four classes:
    \begin{enumerate}[label={\it (\roman*)}]
        \item \label{item:classification_cliff_algebras_A}
        If $\delta$ is of type {\bf A}, then $\AlphabetVar_{\SetPrime_\delta}$ is
        finite and $\RelationSpace_\delta$ is the zero space;

        \item \label{item:classification_cliff_algebras_B}
        If $\delta$ is of type {\bf B}, then $\AlphabetVar_{\SetPrime_\delta}$ is
        infinite and $\RelationSpace_\delta$ is the zero space;

        \item \label{item:classification_cliff_algebras_C}
        If $\delta$ is of type {\bf C}, then $\AlphabetVar_{\SetPrime_\delta}$ is
        finite and $\RelationSpace_\delta$ is finitely generated and nonzero;

        \item \label{item:classification_cliff_algebras_D}
        If $\delta$ is of type {\bf D}, then  $\AlphabetVar_{\SetPrime_\delta}$ is
        infinite and $\RelationSpace_\delta$ is infinitely generated.
    \end{enumerate}
\end{Theorem}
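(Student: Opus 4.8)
The plan is to prove Theorem~\ref{thm:classification_cliff_algebras} by assembling the results already established about the presentation $\K\Angle{\AlphabetVar_{\SetPrime_\delta}}/_{\RelationSpace_\delta}$, and then checking, for each of the four types of valley-free range maps, which of the two dichotomies (finite vs.\ infinite generating alphabet, zero vs.\ nonzero vs.\ infinitely generated relation space) applies. The backbone is Proposition~\ref{prop:cliff_algebra_presentation}, which tells us that every $\SpaceCliff_\delta$ with $\delta$ valley-free is isomorphic as a unital associative algebra to $\K\Angle{\AlphabetVar_{\SetPrime_\delta}}/_{\RelationSpace_\delta}$; this immediately yields the claimed form of the presentation in all four cases. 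What remains is to read off the size of $\AlphabetVar_{\SetPrime_\delta}$ and the structure of $\RelationSpace_\delta$ from the finer properties of $\delta$.

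First I would treat the alphabet. By Lemma~\ref{lem:cliff_algebra_finite_type}, the algebra admits finitely many generators and finitely many nontrivial relations if and only if $\delta$ is $1$-dominated; moreover its proof shows directly that $\AlphabetVar_{\SetPrime_\delta}$ is finite exactly when $\delta$ is $1$-dominated (the bound $\ell$ on the size of $\delta$-prime cliffs, versus the explicit infinite family of $\delta$-prime cliffs when $\delta$ is not $1$-dominated). Constant range maps (type~{\bf A}) and weakly increasing range maps (type~{\bf B}) are both trivially $1$-dominated, since $\delta(1) \geq \delta(k')$ for all $k'$ in the constant case and $\delta$ weakly increasing is the degenerate case where the relevant inequality $\delta(j) \geq \delta(k')$ should instead be read with the largest index: here one must be slightly careful, because ``$1$-dominated'' requires $\delta(1) \geq \delta(k')$ for all large $k'$, which fails for a nonconstant weakly increasing map. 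So type~{\bf A} gives a finite alphabet, type~{\bf B} gives an infinite alphabet; type~{\bf C} is $1$-dominated by hypothesis (finite alphabet) and type~{\bf D} is not $1$-dominated by hypothesis (infinite alphabet). This disposes of the alphabet in all four cases.

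Next I would treat the relation space $\RelationSpace_\delta$. By Lemma~\ref{lem:cliff_algebra_free}, $\RelationSpace_\delta$ is the zero space if and only if $\delta$ is weakly increasing. Type~{\bf A} (constant) and type~{\bf B} (weakly increasing) maps are weakly increasing, so $\RelationSpace_\delta = \{0\}$ in both cases, establishing \ref{item:classification_cliff_algebras_A} and \ref{item:classification_cliff_algebras_B}. For types~{\bf C} and~{\bf D}, the map has a descent, hence is not weakly increasing, so $\RelationSpace_\delta \ne \{0\}$. For type~{\bf C}, Lemma~\ref{lem:cliff_algebra_finite_type} combined with Proposition~\ref{prop:cliff_algebra_relations} shows $\RelationSpace_\delta$ is finitely generated (and we have just seen it is nonzero), giving \ref{item:classification_cliff_algebras_C}. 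For type~{\bf D}, I would argue that $\RelationSpace_\delta$ is infinitely generated: using the explicit infinite family of $\delta$-prime $\delta$-cliffs $u$ constructed in the proof of Lemma~\ref{lem:cliff_algebra_finite_type} (the cliffs $\delta(1)\dots\delta(k)$ for $k$ large), one produces, for each such $u$, a generator $a^{u'}a_v$ of $\RelationSpace_\delta$ in the sense of Proposition~\ref{prop:cliff_algebra_relations} that is minimal for $\LeqSuffix$ and not comparable to those coming from other values of $k$, because the $\delta$-prime factor involved has unbounded size.

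The main obstacle I anticipate is this last step: one must verify carefully that the minimal generating set of $\RelationSpace_\delta$ described by Proposition~\ref{prop:cliff_algebra_relations} is genuinely infinite when $\delta$ has type~{\bf D}, rather than merely that $\RelationSpace_\delta$ is a large ideal. Concretely, the delicate point is to exhibit infinitely many monomials $a^u a_v$ with $u \in \SetCliff_\delta$, $v \in \SetPrime_\delta$, $uv \notin \SetCliff_\delta$, that are pairwise incomparable for the suffix order $\LeqSuffix$; the natural candidates must use the unboundedly large $\delta$-prime cliffs produced by the failure of $1$-domination, and one has to check that appending the ``offending'' letter (some position $i$ with $\delta(i)$ too small after the descent) to each of these produces a genuinely new minimal relation. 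Since this essentially repeats and refines the contrapositive argument already carried out in Lemma~\ref{lem:cliff_algebra_finite_type}, I expect it to go through, but it is the one place where a short additional argument, rather than a pure citation, is needed.
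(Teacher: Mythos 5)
Your proposal is correct and follows essentially the same route as the paper, whose proof of this theorem is simply a citation of Propositions~\ref{prop:cliff_algebra_presentation} and~\ref{prop:cliff_algebra_relations} together with Lemmas~\ref{lem:cliff_algebra_free} and~\ref{lem:cliff_algebra_finite_type}. Your extra care on type {\bf D} is justified and in fact goes slightly beyond the paper: since Lemma~\ref{lem:cliff_algebra_finite_type} only negates the conjunction ``finitely many generators and finitely many relations'' when $\delta$ is not $1$-dominated, one does need the additional observation you sketch (for a fixed $\delta$-prime $v$ with a letter exceeding the eventual constant value of $\delta$, the monomials $a_w a_v$ over the infinitely many $\delta$-prime $w$ of unbounded size are pairwise $\LeqSuffix$-incomparable minimal generators of $\RelationSpace_\delta$) to conclude that $\RelationSpace_\delta$ is infinitely generated, and your sketch of this completes without difficulty.
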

\begin{proof}
    This is a consequence of the presentation by generators and relations of
    $\SpaceCliff_\delta$ provided by Propositions~\ref{prop:cliff_algebra_presentation}
    and~\ref{prop:cliff_algebra_relations}, and of the properties of the generating sets and
    relations spaces of $\SpaceCliff_\delta$ raised by Lemmas~\ref{lem:cliff_algebra_free}
    and~\ref{lem:cliff_algebra_finite_type}.
\end{proof}
\medbreak

%%%%%%%%%%%%%%%%%%%%%%%%%%%%%%%%%%%%%%%%%%%%%%%%%%%%%%%%%%%%%%%%%%%%%%%%%%%%%%%%%%%%%%%%%%%%
\subsubsection{Examples}
We provide here some examples of unital associative algebras $\SpaceCliff_\delta$ for
particular range maps $\delta$ and describe their structure thanks to the classification
provided by Theorem~\ref{thm:classification_cliff_algebras}.
\medbreak

%%%%%%%%%%%%%%%%%%%%%%%%%%%%%%%%%%%%%%%%%%%%%%%%%%%%%%%%%%%%%%%%%%%%%%%%%%%%%%%%%%%%%%%%%%%%
\paragraph{Type {\bf A}}
Let $\delta$ by a range map of type {\bf A}. Thus, there is a value $c \in \N$ such that
$\delta(i) = c$ for all $i \in \N$. Thus, $\SpaceCliff_\delta$ is the free unital
associative algebra generated by $a_0$, $a_1$, \dots, $a_c$.
\medbreak

%%%%%%%%%%%%%%%%%%%%%%%%%%%%%%%%%%%%%%%%%%%%%%%%%%%%%%%%%%%%%%%%%%%%%%%%%%%%%%%%%%%%%%%%%%%%
\paragraph{Type {\bf B}}
For any $m \geq 1$, $\MapM$ is of type {\bf B}. Each $\SpaceCliff_\MapM$ is free as a unital
algebra and its minimal generating sets are infinite.
\medbreak

\begin{Proposition} \label{prop:series_generators_m_cliff_algebra}
    For any $m \geq 0$, the generating series of the minimal generating set of
    $\SpaceCliff_\MapM$ satisfies
    \begin{equation}
        \GeneratingSeries_{\SetPrime_\MapM}(t)
        = 1 - \frac{1}{\HilbertSeries_{\SpaceCliff_\MapM}(t)}.
    \end{equation}
\end{Proposition}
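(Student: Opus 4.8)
The plan is to exploit the presentation of $\SpaceCliff_\MapM$ as a free unital associative algebra, which holds by Lemma~\ref{lem:cliff_algebra_free} since $\MapM$ is weakly increasing. First I would recall that the minimal generating set of $\SpaceCliff_\MapM$ is $\Bra{\BasisE_u : u \in \SetPrime_\MapM}$ by Proposition~\ref{prop:cliff_algebra_minimal_generating_set}, and that $\SetPrime_\MapM$ is precisely the graded set of $\MapM$-prime $\MapM$-cliffs. So the generating series in the statement is $\GeneratingSeries_{\SetPrime_\MapM}(t) = \sum_{u \in \SetPrime_\MapM} t^{|u|}$.

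The key combinatorial fact is that every $\MapM$-cliff factorizes uniquely as a product $u^{(1)} \Over \cdots \Over u^{(k)}$ of $\MapM$-prime $\MapM$-cliffs (since $\MapM$ is weakly increasing, the operation $\Over$ is just concatenation and stays inside $\SetCliff_\MapM$ by Lemma~\ref{lem:over_under_weakly_increasing_range_map}; uniqueness of the rightmost prime suffix comes from Lemma~\ref{lem:unique_factorization_prime_cliffs}, and iterating gives the full unique factorization). This means that at the level of generating series, $\SetCliff_\MapM$ is the free monoid on $\SetPrime_\MapM$ with respect to the degree-additive operation $\Over$, hence
\begin{equation}
    \HilbertSeries_{\SpaceCliff_\MapM}(t)
    = \GeneratingSeries_{\SetCliff_\MapM}(t)
    = \frac{1}{1 - \GeneratingSeries_{\SetPrime_\MapM}(t)},
\end{equation}
where I use that $\dim \SpaceCliff_\MapM(n) = \# \SetCliff_\MapM(n)$ so the Hilbert series of the algebra coincides with the generating series of the graded set. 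Solving this identity for $\GeneratingSeries_{\SetPrime_\MapM}(t)$ yields exactly $\GeneratingSeries_{\SetPrime_\MapM}(t) = 1 - 1/\HilbertSeries_{\SpaceCliff_\MapM}(t)$, which is the claim.

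The only genuine point to nail down is the unique-factorization claim and the resulting free-monoid series identity; everything else is bookkeeping. For that I would argue by induction on $|u|$: if $u$ is nonempty, peel off its unique $\MapM$-prime suffix $u^{(k)}$ given by Lemma~\ref{lem:unique_factorization_prime_cliffs}, write $u = u' \Over u^{(k)}$ with $u'$ a strictly shorter $\MapM$-cliff (a prefix, hence an $\MapM$-cliff), and apply the induction hypothesis to $u'$; uniqueness of the decomposition follows because the prime suffix is forced at each step. I do not expect any serious obstacle here — the main care is just to observe that freeness as an algebra (Lemma~\ref{lem:cliff_algebra_free}) is equivalent to this monoid-level statement, so the series identity is legitimate, and to note the harmless shift that $\GeneratingSeries_{\SetPrime_\MapM}$ has no constant term while $\HilbertSeries_{\SpaceCliff_\MapM}$ has constant term $1$, so the right-hand side $1 - 1/\HilbertSeries_{\SpaceCliff_\MapM}(t)$ indeed has vanishing constant term.
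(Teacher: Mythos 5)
Your proposal is correct and follows essentially the same route as the paper: both proofs invoke the freeness of $\SpaceCliff_\MapM$ as a unital associative algebra (Lemma~\ref{lem:cliff_algebra_free}) and the standard identity $\HilbertSeries_{\SpaceCliff_\MapM}(t) = \Par{1 - \GeneratingSeries_{\SetPrime_\MapM}(t)}^{-1}$ relating the Hilbert series of a free algebra to the generating series of its generators. The only difference is that you spell out the unique-factorization-into-primes argument underlying that identity, which the paper leaves implicit.
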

\begin{proof}
    Since $\SpaceCliff_\MapM$ is a free unital associative algebra, its Hilbert series and
    the generating series of its minimal generating set satisfies the relation
    \begin{math}
        \HilbertSeries_{\SpaceCliff_\MapM}(t)
        =
        \Par{1 - \GeneratingSeries_{\SetPrime_\MapM}(t)}^{-1}.
    \end{math}
    This leads to the stated expression for $\GeneratingSeries_{\SetPrime_\MapM}(t)$.
\end{proof}
\medbreak

The first generators of $\SpaceCliff_\MapOne$ are
\begin{multline}
    a_{0}, \quad
    a_{01}, \quad
    a_{002}, a_{011}, a_{012}, \\
    a_{0003}, a_{0013}, a_{0021}, a_{0022}, a_{0023}, a_{0102}, a_{0103}, a_{0111},
    a_{0112}, a_{0113}, a_{0121}, a_{0122}, a_{0123},
\end{multline}
and the first generators of $\SpaceCliff_\MapTwo$ are
\begin{equation}
    a_{0}, \quad
    a_{01}, a_{02}, \quad
    a_{003}, a_{004}, a_{011}, a_{012}, a_{013}, a_{014}, a_{021}, a_{022}, a_{023},
    a_{024}.
\end{equation}
\medbreak

A consequence of the freeness of $\SpaceCliff_\MapOne$ is that $\SpaceCliff_\MapOne$ is
isomorphic as a unital associative algebra to $\FQSym$~\cite{MR95,DHT02}, an associative
algebra on the linear span of all permutations. This follows from the fact that $\FQSym$ is
also free as a unital associative algebra and that its Hilbert series is the same as the one
of~$\SpaceCliff_\MapOne$. Moreover, in~\cite{NT14}, the authors construct some associative
algebras $^m \FQSym$ as generalizations of $\FQSym$ whose bases are indexed by objects being
generalizations of permutations. The algebras $\SpaceCliff_\MapM$, $m \geq 0$, can therefore
be seen as other generalizations of $\FQSym$, not isomorphic to $^m \FQSym$ when~$m \geq 2$.
\medbreak

%%%%%%%%%%%%%%%%%%%%%%%%%%%%%%%%%%%%%%%%%%%%%%%%%%%%%%%%%%%%%%%%%%%%%%%%%%%%%%%%%%%%%%%%%%%%
\paragraph{Type {\bf C}}
Let the range map $\delta := 010^\omega$ of type {\bf C}. The unital associative algebra
$\SpaceCliff_\delta$ admits the presentation
\begin{math}
    \SpaceCliff_\delta \simeq \K \Angle{a_0, a_{01}} /_{\RelationSpace_\delta}
\end{math}
where $\RelationSpace_\delta$ is minimally generated by the elements
\begin{equation}
    a_0 a_{01},  a_{01} a_{01}.
\end{equation}
\medbreak

Let the range map $\delta := 0110^\omega$ of type {\bf C}. The unital associative algebra
$\SpaceCliff_\delta$ admits the presentation
\begin{math}
    \SpaceCliff_\delta \simeq \K \Angle{a_0, a_{01}, a_{011}} /_{\RelationSpace_\delta}
\end{math}
where $\RelationSpace_\delta$ is minimally generated by the elements
\begin{equation}
    a_0 a_0 a_{01}, a_{01} a_{01}, a_{01} a_0 a_{01}, a_{011} a_{01}, a_{011} a_0 a_{01},
    a_0 a_{011}, a_{01} a_{011}, a_{011} a_{011}.
\end{equation}
\medbreak

Let the range map $\delta := 210^\omega$ of type {\bf C}. The unital associative algebra
$\SpaceCliff_\delta$ admits the presentation
\begin{math}
    \SpaceCliff_\delta \simeq \K \Angle{a_0, a_1, a_2} /_{\RelationSpace_\delta}
\end{math}
where $\RelationSpace_\delta$ is minimally generated
\begin{equation}
    a_0 a_0 a_1, a_0 a_1 a_1, a_1 a_0 a_1, a_1 a_1 a_1, a_2 a_0 a_1, a_2 a_1 a_1, a_0 a_2,
    a_1 a_2, a_2 a_2.
\end{equation}
\medbreak

%%%%%%%%%%%%%%%%%%%%%%%%%%%%%%%%%%%%%%%%%%%%%%%%%%%%%%%%%%%%%%%%%%%%%%%%%%%%%%%%%%%%%%%%%%%%
\paragraph{Type {\bf D}}
Let the range map $\delta := 021^\omega$ of type {\bf D}. The unital associative algebra
$\SpaceCliff_\delta$ admits the presentation
\begin{math}
    \SpaceCliff_\delta
    \simeq
    \K \Angle{a_0, a_{01}, a_{02}, a_{011}, a_{021}, a_{0111}, a_{0211}, a_{01111}
    , a_{02111}, \dots} /_{\RelationSpace_\delta}
\end{math}
where $\RelationSpace_\delta$ is generated by the relations
\begin{equation}
    a_0 a_{02}, a_{01} a_{02}, a_{02} a_{02}, a_{011} a_{02}, a_{021} a_{02}, a_0 a_{021},
    a_{01} a_{021}, a_{02} a_{021}, a_0 a_{0211}, \dots.
\end{equation}
\medbreak

Let the range map $\delta := 1232^\omega$ of type {\bf D}. The unital associative algebra
$\SpaceCliff_\delta$ admits the presentation
\begin{math}
    \SpaceCliff_\delta
    \simeq
    \K \Angle{a_{0}, a_{1}, a_{02}, a_{12}, a_{003}, a_{013}, a_{022}, a_{023}, a_{103},
    a_{113}, a_{122}, a_{123}, \dots} /_{\RelationSpace_\delta}
\end{math}
where $\RelationSpace_\delta$ is generated by the relations
\begin{equation}
    a_{0} a_{003}, a_{1} a_{003}, a_{02} a_{003}, a_{12} a_{003}, a_{0} a_{013},
    a_{1} a_{013}, a_{02} a_{013}, a_{12} a_{013}, \dots.
\end{equation}
\medbreak

%%%%%%%%%%%%%%%%%%%%%%%%%%%%%%%%%%%%%%%%%%%%%%%%%%%%%%%%%%%%%%%%%%%%%%%%%%%%%%%%%%%%%%%%%%%%
%%%%%%%%%%%%%%%%%%%%%%%%%%%%%%%%%%%%%%%%%%%%%%%%%%%%%%%%%%%%%%%%%%%%%%%%%%%%%%%%%%%%%%%%%%%%
\subsection{Quotient algebras}
This last section of this work provides an answer to the problem set out in the
introduction.  This question concerns the possibility of constructing a hierarchy of
substructures of $\SpaceCliff_\delta$ similar to that of $\FQSym$. For this, we consider
quotients of $\SpaceCliff_\delta$ obtained by considering a graded subset $\SubFamilly$ of
$\SetCliff_\delta$ and by equating the basis elements $\BasisF_u$ with $0$ whenever $u
\notin \SubFamilly$.  As we shall see, this is possible only under some combinatorial
conditions on $\SubFamilly$.  We describe the products of these quotient algebras and give a
sufficient condition for the fact that it can be expressed by interval of the poset
$\SubFamilly(n)$ for a certain $n \geq 0$. We end this part by studying the quotients of
$\SpaceCliff_\MapM$ obtained from $\MapM$-hills and $\MapM$-canyons.
\medbreak

%%%%%%%%%%%%%%%%%%%%%%%%%%%%%%%%%%%%%%%%%%%%%%%%%%%%%%%%%%%%%%%%%%%%%%%%%%%%%%%%%%%%%%%%%%%%
\subsubsection{Quotient space}
Let $\delta$ be a range map.  Given a graded subset $\SubFamilly$ of $\SetCliff_\delta$, let
$\SpaceCliff_\SubFamilly$ be the quotient space of $\SpaceCliff_\delta$ defined by
\begin{math}
    \SpaceCliff_\SubFamilly := \SpaceCliff_\delta /_{\SpaceV_\SubFamilly}
\end{math}
such that $\SpaceV_\SubFamilly$ is the linear span of the set
\begin{math}
    \Bra{\BasisF_u : u \in \SetCliff_\delta \setminus \SubFamilly}.
\end{math}
By definition, the set
\begin{math}
    \Bra{\BasisF_u : u \in \SubFamilly}
\end{math}
is a basis of $\SpaceCliff_\SubFamilly$.
\medbreak

Let us introduce here an important combinatorial condition for the sequel on $\SubFamilly$.
We say that$\SubFamilly$ is \Def{closed by suffix reduction} if for any $u \in \SubFamilly$,
for all suffixes $u'$ of $u$, $\Reduction_\delta \Par{u'} \in \SubFamilly$.
\medbreak

\begin{Proposition} \label{prop:quotient_cliff}
    Let $\delta$ be a valley-free range map and $\SubFamilly$ be a graded subset of
    $\SetCliff_\delta$. If $\SubFamilly$ is closed by prefix and is closed by suffix
    reduction, then $\SpaceCliff_\SubFamilly$ is a quotient algebra of the unital
    associative algebra~$\Par{\SpaceCliff_\delta, \Product, \Unit}$.
\end{Proposition}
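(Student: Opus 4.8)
The plan is to show that $\SpaceV_\SubFamilly$ is a two-sided ideal of the unital associative algebra $\Par{\SpaceCliff_\delta, \Product, \Unit}$; once this is done, the quotient $\SpaceCliff_\delta /_{\SpaceV_\SubFamilly} = \SpaceCliff_\SubFamilly$ is automatically a quotient algebra. Since $\delta$ is valley-free, Theorem~\ref{thm:cliff_coassociative_coalgebra} guarantees that $\Product$ is indeed associative, so the only thing to verify is the ideal property. Because $\SpaceV_\SubFamilly$ is spanned by the $\BasisF_u$ with $u \notin \SubFamilly$, and $\Product$ acts on the $\BasisF$-basis by the explicit formula~\eqref{equ:product_cliff_algebra}, it suffices to check that for any $u \in \SetCliff_\delta \setminus \SubFamilly$ and any $v \in \SetCliff_\delta$, both $\BasisF_u \Product \BasisF_v$ and $\BasisF_v \Product \BasisF_u$ lie in $\SpaceV_\SubFamilly$, i.e. every $\delta$-cliff $w$ appearing with nonzero coefficient in these products fails to be in $\SubFamilly$.

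First I would treat the left-multiplication-by-an-ideal-element case, namely $\BasisF_u \Product \BasisF_v$ with $u \notin \SubFamilly$. By~\eqref{equ:product_cliff_algebra}, every $w$ occurring is of the form $w = u v'$ with $v' \in \Reduction_\delta^{-1}(v)$, so $u$ is a prefix of $w$. Now I argue by contraposition: if $w = u v' \in \SubFamilly$, then since $\SubFamilly$ is closed by prefix, its prefix $u$ would also belong to $\SubFamilly$, contradicting $u \notin \SubFamilly$. Hence no such $w$ lies in $\SubFamilly$, and $\BasisF_u \Product \BasisF_v \in \SpaceV_\SubFamilly$. This case only uses ``closed by prefix''.

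Next I would handle the right-multiplication case $\BasisF_v \Product \BasisF_u$ with $u \notin \SubFamilly$; this is where ``closed by suffix reduction'' enters. Here every $w$ occurring in the product is of the form $w = v u'$ with $u' \in \Reduction_\delta^{-1}(u)$, so $u'$ is a suffix of $w$ and satisfies $\Reduction_\delta(u') = u$. Again arguing by contraposition, suppose $w = v u' \in \SubFamilly$. Then $u'$ is a suffix of the element $w$ of $\SubFamilly$, so by the closed-by-suffix-reduction hypothesis $\Reduction_\delta(u') \in \SubFamilly$; but $\Reduction_\delta(u') = u$, so $u \in \SubFamilly$, a contradiction. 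Therefore no $w$ appearing in $\BasisF_v \Product \BasisF_u$ lies in $\SubFamilly$, so this product also lies in $\SpaceV_\SubFamilly$. The main point to be careful about is the direction of the reduction: in the product formula the suffix $u'$ of $w$ is a \emph{pre-image} of $u$ under $\Reduction_\delta$ (componentwise $u'_i \geq u_i$), and reducing it recovers $u$ exactly; matching this with the ``$\Reduction_\delta(u') \in \SubFamilly$'' clause of the definition is the crux, and it is precisely why that combinatorial condition is the right one.

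Putting the two cases together, $\SpaceV_\SubFamilly \Product \SpaceCliff_\delta \subseteq \SpaceV_\SubFamilly$ and $\SpaceCliff_\delta \Product \SpaceV_\SubFamilly \subseteq \SpaceV_\SubFamilly$, so $\SpaceV_\SubFamilly$ is a two-sided ideal of $\Par{\SpaceCliff_\delta, \Product, \Unit}$. Consequently the quotient $\SpaceCliff_\SubFamilly$ inherits a unital associative algebra structure and the canonical projection $\SpaceCliff_\delta \to \SpaceCliff_\SubFamilly$ is an algebra morphism, which is exactly the assertion. I do not expect any serious obstacle here; the only thing requiring attention is the bookkeeping of prefixes versus suffixes and the interplay of $\Reduction_\delta$ with the fiber $\Reduction_\delta^{-1}$ in~\eqref{equ:product_cliff_algebra}.
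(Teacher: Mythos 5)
Your proposal is correct and follows essentially the same route as the paper's proof: both establish that $\SpaceV_\SubFamilly$ is a two-sided ideal by inspecting the product formula~\eqref{equ:product_cliff_algebra}, using closure by prefix for the case where the left factor is outside $\SubFamilly$ and closure by suffix reduction (via $\Reduction_\delta\Par{v'} = v$ for $v' \in \Reduction_\delta^{-1}(v)$) for the case where the right factor is. The bookkeeping of prefixes versus suffixes that you flag as the crux is handled exactly as in the paper.
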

\begin{proof}
    Notice first that, since $\delta$ is valley-free, $\SpaceCliff_\delta$ is by
    Theorem~\ref{thm:cliff_coassociative_coalgebra} a well-defined unital associative
    algebra.  We have to prove that $\SpaceV_\SubFamilly$ is an associative algebra ideal of
    $\SpaceCliff_\SubFamilly$. For this, let $\BasisF_u \in \SpaceV_\SubFamilly$ and
    $\BasisF_v \in \SpaceCliff_\SubFamilly$. Let us look at
    Expression~\eqref{equ:product_cliff_algebra} for computing the product of
    $\SpaceCliff_\delta$.  Assume that there is a cliff $uv' \in \SubFamilly$ such that
    $\BasisF_{uv'}$ appears in $\BasisF_u \Product \BasisF_v$.  Then, since $\SubFamilly$ is
    closed by prefix, $u \in \SubFamilly$, which contradicts our hypothesis. For this
    reason, $\BasisF_u \Product \BasisF_v$ belongs to $\SpaceV_\SubFamilly$.  Moreover, let
    $\BasisF_u \in \SpaceCliff_\SubFamilly$ and $\BasisF_v \in \SpaceV_\SubFamilly$. Assume
    that there is a cliff $uv' \in \SubFamilly$ such that $\BasisF_{uv'}$ appears in
    $\BasisF_u \Product \BasisF_v$.  Then, since $\SubFamilly$ is closed by suffix
    reduction, one has $\Reduction_\delta\Par{v'} \in \SubFamilly$.
    By~\eqref{equ:product_cliff_algebra}, $\Reduction_\delta\Par{v'} = v$, leading to the
    fact that $v \in \SubFamilly$ holds, and which contradicts our hypothesis. Therefore,
    $\BasisF_u \Product \BasisF_v$ belongs to $\SpaceV_\SubFamilly$. This establishes the
    statement of the proposition.
\end{proof}
\medbreak

Notice that the graded subset $\SetAvalanche_\delta$ is not closed by suffix reduction. For
instance, even if $00112$ is an $\MapOne$-avalanche, the $\MapOne$-reduction of its suffix
$112$ is $012$, which is not an $\MapOne$-avalanche.
\medbreak

Let us denote by
\begin{math}
    \theta_\SubFamilly : \SpaceCliff_\delta \to \SpaceCliff_\SubFamilly
\end{math}
the canonical projection map. By definition, this map satisfies, for any $u \in
\SetCliff_\delta$,
\begin{math}
    \theta_\SubFamilly\Par{\BasisF_u} = \IndicatorFunction_{u \in \SubFamilly} \; \BasisF_u.
\end{math}
\medbreak

%%%%%%%%%%%%%%%%%%%%%%%%%%%%%%%%%%%%%%%%%%%%%%%%%%%%%%%%%%%%%%%%%%%%%%%%%%%%%%%%%%%%%%%%%%%%
\subsubsection{Product}
We show here that under some conditions of $\SubFamilly$, the product in
$\SpaceCliff_\SubFamilly$ can be described by using the poset structure of~$\SubFamilly$.
More precisely, we say that $\SpaceCliff_\SubFamilly$ has the \Def{interval condition} if
the support of any product $\BasisF_u \Product \BasisF_v$, $u, v \in \SubFamilly$, is empty
or is an interval of a poset $\SubFamilly(n)$, $n \geq 0$.
\medbreak

\begin{Lemma} \label{lem:quotient_cliff_bounds_f_basis_sublattice}
    Let $\delta$ be a range map and $\SubFamilly$ be a graded subset of $\SetCliff_\delta$
    such that for any $n \geq 0$, $\SubFamilly(n)$ is a meet (resp.\ join) semi-sublattice
    of $\SetCliff_\delta(n)$. For any $u, v \in \SubFamilly$, if $u \Over v$ is a
    $\delta$-cliff, then the set
    \begin{math}
        \Han{u \Over v, u \Under v} \cap \SubFamilly
    \end{math}
    admits at most one minimal (resp.\ maximal) element.
\end{Lemma}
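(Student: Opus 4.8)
The plan is to prove the ``meet'' case (the ``join'' case then follows by the symmetry provided by the anti-automorphism $\Complementary_\delta$ together with Lemma~\ref{lem:over_under_alternative}). So assume each $\SubFamilly(n)$ is a meet semi-sublattice of $\SetCliff_\delta(n)$ and assume $w_0 := u \Over v$ is a $\delta$-cliff. Set $n := |u| + |v|$ and write $I := \Han{w_0, u \Under v}$, which by Theorem~\ref{thm:product_cliff_intervals} is an interval of $\SetCliff_\delta(n)$. First I would observe that $I \cap \SubFamilly$ is nonempty since $w_0 = u \Over v$ lies in it: indeed $u, v \in \SubFamilly$ are both $\delta$-cliffs and $u \Over v$ is just their concatenation $uv$, and $\SubFamilly$ being closed by prefix does not immediately give $uv \in \SubFamilly$ --- so I would actually need to check that $w_0 \in \SubFamilly$ separately, or rather, if $w_0 \notin \SubFamilly$ the intersection could still be nonempty via other elements, and if it is empty the statement about ``at most one minimal element'' is vacuous. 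So the real content is: \emph{when the intersection is nonempty, it has a unique minimal element}.

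The key step is to show $I \cap \SubFamilly$ is closed under the operation $\Meet$ of $\SetCliff_\delta(n)$. Suppose $x, y \in I \cap \SubFamilly$. Since $\SubFamilly(n)$ is a meet semi-sublattice of $\SetCliff_\delta(n)$, the element $x \Meet y$ (computed coordinatewise by minima, as in~\eqref{equ:meet_cliffs}) belongs to $\SubFamilly$. It remains to check $x \Meet y \in I$, i.e. $w_0 \Leq x \Meet y \Leq u \Under v$. The upper bound is clear since $x \Meet y \Leq x \Leq u \Under v$. For the lower bound, since $w_0 \Leq x$ and $w_0 \Leq y$ we get $(w_0)_i \leq \min\{x_i, y_i\} = (x \Meet y)_i$ for every $i \in [n]$, hence $w_0 \Leq x \Meet y$. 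Thus $I \cap \SubFamilly$ is a finite nonempty set closed under the associative commutative idempotent operation $\Meet$; taking the meet of all its elements yields a unique minimum of $I \cap \SubFamilly$, which is therefore its unique minimal element.

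For the ``join'' case, I would dualize: assume each $\SubFamilly(n)$ is a join semi-sublattice and that $u \Over v$ is a $\delta$-cliff. One route is to repeat the argument verbatim with $\JJoin$ in place of $\Meet$, checking that $I$ is also closed under $\JJoin$ --- here the lower bound $x \Leq x \JJoin y$ handles $w_0 \Leq x \JJoin y$, and the upper bound $x \JJoin y \Leq u \Under v$ follows from $x, y \Leq u \Under v$ coordinatewise. So in fact the join case requires no appeal to complementation at all, and the same closure-under-$\JJoin$ plus finiteness argument produces a unique maximal element. I expect the only subtle point is the bookkeeping at the very start: being careful to phrase the conclusion as ``if $I \cap \SubFamilly \ne \varnothing$ then it has a unique minimal (resp. maximal) element'' rather than asserting nonemptiness, since the lemma is stated with ``at most one'' precisely to accommodate the case where $u \Over v \notin \SubFamilly$ and the intersection might be empty; the closure arguments themselves are routine coordinatewise inequalities.
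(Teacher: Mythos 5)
Your proposal is correct and follows essentially the same route as the paper: show that $\Han{u \Over v, u \Under v} \cap \SubFamilly$ is closed under the coordinatewise $\Meet$ (resp. $\JJoin$) using the semi-sublattice hypothesis, check the resulting element stays in the interval via the obvious coordinatewise bounds, and conclude by finiteness that a nonempty intersection has a unique minimal (resp. maximal) element. Your extra care about whether $u \Over v$ itself belongs to $\SubFamilly$ is unnecessary but harmless, since, as you note, the ``at most one'' phrasing covers the empty case.
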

\begin{proof}
    Assume that $\SubFamilly(n)$ is a meet semi-sublattice of $\SetCliff_\delta(n)$ and that
    $u \Over v \in \SetCliff_\delta$.  By Lemma~\ref{lem:over_under_well_definitions}, $u
    \Under v \in \SetCliff_\delta$ so that $I := \Han{u \Over v, u \Under v}$ is a
    well-defined interval of $\SetCliff_\delta(n)$.  Assume that there exist two
    $\delta$-cliffs $w$ and $w'$ belonging to $I \cap \SubFamilly$. Since $\SubFamilly(n)$
    is a meet semi-sublattice of $\SetCliff_\delta(n)$, by setting $w'' := w \Meet w'$, one
    has $w'' \in \SubFamilly$. Since $u \Over v$ is a lower bound of both $w$ and $w'$, we
    necessarily have $ u \Over v \Leq w''$ and $w'' \in I$.  This shows that when $I \cap
    \SubFamilly$ is nonempty, this set admits exactly one minimal element.  The proof is
    analogous for the respective part of the statement of the proposition.
\end{proof}
\medbreak

When for any $n \geq 0$, $\SubFamilly(n)$ is a lattice, we denote by $\Meet_\SubFamilly$
(resp.\ $\JJoin_\SubFamilly$) its meet (resp.\ join) operation. In this case, $\SubFamilly$
is \Def{meet-stable} (resp.  \Def{join-stable}) if, for any $n \geq 0$ and any $u, v \in
\SubFamilly(n)$, the relation $u_i = v_i$ for an $i \in [n]$ implies that the $i$-th letter
of $u \Meet_\SubFamilly v$ (resp.\ $u \JJoin_\SubFamilly v$) is equal to~$u_i$.
\medbreak

\begin{Lemma} \label{lem:quotient_cliff_bounds_f_basis_lattice}
    Let $\delta$ be a range map and $\SubFamilly$ be a closed by prefix, maximally
    extendable, and join-stable graded subset of $\SetCliff_\delta$.  For any $u, v \in
    \SubFamilly$ such  $u \Over v$ is a $\delta$-cliff, the set
    \begin{math}
        \Han{u \Over v, u \Under v} \cap \SubFamilly
    \end{math}
    admits at most one maximal element.
\end{Lemma}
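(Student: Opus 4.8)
The plan is to mirror the argument of Lemma~\ref{lem:quotient_cliff_bounds_f_basis_sublattice}, but using the join operation $\JJoin_\SubFamilly$ of the lattice $\SubFamilly(n)$ (which exists by the hypotheses, since $\SubFamilly$ is closed by prefix and maximally extendable, hence by Theorem~\ref{thm:decr_incr_meet_join} each $\SubFamilly(n)$ is a lattice) in place of the sublattice join. Assume $u, v \in \SubFamilly$ with $w := u \Over v$ a $\delta$-cliff. By Lemma~\ref{lem:over_under_well_definitions}, $u \Under v$ is also a $\delta$-cliff, so that $I := \Han{u \Over v, u \Under v}$ is a well-defined interval of $\SetCliff_\delta(n)$ with $n := |u| + |v|$. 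Suppose $x, y \in I \cap \SubFamilly$. Set $z := x \JJoin_\SubFamilly y$, which belongs to $\SubFamilly(n)$.

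The first key step is to check $x \Leq z$ and $y \Leq z$ in $\SetCliff_\delta(n)$: this is immediate since $z$ is an upper bound of $x$ and $y$ in the poset $\SubFamilly(n)$, whose order is the restriction of $\Leq$. The second, and genuinely delicate, step is to show $z \Leq u \Under v$, i.e.\ that $z \in I$. Here the hypothesis that $\SubFamilly$ is \emph{join-stable} is what does the work. For the first $|u|$ positions, both $x$ and $y$ agree with $u$ (since $u \Over v \Leq x, y \Leq u \Under v$ forces $x_i = y_i = u_i$ for $i \in [|u|]$), so by join-stability the $i$-th letter of $z$ equals $u_i$ for each $i \in [|u|]$; in particular $z_i \leq (u \Under v)_i$ on these positions. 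For the remaining positions $i \in [|u| + 1, n]$, I must argue $z_i \leq (u \Under v)_i$. The point is that on each such position, either $v_{i - |u|} = \delta(i - |u|)$, in which case $(u \Under v)_i = \delta(i)$ and the bound $z_i \leq \delta(i)$ holds trivially because $z$ is a $\delta$-cliff; or $v_{i-|u|} < \delta(i-|u|)$, in which case $(u \Under v)_i = v_{i-|u|} = (u \Over v)_i$, so $x_i = y_i = (u\Over v)_i$ on this position, and join-stability again forces $z_i$ to equal this common value, hence $z_i \leq (u \Under v)_i$. This shows $z \in I \cap \SubFamilly$, and since $z$ dominates both $x$ and $y$, the set $I \cap \SubFamilly$ has at most one maximal element, establishing the lemma.

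The main obstacle I expect is making the position-by-position case analysis in the second step airtight: one has to carefully unwind the definitions of $u \Over v$ and $u \Under v$ from Section~\ref{subsubsec:presentation_cliff_algebra} to see precisely on which positions $x$ and $y$ are forced to coincide, and then invoke join-stability only on those positions. A subtlety is that join-stability is stated as a conditional ``$u_i = v_i$ implies the $i$-th letter of the join is $u_i$'', so I must verify that the two elements of $I \cap \SubFamilly$ actually \emph{do} agree letterwise on the relevant positions before applying it; the interval containment $x, y \in I$ is exactly what guarantees this. Aside from that, the argument is a routine adaptation of the meet-semisublattice case, and no new machinery beyond Lemmas~\ref{lem:over_under_well_definitions} and~\ref{lem:over_under_weakly_increasing_range_map} and Theorem~\ref{thm:decr_incr_meet_join} is needed.
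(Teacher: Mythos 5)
Your proof is correct and follows essentially the same route as the paper's: both invoke Theorem~\ref{thm:decr_incr_meet_join} to realize $\JJoin_\SubFamilly$ as the join of $\SubFamilly(n)$, and both observe that on positions where the two elements of the interval are forced to coincide (namely where $(u\Over v)_i=(u\Under v)_i$) join-stability pins the letter of the join, while on the remaining positions $(u\Under v)_i=\delta(i)$ makes the upper bound automatic. Your position-by-position case split is just a slightly more explicit rendering of the paper's argument.
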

\begin{proof}
    Assume that $u \Over v \in \SetCliff_\delta$. By
    Lemma~\ref{lem:over_under_well_definitions}, $u \Under v \in \SetCliff_\delta$ so that
    $I := \Han{u \Over v, u \Under v}$ is a well-defined interval of $\delta$-cliff poset.
    Assume that there exist two $\delta$-cliffs $w$ and $w'$ belonging to $I \cap
    \SubFamilly$. It follows from the hypotheses on $\SubFamilly$ of the statement that, by
    Theorem~\ref{thm:decr_incr_meet_join}, the operation $\JJoin_{\SubFamilly}$ is the join
    operation of the posets $\SubFamilly(n)$, $n \geq 0$ (see
    Section~\ref{subsubsec:incrementation_decrementation_maps}). First, since $w \Leq u
    \Under v$ and $w' \Leq u \Under v$, we have $w \JJoin w' \Leq u \Under v$. Moreover, by
    definition of the $\JJoin_{\SubFamilly}$ operation, $w'' := w \JJoin_{\SubFamilly} w'$
    is obtained by incrementing by some values some letters of $w \JJoin w'$.  Now, observe
    that due to the definitions of the operations $\Over$ and $\Under$, $w$ and $w'$ write
    respectively as $w = u r$ and $w' = u r'$ where $r$ and $r'$ are some words on $\N$.
    Moreover, if there is an index $i \in [|r|]$ such that $r_i \ne r'_i$, then $v_i =
    \delta(i)$ and $(u \Under v)_{|u| + i} = \delta(|u| + i)$.  This, the definition of the
    $\JJoin_{\SubFamilly}$ operation, and the fact that $\SubFamilly$ is join-stable imply
    that $w'' \Leq u \Under v$. Therefore, $w'' \in I \cap \SubFamilly$.  This shows that
    when $I \cap \SubFamilly$ is nonempty, this set admits exactly one maximal element.
\end{proof}
\medbreak

\begin{Theorem} \label{thm:quotient_cliff_bounds_f_basis}
    Let $\delta$ be a valley-free range map and $\SubFamilly$ be a graded subset of
    $\SetCliff_\delta$ closed by prefix and by suffix reduction. If at least one the
    following conditions is satisfied:
    \begin{enumerate}[label={\it (\roman*)}]
        \item \label{item:quotient_cliff_bounds_f_basis_1}
        for any $n \geq 0$, all posets $\SubFamilly(n)$ are sublattices of
        $\SetCliff_\delta(n)$;
        \item \label{item:quotient_cliff_bounds_f_basis_2} for any $n \geq 0$, all posets
        $\SubFamilly(n)$ are meet semi-sublattices of $\SetCliff_\delta(n)$, maximally
        extendable, and join-stable;
    \end{enumerate}
    then $\SpaceCliff_\SubFamilly$ has the interval condition.
\end{Theorem}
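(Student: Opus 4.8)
The plan is to compute the support of an arbitrary product $\BasisF_u \Product \BasisF_v$ in $\SpaceCliff_\SubFamilly$ and then identify it as an interval of $\SubFamilly(n)$ with $n := |u| + |v|$. First I would invoke Proposition~\ref{prop:quotient_cliff}: since $\delta$ is valley-free and $\SubFamilly$ is closed by prefix and by suffix reduction, $\SpaceCliff_\SubFamilly$ is a quotient algebra of $\Par{\SpaceCliff_\delta, \Product, \Unit}$ with canonical projection $\theta_\SubFamilly$, so for $u, v \in \SubFamilly$ the product in $\SpaceCliff_\SubFamilly$ is $\theta_\SubFamilly\Par{\BasisF_u \Product \BasisF_v}$. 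By Theorem~\ref{thm:product_cliff_intervals}, $\BasisF_u \Product \BasisF_v$ computed in $\SpaceCliff_\delta$ equals $\CharacteristicCliff_\delta\Par{u \Over v} \sum_{w \in \Han{u \Over v, u \Under v}} \BasisF_w$, where $\Han{u \Over v, u \Under v}$ is an interval of $\SetCliff_\delta(n)$; and $\theta_\SubFamilly$ simply erases the terms indexed by cliffs not in $\SubFamilly$. Hence the support of $\BasisF_u \Product \BasisF_v$ in $\SpaceCliff_\SubFamilly$ is empty when $u \Over v \notin \SetCliff_\delta$, and otherwise equals the set $S := \Han{u \Over v, u \Under v} \cap \SubFamilly$. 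If $S = \emptyset$ there is nothing to prove, so from now on I assume $S \ne \emptyset$.

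Next I would pin down the extremal elements of $S$ using the two preparatory lemmas, distinguishing the two hypotheses. Under condition~\ref{item:quotient_cliff_bounds_f_basis_1}, each $\SubFamilly(n)$ is a sublattice of $\SetCliff_\delta(n)$, hence simultaneously a meet and a join semi-sublattice, so Lemma~\ref{lem:quotient_cliff_bounds_f_basis_sublattice} yields that $S$ has at most one minimal and at most one maximal element. Under condition~\ref{item:quotient_cliff_bounds_f_basis_2}, the meet semi-sublattice hypothesis gives, again by Lemma~\ref{lem:quotient_cliff_bounds_f_basis_sublattice}, that $S$ has at most one minimal element, while the hypotheses that $\SubFamilly$ is closed by prefix, maximally extendable, and join-stable give, by Lemma~\ref{lem:quotient_cliff_bounds_f_basis_lattice}, that $S$ has at most one maximal element. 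In either case, since $S$ is finite and nonempty it has at least one minimal and at least one maximal element, so there are a unique minimal element $a \in \SubFamilly(n)$ and a unique maximal element $b \in \SubFamilly(n)$ of $S$.

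Finally I would verify $S = \Han{a, b}$, where $\Han{a, b}$ denotes the interval $\Bra{w \in \SubFamilly(n) : a \Leq w \Leq b}$ of the poset $\SubFamilly(n)$. For the inclusion $S \subseteq \Han{a, b}$: in the finite poset $S$ every element lies above some minimal element and below some maximal element, so uniqueness of $a$ and $b$ forces $a \Leq w \Leq b$ for every $w \in S$. For the reverse inclusion, take $w \in \SubFamilly(n)$ with $a \Leq w \Leq b$; since $a, b \in S \subseteq \Han{u \Over v, u \Under v}$, transitivity of $\Leq$ gives $u \Over v \Leq a \Leq w \Leq b \Leq u \Under v$, hence $w \in \Han{u \Over v, u \Under v}$, and together with $w \in \SubFamilly$ this gives $w \in S$. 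Thus $S$ is exactly the interval $\Han{a, b}$ of $\SubFamilly(n)$, establishing the interval condition. The only subtle point in this argument is the passage from ``$S$ has a unique minimal and a unique maximal element'' to ``$S$ is an interval'', which fails for general posets; here it works precisely because $S$ is carved out of the already-interval $\Han{u \Over v, u \Under v}$ of the ambient lattice by an induced-order restriction. The genuinely hard combinatorics — showing that intersecting $\Han{u \Over v, u \Under v}$ with $\SubFamilly$ cannot produce several incomparable minimal (resp.\ maximal) elements — has been absorbed beforehand into Lemmas~\ref{lem:quotient_cliff_bounds_f_basis_sublattice} and~\ref{lem:quotient_cliff_bounds_f_basis_lattice}.
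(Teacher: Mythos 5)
Your proposal is correct and follows essentially the same route as the paper's proof: reduce via Proposition~\ref{prop:quotient_cliff} and Theorem~\ref{thm:product_cliff_intervals} to the set $\Han{u \Over v, u \Under v} \cap \SubFamilly$, then apply Lemmas~\ref{lem:quotient_cliff_bounds_f_basis_sublattice} and~\ref{lem:quotient_cliff_bounds_f_basis_lattice} in the two cases to get unique minimal and maximal elements. Your explicit final verification that this set equals the interval $\Han{a,b}$ of $\SubFamilly(n)$ is a step the paper leaves implicit, and it is carried out correctly.
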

\begin{proof}
    First, by Proposition~\ref{prop:quotient_cliff}, $\SpaceCliff_\SubFamilly$ is a
    well-defined unital associative algebra quotient of $\SpaceCliff_\delta$.  Now, the
    product $\BasisF_u \Product \BasisF_v$ in $\SpaceCliff_\SubFamilly$ can be computed as
    the image by $\theta_\SubFamilly$ of the product of the same inputs in
    $\SpaceCliff_\delta$. By Theorem~\ref{thm:product_cliff_intervals}, this product is
    equal to zero or its support $I$ is an interval of a $\delta$-cliff poset. By
    construction of $\SpaceCliff_\SubFamilly$, the support of the product $\BasisF_u
    \Product \BasisF_v$ in $\SpaceCliff_\SubFamilly$ is equal to $I' := I \cap \SubFamilly$.
    If~\ref{item:quotient_cliff_bounds_f_basis_1} holds, then by
    Lemma~\ref{lem:quotient_cliff_bounds_f_basis_sublattice}, $I'$ admits both a minimal and
    a maximal element. If~\ref{item:quotient_cliff_bounds_f_basis_2} holds, then by
    Lemma~\ref{lem:quotient_cliff_bounds_f_basis_sublattice}, $I'$ admits a minimal element,
    and by Lemma~\ref{lem:quotient_cliff_bounds_f_basis_lattice}, $S'$ admits a maximal
    element. In both cases, $I'$ is an interval of a poset~$\SubFamilly(n)$, $n \geq 0$.
\end{proof}
\medbreak

%%%%%%%%%%%%%%%%%%%%%%%%%%%%%%%%%%%%%%%%%%%%%%%%%%%%%%%%%%%%%%%%%%%%%%%%%%%%%%%%%%%%%%%%%%%%
\subsubsection{Examples: two Fuss-Catalan associative algebras}
We define and study the associative algebras related to the $\MapM$-hill posets and to the
$\MapM$-canyon posets.
\medbreak

%%%%%%%%%%%%%%%%%%%%%%%%%%%%%%%%%%%%%%%%%%%%%%%%%%%%%%%%%%%%%%%%%%%%%%%%%%%%%%%%%%%%%%%%%%%%
\paragraph{Hill associative algebras.}
For any $m \geq 0$, let $\SpaceHill_m$ be the quotient $\SpaceCliff_{\SetHill_\MapM}$. This
quotient is well-defined due to the fact that $\SetHill_\MapM$ satisfies the conditions of
Proposition~\ref{prop:quotient_cliff}. Moreover, by
Proposition~\ref{prop:properties_hill_objects} and
Point~\ref{item:quotient_cliff_bounds_f_basis_1} of
Theorem~\ref{thm:quotient_cliff_bounds_f_basis}, $\SpaceHill_m$ has the interval condition.
For instance, one has in $\SpaceHill_1$,
\begin{subequations}
\begin{equation}
    \BasisF_{01} \Product \BasisF_{01}
    =
    \BasisF_{0111} + \BasisF_{0112} + \BasisF_{0113} + \BasisF_{0122} + \BasisF_{0123},
\end{equation}
\begin{equation}
    \BasisF_{01} \Product \BasisF_{00}
    =
    0,
\end{equation}
\begin{equation}
    \BasisF_{001} \Product \BasisF_{0122}
    =
    \BasisF_{0011122} + \BasisF_{0011222} + \BasisF_{0012222}.
\end{equation}
\end{subequations}
In $\SpaceHill_2$, one has
\begin{subequations}
\begin{equation}
    \BasisF_{02} \Product \BasisF_{023}
    =
    \BasisF_{02223} + \BasisF_{02233} + \BasisF_{02333},
\end{equation}
\begin{equation}
    \BasisF_{011} \Product \BasisF_{01}
    =
    \BasisF_{01111},
\end{equation}
\begin{equation}
    \BasisF_{0015} \Product \BasisF_{014} = 0.
\end{equation}
\end{subequations}
\medbreak

By computer exploration, minimal generating families of $\SpaceHill_1$ and $\SpaceHill_2$,
respectively up to degree $5$ and $4$, are
\begin{multline}
    \BasisF_{0},
    \quad
    \BasisF_{00},
    \quad
    \BasisF_{001}, \BasisF_{011},
    \quad
    \BasisF_{0002}, \BasisF_{0011}, \BasisF_{0012}, \BasisF_{0022}, \BasisF_{0112},
    \BasisF_{0122},
    \\
    \BasisF_{00003}, \BasisF_{00013}, \BasisF_{00023}, \BasisF_{00033}, \BasisF_{00112},
    \BasisF_{00113}, \BasisF_{00122}, \BasisF_{00123}, \BasisF_{00133}, \BasisF_{00222},
    \\
    \BasisF_{00223}, \BasisF_{00233}, \BasisF_{01113}, \BasisF_{01122}, \BasisF_{01123},
    \BasisF_{01133}, \BasisF_{01223}, \BasisF_{01233},
\end{multline}
and
\begin{multline}
    \BasisF_{0},
    \quad
    \BasisF_{00}, \BasisF_{01},
    \quad
    \BasisF_{001}, \BasisF_{002}, \BasisF_{003}, \BasisF_{012}, \BasisF_{013},
    \BasisF_{022}, \BasisF_{023},
    \\
    \BasisF_{0004}, \BasisF_{0005}, \BasisF_{0012}, \BasisF_{0013}, \BasisF_{0014},
    \BasisF_{0015}, \BasisF_{0022}, \BasisF_{0023}, \BasisF_{0024}, \BasisF_{0025},
    \BasisF_{0033}, \BasisF_{0034}, \BasisF_{0035},
    \\
    \BasisF_{0044}, \BasisF_{0045}, \BasisF_{0114}, \BasisF_{0115}, \BasisF_{0122},
    \BasisF_{0123}, \BasisF_{0124}, \BasisF_{0125}, \BasisF_{0133}, \BasisF_{0134},
    \BasisF_{0135}, \BasisF_{0144}, \BasisF_{0145},
    \\
    \BasisF_{0223}, \BasisF_{0224}, \BasisF_{0225}, \BasisF_{0234}, \BasisF_{0235},
    \BasisF_{0244}, \BasisF_{0245}.
\end{multline}
Moreover, the sequences for the numbers of generators of $\SpaceHill_1$ and $\SpaceHill_2$,
degree by degree begin respectively by
\begin{equation}
    0, 1, 1, 2, 6, 18, 59, 196, 669,
\end{equation}
and
\begin{equation}
    0, 1, 2, 7, 33, 168, 900, 4980.
\end{equation}
We can observe that for any $m \geq 1$, $\SpaceHill_m$ is not free as unital associative
algebra. Indeed, the quasi-inverse of the respective generating series of these elements is
not the Hilbert series of $\SpaceHill_m$, which is expected when this algebra is free.
\medbreak

%%%%%%%%%%%%%%%%%%%%%%%%%%%%%%%%%%%%%%%%%%%%%%%%%%%%%%%%%%%%%%%%%%%%%%%%%%%%%%%%%%%%%%%%%%%%
\paragraph{Canyon associative algebras.}
For any $m \geq 0$, let $\SpaceCanyon_m$ be the quotient $\SpaceCliff_{\SetCanyon_{\mathbf
m}}$. This quotient is well-defined due to the fact that $\SetCanyon_{\mathbf m}$ satisfies
the conditions of Proposition~\ref{prop:quotient_cliff}. Moreover, by
Proposition~\ref{prop:properties_canyon_objects}, the fact that for any $m \geq 0$ and $n
\geq 0$, $\SetCanyon_{\mathbf m}(n)$ is join-stable, and by
Point~\ref{item:quotient_cliff_bounds_f_basis_2} of
Theorem~\ref{thm:quotient_cliff_bounds_f_basis}, $\SpaceCanyon_m$ has the interval
condition. For instance, one has in $\SpaceCanyon_1$,
\begin{subequations}
\begin{equation}
    \BasisF_{0} \Product \BasisF_{01}
    =
    \BasisF_{001} + \BasisF_{002} + \BasisF_{012},
\end{equation}
\begin{equation}
    \BasisF_{0} \Product \BasisF_{002}
    =
    \BasisF_{0002} + \BasisF_{0003} + \BasisF_{0103},
\end{equation}
\begin{equation} \begin{split}
    \BasisF_{0012} \Product \BasisF_{0103}
    & =
    \BasisF_{00120103} + \BasisF_{00120106} + \BasisF_{00120107} + \BasisF_{00120406}
    + \BasisF_{00120407}
    \\
    & \enspace + \BasisF_{00120507} + \BasisF_{00123406} + \BasisF_{00123407}
    + \BasisF_{00123507} + \BasisF_{00124507}.
\end{split} \end{equation}
\end{subequations}
In $\SpaceCanyon_2$, one has
\begin{subequations}
\begin{equation}
    \BasisF_{01} \Product \BasisF_{0014}
    =
    0,
\end{equation}
\begin{equation}
    \BasisF_{01} \Product \BasisF_{0013}
    =
    \BasisF_{010013}.
\end{equation}
\begin{equation} \begin{split}
    \BasisF_{020} \Product \BasisF_{02}
    & =
    \BasisF_{02002} + \BasisF_{02005} + \BasisF_{02006} + \BasisF_{02007} + \BasisF_{02008}
    + \BasisF_{02012} + \BasisF_{02015} + \BasisF_{02016} + \BasisF_{02017} + \BasisF_{02018}
    \\
    & \enspace + \BasisF_{02045} + \BasisF_{02046} + \BasisF_{02047} + \BasisF_{02048}
    + \BasisF_{02056} + \BasisF_{02057} + \BasisF_{02058} + \BasisF_{02067}
    + \BasisF_{02068}.
\end{split} \end{equation}
\end{subequations}
\medbreak

By computer exploration, minimal generating families of $\SpaceCanyon_1$ and
$\SpaceCanyon_2$, respectively up to respectively up to degree $5$ and $4$, are
\begin{multline}
    \BasisF_{0},
    \quad
    \BasisF_{00},
    \quad
    \BasisF_{000}, \BasisF_{001},
    \quad
    \BasisF_{0000}, \BasisF_{0001}, \BasisF_{0002}, \BasisF_{0010}, \BasisF_{0012},
    \\
    \BasisF_{00000}, \BasisF_{00001}, \BasisF_{00002}, \BasisF_{00003}, \BasisF_{00010},
    \BasisF_{00012}, \BasisF_{00013}, \BasisF_{00020}, \BasisF_{00023}, \BasisF_{00100},
    \\
    \BasisF_{00101}, \BasisF_{00103}, \BasisF_{00120}, \BasisF_{00123},
\end{multline}
and
\begin{multline}
    \BasisF_{0},
    \quad
    \BasisF_{00}, \BasisF_{01},
    \quad
    \BasisF_{000}, \BasisF_{002}, \BasisF_{003}, \BasisF_{010}, \BasisF_{012},
    \BasisF_{013}, \BasisF_{023},
    \\
    \BasisF_{0000}, \BasisF_{0003}, \BasisF_{0004}, \BasisF_{0005}, \BasisF_{0014},
    \BasisF_{0015}, \BasisF_{0020}, \BasisF_{0023}, \BasisF_{0024}, \BasisF_{0025},
    \BasisF_{0030}, \BasisF_{0034}, \BasisF_{0035}, \BasisF_{0045}, \BasisF_{0100},
    \\
    \BasisF_{0104}, \BasisF_{0105}, \BasisF_{0120}, \BasisF_{0124}, \BasisF_{0125},
    \BasisF_{0130}, \BasisF_{0134}, \BasisF_{0135}, \BasisF_{0145}, \BasisF_{0204},
    \BasisF_{0205}, \BasisF_{0230}, \BasisF_{0234}, \BasisF_{0235}, \BasisF_{0245}.
\end{multline}
The associative algebra $\SpaceCanyon_1$ is the Loday-Ronco algebra~\cite{LR98},
also known as $\PBT$~\cite{HNT05}. It is known that this associative algebra is free and
that the dimension of its generators are a shifted version of Catalan numbers:
\begin{equation}
    0, 1, 1, 2, 5, 14, 42, 132, 429.
\end{equation}
The sequence for the numbers of generators of $\SpaceCanyon_2$ degree by degree begins by
\begin{equation}
    0, 1, 2, 7, 30, 149, 788, 4332.
\end{equation}
We can observe that for any $m \geq 2$, $\SpaceCanyon_m$ is not free as unital associative
algebra. It follows, from the same argument as the previous section, that $\SpaceCanyon_m$
is not free.
\medbreak

%%%%%%%%%%%%%%%%%%%%%%%%%%%%%%%%%%%%%%%%%%%%%%%%%%%%%%%%%%%%%%%%%%%%%%%%%%%%%%%%%%%%%%%%%%%%
%%%%%%%%%%%%%%%%%%%%%%%%%%%%%%%%%%%%%%%%%%%%%%%%%%%%%%%%%%%%%%%%%%%%%%%%%%%%%%%%%%%%%%%%%%%%
%%%%%%%%%%%%%%%%%%%%%%%%%%%%%%%%%%%%%%%%%%%%%%%%%%%%%%%%%%%%%%%%%%%%%%%%%%%%%%%%%%%%%%%%%%%%
\section*{Conclusion and open questions}
This work presents three new families of posets on Fuss-Catalan objects and associative
algebras on their linear spans. All this are based upon $\delta$-cliffs, a combinatorial
family of words of integers satisfying some conditions. Some general properties about
subposets of the posets of $\delta$-cliffs have been presented, as well as general
properties about quotients of the associative algebras defined on the linear span of
$\delta$-cliffs.
\medbreak

Here is a list of open questions raised by this research:
\begin{enumerate}[fullwidth]
    \item {\bf (Generalization of the weak Bruhat order)} --- The first open question
    concerns the alternative order relation on $\delta$-cliffs introduced in
    Section~\ref{subsubsec:links_weak_bruhat_order}. This consists in considering
    Conjecture~\ref{con:bruhat_order_increasing_trees} and in proving that the posets
    $\Par{\SetCliff_\delta(n), \Leq'}$ are semi-distributive lattices, or at least lattices.
    \medbreak

    \item {\bf (Coproducts and Hopf bialgebras)} --- As explained above, the associative
    algebras $\SpaceCliff_\MapOne$ and $\SpaceCanyon_\MapOne$ are already known algebraic
    structures which are in fact Hopf bialgebras. They are endowed with a coproduct
    satisfying some compatibility relations with the product. The question here consists in
    endowing $\SpaceCliff_\delta$ with a coproduct where $\delta$ is a unimodal range map.
    We can ask also for a general definition of such a coproduct for the quotients
    $\SpaceCliff_\SubFamilly$ of $\SpaceCliff_\delta$ for some subfamilies $\SubFamilly$ of
    $\delta$-cliffs.
    \medbreak

    \item {\bf (Other subposets and quotient algebras)} --- There are other subfamilies of
    $\delta$-cliffs than $\delta$-hills and $\delta$-canyons which seem to lead to
    interesting posets and associative algebras. Among these, there are
    \Def{$\delta$-dunes}, which are $\delta$-cliffs $u$ such that $\Brr{u_i - u_{i + 1}}
    \leq \Brr{\delta(i) - \delta(i + 1)}$ for all $i \in [|u| - 1]$. For $\delta = \MapOne$,
    we obtain a family in one-to-one correspondence with directed animals, which are
    enumerated by Sequence~\OEIS{A005773} of~\cite{Slo}. For $\delta = \MapTwo$, we obtain a
    family enumerated by Sequence~\OEIS{A180898} of~\cite{Slo}. This axis consists in
    studying in particular the posets and the associative algebras of dunes.
\end{enumerate}
\medbreak

%%%%%%%%%%%%%%%%%%%%%%%%%%%%%%%%%%%%%%%%%%%%%%%%%%%%%%%%%%%%%%%%%%%%%%%%%%%%%%%%%%%%%%%%%%%%
%%%%%%%%%%%%%%%%%%%%%%%%%%%%%%%%%%%%%%%%%%%%%%%%%%%%%%%%%%%%%%%%%%%%%%%%%%%%%%%%%%%%%%%%%%%%
%%%%%%%%%%%%%%%%%%%%%%%%%%%%%%%%%%%%%%%%%%%%%%%%%%%%%%%%%%%%%%%%%%%%%%%%%%%%%%%%%%%%%%%%%%%%
\bibliographystyle{alpha}
\bibliography{Bibliography}

\end{document}